\documentclass[11pt]{article}

\usepackage[T1]{fontenc}
\usepackage{lmodern}
\usepackage[margin=1in]{geometry}
\usepackage{amsmath,amssymb,amsthm,mathtools,bm}
\usepackage{xcolor}
\usepackage{graphicx}
\usepackage{booktabs}
\usepackage{placeins}
\usepackage{float}
\usepackage{enumitem}
\usepackage{tabularx}
\usepackage{array}
\usepackage{tikz}
\usetikzlibrary{arrows.meta,positioning,fit,calc}
\usepackage{authblk}
\usepackage[round,authoryear]{natbib}
\usepackage[colorlinks=true,citecolor=blue,linkcolor=blue,urlcolor=blue]{hyperref}
\usepackage{microtype}

\graphicspath{{paper_results/}}

\newcolumntype{Y}{>{\raggedright\arraybackslash}X}
\newcolumntype{C}[1]{>{\centering\arraybackslash}p{#1}}

\numberwithin{equation}{section}
\allowdisplaybreaks
\theoremstyle{plain}
\newtheorem{theorem}{Theorem}[section]
\newtheorem{proposition}[theorem]{Proposition}
\newtheorem{lemma}[theorem]{Lemma}
\newtheorem{corollary}[theorem]{Corollary}
\theoremstyle{definition}
\newtheorem{remark}[theorem]{Remark}
\newtheorem{example}[theorem]{Example}
\newtheorem{definition}[theorem]{Definition}

\newcommand{\E}{\mathbb{E}}
\newcommand{\Var}{\operatorname{Var}}
\newcommand{\Cov}{\operatorname{Cov}}
\newcommand{\R}{\mathbb{R}}

\newcommand{\obs}{\mathrm{obs}}

\newcommand{\Proj}{\Pi}

\title{Semiparametric Efficient Inference under Non-Informative Complex Survey Designs}
\author[1]{Hiroki Chiba\thanks{Email: \texttt{chiba@iastate.edu}}}
\author[1]{Kosuke Morikawa\thanks{Email: \texttt{morikawa@iastate.edu}}}
\affil[1]{Department of Statistics, Iowa State University}
\date{}

\hypersetup{
  pdftitle={Semiparametric Efficient Inference under Non-Informative Complex Survey Designs},
  pdfauthor={Hiroki Chiba and Kosuke Morikawa}
}

\begin{document}
\maketitle

\begin{abstract}
Two features intrinsic to survey sampling complicate semiparametric efficiency analysis: design-induced dependence among sampling indicators and the randomness of finite-population targets under the superpopulation law. For general semiparametric full-data models, we show that the observed-data experiment under a broad class of dependent designs is locally asymptotically normal with the tangent-space structure of a \emph{reference Poisson experiment}. Under the joint superpopulation--design law, first-order efficiency depends on the design only through the limiting inclusion-probability function. Standard missing-at-random projection in the reference experiment characterizes the observed-data efficient influence function. Finite-population targets are treated through first-order asymptotic expansions, extending the analysis beyond exact random sums to nonlinear census characteristics. Superpopulation and finite-population centerings yield equivalent notions of local regularity and efficiency, with their bounds linked by a Pythagorean decomposition that gives a generalized finite-population correction. We then give general and design-specific conditions under which cross-fitted estimators with estimated nuisance functions attain both efficiency bounds. For the finite-population mean, the bound equals the large-sample limit of the Godambe--Joshi anticipated-variance lower bound. For scalar targets, we characterize optimal limiting inclusion probabilities. Simulations and California Academic Performance Index data illustrate the theory.
\end{abstract}

\noindent\textbf{Keywords:} Semiparametric efficiency; survey sampling; local asymptotic normality; finite-population inference; cross-fitting.

\medskip
\noindent\textbf{2020 Mathematics Subject Classification:} Primary 62D05; Secondary 62G20.

\section{Introduction}
\label{sec-intro}

Probability sampling makes the mechanism by which outcomes are observed known to the analyst. In complex surveys, the design may combine unequal inclusion probabilities, stratification, clustering, sampling without replacement, and multiple stages \citep{Neyman1934,HorvitzThompson1952,SarndalSwenssonWretman1992}. These choices can improve precision, reduce observation cost, or both, but sampling without replacement, cluster selection, and multistage selection often induce dependence among sampling indicators. A practically important example is self-weighting two-stage $\pi$PS/SRS sampling used in household surveys, where selection of primary sampling units and subsequent sampling within them induce dependence among unit-level sampling indicators \citep{SinghHarter2015,KaltonEtAl2021}. Besides such design-induced dependence, survey inference often concerns a characteristic of the realized finite population. Under a superpopulation formulation, such a target is random and generally depends on outcomes that remain unobserved. Efficiency analysis must therefore account for two distinct features: a dependent observed-data experiment and random finite-population centering.

Classical finite-population inference incorporates auxiliary information through calibration and generalized regression (GREG) \citep{DevilleSarndal1992} and studies anticipated-variance lower bounds \citep{GodambeJoshi1965} as well as high-entropy and rejective sampling \citep{Hajek1964,Tan2013}. Under a linear regression superpopulation model, \citet{IsakiFuller1982} study asymptotically optimal design--estimator pairs for the anticipated-variance criterion, while \citet{BreidtOpsomer2000} show that local-polynomial model-assisted estimators attain the Godambe--Joshi lower bound under weaker assumptions on the regression function. Joint model--design inference and asymptotic and empirical-process theory have also been developed for dependent survey designs \citep{Binder1983,RubinBleuerSchiopuKratina2005,Fuller2009,BoistardLopuhaaRuizGazen2017,HanWellner2021}. Semiparametric efficiency for superpopulation parameters has been studied under case-control, outcome-dependent two-phase, validation, and related two-phase sampling \citep{BreslowRobinsWellner2000,BreslowMcNeneyWellner2003,BreslowWellner2007,SaegusaWellner2013,ZengLin2014,TaoZengLin2017,TaoLotspeichAmorimShawShepherd2021} and, more recently, under informative Poisson sampling with random survey weights \citep{MorikawaTeradaKim2025}. These literatures provide procedures, benchmarks, probabilistic tools, and the projection theory relating full-data and observed-data influence functions. They leave open, however, the unified efficiency analysis developed here for general semiparametric full-data models under dependent, non-informative designs, including the comparison between superpopulation and random finite-population centerings.

We develop semiparametric efficiency theory for this setting under non-informative sampling, derive bounds for general finite-dimensional pathwise-differentiable superpopulation targets and their random finite-population counterparts, and study when those bounds are attainable with estimated nuisance functions. Throughout, inference is conducted under the joint law of the superpopulation model and the sampling design \citep{RubinBleuerSchiopuKratina2005}. The full-data units are i.i.d., the phase-I covariates are observed for every population unit, and the dependence studied here is induced by the sampling design rather than by additional within-cluster dependence among outcomes.

Our central result gives an experiment-level representation of first-order semiparametric efficiency under dependent survey sampling. Suppose that first-order inclusion probabilities converge uniformly to a limiting function of the phase-I covariates and that centered design fluctuations are asymptotically negligible. The limiting inclusion-probability function defines an associated \emph{reference Poisson experiment} in which the sampling indicators are conditionally independent given the phase-I covariates. We show that the observed-data experiment generated by the actual design, even when its sampling indicators are dependent, is locally asymptotically normal (LAN) with the observed-data tangent-space structure of this reference experiment. Consequently, the standard missing-at-random projection argument in the reference model \citep{RobinsRotnitzkyZhao1994,Tsiatis2006} yields a general characterization of the observed-data efficient influence function (EIF) while retaining restrictions imposed by the full-data model. Under the joint superpopulation--design law, the resulting first-order efficiency bound depends on the sampling design only through the limiting first-order inclusion-probability function. This is a first-order statement under the joint law: it neither asserts that the actual design converges to Poisson sampling nor implies equality of design-conditional variances for a fixed realized population, where higher-order inclusion probabilities may remain relevant.

Establishing this representation is nontrivial because the observed data do not form an i.i.d.\ product experiment. With i.i.d.\ data missing at random, the tangent space is generated by one-observation scores, and efficient scores and EIFs are characterized by orthogonal projection in an $L^2$ space \citep{BickelKlaassenRitovWellner1993,RobinsRotnitzkyZhao1994,Tsiatis2006}. When the sampling indicators are dependent, a one-observation score calculation no longer identifies the tangent space of the joint experiment. Although \citet{Milbrodt1985} established LAN for Poisson and rejective sampling, that result does not supply the general observed-data tangent space needed to apply the missing-at-random projection argument. The non-i.i.d.\ convolution framework of \citet{McNeney2000}, building on \citet{Hajek1970}, permits the tangent set to lie in a Hilbert space whose inner product determines the limiting covariance of the central sequences, but it does not identify that space for the present survey problem. Our LAN theorem supplies this identification for a broad class of non-informative designs, including Poisson sampling, simple random sampling without replacement (SRSWOR), stratified sampling, probability-proportional-to-size (PPS) sampling with replacement, rejective sampling, cluster sampling, and stratified two-stage sampling (Table~\ref{tab-sampling-examples}).

The random-target theory forms a second main contribution. A closely related foundation is \citet{Zhang2005}, who derives semiparametric information bounds and a convolution theorem for additive random targets in i.i.d.\ models, allowing distribution-dependent summands; the independent-Poisson mean problem is a point of overlap (see Section~\ref{sec-convolution-theorems}). For finite-population counterparts whose first-order expansions are governed by the full-data EIF introduced below, our analysis develops the corresponding theory in two additional directions: it treats finite-dimensional targets without requiring an exact random-sum representation, and it allows survey experiments with dependent sampling indicators. Nonlinear census roots, such as regression coefficients, enter through this first-order expansion rather than as exact random sums. For these targets, we formulate local regularity under the joint survey experiment and prove that it defines the same class of estimators as superpopulation local regularity. The two convolution theorems have the same residual component, so an estimator is efficient under one centering if and only if it is efficient under the other. Their bounds satisfy a Pythagorean decomposition: the superpopulation bound is the sum of the full-data and finite-population bounds, yielding a generalized finite-population correction. For the finite-population mean, the finite-population bound coincides with the $\sqrt N$-scale limit of the Godambe--Joshi anticipated-variance lower bound. For scalar targets, we further characterize optimal limiting inclusion probabilities over nonempty closed convex classes attainable under the sampling design and budget constraints. 

We also give high-level sufficient conditions under which cross-fitted estimators \citep{ChernozhukovEtAl2018} attain both bounds and consistently estimate their variances. Under dependent sampling, fitting a nuisance function outside a validation fold does not generally make the fitted function independent of the sampling indicators in that fold. For the augmented mean estimator, sample splitting alone need not eliminate the first-order remainder. We therefore derive sufficient conditions for mean and conditional-mean regression targets. For mean estimation under cluster and two-stage sampling, the design-specific conditions control the accumulation of nuisance-estimation errors within clusters and primary sampling units. The verified designs include Poisson- and rejective-first-stage versions of self-weighting two-stage $\pi$PS/SRS sampling.

Mathematically related recent work studies experiments with dependent treatment assignments. \citet{Armstrong2022} derives likelihood-ratio expansions and LAN for broad classes of experimental rules, including a survey-sampling specialization, and obtains efficiency bounds using least-favorable directions from the corresponding i.i.d.\ problems. \citet{BaiLiuShaikhTabordMeehan2026} establish convolution bounds for targets defined by known, nuisance-free moment conditions under broad dependent assignment mechanisms with a known, fixed marginal propensity that may vary with the covariates. \citet{Rafi2023} proves semiparametric efficiency and cross-fitted attainment for the superpopulation average treatment effect, a linear contrast of two means, under fixed-stratum covariate-adaptive randomization. \citet{LiSimchiLevi2026} study average-propensity bounds under local-unbiasedness and related local-bias conditions in sequential experiments, with EIF-based regression adjustment and adaptive covariate balancing providing routes to attainment.

The relationship with the results of \citet{Armstrong2022} and \citet{BaiLiuShaikhTabordMeehan2026} can be stated precisely. In survey-sampling settings common to our framework and that of \citet{Armstrong2022}, our fixed-submodel LAN expansion agrees with his; once a least-favorable submodel for the corresponding i.i.d.\ problem is identified, the resulting superpopulation lower bound also coincides. Under the further specialization to an unrestricted full-data law, a fixed known propensity, and a target defined by known nuisance-free moment conditions, the resulting observed score, EIF, and efficiency bound agree with the corresponding results of \citet{Armstrong2022} and \citet{BaiLiuShaikhTabordMeehan2026}. These agreements delimit the common part of the theories. Beyond that common part, we characterize observed-data EIFs by tangent-space projection for general finite-dimensional pathwise-differentiable targets, retaining full-data model restrictions without requiring a known moment representation. The accompanying joint-limit theory further establishes local-regularity and efficiency results for the corresponding random finite-population targets. Conversely, the experimental-design literature permits assignment mechanisms that are not explicitly covered by the survey-design conditions verified here. Taken together, these comparisons show that the two lines of work differ along both the target and design dimensions.

The remainder of the paper is organized as follows. Section~2 introduces the observed-data likelihood, the two targets, the tangent set and Hilbert space, the sampling assumptions, and representative designs. Section~3 establishes LAN, pathwise differentiability, the observed-data EIF, equivalence of the two local-regularity formulations, the convolution theorems, and optimal limiting inclusion probabilities. Section~4 gives general cross-fitting and variance-estimation results and specializes them to mean and conditional-mean regression targets. Sections~5 and~6 present numerical experiments and the California Academic Performance Index analysis, respectively, and Section~7 discusses implications and extensions. Proofs, verification of the sampling-design assumptions, and design-specific conditions and results for efficient estimation appear in Appendix~\ref{app-technical-details}. Appendix~\ref{app-additional-numerical-details} contains additional numerical experiments, results, and implementation details.

\section{Basic Setup}
\subsection{Data sources and observed likelihoods}
Consider a finite population indexed by \(\{1,\ldots,N\}\). Let \(\boldsymbol{\delta}_N=(\delta_1,\ldots,\delta_N)\), where \(\delta_i\) indicates whether unit \(i\) is sampled. For \(i=1,\ldots,N\), let \(L_i=(X_i,Y_i)\) be an i.i.d.\ copy of \(L=(X,Y)\) from a superpopulation distribution. The phase-I covariate \(X_i\) is observed for every unit, whereas the study variable \(Y_i\) is observed only if \(\delta_i=1\), so \(O_i=(\delta_i,X_i,\delta_iY_i)\). Cluster partitions are formed from phase-I information. Sampling may jointly reveal outcomes from a cluster, but the full-data units remain i.i.d.; additional dependence among outcomes within clusters is not part of the model.

Let \(\{P_{\theta,\eta}^F\mid \theta\in\Theta\subset\mathbb R^p,\ \eta\in\Xi\}\) be a semiparametric model for the full-data distribution. The true parameter is \((\theta_0,\eta_0)\). Let \(f_{\theta,\eta}(x,y)\) be the full-data density.
Throughout this paper, \(P_0^F\) denotes the true full-data distribution. We write \(\E\) for expectation under \(P_0^F\) with respect to a generic full-data observation \(L=(X,Y)\), and \(\E[\cdot\mid X]\) for the corresponding conditional expectation. The symbols \(\xrightarrow[]{d}\) and \(\xrightarrow[]{p}\) denote convergence in distribution and convergence in probability, respectively.

For \((\theta,\eta)\in\Theta\times\Xi\), let \(P_{\theta,\eta}^N\) denote the observed-data law induced by the full-data law \(P_{\theta,\eta}^F\) and the sampling design.
We assume throughout that the sampling mechanism is non-informative,
\begin{equation}
\boldsymbol{\delta}_N \perp \bm{Y}_N \mid \bm{X}_N,
\label{eq-non-informative}
\end{equation}
where \(\bm{X}_N=(X_1,\dots,X_N)\) and \(\bm{Y}_N=(Y_1,\dots,Y_N)\). Under \eqref{eq-non-informative}, the observed-data likelihood factors as
\begin{equation}
L_N(\theta,\eta)
=
q_N(\boldsymbol{\delta}_N\mid \bm{X}_N)
\prod_{i=1}^N
\left\{
f_{\theta,\eta}(X_i,Y_i)^{\delta_i}
\left(
\int f_{\theta,\eta}(X_i,y)\,dy
\right)^{1-\delta_i}
\right\},
\label{eq-obs-lik}
\end{equation}
where \(q_N(\boldsymbol{\delta}_N\mid \bm{X}_N)\) is the known conditional law of \(\boldsymbol{\delta}_N\) given \(\bm{X}_N\) and does not depend on \((\theta,\eta)\). Let \(P_0^N\) denote the true observed-data distribution and
$\pi_{N,i}
=
P_0^N(\delta_i=1\mid \bm X_N)
>0$
the first-order inclusion probability of unit \(i\) given \(\bm X_N\). The designs considered below, including SRSWOR, stratified sampling, cluster sampling, PPS sampling, rejective sampling, and stratified two-stage sampling, are summarized at the end of Section~\ref{sec-assumptions}.

Let \(\theta_0=\psi(P_0^F)\) be the superpopulation target. For finite-population inference, let \(\theta_N\) be an \(\mathbb R^p\)-valued random variable measurable with respect to \(\sigma(L_1,\ldots,L_N)\), such as a finite-population mean. The finite-population efficiency results below apply under Assumption~\ref{ass-A3}. Inference for \(\theta_N\) is under the joint law of the superpopulation and sampling design, rather than design-conditional inference that fixes the realized finite population and varies only the sampling indicators.

Let \(Q_{\theta,\eta}^N\) be the joint law of \((L_1,\dots,L_N,\boldsymbol{\delta}_N)\), whose observed-data marginal is \(P_{\theta,\eta}^N\).
This distinction is essential. The superpopulation functional \(\psi(P^F)\) is deterministic in the data-generating law, so an observed-data estimator centered at its local value is measurable on \(P^N\). In contrast, \(\theta_N\) is a random function of the realized finite population and generally depends on nonsampled outcomes, so its local regularity must be formulated under \(Q^N\).

\subsection{Tangent spaces}
\label{sec-tangent-space}
We suppose that the full-data model is differentiable in quadratic mean (DQM) at the true parameter value, with tangent set \(\dot{\mathcal P}_{P_0^F}\) and tangent space \(\mathcal H^F\subset L^2_0(P_0^F)\). Since the sampling indicators may be dependent under the actual sampling design, the observed-data LAN is not determined by a single independent observation. We identify the observed-data tangent space by comparison with a reference Poisson experiment in which units are sampled independently conditional on the covariates.

Let \(\mathbb Q^\pi_{\theta,\eta}\) be the joint law of \((\delta,X,Y)\) induced by
\[
L=(X,Y)\sim P_{\theta,\eta}^F,
\quad
\delta\mid X\sim \operatorname{Bernoulli}(\pi(X)),
\quad
\delta\perp Y\mid X.
\]
Here \(\pi:\mathcal X\to(0,1]\) is a fixed measurable candidate limiting inclusion-probability function, and Assumption~\ref{ass-B2} requires \(\pi_{N,i}\) to converge uniformly to \(\pi(X_i)\). Let \(\mathbb P^\pi_{\theta,\eta}\) be the induced law of \(O=(\delta,X,\delta Y)\). The reference Poisson experiment is defined by \(\{\mathbb P^\pi_{\theta,\eta}\mid(\theta,\eta)\in\Theta\times\Xi\}\). At \((\theta_0,\eta_0)\), abbreviate these laws as \(\mathbb Q^\pi\) and \(\mathbb P^\pi\), with expectations \(\mathbb E_{\mathbb Q^\pi}\) and \(\mathbb E^\pi\), respectively, and define
\[
\langle a,b\rangle_F=\mathbb E[a(L)b(L)],
\quad
\langle h_1,h_2\rangle_{\obs}=\mathbb E^\pi[h_1(O)h_2(O)]
\]
where 
\(
\:
a,b \in L^2_0(P_0^F)
\:
\mathrm{and}
\:
h_1,h_2 \in L^2_0(\mathbb P^\pi).
\)
For any \(g\in L^2_0(P_0^F)\), let
\begin{equation}
(\mathcal L g)(O)
=
\mathbb E_{\mathbb Q^\pi}[g(L)\mid O]
=
\delta g(L)+(1-\delta)\mathbb E[g(L)\mid X].
\label{eq-L-operator}
\end{equation}
The operator \(\mathcal L\) maps full-data scores to observed-data scores.
Along a smooth one-dimensional submodel \(t\mapsto P_{t,g}^F\) through \(P_0^F\) with score \(g\), write \(Q_{t,g}^N\) and \(P_{t,\mathcal Lg}^N\) for the induced joint and observed-data laws, respectively. Local alternatives use \(t=u/\sqrt N\) for fixed \(u\in\mathbb R\), with shorthand
\[
Q_0^N=Q_{0,g}^N,
\qquad
P_0^N=P_{0,\mathcal Lg}^N.
\]
For \(h_j=\mathcal Lg_j\), \(g_j\in\mathcal H^F\),
the inner product can be written as
\begin{align}
\langle h_1,h_2\rangle_{\obs}
=
\mathbb E\!\left[
\pi(X)\operatorname{Cov}\{g_1(L),g_2(L)\mid X\}
+
\mathbb E[g_1(L)\mid X]
\mathbb E[g_2(L)\mid X]
\right]
\label{eq-obs-inner-product}
\end{align}
and the induced norm is
\begin{equation}
\|\mathcal Lg\|_{\obs}^2
=
\mathbb E\!\left[
\pi(X)\operatorname{Var}\{g(L)\mid X\}
+
\{\mathbb E[g(L)\mid X]\}^2
\right].
\label{eq-obs-norm}
\end{equation}

Define the operator \(\mathcal M=\mathcal L^*\mathcal L\) on full-data functions by
\begin{equation}
(\mathcal M a)(L)
=
\mathbb E_{\mathbb Q^\pi}[(\mathcal L a)(O)\mid L]
=
\pi(X)a(L)+\{1-\pi(X)\}\mathbb E[a(L)\mid X].
\label{eq-M-operator}
\end{equation}
The operator \(\mathcal M\) is used in the equation for the observed-data EIF below. The observed-data tangent set is the linear subspace
\begin{equation}
\mathcal T^{\obs}
=
\mathcal L\{\dot{\mathcal P}_{P_0^F}\cap L^4(P_0^F)\}
\subset L^2_0(\mathbb P^\pi).
\label{eq-obs-tangent-set}
\end{equation}
The associated observed-data Hilbert space is
\begin{equation}
\mathcal H^{\obs}
=
\overline{\mathcal T^{\obs}}^{\,L^2(\mathbb P^\pi)}
=
\overline{\mathcal L\mathcal H^F}^{\,L^2(\mathbb P^\pi)}.
\label{eq-obs-tangent-space}
\end{equation}
The second equality follows from Assumption~\ref{ass-A1} and the boundedness of \(\mathcal L\).
Assumption~\ref{ass-B4} implies \(\mathcal L\) is one-to-one on \(\mathcal H^F\) and that the norm \(\|\mathcal Lg\|_{\obs}\) is equivalent to \(\|g\|_F\) on \(\mathcal H^F\).

\subsection{Assumptions}
\label{sec-assumptions}
Assumptions~\ref{ass-A1}--\ref{ass-A3} concern the full-data model and the superpopulation and finite-population targets, whereas Assumptions~\ref{ass-B1}--\ref{ass-B4} concern the sampling mechanism.

\begin{enumerate}[label=(A\arabic*),ref=A\arabic*]
\item\label{ass-A1}
\textbf{Full-data DQM and a dense linear score class.}
The full-data model is DQM at \(P_0^F\), with tangent space \(\mathcal H^F\). Every smooth one-dimensional submodel
\(t\mapsto P_{t,g}^F\), \(g\in\dot{\mathcal P}_{P_0^F}\), satisfies
\(P_{t,g}^F\ll P_0^F\) for all sufficiently small \(|t|\). Moreover, \(\dot{\mathcal P}_{P_0^F}\cap L^4(P_0^F)\) is a linear subspace of \(L^2_0(P_0^F)\), and
\(
\overline{\dot{\mathcal P}_{P_0^F}\cap L^4(P_0^F)}
=
\mathcal H^F
\)
in \(L^2(P_0^F)\).

\item\label{ass-A2}
\textbf{Full-data pathwise differentiability.}
The target map \(P\mapsto\psi(P)\in\mathbb R^p\), with
\(
\psi(P_{\theta,\eta}^F)=\theta,
\)
is pathwise differentiable at \(P_0^F=P_{\theta_0,\eta_0}^F\) with respect to the full-data tangent space \(\mathcal H^F\). There exists a continuous linear map \(\dot{\psi}^F\) from \(\mathcal H^F\) to \(\mathbb R^p\) such that, for every smooth one-dimensional submodel \(t\mapsto P_{t,g}^F\) through \(P_0^F\) with score \(g\in\mathcal H^F\),
\[
\lim_{t\to0}
\frac{\psi(P_{t,g}^F)-\psi(P_0^F)}{t}
=
\dot{\psi}^F(g).
\]
Let \(\phi^F=(\phi^F_1,\dots,\phi^F_p)^\top\in(\mathcal H^F)^p\) denote the full-data EIF of \(\psi\), so that
\(
\dot{\psi}^F(g)
=
\mathbb E[\phi^F(L)g(L)],
\:
g\in\mathcal H^F.
\)

\item\label{ass-A3}
\textbf{Asymptotic linearity of the finite-population target.}
The finite-population target \(\theta_N\) satisfies
\begin{equation}
\sqrt N(\theta_N-\theta_0)
=
\frac1{\sqrt N}\sum_{i=1}^N\phi^F(L_i)
+
o_{Q_0^N}(1).
\label{eq-A3-finite-pop-target-expansion}
\end{equation}
\end{enumerate}

Assumption~\ref{ass-A1} combines DQM with path richness. Linearity of \(\dot{\mathcal P}_{P_0^F}\cap L^4(P_0^F)\) permits direct application of the convolution theorem of \citet{McNeney2000}, whose LAN index set is linear, while density extends the result to \(\mathcal H^F\). In the nonparametric model, bounded mean-zero scores are realized by smooth tilting paths and form a dense linear class in \(L^2_0(P_0^F)\). An analogous condition holds in regular semiparametric models when bounded scores from smooth parametric and nuisance paths combine densely in the full tangent space. The \(L^4(P_0^F)\) restriction is used to verify the Lindeberg condition in the LAN proof. Assumption~\ref{ass-A2} is the standard pathwise differentiability condition.

Assumption~\ref{ass-A3} links the first-order fluctuation of the finite-population target to the full-data EIF in the chosen model. Regression coefficients illustrate how this correspondence depends on model specification. Let $Z=Z(X)$ and first consider an unrestricted joint distribution of $(X,Y)$. The best linear projection coefficient and its census counterpart are defined by
\[
\mathbb E[Z(Y-Z^\top\beta_0)]=0,
\qquad
\frac1N\sum_{i=1}^N Z_i(Y_i-Z_i^\top\beta_N)=0.
\]
The second equation defines census OLS. With $\mathbb E(ZZ^\top)$ nonsingular and the usual moment conditions, $\beta_N$ satisfies Assumption~\ref{ass-A3} with full-data EIF $\{\mathbb E(ZZ^\top)\}^{-1}Z(Y-Z^\top\beta_0)$ \citep{VanDerVaart1998}.

In the linear conditional-mean model $\mathbb E(Y\mid X)=Z^\top\beta_0$, the full-data tangent space is restricted. The corresponding full-data EIF is given in \eqref{eq-regression-eifs} and generally differs from the OLS influence function under heteroskedasticity. Example~\ref{ex-eif-regression} pairs this model with the inverse-variance-weighted census equation \eqref{eq-regression-finite-population-target}, whose first-order expansion satisfies Assumption~\ref{ass-A3}. Thus census OLS and the inverse-variance-weighted census target satisfy Assumption~\ref{ass-A3} in their respective models, with each full-data EIF determined by the corresponding tangent space.

\begin{enumerate}[label=(B\arabic*),ref=B\arabic*]
\item\label{ass-B1}
\textbf{Non-informative sampling.}
The sampling design satisfies \eqref{eq-non-informative}:
\[
\boldsymbol{\delta}_N \perp \bm{Y}_N \mid \bm{X}_N.
\]

\item\label{ass-B2}
\textbf{Uniform convergence of the first-order inclusion probabilities.}
The conditional inclusion probabilities
\(
\pi_{N,i}=P_0^N(\delta_i=1\mid \bm X_N)
\)
satisfy
\begin{equation}
\sup_{1\le i\le N}|\pi_{N,i}-\pi(X_i)|
=
o_{P_0^N}(1).
\label{eq-pi-convergence}
\end{equation}

\item\label{ass-B3}
\textbf{Vanishing variance under the sampling design.}
For every measurable function \(h\) with \(\mathbb E[h(X)^2]<\infty\),
\begin{equation}
\operatorname{Var}\!\left(
\frac1N\sum_{i=1}^N (\delta_i-\pi_{N,i})h(X_i)
\,\middle|\, \bm X_N
\right)
=
o_{P_0^N}(1).
\label{eq-weighted-design-stability}
\end{equation}

\item\label{ass-B4}
\textbf{Sampling positivity.}
There exists a constant \(\pi_->0\) such that
\begin{equation}
\pi_-\le \pi(X)\le 1,
\quad \text{a.s.}
\label{eq-positivity}
\end{equation}
\end{enumerate}

Assumption~\ref{ass-B1} is automatic when, conditional on $\bm X_N$, the sampling
indicators are generated by auxiliary randomization independent of $\bm Y_N$. Assumptions~\ref{ass-B2} and~\ref{ass-B3} require uniform approximation of first-order inclusion probabilities and vanishing design variance of centered weighted averages. The reference Poisson experiment uses the function \(\pi\) in Assumption~\ref{ass-B2}. That assumption may fail for systematic designs whose first-order inclusion probabilities lack a uniform approximation by a fixed \(\pi(X_i)\), for example when an \(N\)-varying cycle or phase pattern is not encoded in \(X_i\). By contrast, an equal-probability random-start systematic design with inclusion probability \(n_N/N\) satisfies Assumption~\ref{ass-B2} when \(n_N/N\to\rho\). Under cluster sampling, Assumption~\ref{ass-B3} may fail unless the largest cluster is asymptotically negligible relative to the population. Assumption~\ref{ass-B4} is the standard positivity condition. Table~\ref{tab-sampling-examples} summarizes representative designs satisfying these assumptions, with verification in Appendix~A.2.

\begin{table}[t]
\centering
\footnotesize
\renewcommand{\arraystretch}{1.4}
\caption{Representative sampling designs satisfying Assumptions~\ref{ass-B1}--\ref{ass-B4} under the conditions verified in Appendix~A.2.}
\label{tab-sampling-examples}
\begin{tabularx}{\textwidth}{@{}Y C{0.37\textwidth} Y@{}}
\toprule
Sampling design & Limiting inclusion probability \(\pi(x)\) & Notes \\
\midrule

Poisson sampling
&
\(\displaystyle \pi(x)\)
&
\(\pi\) is any function specified by the sampling procedure that satisfies Assumption~\ref{ass-B4}. \\

Simple random sampling without replacement
&
\(\displaystyle \pi(x)\equiv \rho\)
&
\(\rho=\lim n_N/N\). \\

Cluster sampling
&
\(\displaystyle \pi(x)\equiv \rho\)
&
\(m_N/M_N\to\rho\), and selected clusters are fully observed. The largest cluster contains \(o_p(N)\) units. \\

PPS sampling with replacement
&
\(\displaystyle
\pi(x)
=
1-\exp\!\left\{
-\rho\,\frac{r(x)}{\mu_r}
\right\}
\)
&
\(r\) is the size measure, \(\mu_r=\mathbb{E}[r(X)]\), and
\(\rho=\lim n_N/N\). \\

Rejective sampling
&
\(\displaystyle
\pi(x)
=
\rho\,\frac{r(x)}{\mu_r}
\)
&
Canonical Bernoulli probabilities \(p_{N,i}\) are chosen as in \citet{Hajek1964}. \\

Stratified sampling
&
\(\displaystyle
\pi(x)
=
\sum_{j=1}^J
p_j\,\mathbf 1\{x\in\mathcal X_j\}
\)
&
\(\mathcal X=\bigsqcup_{j=1}^J\mathcal X_j\) with fixed \(J\), and \(p_j\) is the limiting within-stratum sampling fraction. \\

Stratified two-stage sampling
&
\(\displaystyle \pi^{(1)}(x)\pi^{(2)}(x)\)
&
The number of strata is fixed. The largest PSU contains \(o_p(N)\) units. \\

\bottomrule
\end{tabularx}
\par\smallskip
\begin{minipage}{\textwidth}
\footnotesize
For a unit with covariates $x$, $\pi^{(1)}(x)$ is the limiting inclusion probability of its primary sampling unit (PSU), and $\pi^{(2)}(x)$ is its limiting second-stage inclusion probability conditional on that PSU being selected. The separate uniform limits are specified in \textup{(A.2.8)}; their product is the limiting ultimate inclusion probability.
\end{minipage}
\end{table}

In the table, \(n_N\) is the sample size for fixed-size unit sampling or the number of draws with replacement, with \(n_N/N\to\rho\). For PPS sampling, \(\rho\) is the limiting number of draws per population unit, \(r\) is the size measure, and \(\mu_r=\mathbb{E}[r(X)]\). Under cluster sampling, \(M_N\) and \(m_N\) are the total and sampled numbers of clusters. Under stratification, \(\mathcal X_j\) are the strata and \(p_j\) is the limiting within-stratum sampling fraction. For stratified two-stage sampling, the two factors are defined in the table note. In the within-stratum self-weighting specialization of Corollary~A.2.9, only their finite-population product needs to converge; separate stagewise limits are unnecessary.

\section{LAN and Semiparametric Efficiency}
This section establishes the efficiency theory for observed-data experiments. The general framework for convolution theorems under non-i.i.d.\ statistical experiments in \citet{McNeney2000} is applied to the tangent space identified in Section~\ref{sec-tangent-space}.

\subsection{LAN}

\begin{theorem}[LAN for the observed-data experiment]\label{thm-obs-lan}
Assume Assumption~\ref{ass-A1} and Assumptions~\ref{ass-B1}--\ref{ass-B3}. Then the observed-data model satisfies LAN at \(P_0^N\) in the sense of \citet{McNeney2000}, indexed by the linear tangent set \(\mathcal T^{\obs}\) in \eqref{eq-obs-tangent-set}. More precisely, let \(t\mapsto P_{t,g}^F\), \(g\in\dot{\mathcal P}_{P_0^F}\cap L^4(P_0^F)\), be a smooth one-dimensional submodel through \(P_0^F\), and let \(P^N_{t,\mathcal Lg}\) be the corresponding observed-data submodel. For fixed \(u\in\mathbb R\),
\begin{equation}
\log\frac{dP^N_{u/\sqrt N,\mathcal L g}}{dP_0^N}
=u\,\Delta_N(\mathcal Lg)
-
\frac{u^2}{2}\|\mathcal Lg\|_{\obs}^2
+o_{P_0^N}(1),
\label{eq-LAN}
\end{equation}
where
\begin{equation}
\Delta_N(\mathcal Lg)
=
\frac1{\sqrt N}\sum_{i=1}^N
\left[
\delta_i g(L_i)+(1-\delta_i)\mathbb E[g(L_i)\mid X_i]
\right].
\label{eq-central-seq}
\end{equation}
Moreover,
\(
\Delta_N(\mathcal Lg)
\xrightarrow[]{d}
N(0,\|\mathcal Lg\|_{\obs}^2).
\)
For every finite collection \(g_1,\ldots,g_m\in\dot{\mathcal P}_{P_0^F}\cap L^4(P_0^F)\),
\begin{equation}
\left(
\Delta_N(\mathcal Lg_1),\ldots,\Delta_N(\mathcal Lg_m)
\right)^\top
\xrightarrow[]{d}
N_m\left(
0,
\left\{
\langle\mathcal Lg_j,\mathcal Lg_\ell\rangle_{\obs}
\right\}_{j,\ell=1}^m
\right).
\label{eq-central-sequence-joint-limit}
\end{equation}
\end{theorem}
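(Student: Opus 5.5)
Under Assumption~\ref{ass-B1}, the design factor $q_N(\boldsymbol\delta_N\mid\bm X_N)$ in \eqref{eq-obs-lik} does not depend on $(\theta,\eta)$, so it cancels in the likelihood ratio. Writing $t=u/\sqrt N$, the log-likelihood ratio is therefore
\[
\sum_{i=1}^N\Bigl[\delta_i\log\frac{f_t(L_i)}{f_0(L_i)}+(1-\delta_i)\log\frac{f_t^X(X_i)}{f_0^X(X_i)}\Bigr],
\]
where $f^X$ denotes the marginal density of $X$. The score of the marginal path is $\mathbb E[g\mid X]$, so the score of the $i$-th summand is exactly $(\mathcal Lg)(O_i)$. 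Conditional on $\boldsymbol\delta_N$, the summands are independent but not identically distributed, because $\boldsymbol\delta_N$ depends on $\bm X_N$. The plan is to adapt the classical DQM-to-LAN argument (van der Vaart, Thm.~7.2) to this triangular array, keeping the $\delta_i$ as weights.

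\textbf{Step 1 (expansion of the log-likelihood ratio).} Both the full-data path and the $X$-marginal path are DQM; the marginal inherits DQM from the full-data path by Cauchy--Schwarz. The standard second-order expansion of $\sum\log(1+W_{ni})$ in the root-density ratios gives
\[
\log\frac{dP^N_{t}}{dP^N_0}=\frac{u}{\sqrt N}\sum_i(\mathcal Lg)(O_i)-\frac{u^2}{2}\cdot\frac1N\sum_i\bigl[\delta_i g(L_i)^2+(1-\delta_i)\{\mathbb E[g\mid X_i]\}^2\bigr]+o_p(1).
\]
The remainder terms are sums over disjoint index sets (sampled and unsampled units) of terms whose full-population sums are $o_p(1)$ by the i.i.d.\ DQM theory. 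Because $0\le\delta_i\le1$, the weighted sums are bounded by sums of absolute values, so they are $o_p(1)$ as well. I would verify this carefully using $L^1$ bounds and the fact that the relevant terms are nonnegative or have absolute-value sums that vanish in probability. The max-term negligibility uses $g\in L^4$ or $L^2$ via the usual $\max_i|g(L_i)|/\sqrt N\to0$ argument.

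\textbf{Step 2 (quadratic term).} Write $\delta_i=\pi_{N,i}+(\delta_i-\pi_{N,i})$.
\begin{itemize}
\item By Assumption~\ref{ass-B1}, $\mathbb E[\delta_i g(L_i)^2\mid\bm X_N]=\pi_{N,i}\,\mathbb E[g^2\mid X_i]$. The conditional fluctuation of $\delta_i\{g(L_i)^2-\mathbb E[g^2\mid X_i]\}$ has variance $O(N^{-1}\mathbb E g^4)$, because given $(\bm X_N,\boldsymbol\delta_N)$ the $Y_i$ are independent. This is where $L^4$ is used.
\item The term $N^{-1}\sum(\delta_i-\pi_{N,i})h(X_i)$ with $h=\mathbb E[g^2\mid X]-\{\mathbb E[g\mid X]\}^2$ vanishes by Assumption~\ref{ass-B3}.
\item Replacing $\pi_{N,i}$ by $\pi(X_i)$ costs at most $\sup_i|\pi_{N,i}-\pi(X_i)|\cdot N^{-1}\sum|h(X_i)|=o_p(1)$ by Assumption~\ref{ass-B2}.
\end{itemize}
The law of large numbers then gives the limit \eqref{eq-obs-norm}.

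\textbf{Step 3 (CLT for the central sequence; the main obstacle).} Decompose
\[
\Delta_N(\mathcal Lg)=A_N+B_N,\qquad A_N=N^{-1/2}\sum_i\delta_i\{g(L_i)-\mathbb E[g\mid X_i]\},\qquad B_N=N^{-1/2}\sum_i\mathbb E[g\mid X_i].
\]
\begin{itemize}
\item Conditionally on $(\bm X_N,\boldsymbol\delta_N)$, the summands of $A_N$ are independent and mean zero by Assumption~\ref{ass-B1}.
\item The conditional variance of $A_N$ is $N^{-1}\sum\delta_i\operatorname{Var}(g\mid X_i)$, which converges in probability to $\mathbb E[\pi(X)\operatorname{Var}(g\mid X)]$ by the same argument as in Step~2.
\item The conditional Lindeberg condition follows from the $L^4$ moment bound.
\item Hence the conditional characteristic function of $A_N$ converges in probability to the Gaussian one, while $B_N$ is $\sigma(\bm X_N)$-measurable and asymptotically normal by the i.i.d.\ CLT.
\end{itemize}
A conditional-characteristic-function argument, $\mathbb E[e^{isA_N+irB_N}]=\mathbb E[e^{irB_N}\,\mathbb E(e^{isA_N}\mid\bm X_N,\boldsymbol\delta_N)]$ together with dominated convergence, yields joint convergence with independent limits. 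The resulting variance is $\mathbb E[\pi\operatorname{Var}(g\mid X)]+\mathbb E[\{\mathbb E[g\mid X]\}^2]$, which is \eqref{eq-obs-norm}.

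The multivariate statement \eqref{eq-central-sequence-joint-limit} follows by Cramér--Wold. By linearity of $\mathcal L$ and of the score class in Assumption~\ref{ass-A1}, $\sum c_j\Delta_N(\mathcal Lg_j)=\Delta_N(\mathcal L\sum c_jg_j)$, and the covariances match \eqref{eq-obs-inner-product} by polarization. Linearity of $\mathcal T^{\obs}$ then gives LAN in the sense of McNeney.

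I expect Step~1 to be the most delicate part: the remainders must be controlled when the $\delta_i$ are dependent. The route I would try first is to bound every remainder uniformly in $\boldsymbol\delta_N$, using the weights in $[0,1]$ and full-sample i.i.d.\ bounds, so that the dependence among the $\delta_i$ never enters.
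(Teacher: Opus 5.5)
Your Steps~2--3 and the Cram\'er--Wold step are sound and essentially coincide with the paper's proof. Both use B1--B3 for the quadratic term, a conditional Lindeberg--Feller theorem given $(\bm X_N,\boldsymbol\delta_N)$, and stable convergence jointly with the $\bm X_N$-measurable sum $N^{-1/2}\sum_i\mathbb E[g\mid X_i]$. The expansion displayed in Step~1 is also true, but the route you propose for it fails.

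\textbf{Where Step~1 breaks.} Bounding remainders by absolute values uniformly in weights $\delta_i\in[0,1]$ works for the quadratic and cubic pieces of the log expansion. It does not work for the first-order remainder $W_i-t\dot\ell_i$, where $W_i=2\{(\text{density ratio})^{1/2}-1\}$ at unit $i$ and $\dot\ell_i$ is $g(L_i)$ or $\mathbb E[g\mid X_i]$. Even in the i.i.d.\ case, $\sum_i(W_i-t\dot\ell_i)$ tends to $-\frac{u^2}{4}$ times the information, not to zero. The corrected sum $\sum_i(W_i-t\dot\ell_i+\frac{t^2}{4}\dot\ell_i^2)$ is $o_p(1)$ only by cancellation (a mean plus a variance computation); its absolute sum does not vanish. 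Two examples:
\begin{itemize}
\item Let $X\sim N(t,1)$ while the law of $Y\mid X$ does not move. On both index sets the remainder is $\frac{u^2}{2N}(X_i^2-1)$ up to negligible terms. Under Poisson sampling with $\pi$ depending on $X^2$, the sampled-unit sum tends to $\frac{u^2}{2}\mathbb E[\pi(X)(X^2-1)]\neq0$ and the unsampled sum to its negative. Neither index-set sum is negligible; only their total is.
\item Let $X$ be degenerate and $Y\sim N(t,1)$. The sampled remainder is $\frac{u^2}{2N}\sum_i\delta_i(Y_i^2-1)+o_p(1)$. This is $o_p(1)$ only because $\boldsymbol\delta_N\perp\bm Y_N$; your absolute bound gives $\frac{u^2}{2}\mathbb E|Y^2-1|>0$.
\end{itemize}
So non-informativeness must enter Step~1, and the marginal and conditional parts must not be split across the two index sets.

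\textbf{The repair (the paper's route).} Rewrite each summand as $\log\frac{m_t}{m_0}(X_i)+\delta_i\log\frac{f_t(Y_i\mid X_i)}{f_0(Y_i\mid X_i)}$. The marginal part then carries weight one and is covered by the ordinary i.i.d.\ LAN. This needs DQM on average of the conditional path with score $s=g-\mathbb E[g\mid X]$, which follows from full-data DQM via Inagaki; marginal DQM alone is not enough.

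For the weighted conditional part, write $R_t=\{f_t(Y\mid X)/f_0(Y\mid X)\}^{1/2}=1+\frac t2 s+ta_t$ with $\mathbb E[a_t^2]\to0$.
\begin{itemize}
\item The centered sum $t\sum_i\delta_i\{a_t(L_i)-\mathbb E[a_t(L_i)\mid X_i]\}$ is $o_p(1)$. By B1, its conditional second moment given $(\bm X_N,\boldsymbol\delta_N)$ is at most $t^2\sum_i\delta_i\mathbb E[a_t(L_i)^2\mid X_i]$, whose expectation is at most $u^2\mathbb E[a_t^2]$.
\item The conditional mean is identified by the pointwise normalization $\mathbb E[R_t^2\mid X]=1$. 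This gives $2t\,\mathbb E[a_t\mid X]=-\frac{t^2}{4}v(X)-t^2\mathbb E[sa_t+a_t^2\mid X]$ with $v(X)=\mathbb E[s^2\mid X]$.
\item Only the last term, which is $o(t^2)$ in $L^1$, may be bounded by absolute values uniformly in the weights.
\end{itemize}
This yields the drift $-\frac{u^2}{4N}\sum_i\delta_iv(X_i)$. Combine it with the quadratic term of the log expansion and pass to $\mathbb E[\pi v]$ using B2--B3, exactly as in your Step~2.
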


\begin{remark}
The theorem shows that the LAN central sequences have the same limiting covariance structure as those in the corresponding i.i.d.\ Poisson experiment with inclusion probability \(\pi(X)\). Together with the convolution theorem, it implies that, under the joint law of the superpopulation and sampling design, the first-order efficiency bound depends on the design only through \(\pi(X)\).
For fixed submodels in the shared survey-sampling setting, the expansion agrees with \citet[Theorem~3.1 and Corollary~3.1]{Armstrong2022}; Assumptions~\ref{ass-B2}--\ref{ass-B3} verify the required empirical information limit here. 
\end{remark}

\subsection{Pathwise differentiability of the superpopulation target and the observed-data EIF}

Define the observed-data target map by
\[
\psi^{\obs}(P_{\theta,\eta}^N)=\psi(P_{\theta,\eta}^F)=\theta.
\]
This subsection concerns the deterministic superpopulation functional \(\theta=\psi(P^F)\), rather than the random finite-population target \(\theta_N\). Under Assumption~\ref{ass-B4}, \(\mathcal L\) is a bounded linear isomorphism from \(\mathcal H^F\) onto its range, so the full-data derivative induces a unique derivative on the observed-data scores.

\begin{theorem}[Pathwise differentiability under the observed-data model]
\label{thm-pathwise-obs}
Under Assumptions~\ref{ass-A1}--\ref{ass-A2} and~\ref{ass-B1}--\ref{ass-B4}, \(\psi^{\obs}\) is pathwise differentiable at \(P_0^N\) with tangent set \(\mathcal T^{\obs}\). Its derivative extends uniquely and continuously to \(\mathcal H^{\obs}\) and satisfies
\begin{equation}
\dot\psi^{\obs}(\mathcal Lg)
=
\dot\psi^F(g),
\quad g\in\mathcal H^F.
\label{eq-obs-derivative}
\end{equation}
In particular, for every \(g\in\dot{\mathcal P}_{P_0^F}\cap L^4(P_0^F)\) and every fixed \(u\in\mathbb R\),
\[
\sqrt N\{\psi^{\obs}(P^N_{u/\sqrt N,\mathcal Lg})-
\psi^{\obs}(P^N_{0,\mathcal Lg})\}
\to
u\,\dot\psi^{\obs}(\mathcal Lg)=u\,\dot\psi^F(g).
\]
Let \(\phi^{\obs}\) denote the EIF of \(\psi^{\obs}\).
\end{theorem}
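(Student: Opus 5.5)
The plan is to reduce everything to the full-data pathwise differentiability in Assumption~\ref{ass-A2}, transported through the operator \(\mathcal L\). First, well-definedness. By definition \(\psi^{\obs}(P^N_{\theta,\eta})=\psi(P^F_{\theta,\eta})\), so along the observed-data submodel \(t\mapsto P^N_{t,\mathcal Lg}\) induced by \(t\mapsto P^F_{t,g}\) we have \(\psi^{\obs}(P^N_{t,\mathcal Lg})=\psi(P^F_{t,g})\). To see that the observed-data score determines \(g\), use Assumption~\ref{ass-B4}. By \eqref{eq-obs-norm},
\[
\|\mathcal Lg\|_{\obs}^2\ge \pi_-\,\E[\Var\{g\mid X\}]+\E[\{\E(g\mid X)\}^2]\ge \pi_-\|g\|_F^2 ,
\]
and \(\|\mathcal Lg\|_{\obs}\le\|g\|_F\) by Jensen. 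Hence \(\mathcal L\) is injective with closed range on \(\mathcal H^F\). The map \(\dot\psi^{\obs}(\mathcal Lg):=\dot\psi^F(g)\) is therefore a well-defined linear map on \(\mathcal T^{\obs}\). Moreover, \(|\dot\psi^{\obs}(\mathcal Lg)|\le \|\dot\psi^F\|\,\|g\|_F\le \|\dot\psi^F\|\pi_-^{-1/2}\|\mathcal Lg\|_{\obs}\), so it is continuous in the \(\mathcal H^{\obs}\) norm.

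Second, extension. Since \(\mathcal T^{\obs}\) is dense in \(\mathcal H^{\obs}\) by \eqref{eq-obs-tangent-space}, the bounded linear functional extends uniquely by continuity (BLT) to \(\mathcal H^{\obs}=\overline{\mathcal L\mathcal H^F}\). Because \(\mathcal L\mathcal H^F\) is closed, this closure equals \(\mathcal L\mathcal H^F\). For \(g\in\mathcal H^F\), take \(g_k\in\dot{\mathcal P}_{P_0^F}\cap L^4\) with \(g_k\to g\) (Assumption~\ref{ass-A1}). Continuity of \(\dot\psi^F\) and of \(\mathcal L\) then gives \eqref{eq-obs-derivative} on all of \(\mathcal H^F\).

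Third, the local statement in the sense of \citet{McNeney2000}, which is required along the LAN-indexed sequences of Theorem~\ref{thm-obs-lan}. For fixed \(u\), write \(t_N=u/\sqrt N\). Then
\[
\sqrt N\{\psi^{\obs}(P^N_{t_N,\mathcal Lg})-\psi^{\obs}(P^N_{0,\mathcal Lg})\}=u\,\frac{\psi(P^F_{t_N,g})-\psi(P^F_0)}{t_N}\to u\,\dot\psi^F(g)
\]
by Assumption~\ref{ass-A2}; the case \(u=0\) is trivial. The EIF \(\phi^{\obs}\) is then the Riesz representer of the extended functional in \(\mathcal H^{\obs}\). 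Its existence follows from the continuity already shown, and it is the object named at the end of the statement.

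The main subtlety is not the limit, which is immediate, but ensuring that \(\psi^{\obs}\) is a genuine functional of the observed law. Two distinct full-data submodels could induce the same observed-data law only if their scores agreed after \(\mathcal L\). Injectivity under \ref{ass-B4} rules this out at the level of derivatives, which is all that pathwise differentiability requires. I will remark that the target is defined through the parametrization \((\theta,\eta)\) together with the identification \(\psi(P^F_{\theta,\eta})=\theta\), so no further identifiability argument is needed. The other point to check is that Assumptions~\ref{ass-B1}--\ref{ass-B3} enter only through Theorem~\ref{thm-obs-lan}, which supplies the LAN index set with respect to which the differentiability is stated.
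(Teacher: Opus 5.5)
Your proposal is correct and follows essentially the same route as the paper. The two-sided bound \(\pi_-\|g\|_F^2\le\|\mathcal Lg\|_{\obs}^2\le\|g\|_F^2\) from \eqref{eq-obs-norm} and Assumption~\ref{ass-B4} gives injectivity of \(\mathcal L\) and continuity of \(\dot\psi^{\obs}(\mathcal Lg):=\dot\psi^F(g)\) with constant \(\pi_-^{-1/2}\); the functional is then extended to \(\mathcal H^{\obs}\), and the local limit is read off from Assumption~\ref{ass-A2} along \(t_N=u/\sqrt N\). Your only additions are the closed-range observation, which the paper invokes later in the proof of Proposition~\ref{prop-obs-eif}, and the explicit density argument confirming \eqref{eq-obs-derivative} on all of \(\mathcal H^F\).
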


The observed-data EIF is the Riesz representer of the pathwise derivative in \(\mathcal H^{\obs}\). This is the usual projection characterization for missing-at-random models, applied to the reference Poisson experiment; see \citet{RobinsRotnitzkyZhao1994} and \citet{Tsiatis2006}.

\begin{proposition}[Observed-data EIF]
\label{prop-obs-eif}
Assume Assumptions~\ref{ass-A1}--\ref{ass-A2} and~\ref{ass-B1}--\ref{ass-B4}. The observed-data EIF is expressed as
\begin{equation}
\phi^{\obs}(O)
=
(\mathcal L d)(O)
=
\delta d(L)+(1-\delta)\mathbb E[d(L)\mid X],
\label{eq-obs-eif-form}
\end{equation}
where \(d=(d_1,\dots,d_p)^\top\in(\mathcal H^F)^p\) is the unique solution, coordinatewise, to
\begin{equation}
\Proj\left[
\mathcal M d_j \mid \mathcal H^F
\right]
=
\phi_j^F(L),
\quad j=1,\dots,p
\label{eq-obs-eif-equation}
\end{equation}
where
\(\Proj\left[
\cdot \mid \mathcal H^F
\right]\)
denotes orthogonal projection onto \(\mathcal H^F\).
Consequently,
\begin{equation}
\Sigma_{\mathrm{sp}}
=
\mathbb E^\pi[\phi^{\obs}(O)\phi^{\obs}(O)^\top]
\label{eq-sigma-sp}
\end{equation}
is the superpopulation efficiency bound.
\end{proposition}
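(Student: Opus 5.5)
The plan is to identify the Riesz representer of \(\dot\psi^{\obs}\) in \(\mathcal H^{\obs}\) and then read off the bound from the convolution theorem of \citet{McNeney2000}. Theorem~\ref{thm-obs-lan} supplies LAN with the inner product \(\langle\cdot,\cdot\rangle_{\obs}\), and Theorem~\ref{thm-pathwise-obs} supplies the derivative \(\dot\psi^{\obs}(\mathcal Lg)=\dot\psi^F(g)\) on \(\mathcal H^{\obs}\). By definition, the EIF is the unique \(\phi^{\obs}_j\in\mathcal H^{\obs}\) with
\[
\langle \phi^{\obs}_j,\mathcal Lg\rangle_{\obs}=\dot\psi^F_j(g)=\langle\phi^F_j,g\rangle_F
\quad\text{for all } g\in\mathcal H^F.
\]
Under Assumption~\ref{ass-B4}, \eqref{eq-obs-norm} gives \(\pi_-\|g\|_F^2\le\|\mathcal Lg\|_{\obs}^2\le\|g\|_F^2\). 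Hence \(\mathcal L\mathcal H^F\) is already closed, so \(\mathcal H^{\obs}=\mathcal L\mathcal H^F\). Every element of \(\mathcal H^{\obs}\) therefore has the form \(\mathcal Ld\) for a unique \(d\in\mathcal H^F\), which gives the form \eqref{eq-obs-eif-form}.

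Next I translate the Riesz condition into a full-data equation. Since \(\mathcal L a\) is the \(\mathbb Q^\pi\)-conditional expectation of \(a(L)\) given \(O\), and \(\mathcal M a=\mathbb E_{\mathbb Q^\pi}[\mathcal L a\mid L]\), the tower property gives
\[
\langle\mathcal Ld,\mathcal Lg\rangle_{\obs}=\mathbb E_{\mathbb Q^\pi}[(\mathcal Ld)(O)g(L)]=\langle\mathcal Md,g\rangle_F .
\]
The same identity also follows directly from \eqref{eq-obs-inner-product} and \eqref{eq-M-operator}. The Riesz condition thus becomes \(\langle \mathcal M d_j-\phi_j^F,g\rangle_F=0\) for all \(g\in\mathcal H^F\). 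Because \(\phi^F_j\in\mathcal H^F\), this is equivalent to \(\Proj[\mathcal Md_j\mid\mathcal H^F]=\phi^F_j\), which is \eqref{eq-obs-eif-equation}.

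For existence and uniqueness, the operator \(A=\Proj[\mathcal M\,\cdot\mid\mathcal H^F]\) restricted to \(\mathcal H^F\) is bounded and self-adjoint, since \(\mathcal M\) is self-adjoint on \(L^2_0(P_0^F)\). It is also coercive: \(\langle Ag,g\rangle_F=\|\mathcal Lg\|^2_{\obs}\ge\pi_-\|g\|_F^2\). Lax--Milgram, or equivalently the bounded invertibility of a coercive self-adjoint operator, then gives a unique solution \(d_j\). Uniqueness of \(d_j\) also follows from injectivity of \(\mathcal L\) together with uniqueness of the Riesz representer.

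Finally, the convolution theorem of \citet{McNeney2000} applies on the linear index set \(\mathcal T^{\obs}\). The LAN covariance is \(\langle\cdot,\cdot\rangle_{\obs}\), the derivative is continuous, and \(\mathcal T^{\obs}\) is dense in \(\mathcal H^{\obs}\). The theorem then gives that the limit law of every regular estimator is \(N(0,\Sigma_{\mathrm{sp}})\) convolved with an independent noise, where \(\Sigma_{\mathrm{sp}}=\mathbb E^\pi[\phi^{\obs}\phi^{\obs\top}]\). So \(\Sigma_{\mathrm{sp}}\) is the efficiency bound. The main obstacle is this last step: checking that the LAN structure holding only on \(\mathcal T^{\obs}\), together with density from Assumption~\ref{ass-A1} and the closedness of \(\mathcal L\mathcal H^F\), meets McNeney's hypotheses with \(\mathcal H^{\obs}\) as the relevant Hilbert space. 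The algebra in the earlier steps is routine by comparison.
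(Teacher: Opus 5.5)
Your proposal is correct and follows essentially the same route as the paper: coercivity of $\mathcal L$ under Assumption~\ref{ass-B4} gives $\mathcal H^{\obs}=\mathcal L\mathcal H^F$, the Riesz representer is pulled back to a unique $d_j\in\mathcal H^F$, and the identity $\langle\mathcal Ld,\mathcal Lg\rangle_{\obs}=\langle\mathcal Md,g\rangle_F$ turns the representation into the projection equation. The bound itself comes from the McNeney convolution theorem, exactly as the paper does in the proof of Theorem~\ref{thm-obs-convolution}; your extra Lax--Milgram argument is just a self-contained alternative to the paper's route, which gets existence from the Riesz representer and uniqueness from coercivity.
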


Equation~\eqref{eq-obs-eif-equation} makes the role of a general full-data model explicit: its restrictions enter through \(\mathcal H^F\), so the canonical gradient is not obtained by ignoring those restrictions. Example~\ref{ex-eif-regression} illustrates this distinction for a conditional-mean model. The subsequent joint-limit argument uses the EIF characterization in \eqref{eq-obs-eif-equation} for both deterministic and random targets.

\subsection{Local regularity and convolution theorems}
\label{sec-convolution-theorems}

An estimator \(\hat\theta_N\) is required to be measurable with respect to
\(
\mathcal O_N=\sigma(O_1,\ldots,O_N).
\)

\begin{definition}[Superpopulation local regularity]
\label{def-superpopulation-local-regularity}
We say that \(\hat\theta_N\) is \emph{locally regular at \(P_0^F\) for the superpopulation functional \(\psi\)} if, for every fixed \(u\in\mathbb R\) and every \(g\in\dot{\mathcal P}_{P_0^F}\cap L^4(P_0^F)\),
\(
\sqrt N\left\{\hat\theta_N-\psi(P^F_{u/\sqrt N,g})\right\}
\)
converges in distribution under \(P^N_{u/\sqrt N,\mathcal Lg}\) to a limit law that does not depend on \(u\) or \(g\). 
\end{definition}

\begin{definition}[Finite-population local regularity]
\label{def-finite-population-local-regularity}
We say that \(\hat\theta_N\) is \emph{locally regular for the random finite-population target \(\theta_N\)} if, for every fixed \(u\in\mathbb R\) and every \(g\in\dot{\mathcal P}_{P_0^F}\cap L^4(P_0^F)\),
\(
\sqrt N(\hat\theta_N-\theta_N)
\)
converges in distribution under \(Q^N_{u/\sqrt N,g}\) to a limit law that does not depend on \(u\) or \(g\).
\end{definition}

Definition~\ref{def-superpopulation-local-regularity} uses \(P^N\) because the estimator and its deterministic centering are observed-data quantities. Definition~\ref{def-finite-population-local-regularity} uses \(Q^N\) because \(\theta_N\) generally depends on unobserved outcomes. Under the assumptions, the two notions are nevertheless equivalent.

\begin{theorem}[Equivalence of local regularity]
\label{thm-local-regularity-equivalence}
Assume Assumptions~\ref{ass-A1}--\ref{ass-A3} and~\ref{ass-B1}--\ref{ass-B4}. For every observed-data measurable estimator \(\hat\theta_N\), the following two conditions are equivalent.
\begin{enumerate}[label=\textup{(\roman*)}]
\item \(\hat\theta_N\) is locally regular for the superpopulation functional in the sense of Definition~\ref{def-superpopulation-local-regularity}.
\item \(\hat\theta_N\) is locally regular for the random finite-population target in the sense of Definition~\ref{def-finite-population-local-regularity}.
\end{enumerate}
\end{theorem}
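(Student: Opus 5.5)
The plan is to write
\[
\sqrt N(\hat\theta_N-\theta_N)=\sqrt N\{\hat\theta_N-\psi(P^F_{u/\sqrt N,g})\}-\sqrt N\{\theta_N-\psi(P^F_{u/\sqrt N,g})\},
\]
and to show that, under \(Q^N_{u/\sqrt N,g}\), the pair of the estimator term and \(W_N:=N^{-1/2}\sum_{i=1}^N\{\phi^F(L_i)-\mathcal L\phi^F(O_i)\}\) has a joint limit that can be read off from the limit under \(Q_0^N\) by Le Cam's third lemma. The key structural facts are these. First, by Assumption~\ref{ass-A3} and pathwise differentiability (Assumption~\ref{ass-A2}), \(\sqrt N\{\theta_N-\psi(P^F_{u/\sqrt N,g})\}\) equals \(N^{-1/2}\sum_i\phi^F(L_i)-u\dot\psi^F(g)+o(1)\) under \(Q_0^N\). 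Contiguity transfers this to \(Q^N_{u/\sqrt N,g}\); contiguity of \(Q^N\) follows from the full-data i.i.d.\ LAN, since the design factor \(q_N\) cancels in the likelihood ratio. Second, split \(N^{-1/2}\sum_i\phi^F(L_i)=\Delta_N(\mathcal L\phi^F)+W_N\). Here \(W_N=N^{-1/2}\sum_i(1-\delta_i)\{\phi^F(L_i)-\E[\phi^F\mid X_i]\}\) is conditionally mean zero given \(\mathcal O_N\) under \(Q_0^N\), by Assumption~\ref{ass-B1} and independence of the \(L_i\).

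The core step is a joint CLT under \(Q_0^N\) for the triple
\[
\bigl(\sqrt N\{\hat\theta_N-\theta_0\},\ W_N,\ \log dQ^N_{u/\sqrt N,g}/dQ^N_0\bigr).
\]
The full-data log-likelihood ratio is \(uN^{-1/2}\sum_i g(L_i)-u^2\|g\|_F^2/2+o(1)\). I would decompose \(N^{-1/2}\sum_i g(L_i)=\Delta_N(\mathcal Lg)+N^{-1/2}\sum_i(1-\delta_i)\{g(L_i)-\E[g\mid X_i]\}\). The observed-data parts (\(\hat\theta_N\), \(\Delta_N(\mathcal Lg)\)) are \(\mathcal O_N\)-measurable. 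The unobserved residual parts \(R_N(a)=N^{-1/2}\sum_i(1-\delta_i)\{a(L_i)-\E[a\mid X_i]\}\) are, conditionally on \(\mathcal O_N\), sums of independent mean-zero terms. A conditional Lindeberg CLT, using Assumptions~\ref{ass-B2}--\ref{ass-B3} to stabilise \(N^{-1}\sum_i(1-\delta_i)\Cov(a,b\mid X_i)\to\E[(1-\pi)\Cov(a,b\mid X)]\), gives conditional Gaussian limits independent of \(\mathcal O_N\). Hence, if \(\sqrt N(\hat\theta_N-\theta_0)\) has a limit jointly with \(\Delta_N(\mathcal Lg)\) under \(P_0^N\), the triple converges jointly with the residual block independent of the observed block. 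A characteristic-function argument conditioning on \(\mathcal O_N\) and applying dominated convergence gives this. The existence of such a joint limit is guaranteed along subsequences by tightness, which holds under either regularity notion (Prohorov), so the argument can be run along subsequences.

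With this, Le Cam's third lemma under \(Q^N\) gives the shifts. The estimator term shifts by \(\lim\Cov(\sqrt N\hat\theta_N,\Delta_N(\mathcal Lg))\)-type quantities minus \(u\dot\psi^F(g)\). The term \(W_N\) shifts by \(u\,\E[(1-\pi)\Cov(\phi^F,g\mid X)]\) and keeps its law otherwise. Now \(\dot\psi^F(g)=\langle\phi^F,g\rangle_F=\langle\mathcal L\phi^F,\mathcal Lg\rangle_{\obs}+\E[(1-\pi)\Cov(\phi^F,g\mid X)]\), a direct identity from \eqref{eq-obs-norm}. Therefore the total shift of \(\sqrt N(\hat\theta_N-\theta_N)\) equals the shift of \(\sqrt N\{\hat\theta_N-\psi(P^F_{u/\sqrt N,g})\}\) under \(P^N_{u/\sqrt N,\mathcal Lg}\) (the observed-block shift), less the residual shift plus its compensating \(\dot\psi\) part, which cancel.

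More concretely, the limit law of \(\sqrt N(\hat\theta_N-\theta_N)\) under \(Q^N_{u/\sqrt N,g}\) is the convolution of the limit law \(\mathcal L_{u,g}\) of \(\sqrt N\{\hat\theta_N-\psi(P^F_{u/\sqrt N,g})\}\) under \(P^N_{u/\sqrt N,\mathcal Lg}\) with a fixed Gaussian \(N(0,\E[(1-\pi)\Var(\phi^F\mid X)])\) not depending on \((u,g)\). Then (i)\(\Rightarrow\)(ii) is immediate. For (ii)\(\Rightarrow\)(i), note that a fixed nondegenerate-or-degenerate Gaussian has a nowhere vanishing characteristic function, so the convolution can be deconvolved and \(\mathcal L_{u,g}\) is independent of \((u,g)\). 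Running this along subsequences and invoking uniqueness then gives full convergence.

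I expect the main obstacle to be the conditional CLT for the residual block under dependent designs. Its variance is \(N^{-1}\sum_i(1-\delta_i)v(X_i)\), whose convergence to \(\E[(1-\pi)v]\) needs Assumptions~\ref{ass-B2}--\ref{ass-B3} applied with \(h=v\). Making this joint with the observed-data block, in a way that yields asymptotic independence, is also delicate; the \(L^4\) restriction on \(g\) is what supplies Lindeberg.
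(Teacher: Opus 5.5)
\textbf{There is a genuine gap.} Your ``more concretely'' representation is false, and both directions rest on it.

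You split $\mathbb G_N(\phi^F)=\Delta_N(\mathcal L\phi^F)+W_N$ and then drop the first piece. That piece is $\mathcal O_N$-measurable, not part of the residual block, and it is generally correlated with $\hat\theta_N$. Your triple omits it, and your shift bookkeeping tracks only its mean $u\langle\mathcal L\phi^F,\mathcal Lg\rangle_{\obs}$. To carry it correctly, put $c_g=\E[\{1-\pi(X)\}\Cov\{\phi^F(L),g(L)\mid X\}]$ and use your identity $\dot\psi^F(g)=\langle\mathcal L\phi^F,\mathcal Lg\rangle_{\obs}+c_g$. Under $Q^N_{u/\sqrt N,g}$ this gives
\[
\sqrt N(\hat\theta_N-\theta_N)=D_N-\{W_N-u\,c_g\}+o_{Q^N_{u/\sqrt N,g}}(1),
\]
where
\[
D_N=\sqrt N\{\hat\theta_N-\psi(P^F_{u/\sqrt N,g})\}-\{\Delta_N(\mathcal L\phi^F)-u\langle\mathcal L\phi^F,\mathcal Lg\rangle_{\obs}\}
\]
is $\mathcal O_N$-measurable. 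Your stable residual CLT, together with the factorization of the limiting likelihood ratio, then shows that the limit law is $\mathcal D_{u,g}*N(0,\Sigma_R)$. Here $\mathcal D_{u,g}$ is the $P^N_{u/\sqrt N,\mathcal Lg}$-limit law of $D_N$ and $\Sigma_R=\E[\{1-\pi(X)\}\Var\{\phi^F(L)\mid X\}]$. The limit law is not your $\mathcal L_{u,g}*N(0,\Sigma_R)$. As a check, for an efficient estimator your formula gives covariance $\Sigma_{\mathrm{sp}}+\Sigma_R$, whereas Theorem~\ref{thm-finite-population-convolution} gives $\Sigma_{\mathrm{sp}}-\Sigma_F$.

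With the correct representation, neither direction is automatic.

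For (i)$\Rightarrow$(ii), marginal superpopulation regularity does not make $\mathcal D_{u,g}$ invariant. You need the joint law of the estimator and $\Delta_N(\mathcal L\phi^F)$ under local alternatives. The paper gets this from Lemma~\ref{lem-joint-convolution-observed-central}. There, Le Cam's third lemma along admissible paths is analytically continued in the tilt parameter. This yields $\sqrt N(\hat\theta_N-\theta_0)\to G_{\mathrm{sp}}+W$, with $W$ independent of every limit of $\Delta_N(\mathcal L r)$, $r\in\mathcal H^F$. Non-admissible directions such as $\phi^F$ are handled by the $L^2$ approximation in Lemma~\ref{lem-central-sequence-L2-limits}. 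Lemma~\ref{lem-joint-convolution-full-scores} then gives $B=G_{\mathrm{sp}}-\mathbb G(\phi^F)+W\perp\mathbb G(g)$, and Le Cam's third lemma under the full-data LAN finishes the argument.

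For (ii)$\Rightarrow$(i), deconvolving the Gaussian yields only invariance of $\mathcal D_{u,g}$, that is, of the estimator minus a correlated Gaussian. To return to the estimator itself, you must show that the limit of $D_N$ is independent of $\lim\Delta_N(\mathcal L\phi^F)$, which again needs the analytic-continuation argument. The paper avoids the observed/residual split in this direction entirely. From finite-population regularity and the joint full-data LAN, it derives $B\perp(\mathbb G(g_1),\ldots,\mathbb G(g_q))$ and extends this to $B\perp\mathbb G(\phi^F)$ by approximation. It then reads off
\[
\sqrt N\{\hat\theta_N-\psi(P^F_{u/\sqrt N,g})\}=B_N+\mathbb G_N(\phi^F)-u\dot\psi^F(g)+o(1)\to B+\mathbb G(\phi^F)
\]
under every local alternative.

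Your conditional CLT for the residual block is sound and matches Lemma~\ref{lem-joint-full-observed-eif}. The missing ingredient is this independence (joint-invariance) step.
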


For the full-data EIF in Assumption~\ref{ass-A2}, define
\begin{equation}
\Sigma_F
=
\mathbb E[\phi^F(L)\phi^F(L)^\top].
\label{eq-sigma-F}
\end{equation}
The following two convolution theorems hold for the two equivalent classes of locally regular estimators identified in Theorem~\ref{thm-local-regularity-equivalence}.

\begin{theorem}[Convolution theorem for the superpopulation target]
\label{thm-obs-convolution}
Assume Assumptions~\ref{ass-A1}--\ref{ass-A2} and~\ref{ass-B1}--\ref{ass-B4}. If \(\hat\theta_N\) is locally regular in the sense of Definition~\ref{def-superpopulation-local-regularity}, then, under \(P_0^N\),
\[
\sqrt N(\hat\theta_N-\theta_0)
\xrightarrow{d}
G_{\mathrm{sp}}+W,
\quad
G_{\mathrm{sp}}\perp W,
\quad
G_{\mathrm{sp}}\sim N_p(0,\Sigma_{\mathrm{sp}}).
\]
Such an estimator is efficient for the superpopulation target if \(W\) is degenerate at zero.
\end{theorem}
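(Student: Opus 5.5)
The plan is to invoke the abstract convolution theorem of \citet{McNeney2000} for non-i.i.d.\ experiments, feeding it the LAN structure of Theorem~\ref{thm-obs-lan} and the pathwise differentiability of Theorem~\ref{thm-pathwise-obs}. McNeney's framework needs four inputs.
\begin{enumerate}[label=(\roman*)]
\item A linear index set $\mathcal T^{\obs}$ sitting inside a Hilbert space $\mathcal H^{\obs}$ with inner product $\langle\cdot,\cdot\rangle_{\obs}$.
\item For each $h=\mathcal Lg\in\mathcal T^{\obs}$, a LAN expansion whose central sequences $\Delta_N(h)$ are asymptotically jointly Gaussian with covariance $\langle h_j,h_\ell\rangle_{\obs}$.
\item A sequence of parameters $\kappa_N(h)=\psi^{\obs}(P^N_{1/\sqrt N,h})$ satisfying $\sqrt N\{\kappa_N(uh)-\kappa_N(0)\}\to\dot\psi^{\obs}(uh)$, with $\dot\psi^{\obs}$ continuous and linear on $\mathcal H^{\obs}$.
\item A regular estimator sequence.
\end{enumerate}

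Items (i)–(ii) are exactly the content of Theorem~\ref{thm-obs-lan}: linearity of $\mathcal T^{\obs}$ comes from Assumption~\ref{ass-A1} and linearity of $\mathcal L$, and the joint limit is \eqref{eq-central-sequence-joint-limit}.

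One point needs care. McNeney indexes local alternatives by $h$ itself, while the paper's paths are indexed by $(u,g)$. The submodel along $ug$ and the submodel along $g$ at parameter $u/\sqrt N$ can be taken as the same path. The LAN expansion \eqref{eq-LAN} with score $u\mathcal Lg$ then has central sequence $u\Delta_N(\mathcal Lg)$ and quadratic term $\tfrac{u^2}{2}\|\mathcal Lg\|^2_{\obs}$. So the two parametrizations agree up to $o_{P_0^N}(1)$, and Definition~\ref{def-superpopulation-local-regularity} is precisely McNeney's regularity condition along the index set.

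Item (iii) is Theorem~\ref{thm-pathwise-obs}. Since $\mathcal L$ is an isomorphism onto its range under Assumption~\ref{ass-B4}, the derivative $\dot\psi^{\obs}(\mathcal Lg)=\dot\psi^F(g)$ is well defined and extends continuously to $\mathcal H^{\obs}$. Its Riesz representer in $\mathcal H^{\obs}$ is the EIF $\phi^{\obs}$ of Proposition~\ref{prop-obs-eif}, so that $\dot\psi^{\obs}(h)=\langle\phi^{\obs},h\rangle_{\obs}$ componentwise.

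The abstract convolution theorem then gives
\[
\sqrt N(\hat\theta_N-\theta_0)\xrightarrow{d} G+W
\quad\text{under } P_0^N,
\]
with $G\perp W$ and $G\sim N_p(0,\Sigma)$, where $\Sigma$ is the Gram matrix $\{\langle\phi^{\obs}_j,\phi^{\obs}_\ell\rangle_{\obs}\}$. By the definition of $\langle\cdot,\cdot\rangle_{\obs}$ as an $L^2(\mathbb P^\pi)$ inner product, this Gram matrix equals $\mathbb E^\pi[\phi^{\obs}\phi^{\obs\top}]=\Sigma_{\mathrm{sp}}$ in \eqref{eq-sigma-sp}. The efficiency statement follows directly: when $W$ is degenerate, the limit attains the minimal covariance $\Sigma_{\mathrm{sp}}$ in the positive-semidefinite order.

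The main obstacle is the mismatch between the LAN index set and the Hilbert space. Regularity is only assumed along $\mathcal T^{\obs}$, the image of $L^4$ scores, whereas $\phi^{\obs}$ lies in the closure $\mathcal H^{\obs}$ and need not lie in $\mathcal T^{\obs}$. McNeney's theorem handles this by requiring only that $\mathcal T^{\obs}$ be a linear set whose closure carries the inner product. The key check is that $\phi^{\obs}$ belongs to that closure, which holds because $\phi^{\obs}=\mathcal L d$ with $d\in(\mathcal H^F)^p$.

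If one argues directly instead, the route is the following. Choose $g_k\in\dot{\mathcal P}_{P_0^F}\cap L^4$ with $\mathcal Lg_k\to\phi^{\obs}$, which is possible by density and boundedness of $\mathcal L$. Apply the finite-dimensional convolution argument — Le Cam's third lemma plus tightness via Prohorov — on the span of the $g_k$. Pass to the limit $k\to\infty$ using the continuity of $\dot\psi^{\obs}$ and the norm equivalence from Assumption~\ref{ass-B4}, showing that the Gaussian components converge to $N_p(0,\Sigma_{\mathrm{sp}})$ while independence is preserved.
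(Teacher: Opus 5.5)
Your proposal is correct and follows essentially the same route as the paper. The paper applies its specialization of \citet{McNeney2000}, Theorem~2.4, with $H=\mathcal T^{\obs}$, $\overline H=\mathcal H^{\obs}$, local laws $P^N_{u/\sqrt N,\mathcal Lg}$ for $h=u\mathcal Lg$ and central sequence $u\Delta_N(\mathcal Lg)$. It takes LAN and differentiability from Theorems~\ref{thm-obs-lan} and~\ref{thm-pathwise-obs}, and uses Proposition~\ref{prop-obs-eif} to identify $b^\top\phi^{\obs}$ as the Riesz representer, which yields $Z_0\sim N_p(0,\Sigma_{\mathrm{sp}})$. As you observe, $\phi^{\obs}$ only needs to lie in the closure $\mathcal H^{\obs}$ of the index set, so the direct approximation argument you sketch at the end is not needed.
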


\begin{theorem}[Convolution theorem for the finite-population target]
\label{thm-finite-population-convolution}
Assume Assumptions~\ref{ass-A1}--\ref{ass-A3} and~\ref{ass-B1}--\ref{ass-B4}. If \(\hat\theta_N\) satisfies the equivalent conditions in Theorem~\ref{thm-local-regularity-equivalence}, then, under \(Q_0^N\),
\[
\sqrt N(\hat\theta_N-\theta_N)
\xrightarrow{d}
G_{\mathrm{fp}}+W,
\quad
G_{\mathrm{fp}}\perp W,
\quad
G_{\mathrm{fp}}\sim N_p(0,\Sigma_{\mathrm{fp}}),
\]
where \(W\) is the same residual as in Theorem~\ref{thm-obs-convolution}, and
\begin{equation}
\Sigma_{\mathrm{fp}}
=
\Sigma_{\mathrm{sp}}-
\Sigma_F.
\label{eq-sigma-fp-general}
\end{equation}
The matrix \(\Sigma_{\mathrm{fp}}\) is positive semidefinite. An estimator in this class is efficient for the finite-population target if \(W\) is degenerate at zero.
\end{theorem}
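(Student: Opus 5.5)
We prove the finite-population convolution theorem by reducing it to Theorem~\ref{thm-obs-convolution}. The idea is to decompose
\[
\sqrt N(\hat\theta_N-\theta_N)=\sqrt N(\hat\theta_N-\theta_0)-\sqrt N(\theta_N-\theta_0).
\]
By Assumption~\ref{ass-A3}, the second term equals $S_N:=N^{-1/2}\sum_i\phi^F(L_i)+o_{Q_0^N}(1)$. So it suffices to identify the joint limit of $(\sqrt N(\hat\theta_N-\theta_0),S_N)$ under $Q_0^N$.

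\textbf{Step 1: the convolution structure.} Theorem~\ref{thm-obs-convolution} and its proof via \citet{McNeney2000} give $\sqrt N(\hat\theta_N-\theta_0)=\Delta_N(\phi^{\obs})+R_N$. Here $\Delta_N(\phi^{\obs})$ is the central sequence at the EIF of Proposition~\ref{prop-obs-eif}, extended to $\mathcal H^{\obs}$ by $L^2$ approximation with elements of $\mathcal T^{\obs}$. The pair $(\Delta_N(\phi^{\obs}),R_N)$ converges to $(G_{\mathrm{sp}},W)$ with the two components independent. More precisely, I would use the standard strengthening of that result: $R_N$ is asymptotically independent of every central sequence $\Delta_N(\mathcal Lg)$, $g\in\dot{\mathcal P}_{P_0^F}\cap L^4$. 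This holds because, by regularity and Le Cam's third lemma, the limit of $(\sqrt N(\hat\theta_N-\theta_0),\Delta_N(\mathcal Lg))$ has characteristic function factorizing through the Gaussian part.

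\textbf{Step 2: the cross-covariance.} Write $S_N^{\circ}=N^{-1/2}\sum_i\phi^F(L_i)$. I would compute the joint limit of $(\Delta_N(\mathcal Lg),S_N^{\circ})$ under $Q_0^N$. Applying the Lindeberg/martingale CLT conditional on $\bm X_N$, as in the proof of Theorem~\ref{thm-obs-lan}, together with Assumptions~\ref{ass-B2}--\ref{ass-B3}, the cross-covariance is
\[
\mathbb E_{\mathbb Q^\pi}[(\mathcal Lg)(O)\phi^F(L)]=\langle \mathcal M g,\phi^F\rangle_F .
\]
For $g=d_j$, this equals $\langle \Proj[\mathcal M d_j\mid\mathcal H^F],\phi^F_k\rangle_F=\langle\phi^F_j,\phi^F_k\rangle_F$, by \eqref{eq-obs-eif-equation} and $\phi^F\in(\mathcal H^F)^p$. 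Hence $\Cov(G_{\mathrm{sp}},G_F)=\Sigma_F$, and the limit of $\Delta_N(\phi^{\obs})-S_N^{\circ}$ is Gaussian with covariance $\Sigma_{\mathrm{sp}}-2\Sigma_F+\Sigma_F=\Sigma_{\mathrm{sp}}-\Sigma_F$. This is \eqref{eq-sigma-fp-general}. Positive semidefiniteness is immediate, since it is the covariance of a limit; equivalently, $\phi^{\obs}-\phi^F$ is orthogonal to $\phi^F$ in $L^2(\mathbb Q^\pi)$, which gives the Pythagorean identity.

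\textbf{Step 3: independence of $W$ from $G_F$ (the main obstacle).} The difficulty is that $S_N^{\circ}$ depends on unobserved $Y_i$, so it is not an observed-data central sequence, and Step 1 does not directly give independence of $W$ from the limit of $S_N^{\circ}$. My plan is to decompose $S_N^{\circ}=\Delta_N(\mathcal L\phi^F)+T_N$, where
\[
T_N=N^{-1/2}\sum_i(1-\delta_i)\{\phi^F(L_i)-\mathbb E[\phi^F\mid X_i]\}.
\]
For the first piece, approximate $\phi^F$ by elements of $\dot{\mathcal P}\cap L^4$; then Step 1 gives independence of $R_N$ from the limit of $\Delta_N(\mathcal L\phi^F)$. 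For $T_N$, conditional on $\mathcal O_N$ it is a sum of conditionally mean-zero independent terms, using non-informativeness (B1) and the i.i.d. full data. Its conditional characteristic function therefore converges in probability to a deterministic Gaussian one, by a conditional Lindeberg CLT with variance $\mathbb E[(1-\pi)\Var(\phi^F\mid X)]$ via Assumptions~\ref{ass-B2}--\ref{ass-B3}. Since $\hat\theta_N$, $R_N$ and $\Delta_N(\mathcal L\phi^F)$ are $\mathcal O_N$-measurable, conditioning on $\mathcal O_N$ shows that the limit of $T_N$ is independent of the joint limit of $(\Delta_N(\phi^{\obs}),\Delta_N(\mathcal L\phi^F),R_N)$.

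\textbf{Conclusion.} Combining the steps, $\sqrt N(\hat\theta_N-\theta_N)\to G_{\mathrm{fp}}+W$, where $G_{\mathrm{fp}}$ is the Gaussian limit of $\Delta_N(\phi^{\obs})-\Delta_N(\mathcal L\phi^F)-T_N$, independent of $W$, and $W$ is the same residual as in Theorem~\ref{thm-obs-convolution}. Efficiency holds exactly when $W=0$.
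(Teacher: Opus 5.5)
Your proposal is correct and follows essentially the same route as the paper. The paper likewise combines Assumption~\ref{ass-A3} with a joint-convergence lemma, proved via Le Cam's third lemma and $L^2$ approximation, showing that $\sqrt N(\hat\theta_N-\theta_0)-\Delta_N(\phi^{\obs})$ is asymptotically independent of all observed central sequences. It then splits $N^{-1/2}\sum_i\phi^F(L_i)$ into $\Delta_N(\mathcal L\phi^F)$ plus the non-sampled residual. A conditional CLT for that residual given $\mathcal O_N$ yields stable convergence and hence independence from $W$, and $\Cov\{G_{\mathrm{sp}},\mathbb G(\phi^F)\}=\Sigma_F$ follows from the projection equation exactly as in your Step~2.

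Two small details need tidying. First, $\Var\{\phi^F(L)\mid X\}$ is only integrable, so Assumption~\ref{ass-B3} must be applied after truncation when you identify the conditional variance of $T_N$. Second, the conditional CLT in Step~2 should condition on $(\bm X_N,\boldsymbol\delta_N)$ rather than on $\bm X_N$ alone.
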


\begin{remark}[Finite-population correction as the Pythagorean theorem]
\label{rem-finite-population-correction-Pythagorean-theorem}
Theorem~\ref{thm-local-regularity-equivalence} identifies the same class of locally regular estimators for the two targets, and Theorems~\ref{thm-obs-convolution} and~\ref{thm-finite-population-convolution} have the same residual component \(W\). Hence efficiency is equivalent for the two targets, while the Gaussian covariance decreases from \(\Sigma_{\mathrm{sp}}\) to \(\Sigma_{\mathrm{fp}}=\Sigma_{\mathrm{sp}}-\Sigma_F\), which can be interpreted as the generalized finite-population correction.

Under the natural embeddings into \(L^2_0(\mathbb Q^\pi)\), coordinatewise,
\[
\Proj[\phi^{\obs}(O)\mid\mathcal H^F]=\phi^F(L).
\]
Thus
\[
\phi^{\obs}(O)
=
\phi^F(L)
+
\{\phi^{\obs}(O)-\phi^F(L)\},
\qquad
\phi^F(L)\perp\{\phi^{\obs}(O)-\phi^F(L)\}.
\]
Taking covariance matrices in this orthogonal decomposition gives
\[
\Sigma_{\mathrm{sp}}
=
\Sigma_F
+
\mathbb E_{\mathbb Q^\pi}
\left[
\{\phi^{\obs}(O)-\phi^F(L)\}
\{\phi^{\obs}(O)-\phi^F(L)\}^{\top}
\right].
\]
Therefore,
\[
\Sigma_{\mathrm{fp}}
=
\mathbb E_{\mathbb Q^\pi}
\left[
\{\phi^{\obs}(O)-\phi^F(L)\}
\{\phi^{\obs}(O)-\phi^F(L)\}^{\top}
\right]
=
\Sigma_{\mathrm{sp}}-\Sigma_F,
\]
which is the Pythagorean identity for this orthogonal decomposition.
\end{remark}

\begin{example}[Mean target in the nonparametric full-data model]
\label{ex-eif-mean}
Consider the superpopulation mean \(\mu_0=\mathbb E[Y]\) and the finite-population mean \(\bar Y_N=N^{-1}\sum_iY_i\). Suppose \(\mathcal H^F=L^2_0(P_0^F)\). Then \(\phi^F(L)=Y-\mu_0\). Put
\(
m_0(X)=\mathbb E[Y\mid X].
\)
The observed-data EIF is
\begin{equation}
\phi^{\obs}(O)
=
m_0(X)-\mu_0+
\frac{\delta}{\pi(X)}\{Y-m_0(X)\}.
\label{eq-eif-mean-example}
\end{equation}
The corresponding bounds are
\[
\Sigma_{\mathrm{sp}}
=
\operatorname{Var}\{m_0(X)\}
+
\mathbb E\!\left[\frac{\operatorname{Var}(Y\mid X)}{\pi(X)}\right],
\quad
\Sigma_{\mathrm{fp}}
=
\mathbb E\!\left[\left\{\frac1{\pi(X)}-1\right\}
\operatorname{Var}(Y\mid X)\right].
\]
Equation \eqref{eq-sigma-fp-general} holds because
\(
\Sigma_{\mathrm{sp}} - \Sigma_{\mathrm{fp}}
=
\operatorname{Var}(Y)
=
\Sigma_F.
\)
\end{example}

On the \(\sqrt N\)-scale, the anticipated-variance lower bound of \citet{GodambeJoshi1965} for \(\bar Y_N=N^{-1}\sum_iY_i\) is
\[
\frac1N\sum_{i=1}^N
\left(\frac1{\pi_{N,i}}-1\right)
\operatorname{Var}(Y_i\mid X_i).
\]
Under Assumptions~\ref{ass-B2} and~\ref{ass-B4}, it converges to \(\Sigma_{\mathrm{fp}}\) in Example~\ref{ex-eif-mean}, so the bound in Theorem~\ref{thm-finite-population-convolution} is consistent with the classic Godambe--Joshi lower bound.

\begin{example}[Conditional mean regression]
\label{ex-eif-regression}
Suppose that the full-data model is otherwise nonparametric subject to
\begin{equation}
\mathbb E[Y\mid X]=\mu(X,\beta_0),
\qquad
\beta_0\in\mathbb R^p,
\label{eq-regression-conditional-mean}
\end{equation}
where \(Y\) is scalar. Put \(\varepsilon=Y-\mu(X,\beta_0)\), \(\varepsilon_i=Y_i-\mu(X_i,\beta_0)\), and \(\sigma^2(X)=\mathbb E[\varepsilon^2\mid X]\). Let \(\dot\mu_\beta(X)=\partial\mu(X,\beta)/\partial\beta\) and \(\ddot\mu_\beta(X)=\partial\dot\mu_\beta(X)/\partial\beta^\top\). We abbreviate \(\dot\mu_{\beta_0}(X)\) and \(\ddot\mu_{\beta_0}(X)\) as \(\dot\mu_0(X)\) and \(\ddot\mu_0(X)\), respectively. Define
\begin{equation}
I_F
=
\mathbb E\!\left[
\frac{\dot\mu_0(X)\dot\mu_0(X)^\top}{\sigma^2(X)}
\right],
\qquad
I_\pi
=
\mathbb E\!\left[
\pi(X)\frac{\dot\mu_0(X)\dot\mu_0(X)^\top}{\sigma^2(X)}
\right].
\label{eq-regression-information}
\end{equation}
Assume both matrices are nonsingular and that \(\mathcal H^F\) consists exactly of the functions \(g\in L^2_0(P_0^F)\) satisfying \(\mathbb E[\varepsilon g(L)\mid X]=\dot\mu_0(X)^\top\dot\beta_g\) for some \(\dot\beta_g\in\mathbb R^p\), with \(\dot\psi^F(g)=\dot\beta_g\). Then the full-data and observed-data EIFs are
\begin{equation}
\phi^F(L)
=
I_F^{-1}\frac{\dot\mu_0(X)}{\sigma^2(X)}\varepsilon,
\qquad
\phi^{\obs}(O)
=
I_\pi^{-1}\frac{\delta\dot\mu_0(X)}{\sigma^2(X)}\varepsilon.
\label{eq-regression-eifs}
\end{equation}
Thus
\begin{equation}
\Sigma_F=I_F^{-1},
\qquad
\Sigma_{\mathrm{sp}}=I_\pi^{-1},
\qquad
\Sigma_{\mathrm{fp}}=I_\pi^{-1}-I_F^{-1}.
\label{eq-regression-bounds}
\end{equation}
For finite-population inference, let \(\beta_N\) be the local solution of
\begin{equation}
\frac1N\sum_{i=1}^N
\frac{\dot\mu_{\beta_N}(X_i)}{\sigma^2(X_i)}
\{Y_i-\mu(X_i,\beta_N)\}
=0.
\label{eq-regression-finite-population-target}
\end{equation}
Under Condition~\ref{cond-regression-regularity} in Section~\ref{sec-regression-target-estimation},
\[
\sqrt N(\beta_N-\beta_0)
=
\frac1{\sqrt N}\sum_{i=1}^N\phi^F(L_i)+o_{Q_0^N}(1),
\]
so Assumption~\ref{ass-A3} holds and Theorems~\ref{thm-obs-convolution} and~\ref{thm-finite-population-convolution} apply with the bounds in \eqref{eq-regression-bounds}.
\end{example}

\begin{proposition}[Characterization of efficient regular estimators]
\label{prop-obs-efficiency-characterization}
Assume Assumptions~\ref{ass-A1}--\ref{ass-A2} and~\ref{ass-B1}--\ref{ass-B4}. For an observed-data measurable estimator \(\hat\theta_N\), the following two statements are equivalent:
\[
\hat\theta_N\text{ is locally regular in the sense of Definition~\ref{def-superpopulation-local-regularity} and efficient for }\psi,
\]
and
\begin{equation}
\sqrt N(\hat\theta_N-\theta_0)
=
\frac1{\sqrt N}\sum_{i=1}^N\phi^{\obs}(O_i)
+o_{P_0^N}(1).
\label{eq-best-regular-sp}
\end{equation}
If Assumption~\ref{ass-A3} also holds, then each condition is further equivalent to local regularity in the sense of Definition~\ref{def-finite-population-local-regularity} and efficiency for the random finite-population target \(\theta_N\), and to
\begin{equation}
\sqrt N(\hat\theta_N-\theta_N)
=
\frac1{\sqrt N}\sum_{i=1}^N
\{\phi^{\obs}(O_i)-\phi^F(L_i)\}
+o_{Q_0^N}(1).
\label{eq-best-regular-fp}
\end{equation}
\end{proposition}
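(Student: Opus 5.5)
We establish the superpopulation equivalence first, via the standard characterization of best regular estimators in LAN experiments, and then transfer it to the finite-population centering using Assumption~\ref{ass-A3} and Theorem~\ref{thm-local-regularity-equivalence}.

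\emph{Sufficiency for the superpopulation target.} Suppose \eqref{eq-best-regular-sp} holds. By Theorem~\ref{thm-obs-lan}, for every $g\in\dot{\mathcal P}_{P_0^F}\cap L^4(P_0^F)$ the pair
\[
\Bigl(\sqrt N(\hat\theta_N-\theta_0),\ \log dP^N_{u/\sqrt N,\mathcal Lg}/dP^N_0\Bigr)
\]
is asymptotically equivalent to $\bigl(N^{-1/2}\sum_i\phi^{\obs}(O_i),\ u\Delta_N(\mathcal Lg)-u^2\|\mathcal Lg\|_{\obs}^2/2\bigr)$. The joint central limit theorem for $N^{-1/2}\sum_i\phi^{\obs}(O_i)$ and $\Delta_N(\mathcal Lg)$ is obtained exactly as in \eqref{eq-central-sequence-joint-limit}. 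Since $\phi^{\obs}=\mathcal Ld$ with $d\in(\mathcal H^F)^p$, we approximate $d_j$ by $L^4$ scores using Assumption~\ref{ass-A1}; this approximation has to be handled uniformly in $N$. The main tool is Assumption~\ref{ass-B3}, whose variance control gives $N^{-1/2}\sum_i\{\mathcal L(d-d_k)\}(O_i)=O_P(\|\mathcal L(d-d_k)\|_{\obs})$. The limiting covariance between the two components is $u\langle\phi^{\obs},\mathcal Lg\rangle_{\obs}=u\dot\psi^F(g)$, by Proposition~\ref{prop-obs-eif} and Theorem~\ref{thm-pathwise-obs}.

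Le Cam's third lemma then shows that, under $P^N_{u/\sqrt N,\mathcal Lg}$, $\sqrt N(\hat\theta_N-\theta_0)\to N_p(u\dot\psi^F(g),\Sigma_{\mathrm{sp}})$. Subtracting $\sqrt N\{\psi(P^F_{u/\sqrt N,g})-\theta_0\}\to u\dot\psi^F(g)$ shows that $\hat\theta_N$ is regular with limit $N_p(0,\Sigma_{\mathrm{sp}})$. By Theorem~\ref{thm-obs-convolution}, $W=0$, so the estimator is efficient.

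\emph{Necessity for the superpopulation target.} This is the main obstacle. Suppose $\hat\theta_N$ is regular and efficient. We follow the standard argument (van der Vaart 1998, Lemma~25.23) in the non-i.i.d.\ setting of \citet{McNeney2000}. Set $T_N=\sqrt N(\hat\theta_N-\theta_0)$ and $Z_N=N^{-1/2}\sum_i\phi^{\obs}(O_i)$. The sequence $(T_N,Z_N)$ is tight, so we pass to subsequences along which it converges jointly to $(T,Z)$, where $T\sim N_p(0,\Sigma_{\mathrm{sp}})$. The convolution proof in fact yields more: $T=Z+W'$ with $W'$ independent of $Z$. This comes from applying regularity along least-favorable directions $g$ approximating the coordinates of $d$, together with the Gaussian-shift argument over $u\in\mathbb R^p$ and the limit experiment. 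Since $T$ and $Z$ have the same covariance, $\operatorname{Var}(W')=0$, so $W'=0$ and $T_N-Z_N\to0$ in probability.

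The delicate point is that $\phi^{\obs}$ need not itself be a score in $\mathcal T^{\obs}$. The plan is therefore to take $g_k\in\dot{\mathcal P}\cap L^4$ with $\mathcal Lg_k\to\phi^{\obs}$ in $\mathcal H^{\obs}$ and to run the argument for $Z_{N,k}=\Delta_N(\mathcal Lg_k)$. This yields $T=Z_k+W_k$ in the limit with $\operatorname{Cov}(T-Z_k)\to0$ as $k\to\infty$. A diagonal argument then completes the proof, again using the Assumption~\ref{ass-B3} bound on $Z_N-Z_{N,k}$.

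\emph{Finite-population part.} Assume Assumption~\ref{ass-A3}. Subtracting \eqref{eq-A3-finite-pop-target-expansion} from \eqref{eq-best-regular-sp} gives \eqref{eq-best-regular-fp} under $Q_0^N$, because $o_{P_0^N}$ terms are $o_{Q_0^N}$ terms for observed-data measurable quantities. Conversely, adding \eqref{eq-A3-finite-pop-target-expansion} to \eqref{eq-best-regular-fp} gives \eqref{eq-best-regular-sp} under $Q_0^N$, and hence under $P_0^N$, since both sides are $\mathcal O_N$-measurable. Theorem~\ref{thm-local-regularity-equivalence} identifies the two regularity classes. Theorems~\ref{thm-obs-convolution} and~\ref{thm-finite-population-convolution} share the residual $W$, so efficiency for $\psi$ is equivalent to efficiency for $\theta_N$. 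Combining these facts with the superpopulation equivalence above closes the chain of equivalences.
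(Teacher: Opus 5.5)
Your proposal is correct and has the same architecture as the paper's proof. Sufficiency goes through the joint limit of $\Delta_N(\phi^{\obs})$ and $\Delta_N(\mathcal Lg)$ plus Le Cam's third lemma. Necessity goes through $(\sqrt N(\hat\theta_N-\theta_0),\Delta_N(\phi^{\obs}))\to(G_{\mathrm{sp}}+W,G_{\mathrm{sp}})$ with $W\perp G_{\mathrm{sp}}$, and efficiency forces $W=0$. The finite-population part transfers through Assumption~\ref{ass-A3} and the $\mathcal O_N$-measurability of the remainder.

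The only real difference is how you obtain the joint limit in the necessity step.
\begin{itemize}
\item The paper proves it with the exact central sequence $\Delta_N(\phi^{\obs})$, for every regular estimator (Lemma~\ref{lem-joint-convolution-observed-central}, later reused for the finite-population theorems). It tilts along scalar paths $\mathcal Lb_m\to\mathcal Lb$, continues the Gaussian-tilt identity analytically to $z=i$, and lets $m\to\infty$ inside the characteristic function.
\item You instead apply the finite-dimensional convolution theorem to the subexperiment indexed by $c\in\mathbb R^p$, realized through the admissible scores $c^\top g_k$ at $u=1$. You then close with a diagonal argument that uses efficiency to make the residual covariance vanish. This is closer to the textbook proof and equally valid.
\end{itemize}
In your subexperiment the decomposition is actually $T=A_k\Gamma_k^{-1}Z_k+W_k$ with $W_k\perp Z_k$, where $A_k=\mathbb E^\pi[\phi^{\obs}(\mathcal Lg_k)^\top]$ and $\Gamma_k=\mathbb E^\pi[(\mathcal Lg_k)(\mathcal Lg_k)^\top]$. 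Since both matrices tend to $\Sigma_{\mathrm{sp}}$, your conclusion $\operatorname{Cov}(T-Z_k)\to0$ still holds.

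One justification needs correcting: the uniform-in-$N$ control of $\Delta_N\{\mathcal L(d-d_k)\}$ does not come from Assumption~\ref{ass-B3}. That assumption concerns $N^{-1}$-scale design fluctuations and gives nothing at the $\sqrt N$ scale. The control comes from Assumption~\ref{ass-B1}. Write $\mathcal Lb=\mathbb E[b(L)\mid X]+\delta\{b(L)-\mathbb E[b(L)\mid X]\}$. Given $(\bm X_N,\boldsymbol\delta_N)$, the second summands are conditionally independent and centered, so $\mathbb E|\Delta_N(\mathcal Lb)|^2\le\|b\|_F^2$ for every $N$. No term of the form $(\delta_i-\pi_{N,i})h(X_i)$ ever appears.
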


\subsection{Optimal sampling design}
\label{sec-optimal-design}

For a scalar target, we optimize the limiting inclusion probability. For each candidate \(\pi\), let \(d_\pi\) solve \eqref{eq-obs-eif-equation} with \(\mathcal L\) and \(\mathcal M\) constructed from \(\pi\). Let 
\[
\Sigma_{\mathrm{sp}}(\pi)
=
\mathbb E^\pi\!\left[(\mathcal Ld_\pi)(O)^2\right].
\]
Under Assumption~\ref{ass-A3}, the corresponding finite-population bound is \(\Sigma_{\mathrm{fp}}(\pi)=\Sigma_{\mathrm{sp}}(\pi)-\Sigma_F\). For a given design and budget, let \(\mathfrak D_\rho\) be the feasible class of limiting inclusion probabilities. Typical constraints are
\[
\mathbb E[\pi(X)]\le\rho
\qquad\text{or}\qquad
\mathbb E[c(X)\pi(X)]\le C,
\]
where \(c(X)\ge0\) is a measurement cost of the study variable. If \(c\in L^2(P_0^F)\), each constraint defines a closed half-space in \(L^2(P_0^F)\). Appendix~A.2 specifies feasible classes for each sampling design in Table~\ref{tab-sampling-examples} under some budget constraints. At a fixed sampling fraction, SRSWOR and cluster sampling admit only \(\pi(x)\equiv\rho\), whereas Poisson, PPS, rejective, stratified, and self-weighted \(\pi\)PS/SRS designs allow nonconstant limits.

The following theorem characterizes optimal limiting inclusion probabilities for scalar pathwise-differentiable targets in general semiparametric full-data models. Implementing an optimizer additionally requires finite-population inclusion probabilities \(\pi_{N,i}\) satisfying Assumption~\ref{ass-B2}.

\begin{theorem}[Optimal limiting inclusion probability]
\label{thm-optimal-design}
Suppose that Assumptions~\ref{ass-A1}--\ref{ass-A2} hold and that \(\theta_0\in\mathbb R\). Let \(\mathfrak D_\rho\) be a nonempty, closed, convex subset of \(L^2(P_0^F)\) consisting of \(X\)-measurable functions satisfying \(\pi_-\le\pi(X)\le1\) a.s. for a common constant \(\pi_->0\). Assume further that every \(\pi\in\mathfrak D_\rho\) is attainable by a sequence of sampling designs satisfying Assumptions~\ref{ass-B1}--\ref{ass-B4} as the limit in Assumption~\ref{ass-B2}.
Then \(\pi\mapsto\Sigma_{\mathrm{sp}}(\pi)\) is convex and weakly lower semicontinuous on \(\mathfrak D_\rho\) and attains its minimum. A function \(\pi^*\in\mathfrak D_\rho\) is a minimizer of \(\Sigma_{\mathrm{sp}}(\pi)\) if and only if
\begin{equation}
\mathbb E\!\left[
\Var\{d_{\pi^*}(L)\mid X\}
\{\pi(X)-\pi^*(X)\}
\right]
\le0,
\qquad
\pi\in\mathfrak D_\rho.
\label{eq-optimal-design-variational-inequality}
\end{equation}
If Assumption~\ref{ass-A3} also holds, then
\[
\operatorname*{arg\,min}_{\pi\in\mathfrak D_\rho}
\Sigma_{\mathrm{fp}}(\pi)
=
\operatorname*{arg\,min}_{\pi\in\mathfrak D_\rho}
\Sigma_{\mathrm{sp}}(\pi).
\]
\end{theorem}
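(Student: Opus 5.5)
The plan is to write \(\Sigma_{\mathrm{sp}}(\pi)\) as a supremum of functions that are affine in \(\pi\). All structural claims then follow from this single representation.

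\emph{Variational representation.} Fix \(\pi\) with \(\pi_-\le\pi\le1\). For \(g\in\mathcal H^F\), \eqref{eq-obs-norm} gives
\[
\langle\mathcal M g,g\rangle_F=\|\mathcal Lg\|_{\obs}^2=\mathbb E\bigl[\pi(X)\Var\{g(L)\mid X\}+\{\mathbb E[g(L)\mid X]\}^2\bigr],
\]
which is coercive on \(\mathcal H^F\) with constant \(\pi_-\). Consider
\[
F_\pi(g)=2\,\mathbb E[\phi^F(L)g(L)]-\|\mathcal L_\pi g\|_{\obs}^2 .
\]
It is strictly concave in \(g\). Its first-order condition on \(\mathcal H^F\) is \(\Proj[\mathcal M_\pi g\mid\mathcal H^F]=\phi^F\), so the unique maximizer is \(d_\pi\) from Proposition~\ref{prop-obs-eif}.

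At the maximizer, \(\mathbb E[\phi^F d_\pi]=\langle \mathcal M_\pi d_\pi,d_\pi\rangle_F=\|\mathcal L_\pi d_\pi\|^2_{\obs}\). Hence
\[
\Sigma_{\mathrm{sp}}(\pi)=\sup_{g\in\mathcal H^F}F_\pi(g).
\]
For fixed \(\pi\), \(g\mapsto F_\pi(g)\) is \(L^2\)-continuous, because \(\|\mathcal L g\|_{\obs}\lesssim\|g\|_F\). By Assumption~\ref{ass-A1}, the supremum can therefore be restricted to the dense class \(\dot{\mathcal P}_{P_0^F}\cap L^4(P_0^F)\). For such \(g\), \(\Var\{g\mid X\}\in L^2(P_0^F)\), so \(\pi\mapsto F_\pi(g)\) is an affine, weakly continuous functional on \(L^2(P_0^F)\).

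\emph{Convexity, weak lower semicontinuity and existence.} As a pointwise supremum of weakly continuous affine functionals, \(\Sigma_{\mathrm{sp}}\) is convex and weakly lower semicontinuous on \(\mathfrak D_\rho\). The set \(\mathfrak D_\rho\) is closed, convex and bounded in \(L^2\), since \(0\le\pi\le1\). It is therefore weakly compact, and a weakly lower semicontinuous function attains its minimum on it. The attainability hypothesis only ensures that each such \(\pi\) corresponds to genuine designs, so that \(\Sigma_{\mathrm{sp}}(\pi)\) is the bound of Proposition~\ref{prop-obs-eif}.

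\emph{Variational inequality.} Put \(\pi_t=\pi^*+t(\pi-\pi^*)\) for \(t\in[0,1]\).

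For sufficiency, evaluate the supremum at \(g=d_{\pi^*}\) to get
\[
\Sigma_{\mathrm{sp}}(\pi)\ge F_{\pi}(d_{\pi^*})=\Sigma_{\mathrm{sp}}(\pi^*)-\mathbb E\bigl[\Var\{d_{\pi^*}\mid X\}(\pi-\pi^*)\bigr].
\]
This is a subgradient inequality, so \eqref{eq-optimal-design-variational-inequality} implies \(\Sigma_{\mathrm{sp}}(\pi)\ge\Sigma_{\mathrm{sp}}(\pi^*)\). Here \(\Var\{d\mid X\}\in L^1\) and \(\pi\) is bounded, so all expectations are finite.

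For necessity, evaluate at \(g=d_{\pi_t}\) to get the reverse bound
\[
\Sigma_{\mathrm{sp}}(\pi_t)-\Sigma_{\mathrm{sp}}(\pi^*)\le -t\,\mathbb E\bigl[\Var\{d_{\pi_t}\mid X\}(\pi-\pi^*)\bigr].
\]
It then suffices to show \(d_{\pi_t}\to d_{\pi^*}\) in \(L^2\). This is the main technical step. The difference \(d_{\pi_t}-d_{\pi^*}\) solves
\[
\Proj[\mathcal M_{\pi_t}(d_{\pi_t}-d_{\pi^*})\mid\mathcal H^F]=-\Proj[(\mathcal M_{\pi_t}-\mathcal M_{\pi^*})d_{\pi^*}\mid\mathcal H^F].
\]
The right-hand side equals \(-t\,\Proj[(\pi-\pi^*)(d_{\pi^*}-\mathbb E[d_{\pi^*}\mid X])\mid\mathcal H^F]\), whose \(L^2\) norm is \(O(t)\). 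Uniform coercivity with constant \(\pi_-\) then gives \(\|d_{\pi_t}-d_{\pi^*}\|_F=O(t)\).

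Consequently \(\Var\{d_{\pi_t}\mid X\}\to\Var\{d_{\pi^*}\mid X\}\) in \(L^1\). Combined with the two inequalities, this shows the right derivative of \(t\mapsto\Sigma_{\mathrm{sp}}(\pi_t)\) at \(0\) is \(-\mathbb E[\Var\{d_{\pi^*}\mid X\}(\pi-\pi^*)]\). If \(\pi^*\) is a minimizer, this derivative must be nonnegative, which is \eqref{eq-optimal-design-variational-inequality}.

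\emph{Finite-population statement.} Under Assumption~\ref{ass-A3}, \(\Sigma_{\mathrm{fp}}(\pi)=\Sigma_{\mathrm{sp}}(\pi)-\Sigma_F\), and \(\Sigma_F=\mathbb E[\phi^F(L)^2]\) does not depend on \(\pi\). The two argmin sets therefore coincide.
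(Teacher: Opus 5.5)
Your proposal is correct and follows the paper's proof essentially step for step. The shared steps are the completed-square representation of $\Sigma_{\mathrm{sp}}(\pi)$ as a supremum of functionals affine in $\pi$, with the supremum restricted to the dense $L^4$ score class for weak continuity, weak compactness of $\mathfrak D_\rho$, the two-sided bound from evaluating the objective at $d_{\pi^*}$ and at $d_{\pi_t}$, $L^2$-stability of $d_{\pi_t}$ via coercivity, and the $\pi$-independence of $\Sigma_F$. The differences are minor and both versions are valid: you read sufficiency directly off the subgradient inequality, and you bound $d_{\pi_t}-d_{\pi^*}$ via $\mathcal M_{\pi_t}$-coercivity with $d_{\pi^*}$ held fixed, rather than the paper's coercivity at the base point combined with a uniform bound on $\|d_t\|_F$.
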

The equality of the two sets of minimizers follows from the variance decomposition, since \(\Sigma_F\) does not depend on the design. An optimal limiting inclusion probability need not be unique, but strict convexity of $\Sigma_{\mathrm{sp}}$ on $\mathfrak D_\rho$ ensures uniqueness. For $d_{\pi^*}$ in \eqref{eq-obs-eif-equation}, the residual $d_{\pi^*}(L)-\mathbb E[d_{\pi^*}(L)\mid X]$ is the target-specific component unrecoverable from phase-I covariates and revealed only when $Y$ is observed. Its conditional variance measures the marginal reduction in the efficiency bound as inclusion probability increases, so \eqref{eq-optimal-design-variational-inequality} says that no feasible reallocation can improve precision.

In the mean target problem under the nonparametric model, Example~\ref{ex-eif-mean} gives
\[
\operatorname{Var}\{d_\pi(L)\mid X\}
=
\frac{\operatorname{Var}(Y\mid X)}{\pi(X)^2}.
\]
With unit costs and \(X\) indicating membership in fixed strata, an interior optimum satisfies \(p_j\propto\{\operatorname{Var}(Y\mid X\in\mathcal X_j)\}^{1/2}\), which is the Neyman allocation \citep{Neyman1934}.
For the scalar target \(\beta_{0,j}\) in Example~\ref{ex-eif-regression}, \(\operatorname{Var}\{d_\pi(L)\mid X\}=\{(I_\pi^{-1}\dot\mu_0(X))_j\}^2/\sigma^2(X)\). Thus the allocation criterion in \eqref{eq-optimal-design-variational-inequality} depends on both the coefficient of interest and the conditional variance. 

\citet[Section~5.1]{Armstrong2022} derives allocation conditions from an explicit expression for the EIF, with resource constraints given by finitely many linear expectation inequalities. Our criterion instead incorporates full-data model restrictions through the projection equation \eqref{eq-obs-eif-equation} and allows feasible classes of survey inclusion probabilities that need not be defined by such constraints.

\section{Efficient Estimation}
\label{sec-efficient-estimation}

This section gives sufficient conditions for cross-fitted estimators of means and conditional-mean regression coefficients to attain both efficiency bounds and consistently estimate their asymptotic variances. We first state a general result based on Proposition~\ref{prop-obs-efficiency-characterization}, then give the target-specific conditions. Design-specific conditions on fold construction and nuisance estimation, and the corresponding verifications, are collected in Appendix~A.3.

Fix an integer \(K\ge2\) and a phase-I-measurable fold partition
\[
\{1,\ldots,N\}=I_1\sqcup\cdots\sqcup I_K,
\qquad
\mathcal I_N=\sigma(I_1,\ldots,I_K)\subseteq\sigma(\bm X_N).
\]
Let \(k(i)\) denote the fold containing unit \(i\), and define
\begin{equation}
\mathcal T_{N,k}
=
\sigma\!\left(
\mathcal I_N,
\bm X_N,
\{(\delta_j,\delta_jY_j)\mid j\in I_k^c\}
\right),
\label{eq-fold-training-field}
\end{equation}
which represents the information available for nuisance fitting outside \(I_k\).
For \(i\in I_k\), let \(\hat\phi^{\obs}_{N,i}(\theta)\) be the contribution obtained after fitting the nuisance functions outside \(I_k\), and put
\begin{equation}
\hat\Psi_N(\theta)
=
\frac1N\sum_{i=1}^N\hat\phi^{\obs}_{N,i}(\theta).
\label{eq-efficient-estimating-map}
\end{equation}
At the truth, write \(\phi^{\obs}_{0,i}=\phi^{\obs}(O_i)\) and \(\phi^F_{0,i}=\phi^F(L_i)\).

\begin{remark}[Phase-I randomization]
\label{rem-randomized-folds}
An independent parameter-free seed \(\zeta_N\perp(L_1,\ldots,L_N)\) may generate folds or PSU partitions before sampling. We adjoin it to the phase-I and observed-data sigma-fields, retaining the notation \(\sigma(\bm X_N)\) and \(\mathcal O_N\). All laws and Assumptions~\ref{ass-B1}--\ref{ass-B4} are interpreted on this augmented experiment.
\end{remark}

\subsection{General results}
\label{sec-efficient-general-result}

The general results use the following conditions. For variance estimation, let \(\hat\phi_i^F\) denote the estimated full-data EIF evaluated on sampled units.
\begin{enumerate}[label=(C\arabic*),ref=C\arabic*]
\item\label{cond-general-root}
\textbf{Uniform law of large numbers and identifiability.}
There is a convex neighborhood \(U\subset\Theta\) of \(\theta_0\) such that, with probability tending to one, \(\hat\theta_N\in U\),
\begin{equation}
\|\hat\Psi_N(\hat\theta_N)\|=o_{P_0^N}(N^{-1/2}),
\label{eq-general-root-condition}
\end{equation}
and, for a deterministic map \(\Psi:U\to\mathbb R^p\) with \(\Psi(\theta_0)=0\),
\begin{equation}
\sup_{\theta\in U}\|\hat\Psi_N(\theta)-\Psi(\theta)\|
=o_{P_0^N}(1),
\label{eq-general-uniform-lln-condition}
\end{equation}
\begin{equation}
\inf_{\theta\in U,\ \|\theta-\theta_0\|\ge\epsilon}
\|\Psi(\theta)\|>0,
\qquad
\epsilon>0.
\label{eq-general-identification-condition}
\end{equation}

\item\label{cond-general-jacobian}
\textbf{Convergence of gradients.}
The map \(\theta\mapsto\hat\phi^{\obs}_{N,i}(\theta)\) is differentiable on \(U\), and, for every deterministic sequence \(\rho_N\downarrow0\),
\begin{equation}
\sup_{\theta\in U,\ \|\theta-\theta_0\|\le\rho_N}
\left\|
\frac1N\sum_{i=1}^N
\frac{\partial}{\partial\theta^\top}\hat\phi^{\obs}_{N,i}(\theta)
+I_p
\right\|
=o_{P_0^N}(1).
\label{eq-general-jacobian-condition}
\end{equation}

\item\label{cond-general-score}
\textbf{First-order effect of nuisance estimation.}
\begin{equation}
\frac1{\sqrt N}\sum_{i=1}^N
\{\hat\phi^{\obs}_{N,i}(\theta_0)-\phi^{\obs}_{0,i}\}
=o_{P_0^N}(1).
\label{eq-general-nuisance-effect-control}
\end{equation}

\item\label{cond-general-variance}
\textbf{Mean-square consistency of the estimated EIFs.}
\begin{equation}
\frac1N\sum_{i=1}^N
\|\hat\phi^{\obs}_{N,i}(\hat\theta_N)-\phi^{\obs}_{0,i}\|^2
=o_{P_0^N}(1),
\label{eq-general-observed-eif-L2}
\end{equation}
and
\begin{equation}
\frac1N\sum_{i=1}^N
\frac{\delta_i}{\pi_{N,i}}
\|\hat\phi^F_i-\phi^F_{0,i}\|^2
=o_{P_0^N}(1).
\label{eq-general-full-eif-L2}
\end{equation}
\end{enumerate}
Condition~\ref{cond-general-score} is target-dependent because its verification depends on the form of the EIF and on the relation between the training data and the sampling indicators in the validation fold. Condition~\ref{cond-general-variance} is used only for variance estimation. 
Put
\(
\hat\phi^{\obs}_{N,i}=\hat\phi^{\obs}_{N,i}(\hat\theta_N).
\)
The superpopulation variance estimator is
\begin{equation}
\hat\Sigma_{\mathrm{sp},N}
=
\frac1N\sum_{i=1}^N
\hat\phi^{\obs}_{N,i}\{\hat\phi^{\obs}_{N,i}\}^\top.
\label{eq-general-sp-var-estimator}
\end{equation}
For a general finite-population target, \(\Sigma_{\mathrm{fp}}\) need not have a convenient direct expression. Define
\begin{equation}
\hat\Sigma_{F,N}
=
\frac1N\sum_{i=1}^N
\frac{\delta_i}{\pi_{N,i}}
\hat\phi_i^F\{\hat\phi_i^F\}^\top,
\label{eq-F-var-estimator-general}
\end{equation}
and
\begin{equation}
\widetilde{\Sigma}_{\mathrm{fp},N}
=
\hat\Sigma_{\mathrm{sp},N}-\hat\Sigma_{F,N}.
\label{eq-fp-var-estimator-general}
\end{equation}
For the mean, Subsection~\ref{sec-mean-target-estimation} instead uses the nonnegative residual estimator \(\hat\Sigma_{\mathrm{fp},N}\) in \eqref{eq-fp-var-estimator-mean-positive}. Although \eqref{eq-fp-var-estimator-general} is consistent, it need not be positive semidefinite in finite samples. Truncating its negative eigenvalues at zero preserves consistency.

\begin{theorem}[A general theorem for cross-fitted estimators]
\label{thm-efficient-estimation}
Assume Assumptions~\ref{ass-A1}--\ref{ass-A2} and \ref{ass-B1}--\ref{ass-B4}.

\noindent
\textup{(i)} If Conditions~\ref{cond-general-root}--\ref{cond-general-score} hold, then
\begin{equation}
\sqrt N(\hat\theta_N-\theta_0)
=
\frac1{\sqrt N}\sum_{i=1}^N\phi^{\obs}_{0,i}
+o_{P_0^N}(1).
\label{eq-efficient-sp-expansion}
\end{equation}
If Assumption~\ref{ass-A3} also holds, then
\begin{equation}
\sqrt N(\hat\theta_N-\theta_N)
=
\frac1{\sqrt N}\sum_{i=1}^N
\{\phi^{\obs}_{0,i}-\phi^F_{0,i}\}
+o_{Q_0^N}(1).
\label{eq-efficient-fp-expansion}
\end{equation}
Consequently, \(\hat\theta_N\) is locally regular and efficient for the superpopulation functional. Under Assumption~\ref{ass-A3}, it is also locally regular and efficient for \(\theta_N\).

\noindent
\textup{(ii)} If Condition~\ref{cond-general-variance} holds, then, under \(P_0^N\),
\begin{equation}
\hat\Sigma_{\mathrm{sp},N}
\xrightarrow[]{p}
\Sigma_{\mathrm{sp}}.
\label{eq-sp-var-consistency}
\end{equation}
If Assumption~\ref{ass-A3} also holds, then
\begin{equation}
\widetilde{\Sigma}_{\mathrm{fp},N}
\xrightarrow[]{p}
\Sigma_{\mathrm{fp}}.
\label{eq-fp-var-consistency}
\end{equation}
\end{theorem}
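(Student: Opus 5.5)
For part (i), I would follow the standard Z-estimator argument and then invoke Proposition~\ref{prop-obs-efficiency-characterization}. The steps are as follows.

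\textbf{Consistency.} Conditions~\eqref{eq-general-root-condition} and~\eqref{eq-general-uniform-lln-condition} give $\|\Psi(\hat\theta_N)\|\le\|\hat\Psi_N(\hat\theta_N)\|+\sup_U\|\hat\Psi_N-\Psi\|=o_{P_0^N}(1)$. The identification condition~\eqref{eq-general-identification-condition} then yields $\hat\theta_N\xrightarrow{p}\theta_0$.

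\textbf{Rate sequence.} I would extract a deterministic $\rho_N\downarrow0$ with $P_0^N(\|\hat\theta_N-\theta_0\|\le\rho_N)\to1$, which exists by a standard diagonal argument.

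\textbf{Mean-value expansion.} On that event, convexity of $U$ lets me apply the mean-value theorem row by row, or in integral form along the segment from $\theta_0$ to $\hat\theta_N$:
\[
\hat\Psi_N(\hat\theta_N)=\hat\Psi_N(\theta_0)+\bar J_N(\hat\theta_N-\theta_0),
\]
where $\bar J_N$ is an average of the Jacobians $N^{-1}\sum_i\partial_{\theta^\top}\hat\phi^{\obs}_{N,i}$ over points within $\rho_N$ of $\theta_0$. Condition~\eqref{eq-general-jacobian-condition} gives $\bar J_N=-I_p+o_{P_0^N}(1)$.

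\textbf{Expansion of $\hat\theta_N$.} Combining with~\eqref{eq-general-root-condition},
\[
\sqrt N(\hat\theta_N-\theta_0)\{1+o_{P_0^N}(1)\}=\sqrt N\hat\Psi_N(\theta_0)+o_{P_0^N}(1).
\]
Condition~\eqref{eq-general-nuisance-effect-control} replaces $\sqrt N\hat\Psi_N(\theta_0)$ by $N^{-1/2}\sum_i\phi^{\obs}_{0,i}+o_{P_0^N}(1)$. This sum is $O_{P_0^N}(1)$: it is the central-sequence-type sum, tight because its variance is bounded using Assumptions~\ref{ass-B2}--\ref{ass-B3}, exactly as in the proof of Theorem~\ref{thm-obs-lan}, where $\mathcal L d$ has this form. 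Tightness of the right-hand side lets me invert the factor and gives~\eqref{eq-efficient-sp-expansion}.

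\textbf{Finite-population expansion.} Under Assumption~\ref{ass-A3}, I subtract~\eqref{eq-A3-finite-pop-target-expansion}, noting that $o_{P_0^N}(1)$ terms in observed-data quantities are $o_{Q_0^N}(1)$ since $P_0^N$ is a marginal of $Q_0^N$. This gives~\eqref{eq-efficient-fp-expansion}.

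\textbf{Regularity and efficiency.} Local regularity and efficiency for both targets then follow directly from Proposition~\ref{prop-obs-efficiency-characterization}.

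For part (ii), I write $\hat\phi^{\obs}_{N,i}=\phi^{\obs}_{0,i}+r_i$. Then
\[
\hat\Sigma_{\mathrm{sp},N}-N^{-1}\sum_i\phi^{\obs}_{0,i}\phi^{\obs\top}_{0,i}
\]
is bounded by $N^{-1}\sum\|r_i\|^2+2(N^{-1}\sum\|r_i\|^2)^{1/2}(N^{-1}\sum\|\phi^{\obs}_{0,i}\|^2)^{1/2}$ via Cauchy--Schwarz, and this vanishes by~\eqref{eq-general-observed-eif-L2}. It remains to prove a law of large numbers $N^{-1}\sum_i\phi^{\obs}_{0,i}\phi^{\obs\top}_{0,i}\xrightarrow{p}\mathbb E^\pi[\phi^{\obs}\phi^{\obs\top}]=\Sigma_{\mathrm{sp}}$ under the dependent design; this is the main obstacle. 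Since $\phi^{\obs}=\mathcal Ld$, each entry has the form $\delta_i a(L_i)+(1-\delta_i)b(X_i)$ with $a,b$ integrable, using $\delta_i^2=\delta_i$ and $\delta_i(1-\delta_i)=0$. I would decompose
\[
N^{-1}\sum_i\delta_i\{a(L_i)-b(X_i)\}
\]
into three parts.
\begin{enumerate}[label=(\alph*)]
\item $N^{-1}\sum_i(\delta_i-\pi_{N,i})\tilde a(X_i)$ with $\tilde a=\mathbb E[a-b\mid X]$. This is controlled by Assumption~\ref{ass-B3}, after truncating $\tilde a$ to an $L^2$ function since it is only $L^1$.
\item $N^{-1}\sum_i\pi_{N,i}\tilde a(X_i)$. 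By Assumption~\ref{ass-B2} this equals $N^{-1}\sum_i\pi(X_i)\tilde a(X_i)+o_{P_0^N}(1)$, and the i.i.d.\ law of large numbers then gives its limit.
\item The term $N^{-1}\sum_i\delta_i\{a(L_i)-b(X_i)-\tilde a(X_i)\}$. By Assumption~\ref{ass-B1}, conditional on $(\bm X_N,\boldsymbol\delta_N)$ its summands are independent and mean-zero, so a truncated conditional weak law of large numbers applies.
\end{enumerate}
The truncation arguments for the $L^1$ functions are where care is needed.

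For the finite-population variance, the same decomposition with $a=\pi_{N,i}^{-1}\phi^F\phi^{F\top}$ and $b=0$ gives $N^{-1}\sum_i\delta_i\pi_{N,i}^{-1}\phi^F_{0,i}\phi^{F\top}_{0,i}\xrightarrow{p}\Sigma_F$. Positivity, Assumption~\ref{ass-B4} combined with Assumption~\ref{ass-B2}, bounds $\pi_{N,i}^{-1}$ uniformly with probability tending to one. The error from estimating $\phi^F$ is handled by Cauchy--Schwarz with~\eqref{eq-general-full-eif-L2}. Theorem~\ref{thm-finite-population-convolution} then gives $\widetilde\Sigma_{\mathrm{fp},N}\xrightarrow{p}\Sigma_{\mathrm{sp}}-\Sigma_F=\Sigma_{\mathrm{fp}}$.
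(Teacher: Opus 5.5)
Your proposal is correct and follows the paper's proof essentially step for step. In (i) you use the same consistency and integral-form Taylor argument, followed by Proposition~\ref{prop-obs-efficiency-characterization}. In (ii) you use the same three-term split that the paper uses in its observed-score and Horvitz--Thompson laws of large numbers: a conditionally centered part, a $\pi_{N,i}$ part, and a $(\delta_i-\pi_{N,i})$ part controlled by Assumption~\ref{ass-B3}. The only difference is that you truncate the $L^1$ terms directly, whereas the paper first truncates $d$ and $h$.

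One routine step should be made explicit in the Horvitz--Thompson term. Assumption~\ref{ass-B3} applies only to fixed functions of $X_i$, so before invoking it you must replace $\pi_{N,i}^{-1}$ by $\pi(X_i)^{-1}$, using uniform convergence from Assumptions~\ref{ass-B2} and~\ref{ass-B4}. Boundedness of $\pi_{N,i}^{-1}$ alone is not enough for that step.
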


\subsection{Mean targets}
\label{sec-mean-target-estimation}

We return to the setting of Example~\ref{ex-eif-mean}. For \(i\in I_k\), let \(\hat m_{-k}(X_i)\) be trained outside \(I_k\). Define
\begin{align}
\hat\phi^{\obs}_{N,i}(\mu)
&=
\hat m_{-k(i)}(X_i)-\mu
+
\frac{\delta_i}{\pi_{N,i}}
\{Y_i-\hat m_{-k(i)}(X_i)\},
\label{eq-mean-estimated-observed-eif}
\\
\hat\mu_N
&=
\frac1N\sum_{i=1}^N
\left[
\hat m_{-k(i)}(X_i)
+
\frac{\delta_i}{\pi_{N,i}}
\{Y_i-\hat m_{-k(i)}(X_i)\}
\right].
\label{eq-mean-estimator-appendix}
\end{align}
Under dependent sampling, fitting \(\hat m_{-k}\) outside \(I_k\) need not make it independent of that fold's sampling indicators, so we impose Condition~\ref{cond-mean-sampling-term} below to control the resulting remainder.

\begin{enumerate}[label=(M\arabic*),ref=M\arabic*]
\item\label{cond-mean-regression}
\textbf{Mean-square consistency.}
For \(i\in I_k\), \(\hat m_{-k}(X_i)\) is \(\mathcal T_{N,k}\)-measurable, and
\begin{equation}
\frac1N\sum_{i=1}^N
\{\hat m_{-k(i)}(X_i)-m_0(X_i)\}^2
=o_{P_0^N}(1).
\label{eq-mean-regression-L2-condition}
\end{equation}

\item\label{cond-mean-sampling-term}
\textbf{Remainder involving the sampling indicators.}
\begin{equation}
\frac1{\sqrt N}\sum_{i=1}^N
(\delta_i-\pi_{N,i})
\frac{\hat m_{-k(i)}(X_i)-m_0(X_i)}{\pi_{N,i}}
=o_{P_0^N}(1).
\label{eq-mean-sampling-term-condition}
\end{equation}

\end{enumerate}
Known inclusion probabilities remove the usual product-rate requirement, but Condition~\ref{cond-mean-sampling-term} can still impose design-specific rate restrictions.
Put \(\hat\phi^{\obs}_{N,i}=\hat\phi^{\obs}_{N,i}(\hat\mu_N)\), and define
\begin{align}
\hat\Sigma_{\mathrm{sp},N}
&=
\frac1N\sum_{i=1}^N
\{\hat\phi^{\obs}_{N,i}\}^2,
\label{eq-sp-var-estimator}
\\
\hat\Sigma_{\mathrm{fp},N}
&=
\frac1N\sum_{i=1}^N
\frac{\delta_i}{\pi_{N,i}}
\left(\frac1{\pi_{N,i}}-1\right)
\{Y_i-\hat m_{-k(i)}(X_i)\}^2.
\label{eq-fp-var-estimator-mean-positive}
\end{align}

\begin{theorem}[Efficient estimation of the mean]
\label{thm-mean-efficient-estimation}
Assume Assumptions~\ref{ass-A1}--\ref{ass-A2} and~\ref{ass-B1}--\ref{ass-B4}.

\noindent
\textup{(i)} If Conditions~\ref{cond-mean-regression}--\ref{cond-mean-sampling-term} hold, then
\begin{align}
\sqrt N(\hat\mu_N-\mu_0)
&=
\frac1{\sqrt N}\sum_{i=1}^N
\left[
 m_0(X_i)-\mu_0
 +
 \frac{\delta_i}{\pi(X_i)}\{Y_i-m_0(X_i)\}
\right]
+o_{P_0^N}(1),
\label{eq-mean-efficient-sp-expansion}
\\
\sqrt N(\hat\mu_N-\bar Y_N)
&=
\frac1{\sqrt N}\sum_{i=1}^N
\left\{
\frac{\delta_i}{\pi(X_i)}-1
\right\}
\{Y_i-m_0(X_i)\}
+o_{Q_0^N}(1).
\label{eq-mean-efficient-fp-expansion}
\end{align}
Consequently, \(\hat\mu_N\) is locally regular and efficient for \(\mu_0\) and \(\bar Y_N\).

\noindent
\textup{(ii)} If Condition~\ref{cond-mean-regression} holds, then
\begin{equation}
\hat\Sigma_{\mathrm{sp},N}\xrightarrow[]{p}\Sigma_{\mathrm{sp}},
\qquad
\hat\Sigma_{\mathrm{fp},N}\xrightarrow[]{p}\Sigma_{\mathrm{fp}}.
\label{eq-mean-variance-consistency}
\end{equation}
\end{theorem}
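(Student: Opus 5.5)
The plan is to reduce part (i) to Theorem~\ref{thm-efficient-estimation} by checking Conditions~\ref{cond-general-root}--\ref{cond-general-score} for the map \eqref{eq-mean-estimated-observed-eif}, with one correction: the efficient expansion in \eqref{eq-mean-efficient-sp-expansion} uses \(\pi(X_i)\), whereas the estimator uses \(\pi_{N,i}\). Because \(\hat\Psi_N(\mu)=\hat\mu_N-\mu\) is linear with derivative \(-1\), Conditions~\ref{cond-general-root} and~\ref{cond-general-jacobian} are immediate once \(\hat\mu_N\) is shown to be consistent, which follows from the expansion below. It therefore suffices to prove
\[
\sqrt N(\hat\mu_N-\mu_0)=\frac1{\sqrt N}\sum_{i=1}^N\Big[m_0(X_i)-\mu_0+\frac{\delta_i}{\pi(X_i)}\{Y_i-m_0(X_i)\}\Big]+o_{P_0^N}(1).
\]

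Write \(\Delta_i=\hat m_{-k(i)}(X_i)-m_0(X_i)\) and \(\varepsilon_i=Y_i-m_0(X_i)\). Then
\[
\sqrt N(\hat\mu_N-\mu_0)=\frac1{\sqrt N}\sum_i\Big\{m_0(X_i)-\mu_0+\frac{\delta_i}{\pi_{N,i}}\varepsilon_i\Big\}-\frac1{\sqrt N}\sum_i\Big(\frac{\delta_i}{\pi_{N,i}}-1\Big)\Delta_i .
\]
The second sum equals \(N^{-1/2}\sum_i(\delta_i-\pi_{N,i})\Delta_i/\pi_{N,i}\), which is \(o_{P_0^N}(1)\) by Condition~\ref{cond-mean-sampling-term}.

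The remaining step replaces \(\pi_{N,i}\) by \(\pi(X_i)\) in the first sum. The difference is
\[
R_N=\frac1{\sqrt N}\sum_i\delta_i\varepsilon_i\Big(\frac1{\pi_{N,i}}-\frac1{\pi(X_i)}\Big).
\]
Conditional on \((\bm X_N,\boldsymbol\delta_N)\), the \(\varepsilon_i\) are independent with mean zero, by Assumption~\ref{ass-B1} and the i.i.d.\ structure. Hence
\[
\E[R_N^2\mid\bm X_N,\boldsymbol\delta_N]\le \frac1N\sum_i\sigma^2(X_i)\Big(\frac1{\pi_{N,i}}-\frac1{\pi(X_i)}\Big)^2 .
\]
This bound is \(o_{P_0^N}(1)\) by Assumption~\ref{ass-B2}, Assumption~\ref{ass-B4} (with \(\pi_{N,i}\ge\pi_-/2\) with probability tending to one) and the law of large numbers. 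A conditional Chebyshev argument then gives \(R_N=o_{P_0^N}(1)\).

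Expansion \eqref{eq-mean-efficient-fp-expansion} follows by subtracting \(\sqrt N(\bar Y_N-\mu_0)=N^{-1/2}\sum_i(Y_i-\mu_0)\); Assumption~\ref{ass-A3} holds exactly here, with \(\phi^F(L)=Y-\mu_0\). Local regularity and efficiency then follow from Proposition~\ref{prop-obs-efficiency-characterization} and Example~\ref{ex-eif-mean}. One subtlety is that the \(o_{P_0^N}\) statements are on the observed data while \(\bar Y_N\) lives on \(Q^N\). Since \(P_0^N\) is a marginal of \(Q_0^N\), every \(o_{P_0^N}(1)\) term is also \(o_{Q_0^N}(1)\).

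For part (ii), expand \(\hat\phi^{\obs}_{N,i}-\phi^{\obs}_{0,i}\). It consists of \((1-\delta_i/\pi_{N,i})\Delta_i\), a term with \(1/\pi_{N,i}-1/\pi(X_i)\) multiplying \(\delta_i\varepsilon_i\), and \(\hat\mu_N-\mu_0\). The average squared norm of the first term is bounded by \((1+1/\pi_-)^2N^{-1}\sum_i\Delta_i^2=o_p(1)\) by Condition~\ref{cond-mean-regression}. The second term is handled by Assumption~\ref{ass-B2}, and the third by part (i) or consistency of \(\hat\mu_N\); note that consistency requires only Condition~\ref{cond-mean-regression} together with a variance bound, which I would obtain from Cauchy--Schwarz on the sampling term. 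Cauchy--Schwarz then transfers this \(L^2\) closeness to \(\hat\Sigma_{\mathrm{sp},N}-N^{-1}\sum_i(\phi^{\obs}_{0,i})^2\to0\).

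The oracle average \(N^{-1}\sum_i(\phi^{\obs}_{0,i})^2\) still contains dependent \(\delta_i\). Its conditional mean given \(\bm X_N\) converges to \(\Sigma_{\mathrm{sp}}\) via Assumptions~\ref{ass-B2}--\ref{ass-B3}, applied with \(h(X)=\E[\varepsilon^2\mid X]/\pi(X)^2\) after replacing \(\varepsilon_i^2\) by its conditional mean through a conditional weak law of large numbers. The same steps apply to \(\hat\Sigma_{\mathrm{fp},N}\). Here \((\varepsilon_i-\Delta_i)^2\) is compared with \(\varepsilon_i^2\) via the \(L^2\) bound, and the limit \(\E[(1/\pi-1)\sigma^2(X)]\) is identified through Assumption~\ref{ass-B3}.

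The main obstacle is this last law of large numbers for \(\delta_i\varepsilon_i^2\) under dependent designs when only second moments are available. Assumption~\ref{ass-B3} controls only functions of \(X\) with finite second moment, while \(\sigma^2(X)/\pi(X)^2\) need only be integrable. I would truncate \(\varepsilon_i^2\) at a level \(M\), apply Assumption~\ref{ass-B3} to the bounded truncated conditional mean, and control the tail uniformly using \(\delta_i\le1\) and \(\E[\varepsilon^2\mathbf 1\{\varepsilon^2>M\}]\to0\).
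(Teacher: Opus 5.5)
Your proposal is correct and is essentially the paper's proof. In part (i) you use the same decomposition, Condition~M2 for the sampling term, and a conditional Chebyshev bound to replace $\pi_{N,i}$ by $\pi(X_i)$; the paper phrases this as verifying C1--C3 and invoking Theorem~4.2, which for the linear map $\hat\Psi_N(\mu)=\hat\mu_N-\mu$ is the same computation. In part (ii) you use the same three-term $L^2$ comparison, consistency of $\hat\mu_N$ from M1 via Cauchy--Schwarz on the sampling term, and truncated laws of large numbers, matching the paper's observed-score and Horvitz--Thompson lemmas.

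One detail needs adding. In the oracle average $N^{-1}\sum_i(\phi^{\mathrm{obs}}_{0,i})^2$, the cross term $2\{m_0(X_i)-\mu_0\}\delta_i\varepsilon_i/\pi(X_i)$ has conditional variance $4N^{-2}\sum_i\delta_i\{m_0(X_i)-\mu_0\}^2\Var(Y_i\mid X_i)/\pi(X_i)^2$, and this need not have finite expectation. You should therefore truncate $m_0-\mu_0$ as well, not only $\varepsilon_i^2$. The paper avoids the issue by truncating the whole function $d$ in its observed-score law of large numbers.
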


Under the design-specific settings and fold conditions of Theorem~A.3.5, Condition~\ref{cond-mean-regression} suffices for Condition~\ref{cond-mean-sampling-term} except under cluster and two-stage sampling. These additionally require Condition~M3 in Appendix~A.3.4,
\[
\frac1N\sum_G\left[\sum_{i\in G}\{\hat m_{-k(i)}(X_i)-m_0(X_i)\}\right]^2=o_{P_0^N}(1),
\]
where \(G\) ranges over groups of units sharing a first-stage selection indicator (clusters or PSUs), each assigned wholly to one fold. With empirical RMSE \(O_p(N^{-1/4})\), Proposition~A.3.6 gives the sufficient size condition \(\max_G|G|=o_p(\sqrt N)\), stronger than the \(o_p(N)\) condition used for the bounds.

\subsection{Regression targets}
\label{sec-regression-target-estimation}

Work under the assumptions of Example~\ref{ex-eif-regression}.
For \(i\in I_k\), let \(\hat\sigma^2_{-k}(X_i)\) be fitted outside \(I_k\), and define
\begin{equation}
\hat U_N(\beta)
=
\frac1N\sum_{i=1}^N
\delta_i
\frac{\dot\mu_\beta(X_i)}{\hat\sigma^2_{-k(i)}(X_i)}
\{Y_i-\mu(X_i,\beta)\}.
\label{eq-regression-estimating-equation}
\end{equation}
For the population estimating equation, define
\begin{equation}
\Psi_R(\beta)
=
\mathbb E\!\left[
\pi(X)\frac{\dot\mu_\beta(X)}{\sigma^2(X)}
\{\mu(X,\beta_0)-\mu(X,\beta)\}
\right].
\label{eq-regression-population-score}
\end{equation}
Let \(\hat\beta_N\) be an approximate root of \(\hat U_N\). Put
\begin{align}
\hat I_{\pi,N}
&=
\frac1N\sum_{i=1}^N
\delta_i
\frac{\dot\mu_{\hat\beta_N}(X_i)
\dot\mu_{\hat\beta_N}(X_i)^\top}
{\hat\sigma^2_{-k(i)}(X_i)},
\notag\\
\hat\phi^{\obs}_{N,i}(\beta)
&=
\hat I_{\pi,N}^{-1}
\delta_i
\frac{\dot\mu_\beta(X_i)}{\hat\sigma^2_{-k(i)}(X_i)}
\{Y_i-\mu(X_i,\beta)\}.
\label{eq-regression-estimated-eif}
\end{align}
The regression model and conditional-variance estimators satisfy the following conditions.
\begin{enumerate}[label=(R\arabic*),ref=R\arabic*]
\item\label{cond-regression-regularity}
\textbf{Smoothness and moments.}
There is a compact convex neighborhood \(U\) of \(\beta_0\) on which \(\beta\mapsto\mu(x,\beta)\) is twice continuously differentiable. Let \(b(X)\) be an envelope for the regression function and its first two derivatives. It satisfies
\[
\sup_{\beta\in U}
\{|\mu(X,\beta)|+\|\dot\mu_\beta(X)\|+\|\ddot\mu_\beta(X)\|\}
\le b(X),
\qquad
\mathbb E[Y^4+b(X)^4]<\infty.
\]
There are constants \(0<\sigma_-^2\le\sigma_+^2<\infty\) such that
\(
\sigma_-^2\le\sigma^2(X)\le\sigma_+^2
\:
\)
a.s. The matrices \(I_F\) and \(I_\pi\) in \eqref{eq-regression-information} are positive definite. The local solution \(\beta_N\) in \eqref{eq-regression-finite-population-target} exists with probability tending to one and satisfies \(\beta_N\xrightarrow[]{p}\beta_0\).

\item\label{cond-regression-variance-estimation}
\textbf{Cross-fitted conditional-variance estimation.}
For \(i\in I_k\), \(\hat\sigma^2_{-k}(X_i)\) is \(\mathcal T_{N,k}\)-measurable. For fixed constants \(0<c<C<\infty\) satisfying \(c\le\sigma_-^2\le\sigma_+^2\le C\),
\begin{equation}
c\le\hat\sigma^2_{-k(i)}(X_i)\le C,
\qquad
1\le i\le N,
\label{eq-regression-variance-bounds}
\end{equation}
holds with probability tending to one, and for the function \(b(X)\) in Condition~\ref{cond-regression-regularity},
\begin{equation}
\frac1N\sum_{i=1}^N
b(X_i)^2
\{\hat\sigma^2_{-k(i)}(X_i)-\sigma^2(X_i)\}^2
=o_{P_0^N}(1).
\label{eq-regression-variance-L2}
\end{equation}
The bounds in \eqref{eq-regression-variance-bounds} can be enforced by truncation.

\item\label{cond-regression-root}
\textbf{Approximate root and identification.}
With probability tending to one, \(\hat\beta_N\in U\),
\begin{equation}
\|\hat U_N(\hat\beta_N)\|=o_{P_0^N}(N^{-1/2}),
\label{eq-regression-root}
\end{equation}
and, for every \(\epsilon>0\),
\begin{equation}
\inf_{\beta\in U,\ \|\beta-\beta_0\|\ge\epsilon}
\|\Psi_R(\beta)\|>0.
\label{eq-regression-score-identification}
\end{equation}
\end{enumerate}
Unlike the mean target, no counterpart of Condition~\ref{cond-mean-sampling-term} or additional control of within-cluster regression-error sums is needed because, conditional on the training data, \(\bm X_N\), and \(\boldsymbol\delta_N\), validation-fold residuals remain independent and centered. No additional fold condition is used.

Define
\begin{equation}
\hat I_{F,N}
=
\frac1N\sum_{i=1}^N
\frac{\dot\mu_{\hat\beta_N}(X_i)
\dot\mu_{\hat\beta_N}(X_i)^\top}
{\hat\sigma^2_{-k(i)}(X_i)},
\label{eq-regression-full-information-estimator}
\end{equation}
and
\begin{equation}
\hat\Sigma_{\mathrm{sp},N}=\hat I_{\pi,N}^{-1},
\qquad
\hat\Sigma_{\mathrm{fp},N}
=\hat I_{\pi,N}^{-1}-\hat I_{F,N}^{-1}.
\label{eq-regression-variance-estimators}
\end{equation}
If both information estimators are positive definite, \(\hat I_{\pi,N}\preceq\hat I_{F,N}\) makes \(\hat\Sigma_{\mathrm{fp},N}\) positive semidefinite.

\begin{theorem}[Efficient estimation of a regression target]
\label{thm-regression-efficient-estimation}
Assume Assumptions~\ref{ass-A1}--\ref{ass-A2} and~\ref{ass-B1}--\ref{ass-B4}, together with Conditions~\ref{cond-regression-regularity}--\ref{cond-regression-root}.

\noindent
\textup{(i)} The estimator satisfies
\begin{align}
\sqrt N(\hat\beta_N-\beta_0)
&=
I_\pi^{-1}
\frac1{\sqrt N}\sum_{i=1}^N
\delta_i\frac{\dot\mu_0(X_i)}{\sigma^2(X_i)}\varepsilon_i
+o_{P_0^N}(1),
\label{eq-regression-efficient-sp-expansion}
\\
\sqrt N(\hat\beta_N-\beta_N)
&=
\frac1{\sqrt N}\sum_{i=1}^N
\{\delta_i I_\pi^{-1}-I_F^{-1}\}
\frac{\dot\mu_0(X_i)}{\sigma^2(X_i)}\varepsilon_i
+o_{Q_0^N}(1).
\label{eq-regression-efficient-fp-expansion}
\end{align}
Consequently, \(\hat\beta_N\) is locally regular and efficient for \(\beta_0\) and for the finite-population target in \eqref{eq-regression-finite-population-target}.

\noindent
\textup{(ii)} The variance estimators satisfy
\begin{equation}
\hat\Sigma_{\mathrm{sp},N}\xrightarrow[]{p}I_\pi^{-1},
\qquad
\hat\Sigma_{\mathrm{fp},N}\xrightarrow[]{p}I_\pi^{-1}-I_F^{-1}.
\label{eq-regression-variance-consistency}
\end{equation}
\end{theorem}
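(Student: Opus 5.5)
The plan is to verify the high-level Conditions~\ref{cond-general-root}--\ref{cond-general-variance} of Theorem~\ref{thm-efficient-estimation} for the estimating map $\hat\Psi_N(\beta)=\hat I_{\pi,N}^{-1}\hat U_N(\beta)$. This map has estimated contributions \eqref{eq-regression-estimated-eif}. Alternatively, I would run the standard $Z$-estimator argument directly on $\hat U_N$. Once \eqref{eq-regression-efficient-sp-expansion} is obtained, Proposition~\ref{prop-obs-efficiency-characterization} with $\phi^{\obs}$ from \eqref{eq-regression-eifs} gives local regularity and efficiency for $\beta_0$. The finite-population expansion \eqref{eq-regression-efficient-fp-expansion} then follows by subtracting the expansion of $\beta_N$ from Example~\ref{ex-eif-regression}, which I prove by the same Taylor argument applied to the census equation \eqref{eq-regression-finite-population-target} with $\sigma^2$ known and no sampling indicators.

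\textbf{Consistency.} I first show $\sup_{\beta\in U}\|\hat U_N(\beta)-\Psi_R(\beta)\|=o_p(1)$. Write $\delta_i=\pi_{N,i}+(\delta_i-\pi_{N,i})$. Replace $\hat\sigma^2_{-k}$ by $\sigma^2$ at a cost controlled by \eqref{eq-regression-variance-bounds}--\eqref{eq-regression-variance-L2}, Cauchy--Schwarz, and the envelope $b$. The terms with $\pi_{N,i}$ reduce, via Assumption~\ref{ass-B2}, to i.i.d.\ averages with $\pi(X_i)$; a uniform LLN over the compact $U$ with envelope $b(X)^2(|Y|+b(X))$ handles these, and the $\varepsilon_i$ part averages to zero. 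The terms with $\delta_i-\pi_{N,i}$ are handled through Assumption~\ref{ass-B1}. Conditionally on $\bm X_N$, I replace $Y_i$ by $m_0(X_i)$ plus a centered error, and the $X$-measurable part is $o_p(1)$ pointwise by Assumption~\ref{ass-B3}. Uniformity comes from a finite grid plus Lipschitz control in $\beta$ by $b$. Identification \eqref{eq-regression-score-identification} then gives $\hat\beta_N\to\beta_0$. The same decomposition applied to the Jacobian $-\hat I_{\pi,N}$ plus the term involving $\ddot\mu_\beta$ times the residual gives Condition~\ref{cond-general-jacobian}, with limit $-I_\pi$.

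\textbf{Score term.} The main step is showing
\[
N^{-1/2}\sum_i\delta_i\dot\mu_0(X_i)\varepsilon_i\{\hat\sigma^{-2}_{-k(i)}(X_i)-\sigma^{-2}(X_i)\}=o_p(1),
\]
since $\hat\sigma$ need not be independent of fold-$k$ indicators under dependent designs. The argument conditions on $\mathcal T_{N,k}$ together with $\bm\delta_N$. By Assumption~\ref{ass-B1} and the i.i.d.\ structure, the $Y_i$, $i\in I_k$, remain conditionally independent given $\bm X_N$, $\bm\delta_N$, and the out-of-fold observed outcomes, with $\mathbb E[\varepsilon_i\mid\cdot]=0$. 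I expect this conditional-independence verification to be the main obstacle, because the training field includes $\delta_j$ for $j\notin I_k$, which may be dependent on the in-fold $\delta_i$; non-informativeness gives $\bm Y_N\perp\bm\delta_N\mid\bm X_N$, which suffices. Given this, the conditional second moment of the sum is $N^{-1}\sum_i\delta_i\|\dot\mu_0\|^2\sigma^2(\hat\sigma^{-2}-\sigma^{-2})^2\lesssim N^{-1}\sum b^2(\hat\sigma^2-\sigma^2)^2=o_p(1)$ by \eqref{eq-regression-variance-L2}, and a conditional Chebyshev bound concludes. The remaining score $N^{-1/2}\sum_i\delta_i\dot\mu_0\varepsilon_i/\sigma^2$ is left as the leading term, and $\hat I_{\pi,N}\to I_\pi$ converts $\hat\Psi_N$'s normalization.

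\textbf{Expansions and variance estimation.} The finite-population expansion follows because $\sqrt N(\beta_N-\beta_0)=I_F^{-1}N^{-1/2}\sum\dot\mu_0\varepsilon_i/\sigma^2+o_{Q_0^N}(1)$ by the Taylor argument under Condition~\ref{cond-regression-regularity}. The $o_{P_0^N}$ remainder transfers to $o_{Q_0^N}$ because $P_0^N$ is the marginal of $Q_0^N$. For part (ii), $\hat I_{\pi,N}\to I_\pi$ was shown above, and $\hat I_{F,N}\to I_F$ follows by the same argument without indicators. Continuity of matrix inversion then gives \eqref{eq-regression-variance-consistency}.
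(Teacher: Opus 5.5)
Your proposal is correct and follows essentially the paper's route: you verify Conditions~\ref{cond-general-root}--\ref{cond-general-score} for $\hat\Psi_N=\hat I_{\pi,N}^{-1}\hat U_N$ through a uniform law of large numbers and the identification condition, and you control the cross-fitted weight remainder by a conditional second-moment bound given $\mathcal T_{N,k}$ and $\boldsymbol\delta_N$, justified by non-informativeness. You then obtain Assumption~\ref{ass-A3} from a Taylor expansion of the census equation and conclude with Proposition~\ref{prop-obs-efficiency-characterization} and continuity of matrix inversion, as the paper does. The only minor difference is in the uniform law of large numbers: you remove $\hat\sigma^2_{-k}$ from the whole score by Cauchy--Schwarz, which suffices at the $o_p(1)$ scale, whereas the paper keeps $\hat\sigma^2_{-k}$ in the residual term and bounds that term by the same conditional-independence argument.
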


Thus \(\hat\beta_N\) attains both bounds in Example~\ref{ex-eif-regression}, with both asymptotic variances estimated consistently. Theorem~A.3.18 in Appendix~A.3.6 gives the corresponding design-specific result.

\section{Numerical Experiments}
\label{sec-num_exp}

The numerical experiments in this section address two questions. Subsection~\ref{sec-num-design-invariance} examines whether efficient estimators have the same first-order behavior under fixed-stratum single-stage designs and two-stage \(\pi\)PS/SRS designs that share the same limiting ultimate first-order inclusion probabilities. Subsection~\ref{sec-num-regression} studies when the efficiency gain over conventional estimators is substantively large, rather than only checking that standardized errors are approximately Gaussian. Complete Monte Carlo output, standard-error calibration, and nuisance diagnostics are generated by the accompanying R scripts.

For $B$ Monte Carlo repetitions, let $\theta_b$ be the superpopulation or realized finite-population target, and let $\Sigma_{\mathrm{bound},b}$ be the corresponding empirical reference variance on the $\sqrt N$ scale, formed from the generating conditional moments and the exact inclusion probabilities. The empirical SD is the sample standard deviation of the $B$ values $\sqrt N(\hat\theta_b-\theta_b)$, with divisor $B-1$ in the sample variance. We report
\begin{equation}
\mathrm{SD/bound}
=
\frac{\text{empirical SD}}
{\{B^{-1}\sum_{b=1}^B\Sigma_{\mathrm{bound},b}\}^{1/2}}.
\label{eq-standardized-simulation-error}
\end{equation}
Coverage is the fraction of intervals $\hat\theta_b\pm1.96\,\widehat{\mathrm{SE}}_b$ containing $\theta_b$, using an estimated standard error rather than the reference bound. Appendix~B.1 specifies the reference variances and standard errors. For an efficient asymptotically normal estimator, the SD ratio should approach one and coverage should approach 0.95. At 1,000 repetitions, coverage near 0.95 has a Monte Carlo standard error of approximately 0.007.

\subsection{Fixed strata and self-weighted \texorpdfstring{\(\pi\)PS/SRS}{piPS/SRS} sampling}
\label{sec-num-design-invariance}

We use $N=10{,}000$ and independently generate each unit's stratum membership with probabilities
\[
(q_1,\ldots,q_5)=(0.10,0.15,0.20,0.25,0.30).
\]
Thus there are five fixed stratum categories, but their realized population counts are random. Let $H\in\{1,\ldots,5\}$ denote stratum membership. Conditional on $H=h$, generate $X_1=\mu_h+U_1$, where $U_1\sim N(0,1)$ and
\[
(\mu_1,\ldots,\mu_5)=(-1,-0.5,0,0.5,1),
\]
and independently generate $X_2\sim\operatorname{Unif}(-1,1)$. Put \(X=(H,X_1,X_2)\). This construction permits strong association between stratum membership and the continuous covariates while retaining a fixed number of strata. The limiting ultimate inclusion probabilities are constant within strata but differ across strata:
\[
(\rho_1,\ldots,\rho_5)=(0.20,0.20,0.25,0.28,0.30),
\qquad
\pi(X_i)=\rho_{H_i}.
\]

We compare four designs: independent Poisson sampling with probability $\rho_h$ in stratum $h$; stratified SRSWOR; one-stage SRSWOR of bounded-size PSUs within strata; and the self-weighted \(\pi\)PS/SRS design in Corollary~A.2.9 with Poisson sampling at the first stage. In the last design, the first-stage PSU inclusion probability is proportional to PSU size, ten units are selected within each sampled PSU, and the product of the stage-specific probabilities equals $\rho_h$. Unit and PSU sample sizes are rounded to integers for the two fixed-size designs. Their exact finite-population inclusion probabilities are used in estimation and reference variances; the four designs share the same limits, not necessarily identical probabilities at finite $N$. Appendix~B.1.1 gives the construction.

For the mean target, generate
\[
m_0(X)=\alpha_H+U_1+0.75(X_2^2-1/3),
\qquad
\Var(Y\mid X)=\exp(0.35U_1+0.15X_2),
\]
where $(\alpha_1,\ldots,\alpha_5)=(-1.25,-0.75,-0.25,0.25,0.75)$, so $\sum_hq_h\alpha_h=0$. For the regression target, let $Z=(1,U_1,X_2)^\top$ and generate
\[
\mathbb E(Y\mid X)=Z^\top\beta_0,
\qquad
\Var(Y\mid X)=\exp(0.65U_1),
\qquad
\beta_0=(0.5,1,-0.75)^\top.
\]
Both outcomes have conditionally Gaussian errors, independent across units. The main comparison uses the oracle mean EIF and oracle GLS so that nuisance estimation does not obscure the sampling-design comparison. The finite-population regression target is the full-population solution of \eqref{eq-regression-finite-population-target}, weighted by the true inverse conditional variance, rather than the full-population OLS coefficient. The lognormal variance is not bounded as required by Condition~\ref{cond-regression-regularity}; Appendix~B.1.7 verifies the oracle calculation directly for this setting. Five whole-PSU folds are used for feasible mean estimation under all four designs. Oracle and feasible mean results are compared in Appendix~B.

\begin{table}[H]

\centering
\small
\caption{Design-invariance experiment with i.i.d. stratum membership and $N=10{,}000$. Entries are empirical SD divided by the square root of the mean reference variance, with 95\% estimated-SE Wald coverage in parentheses. The four designs share the same limiting inclusion probabilities; exact finite-population probabilities after rounding are used in estimation and reference variances. Mean columns use the oracle EIF, and regression columns use oracle GLS. The regression columns report the coefficient of \(U_1\); SP and FP denote superpopulation and finite-population targets, respectively.}
\label{tab:fixed-strata-design-invariance}
\begin{tabular}{lcccc}
\toprule
Design & Mean SP & Mean FP & $\beta_1$ SP & $\beta_1$ FP \\
\midrule
Stratum-specific Poisson & 0.978 (0.958) & 0.993 (0.951) & 0.988 (0.952) & 0.996 (0.950) \\
Stratified SRSWOR & 0.977 (0.953) & 1.014 (0.946) & 0.993 (0.948) & 0.975 (0.951) \\
One-stage cluster & 1.011 (0.953) & 1.013 (0.950) & 0.986 (0.954) & 1.022 (0.954) \\
Two-stage self-weighted & 1.004 (0.956) & 1.035 (0.947) & 1.011 (0.943) & 1.004 (0.951) \\
\bottomrule
\end{tabular}
\end{table}

Across the 16 oracle design--target combinations in Table~\ref{tab:fixed-strata-design-invariance}, the SD ratio ranges from 0.975 to 1.035 and coverage from 0.943 to 0.958. For the self-weighted \(\pi\)PS/SRS design, the four ratios are 1.004, 1.035, 1.011, and 1.004, with coverage between 0.943 and 0.956. Appendix Table~B.3 shows that feasible mean estimation is close to its oracle counterpart, with a modest finite-sample cost under two-stage sampling: the finite-population mean has an SD ratio of 1.046 and coverage of 0.942. These results are consistent with first-order efficiency being governed by the limiting ultimate inclusion probabilities under the joint superpopulation--design law; they do not assert equality of design-conditional variances for a fixed population.
\FloatBarrier

\subsection{When efficiency gains are large}
\label{sec-num-regression}

The first experiment varies the prognostic strength of the phase-I covariates for mean estimation. Let \(X=(X_1,X_2)\), where \(X_1,X_2\) are independent \(\operatorname{Unif}(-1,1)\) variables, and define
\[
f(X)=
\frac{X_1+0.75(X_2^2-1/3)}
{\{1/3+0.75^2(4/45)\}^{1/2}},
\qquad
\mathbb E\{f(X)\}=0,
\quad
\Var\{f(X)\}=1.
\]
For $R^2\in\{0,0.25,0.50,0.75,0.90\}$, generate
\[
Y=\sqrt{R^2}\,f(X)+\sqrt{1-R^2}\,\varepsilon,
\qquad
\varepsilon\sim N(0,1),
\]
and use independent Poisson sampling with $\rho\in\{0.10,0.25,0.50\}$. The HT superpopulation asymptotic variance is $1/\rho$, and the corresponding efficiency bound is $R^2+(1-R^2)/\rho$.
Consequently, the theoretical relative variance reduction is
\begin{equation}
(1-\rho)R^2
\quad\text{for the superpopulation target},
\qquad
R^2
\quad\text{for the finite-population target}.
\label{eq-mean-efficiency-gain-formula}
\end{equation}
This experiment separates the roles of outcome predictability and sampling fraction.

The second experiment keeps the conditional mean model correctly specified and varies where heteroskedasticity occurs. Let $X\sim\operatorname{Unif}(-1,1)$, $Z=(1,X)^\top$, and
\[
Y=Z^\top\beta_0+\varepsilon,
\qquad
\beta_0=(0.5,1)^\top.
\]
For $c\in\{1,2,4,8\}$, compare
\[
\sigma^2_{c,\mathrm{high}}(X)=1+(c-1)\mathbf1\{|X|>0.7\}
\]
with
\[
\sigma^2_{c,\mathrm{low}}(X)=1+(c-1)\mathbf1\{|X|<0.3\}.
\]
We use Poisson sampling with $\rho=0.25$ and conditionally Gaussian errors. Both settings have the same variance contrast, but the first places the excess noise at high-leverage observations for slope estimation, whereas the second places it near the center of the covariate distribution. We compare complete-case OLS with oracle and feasible GLS. In this controlled comparison, the two variance regions are known; their levels are estimated from training-fold residual squares. With \(e_2=(0,1)^\top\), the theoretical relative efficiency for the slope is
\begin{equation}
\begin{gathered}
\frac{e_2^\top A_\pi^{-1}B_\pi A_\pi^{-1}e_2}
{e_2^\top I_\pi^{-1}e_2},
\qquad A_\pi=\mathbb E\{\pi(X)ZZ^\top\},\\
B_\pi=\mathbb E\{\pi(X)\sigma^2(X)ZZ^\top\},
\qquad I_\pi=\mathbb E\!\left\{\pi(X)\frac{ZZ^\top}{\sigma^2(X)}\right\}.
\end{gathered}
\label{eq-regression-relative-efficiency}
\end{equation}

Figure~\ref{fig-efficiency-gain-map} closely follows the theoretical predictions. At $R^2=0.75$ and $\rho=0.25$, the feasible estimator reduces variance relative to HT by 56.6\% for the superpopulation mean and 75.1\% for the finite-population mean, compared with theoretical reductions of 56.25\% and 75 \%. For regression with $c=8$, the ratio of the OLS variance to the feasible GLS variance is 2.411 when the excess variance is concentrated at high-leverage observations, but only 1.148 when it is concentrated at low-leverage observations. Thus the magnitude and location of heteroskedasticity both matter for the practical value of efficient variance weighting. Oracle and feasible results are nearly identical in these correctly specified, low-dimensional nuisance models. Appendix Tables~B.4 and~B.5 report representative numerical values and the feasible-to-oracle variance ratios at $N\in\{2{,}000,8{,}000,32{,}000\}$.

\begin{figure}[H]
\centering
\includegraphics[width=\textwidth]{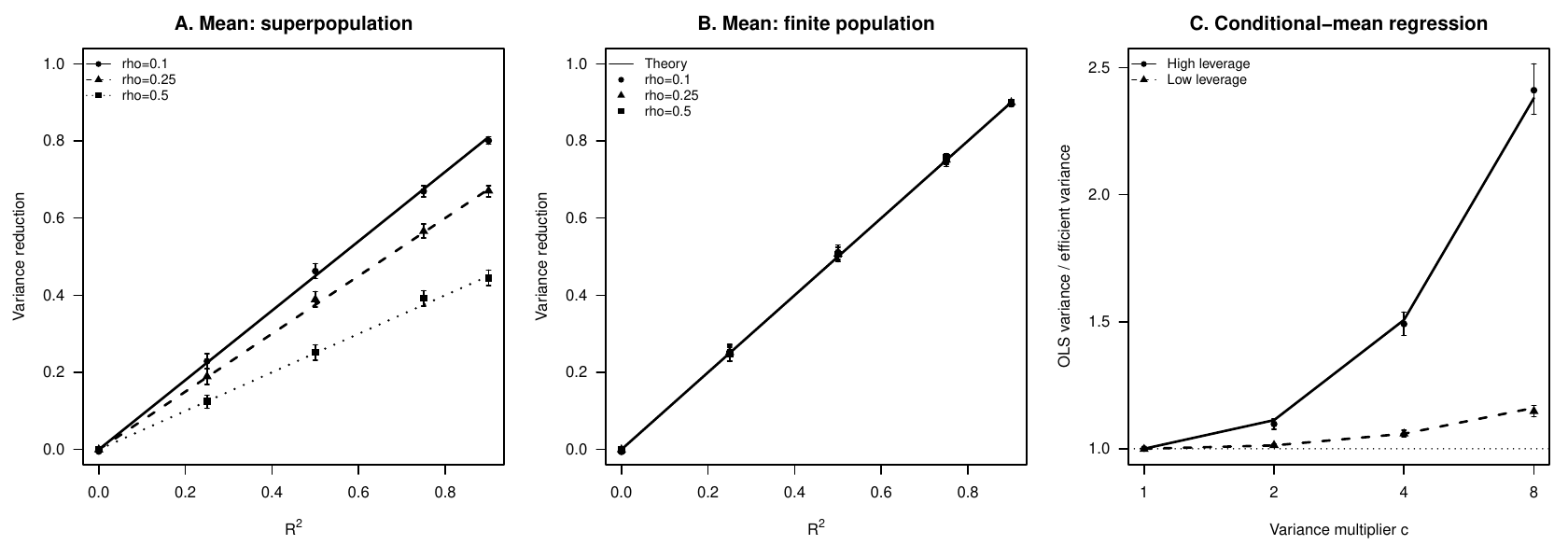}

\caption{Efficiency-gain map. Panels A and B show the variance reduction of the feasible EIF estimator relative to HT for the superpopulation and finite-population means. Panel C shows the OLS-to-efficient variance ratio for the slope when heteroskedasticity is concentrated at high- or low-leverage covariate values. Curves are theoretical values, points are Monte Carlo estimates based on 5,000 repetitions per setting, and error bars are paired Monte Carlo bootstrap intervals.}
\label{fig-efficiency-gain-map}
\end{figure}

\section{Real Data Analysis}
\label{sec-real-data}

We analyze the California Academic Performance Index (API) data distributed with the \texttt{survey} package \citep{Lumley2004}. We treat \texttt{apipop}, containing $N=6{,}194$ schools, as the phase-I population and \texttt{apistrat}, a stratified SRSWOR sample of $n=200$ schools, as the observed sample. School type defines the strata, with population size \(N_h\), sample size \(n_h\), and inclusion probability \(\pi_{N,i}=n_h/N_h\) in stratum \(h\). The outcome is the 2000 API. Outcomes outside \texttt{apistrat} are masked during this sample's estimation; the complete outcome vector is used to evaluate finite-population targets and to construct the separate repeated-sampling diagnostic. Appendix~B.1.4 gives implementation details.

The phase-I covariates are the 1999 API, percentages eligible for subsidized meals and classified as English learners, student mobility, average parental education, percentages of fully qualified and emergency-credentialed teachers, log enrollment, and school-type indicators. Missing continuous covariates are replaced by phase-I medians and standardized using phase-I means and standard deviations. Five folds are formed within school type using only phase-I information.

As a separate realism check, Appendix~B.1.3 compares Poisson, stratified SRSWOR, and self-weighted \(\pi\)PS/SRS sampling in simulations using populations calibrated to these data.

\subsection{Mean}
\label{sec-real-data-mean}

We compare four estimators. HT uses no outcome regression. GREG uses a survey-weighted linear regression fitted on the full observed sample. The cross-fitted linear augmentation estimator uses the same low-dimensional linear dictionary but evaluates each sampled school with a prediction trained outside its fold. The cross-fitted spline EIF estimator uses penalized additive regression. The last two estimators have the augmented form in \eqref{eq-mean-estimator-appendix}; they differ only in the nuisance learner. Finite-population variances use the same residual-linearization formula for stratified SRSWOR, and superpopulation variances add an HT estimate of the full-data contribution. Appendix~B.1.5 gives these formulas and their relation to the variance estimators in Section~\ref{sec-mean-target-estimation}.

\begin{table}[H]

\centering
\footnotesize
\caption{California API mean analysis with linear and spline augmentation diagnostics.}
\label{tab:api-mean-actual-diagnostic}
\begin{tabular}{llrrrr}
\toprule
Target & Estimator & Estimate & SE & Lower & Upper \\
\midrule
Superpopulation mean & HT & 662.287 & 9.538 & 643.594 & 680.981 \\
Superpopulation mean & GREG & 664.474 & 2.431 & 659.710 & 669.238 \\
Superpopulation mean & Cross-fitted linear augmentation & 664.179 & 2.601 & 659.081 & 669.278 \\
Superpopulation mean & Cross-fitted spline EIF & 664.244 & 2.616 & 659.116 & 669.372 \\
Finite-population mean & HT & 662.287 & 9.409 & 643.846 & 680.729 \\
Finite-population mean & GREG & 664.474 & 1.862 & 660.824 & 668.124 \\
Finite-population mean & Cross-fitted linear augmentation & 664.179 & 2.080 & 660.102 & 668.256 \\
Finite-population mean & Cross-fitted spline EIF & 664.244 & 2.099 & 660.131 & 668.357 \\
\bottomrule
\end{tabular}
\par\smallskip\parbox{0.95\textwidth}{\footnotesize Finite-population standard errors use the same stratified-SRSWOR residual-linearization formula; superpopulation variances additionally include the estimated full-data contribution. GREG residuals are evaluated in sample, whereas the two cross-fitted methods use out-of-fold predictions. The realized finite-population mean is 664.713.}
\end{table}

All four point estimates are close to the realized finite-population mean, 664.713, and all three regression-adjusted estimators are much more precise than HT. GREG has the smallest reported standard error in this particular sample. This ordering is not, by itself, an efficiency comparison.

The difference partly reflects \emph{in-sample optimism}. GREG reduces the weighted residual sum of squares in the same 200 schools whose residuals enter its linearization variance estimate, making them systematically smaller than the out-of-fold residuals used by the cross-fitted estimators. The weighted RMSEs are 24.04 for in-sample GREG, 26.79 for cross-fitted linear augmentation, and 27.01 for the cross-fitted spline. Appendix Table~B.6 gives the corresponding $R^2$ values.

A repeated stratified-sampling diagnostic conditional on the realized API population separates standard-error calibration from estimator dispersion, rather than assessing the joint superpopulation--design efficiency bound. For GREG, cross-fitted linear augmentation, and the spline EIF, respectively, empirical root-$N$ SDs are 163.9, 165.4, and 169.3, ratios of average estimated to empirical SD are 0.935, 1.010, and 1.003, and coverage is 0.924, 0.936, and 0.939. Appendix Table~B.7 reports the complete diagnostic. The low-dimensional linear estimator is stable at this sample size, while the flexible spline incurs a modest finite-sample cost. Thus, in a small, nearly linear application, a simple working model can outperform a flexible learner, while cross-fitting materially improves standard-error calibration.

\FloatBarrier

\subsection{Conditional mean regression}
\label{sec-real-data-regression}

Under the linear conditional mean working model, we compare complete-case OLS with feasible GLS using a cross-fitted estimate of $\sigma^2(X)=\Var(Y\mid X)$. Table~\ref{tab-api-regression} reports superpopulation coefficients for four standardized covariates. GLS plug-in standard errors use the fitted conditional variance, while sandwich standard errors are reported for both estimators. A dash in the OLS plug-in column indicates that no separate model-based calculation is reported. A cross-fitted Gamma log-link model supplies the variance estimates; Appendix~B.1.6 gives the fitting procedure, numerical checks, and covariance formulas.

\begin{table}[H]

\centering
\small
\caption{California API conditional-mean regression analysis for superpopulation coefficients. Continuous covariates are standardized using the phase-I population.}
\label{tab-api-regression}
\begin{tabular}{llrrr}
\toprule
Estimator & Coefficient & Estimate & Plug-in SE & Sandwich SE \\
\midrule
Complete-case OLS & $\mathrm{api99}_z$ & 117.716 & --- & 4.593 \\
Complete-case OLS & $\mathrm{meals}_z$ & 0.655 & --- & 5.223 \\
Complete-case OLS & $\mathrm{ell}_z$ & -5.853 & --- & 3.237 \\
Complete-case OLS & $\mathrm{avg.ed}_z$ & 2.110 & --- & 2.901 \\
\addlinespace
Feasible GLS & $\mathrm{api99}_z$ & 132.615 & 2.811 & 4.787 \\
Feasible GLS & $\mathrm{meals}_z$ & 3.225 & 3.467 & 5.273 \\
Feasible GLS & $\mathrm{ell}_z$ & -0.926 & 2.156 & 4.261 \\
Feasible GLS & $\mathrm{avg.ed}_z$ & -3.609 & 2.462 & 3.082 \\
\bottomrule
\end{tabular}
\par\smallskip\parbox{0.95\textwidth}{\footnotesize GLS uses a cross-fitted, scale-adaptive Gamma log-link variance model with linear and quadratic continuous-covariate terms. Fitting and numerical validation are detailed in Appendix B.1.6. A dash denotes an unreported, separate OLS plug-in SE. GLS plug-in SEs use the fitted variance, whereas sandwich SEs are model-sensitivity diagnostics.}
\end{table}

The OLS and feasible GLS estimates differ appreciably; the standardized 1999 API coefficient is 117.716 under OLS and 132.615 under GLS. The GLS sandwich standard errors are larger than the corresponding plug-in standard errors.  This discrepancy indicates sensitivity to the working models and finite-sample weighting, but does not isolate conditional-variance misspecification as the cause. As a diagnostic, the mean of the squared out-of-fold OLS prediction residual divided by the fitted conditional variance is 3.18, or 3.44 after inverse-inclusion-probability weighting. The fitted variances are therefore small relative to these prediction errors, which also include conditional-mean estimation error and possible mean-model misspecification. This diagnostic is not used to select or rescale the variance learner.

Under a correctly specified conditional mean model, OLS and GLS target the same coefficient. If that model is misspecified, their different weights generally define different projection coefficients; the displayed sandwich calculation, which treats fitted weights as given, does not by itself establish valid inference under arbitrary mean-model misspecification. This analysis illustrates implementation under the conditional mean working model, rather than demonstrating efficiency attainment in the API population.

\FloatBarrier

\section{Discussion}
\label{sec-discussion}

This paper establishes semiparametric efficiency theory under non-informative complex survey designs using the reference Poisson experiment, which permits the use of standard projection arguments for general full-data models, while the joint limits show that local regularity and efficiency are each equivalent under the two centerings. The numerical experiments also suggest several practical implications. For mean estimation, regression adjustment is most valuable when phase-I covariates are strongly prognostic, with the gain for the finite-population target persisting even at larger sampling fractions. For conditional-mean regression, variance weighting is most beneficial when heteroskedasticity occurs at covariate values with high leverage for the coefficient of interest. At moderate sample sizes, however, these asymptotic gains must be balanced against the cost of nuisance estimation: a simple working model may outperform a more flexible learner, while cross-fitting can still materially improve standard-error calibration.

Further applications within the present observation scheme include the cube method for balanced selection \citep{DevilleTille2004} and the local pivotal method for spatially balanced selection \citep{GrafstromLundstromSchelin2012}, with balancing variables or locations in population-wide covariates. Under the full-data and target assumptions, verifying Assumptions~\ref{ass-B1}--\ref{ass-B4} suffices to apply the efficiency theory in Section~3, while feasible attainment still requires the conditions in Section~4. Two-phase rejective sampling \citep{YangDing2025}, observing auxiliary covariates only in the first-phase sample, would instead require a new observed-score operator and joint-limit analysis, not merely verification of Assumptions~\ref{ass-B1}--\ref{ass-B4}.

Future work includes informative sampling, outcome dependence within clusters, non-negligible dependence blocks, and design-conditional inference for fixed populations. Further directions are to verify additional sampling designs and derive concrete attainment conditions for other pathwise-differentiable targets. Extending the correspondence to targets with other first-order census fluctuations requires separate joint-limit and efficiency analysis. These extensions should distinguish the geometry determining a lower bound from the nuisance-estimation conditions needed by a feasible procedure.

\clearpage
\appendix
\numberwithin{equation}{subsection}
\numberwithin{theorem}{subsection}
\renewcommand{\theHequation}{\thesection.\arabic{subsection}.\arabic{equation}}
\renewcommand{\theHtheorem}{\thesection.\arabic{subsection}.\arabic{theorem}}

\numberwithin{table}{section}
\numberwithin{figure}{section}

\section{Technical proofs and design verifications}
\label{app-technical-details}
\subsection{Proofs of the efficiency results}\label{app-proofs}

\begin{lemma}[DQM for marginal and conditional distributions]
\label{lem-dqm-factorization-obs}
Let \(t\mapsto P_t^F\) be a one-dimensional submodel through \(P_0^F\) with joint density \(f_t(x,y)\). Suppose the submodel is DQM at \(0\) with score \(g\in L^4(P_0^F)\), and \(P_t^F\ll P_0^F\) for all sufficiently small \(|t|\). Put
\[
k(x)=\mathbb E[g(L)\mid X=x],
\quad
s(x,y)=g(x,y)-k(x).
\]
Then the marginal path of \(X\) is DQM with score \(k\), and the conditional path of \(Y\mid X\) is DQM on average with score \(s\).
\end{lemma}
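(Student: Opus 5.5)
We work with square roots of densities relative to a dominating measure \(\mu\times\nu\) and write \(f_t(x,y)=p_t(x)\,q_t(y\mid x)\), where \(p_t(x)=\int f_t(x,y)\,dy\) is the marginal density and \(q_t=f_t/p_t\) on \(\{p_t>0\}\). DQM of the joint path means
\[
\int\Bigl\{\sqrt{f_t}-\sqrt{f_0}-\tfrac{t}{2}g\sqrt{f_0}\Bigr\}^2\,d\mu\,d\nu=o(t^2).
\]
The claim for the marginal follows from the standard fact that DQM is preserved under measurable maps, here the projection \(L\mapsto X\), with score equal to the conditional expectation of the original score \citep[cf.][]{BickelKlaassenRitovWellner1993,VanDerVaart1998}. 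We would reprove it directly using Cauchy--Schwarz in \(y\).

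Fix \(x\) and write \(a_t(y)=\sqrt{f_t(x,y)}\). Then \(\sqrt{p_t(x)}=\|a_t\|_{L^2(\nu)}\). By the reverse triangle inequality and a first-order expansion of the norm,
\[
\Bigl|\|a_t\|-\|a_0\|-\tfrac t2\langle g a_0,a_0\rangle/\|a_0\|\Bigr|
\le \|a_t-a_0-\tfrac t2 g a_0\|+\text{(curvature term)},
\]
and \(\langle g a_0,a_0\rangle/\|a_0\|=k(x)\sqrt{p_0(x)}\). Squaring and integrating over \(x\) turns the first term into the joint DQM remainder, which is \(o(t^2)\).

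The curvature term is the delicate point. Rather than expanding the norm, we would use the exact identity
\[
\|a_t\|-\|a_0\|=\frac{\|a_t\|^2-\|a_0\|^2}{\|a_t\|+\|a_0\|}
\]
and handle the numerator via \(a_t^2-a_0^2=(a_t-a_0)(a_t+a_0)\). This leaves quadratic terms of order \(\|a_t-a_0\|^2\). Their squared \(x\)-integral is controlled by \(\int\|a_t-a_0\|^4\,d\mu\). We would bound this by splitting on whether \(\|a_t-a_0\|\le\|a_0\|\), which reduces it to \(o(t^2)\) using the DQM remainder together with uniform integrability of \(t^{-2}\|a_t-a_0\|^2\). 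That uniform integrability follows from \(L^2\) convergence of \(t^{-1}(a_t-a_0)\to\tfrac12 g a_0\). This is the standard argument that the marginal path is DQM with score \(k\).

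For the conditional part, "DQM on average" means
\[
\int p_0(x)\int\Bigl\{\sqrt{q_t}-\sqrt{q_0}-\tfrac t2 s\sqrt{q_0}\Bigr\}^2\,d\nu\,d\mu=o(t^2).
\]
We would start from the algebraic decomposition
\[
\sqrt{f_t}-\sqrt{f_0}=\sqrt{p_0}\bigl(\sqrt{q_t}-\sqrt{q_0}\bigr)+\sqrt{q_t}\bigl(\sqrt{p_t}-\sqrt{p_0}\bigr).
\]
Subtracting \(\tfrac t2 g\sqrt{f_0}=\tfrac t2 s\sqrt{p_0q_0}+\tfrac t2 k\sqrt{p_0q_0}\) gives
\[
\sqrt{p_0}\bigl(\sqrt{q_t}-\sqrt{q_0}-\tfrac t2 s\sqrt{q_0}\bigr)
=R^F_t-\sqrt{q_t}\,R^X_t-\tfrac t2 k\sqrt{p_0}\bigl(\sqrt{q_t}-\sqrt{q_0}\bigr),
\]
where \(R^F_t\) and \(R^X_t\) are the joint and marginal DQM remainders. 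Because \(\int q_t\,d\nu=1\), the middle term has squared norm \(\|R^X_t\|^2=o(t^2)\) by the first part.

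The last term is \(t\) times \(k\sqrt{p_0}(\sqrt{q_t}-\sqrt{q_0})\), so we need the latter to be \(o(1)\) in \(L^2\). This is the main obstacle, since \(k\) may be unbounded. We would use \(g\in L^4\), which gives \(k\in L^4(P_0)\), and split on \(\{|k|\le M\}\). On that set the term is at most \(M\) times \(\|\sqrt{p_0}(\sqrt{q_t}-\sqrt{q_0})\|\to0\). That bound holds by the decomposition above, since both \(\sqrt{f_t}-\sqrt{f_0}\) and \(\sqrt{q_t}(\sqrt{p_t}-\sqrt{p_0})\) are \(O(t)\).

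On \(\{|k|>M\}\), we have
\[
\int_{|k|>M}k^2p_0\int(\sqrt{q_t}-\sqrt{q_0})^2\,d\nu\le 2\,\mathbb E\bigl[k^2\mathbf 1\{|k|>M\}\bigr],
\]
which is small uniformly in \(t\). The absolute continuity \(P_t^F\ll P_0^F\) ensures that \(p_t>0\) only where \(p_0>0\), so \(q_t\) is well defined on the relevant set. Letting \(t\to0\) and then \(M\to\infty\) completes the proof.
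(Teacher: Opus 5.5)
Your proof is correct, but it takes a different route from the paper, which proves the lemma in one line by citing \citet[Theorem~4.3]{Inagaki1983} for the factorization $f_t(x,y)=m_t(x)f_t(y\mid x)$.

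You instead reprove that decomposition directly. For the marginal, you use the fact that Hellinger differentiability passes to the law of $X$ with score $\mathbb E[g\mid X]$. For the conditional, you use the exact identity $\sqrt{f_t}-\sqrt{f_0}=\sqrt{p_0}(\sqrt{q_t}-\sqrt{q_0})+\sqrt{q_t}(\sqrt{p_t}-\sqrt{p_0})$. This bounds the conditional remainder by three pieces: the joint remainder, the marginal remainder (via $\int q_t\,d\nu=1$), and $t$ times the cross term $k\sqrt{p_0}(\sqrt{q_t}-\sqrt{q_0})$, which truncating $k$ disposes of.

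The citation is shorter and delegates the measure-theoretic bookkeeping. Your argument is self-contained and shows the lemma needs less than is assumed. The truncation uses only $k\in L^2$, and neither $g\in L^4$ nor $P_t^F\ll P_0^F$ enters. In the paper these hypotheses matter downstream: $L^4$ for the fourth-moment and Lindeberg bounds, and absolute continuity for the normalization $\mathbb E[R_t(L)^2\mid X]=1$ in Lemma~\ref{lem-sampled-conditional-likelihood-expansion}.

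Three details need fixing in a write-up.

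First, in the direct marginal reproof the quadratic term is $\|a_t-a_0\|^2/(\sqrt{p_t}+\sqrt{p_0})$. The quantity to control is therefore $\int\|a_t-a_0\|^4(\sqrt{p_t}+\sqrt{p_0})^{-2}\,d\mu$. It is not controlled by $\int\|a_t-a_0\|^4\,d\mu$, because the denominator is small where $p_0$ is small. Your split on $\|a_t-a_0\|\le\|a_0\|$, combined with the $L^1(\mu)$ convergence of $t^{-2}\|a_t-a_0\|^2$, does work once the denominator is kept.

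Second, replacing $\sqrt{p_t}+\sqrt{p_0}$ by $2\sqrt{p_0}$ in the linear term leaves an extra piece with squared norm $\tfrac{t^2}{4}\mathbb E[k(X)^2\rho_t(X)^2]$. Here $\rho_t=|\sqrt{p_t}-\sqrt{p_0}|/(\sqrt{p_t}+\sqrt{p_0})\le 1$ tends to zero in $P_0$-probability, so this piece is $o(t^2)$ by dominated convergence.

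Third, absolute continuity is not what makes $q_t$ well defined. The troublesome set is $\{p_t=0<p_0\}$, where you need a convention such as $q_t=q_0$, as the paper uses in Lemma~\ref{lem-sampled-conditional-likelihood-expansion}. Your identity holds for any choice there, and your bounds use only $\int q_t\,d\nu=1$.
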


\begin{proof}
This is \citet[Theorem~4.3]{Inagaki1983} applied to the factorization \(f_t(x,y)=m_t(x)f_t(y\mid x)\).
\end{proof}

\begin{lemma}[Sampled conditional likelihood expansion]
\label{lem-sampled-conditional-likelihood-expansion}
Assume the setting of Lemma~\ref{lem-dqm-factorization-obs} and Assumption~{B1}. Put
\[
v(X)=\E[s(L)^2\mid X].
\]
Let \(u\in\R\) be fixed. Then, under \(P_0^N\),
\[
\sum_{i=1}^N\delta_i
\log\frac{f_{u/\sqrt N}(Y_i\mid X_i)}{f_0(Y_i\mid X_i)}
=
\frac{u}{\sqrt N}\sum_{i=1}^N\delta_i s(L_i)
-
\frac{u^2}{2N}\sum_{i=1}^N\delta_i v(X_i)
+
o_{P_0^N}(1).
\]
\end{lemma}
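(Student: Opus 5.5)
The plan is to condition on $(\bm X_N,\boldsymbol\delta_N)$ and reduce to a classical triangular-array LAN argument for conditional DQM paths, in the spirit of Le Cam's second lemma. By Assumption~B1, given $(\bm X_N,\boldsymbol\delta_N)$ the outcomes $Y_i$ are independent with densities $f_0(\cdot\mid X_i)$. Hence the summands $\delta_i\log\{f_{t}(Y_i\mid X_i)/f_0(Y_i\mid X_i)\}$, with $t=u/\sqrt N$, are conditionally independent. The sampling indicators only select which of them appear. Write
\[
W_{i}=2\left\{\sqrt{f_{t}(Y_i\mid X_i)/f_0(Y_i\mid X_i)}-1\right\}.
\]
By Lemma~\ref{lem-dqm-factorization-obs}, the conditional path is DQM on average with score $s$:
\[
\mathbb E\bigl[\{W_i-t\,s(L_i)\}^2\bigr]=o(t^2),
\]
where the expectation is under $P_0^F$ and averages over $X$.

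First, I would show
\[
\sum_i\delta_i\{W_i-t s(L_i)\}=o_{P}(1).
\]
Since $0\le\delta_i\le1$, the second moment of this sum is controlled as follows. The conditional mean part is handled by $\mathbb E[W_i\mid X_i]=-\mathbb E[W_i^2\mid X_i]/4$, which is $O(t^2)$ on average, together with $\mathbb E[s\mid X]=0$. The conditional variance part is bounded by $\sum_i\mathbb E[\{W_i-ts(L_i)\}^2]=N\,o(t^2)=o(1)$. For the mean part, the centered remainder is
\[
\sum_i\delta_i\bigl(\mathbb E[W_i\mid X_i]+\mathbb E[W_i^2\mid X_i]/4\bigr),
\]
which vanishes exactly, because $\mathbb E[W_i\mid X_i]=-\tfrac14\mathbb E[W_i^2\mid X_i]$ when the conditional density integrates to one. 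Absolute continuity is assumed, so no singular part appears. Second, using DQM on average, I would show
\[
\sum_i\delta_i W_i^2=\frac{u^2}{N}\sum_i\delta_i v(X_i)+o_P(1),
\]
by comparing with $t^2\sum_i\delta_i s(L_i)^2$. I would then replace $s^2$ by its conditional mean $v$ given $X$ via a conditional weak law: the differences are conditionally independent and mean zero given $(\bm X_N,\boldsymbol\delta_N)$, and the $L^4$ hypothesis on $g$ (hence on $s$) gives a variance of order $N^{-1}$.

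Third, I would apply the expansion $\log(1+x/2)=x/2-x^2/8+x^2R(x)$ with $R(x)\to0$ as $x\to0$. This requires $\max_i\delta_i|W_i|=o_P(1)$. That follows from
\[
P\Bigl(\max_i|W_i|>\epsilon\Bigr)\le N\,P(|W_1|>\epsilon),
\]
together with the uniform integrability of $N W_1^2$, which DQM on average and $s\in L^2$ provide via a Lindeberg-type argument. Combining the pieces, the selected log-likelihood ratio equals
\[
\tfrac12\sum_i\delta_iW_i-\tfrac18\sum_i\delta_iW_i^2+o_P(1).
\]
The first term is
\[
\frac{t}{2}\sum_i\delta_i s(L_i)-\frac{t^2}{8}\sum_i\delta_i v(X_i)+o_P(1),
\]
using the mean identity above. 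The second term is
\[
\frac{t^2}{8}\sum_i\delta_i v(X_i),
\]
so the quadratic terms sum to $-\tfrac{t^2}{4}\sum_i\delta_i v(X_i)$. The factor $\tfrac12$ on the linear term needs correcting: with the paper's convention the score of $\sqrt{f}$ is $s/2$, so DQM reads $W_i\approx t s$ under the definition $W_i=2(\sqrt{\cdot}-1)$. Redoing the expansion with $\log(1+W/2)=W/2-W^2/8$ and $W\approx ts$ gives the linear term $t\sum_i\delta_i s/2$, which does not match the statement. I will therefore instead use
\[
\log(\text{ratio})=2\log(1+W/2),\qquad 2\log(1+W/2)=W-W^2/4,
\]
which gives the linear term $t\sum_i\delta_i s(L_i)$. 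Its conditional mean correction is $-\tfrac{t^2}{4}\sum_i\delta_i v$, and together with the $-\tfrac{t^2}{4}\sum_i\delta_i v$ from the $W^2$ term the total is $-\tfrac{u^2}{2N}\sum_i\delta_i v(X_i)$, as claimed.

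The main obstacle is that DQM on average holds only in an $X$-averaged sense, while the indicators $\delta_i$ depend on $\bm X_N$ and are dependent across units. My approach throughout is to bound every error by dropping $\delta_i\le1$ and working with unconditional sums over all $N$ units, which are i.i.d. This makes the design dependence irrelevant for the remainder terms and needs nothing beyond Assumption~B1.
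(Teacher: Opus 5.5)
Your corrected argument is essentially the paper's proof. With $W_i=2\omega_{N,i}$, so that $W_i-ts(L_i)=2t\,a_t(L_i)$, you expand $2\log(1+W_i/2)=W_i-W_i^2/4$. You use the normalization $\E[R_t^2\mid X]=1$ (your identity $\E[W\mid X]=-\tfrac14\E[W^2\mid X]$) for the conditional mean. You bound the centered sums by conditional variances given $(\bm X_N,\boldsymbol\delta_N)$ after dropping $\delta_i\le1$, replace $s^2$ by $v$ via the fourth moment, and control $\max_i\delta_i|W_i|$. Your uniform-integrability argument for the maximum is a valid alternative to the paper's Markov bound through $\E|s(L)|^4$.

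One fix is needed. The displayed claim in your first step, $\sum_i\delta_i\{W_i-ts(L_i)\}=o_P(1)$, is false. Its conditional-mean part is
\[
-\tfrac14\sum_i\delta_i\E[W_i^2\mid X_i]=-\tfrac{u^2}{4N}\sum_i\delta_i v(X_i)+o_P(1),
\]
which is an $O(t^2)$ term summed over $N$ units and hence $O_P(1)$. Only the centered part is $o_P(1)$. This nonvanishing mean term is exactly what your final paragraph correctly uses to supply half of the quadratic coefficient.
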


\begin{proof}
If \(u=0\), the statement is obvious. Suppose \(u \neq 0\), and set \(f_t(\cdot\mid x)=f_0(\cdot\mid x)\) whenever \(m_t(x)=0<m_0(x)\).
Let
\[
R_t(L)=\left\{\frac{f_t(Y\mid X)}{f_0(Y\mid X)}\right\}^{1/2},
\quad
a_t(L)=\frac{R_t(L)-1-\frac{t}{2}s(L)}{t},
\quad t\ne0,
\]
\[
\omega_{N,i}=R_{u/\sqrt N}(L_i)-1
=
\frac{u}{2\sqrt N}s(L_i)+\frac{u}{\sqrt N}a_{u/\sqrt N}(L_i).
\]
Thus
\begin{equation}
R_t(L)=1+\frac{t}{2}s(L)+ta_t(L),
\quad
\log\frac{f_t(Y\mid X)}{f_0(Y\mid X)}=2\log R_t(L).
\label{eq-lem-sampled-cond-identities}
\end{equation}
Lemma~\ref{lem-dqm-factorization-obs} gives, as \(t \to 0\),
\begin{equation}
\E[a_t(L)^2]\to0,
\quad
\E[s(L)\mid X]=0,
\quad
\E[s(L)^2\mid X]=v(X).
\label{eq-lem-sampled-cond-dqm}
\end{equation}
Moreover,
\begin{equation}
\E|s(L)|^4
\le
8\E|g(L)|^4+8\E|k(X)|^4
\le
16\E|g(L)|^4
<\infty,
\label{eq-lem-sampled-cond-s-fourth}
\end{equation}
and
\begin{equation}
\E|s(L)a_t(L)|
\le
\{\E s(L)^2\}^{1/2}\{\E a_t(L)^2\}^{1/2}
\to0.
\label{eq-lem-sampled-cond-sa}
\end{equation}
Since \(\E[R_t(L)^2\mid X]=1\),
\begin{align}
0
&=
\E[R_t(L)^2-1\mid X] \notag\\
&=
2t\E[a_t(L)\mid X]
+
\frac{t^2}{4}v(X)
+
t^2\E[s(L)a_t(L)\mid X]
+
t^2\E[a_t(L)^2\mid X].
\label{eq-lem-sampled-cond-normalization}
\end{align}
By \eqref{eq-lem-sampled-cond-dqm} and \eqref{eq-lem-sampled-cond-sa},
\begin{align}
\E\left|
\frac{u^2}{N}\sum_{i=1}^N\delta_i
\E[s(L_i)a_{u/\sqrt N}(L_i)\mid X_i]
\right|
&\le
u^2\E|s(L)a_{u/\sqrt N}(L)|
=o(1),
\notag\\
\E\left[
\frac{u^2}{N}\sum_{i=1}^N\delta_i
\E[a_{u/\sqrt N}(L_i)^2\mid X_i]
\right]
&\le
u^2\E[a_{u/\sqrt N}(L)^2]
=o(1),
\notag\\
\E\left|
\frac{u^2}{N}\sum_{i=1}^N\delta_i s(L_i)a_{u/\sqrt N}(L_i)
\right|
&\le
u^2\E|s(L)a_{u/\sqrt N}(L)|
=o(1),
\notag\\
\E\left[
\frac{u^2}{N}\sum_{i=1}^N\delta_i a_{u/\sqrt N}(L_i)^2
\right]
&\le
u^2\E[a_{u/\sqrt N}(L)^2]
=o(1).
\label{eq-lem-sampled-cond-a-basic-rem}
\end{align}
Moreover,
\begin{align}
&\E\left[
\left\{
\frac{2u}{\sqrt N}\sum_{i=1}^N\delta_i
\left[
a_{u/\sqrt N}(L_i)-\E[a_{u/\sqrt N}(L_i)\mid X_i]
\right]
\right\}^2
\,\middle|\,
\bm X_N,\boldsymbol\delta_N
\right]\notag\\
&\quad\le
4\frac{u^2}{N}\sum_{i=1}^N\delta_i
\E[a_{u/\sqrt N}(L_i)^2\mid X_i].
\label{eq-lem-sampled-cond-a-centered}
\end{align}
Hence \eqref{eq-lem-sampled-cond-a-basic-rem} and \eqref{eq-lem-sampled-cond-a-centered} imply
\begin{equation}
\frac{2u}{\sqrt N}\sum_{i=1}^N\delta_i
\left[
a_{u/\sqrt N}(L_i)-\E[a_{u/\sqrt N}(L_i)\mid X_i]
\right]
=o_{P_0^N}(1).
\label{eq-lem-sampled-cond-a-centered-op}
\end{equation}
Summing \eqref{eq-lem-sampled-cond-normalization} over sampled units and using
\eqref{eq-lem-sampled-cond-a-basic-rem} gives
\begin{equation}
\frac{2u}{\sqrt N}\sum_{i=1}^N\delta_i
\E[a_{u/\sqrt N}(L_i)\mid X_i]
=
-\frac{u^2}{4N}\sum_{i=1}^N\delta_i v(X_i)
+
o_{P_0^N}(1).
\label{eq-lem-sampled-cond-a-mean-sum}
\end{equation}
Combining \eqref{eq-lem-sampled-cond-a-mean-sum} and
\eqref{eq-lem-sampled-cond-a-centered-op},
\begin{equation}
\frac{2u}{\sqrt N}\sum_{i=1}^N\delta_i a_{u/\sqrt N}(L_i)
=
-\frac{u^2}{4N}\sum_{i=1}^N\delta_i v(X_i)
+
o_{P_0^N}(1).
\label{eq-lem-sampled-cond-a-sum}
\end{equation}
On the other hand, by \eqref{eq-lem-sampled-cond-a-basic-rem},
\begin{align}
\sum_{i=1}^N\delta_i\omega_{N,i}^2
&=
\sum_{i=1}^N\delta_i
\left\{
\frac{u}{2\sqrt N}s(L_i)+\frac{u}{\sqrt N}a_{u/\sqrt N}(L_i)
\right\}^2 \notag\\
&=
\frac{u^2}{4N}\sum_{i=1}^N\delta_i s(L_i)^2
+
\frac{u^2}{N}\sum_{i=1}^N\delta_i s(L_i)a_{u/\sqrt N}(L_i)
+
\frac{u^2}{N}\sum_{i=1}^N\delta_i a_{u/\sqrt N}(L_i)^2 \notag\\
&=
\frac{u^2}{4N}\sum_{i=1}^N\delta_i s(L_i)^2
+
o_{P_0^N}(1).
\label{eq-lem-sampled-cond-omega-square-1}
\end{align}
By \eqref{eq-lem-sampled-cond-dqm} and \eqref{eq-lem-sampled-cond-s-fourth},
\begin{align}
&\E\left[
\left\{
\frac{u^2}{N}\sum_{i=1}^N\delta_i
\left[
s(L_i)^2-v(X_i)
\right]
\right\}^2
\,\middle|\,
\bm X_N,\boldsymbol\delta_N
\right]
\le
\frac{u^4}{N^2}\sum_{i=1}^N\delta_i
\E[s(L_i)^4\mid X_i].
\notag
\end{align}
Also,
\[
\E\left[
\frac{u^4}{N^2}\sum_{i=1}^N\delta_i
\E[s(L_i)^4\mid X_i]
\right]
\le
\frac{u^4}{N}\E|s(L)|^4
\to0.
\]
Therefore
\begin{equation}
\frac{u^2}{N}\sum_{i=1}^N\delta_i
\{s(L_i)^2-v(X_i)\}
=
o_{P_0^N}(1).
\label{eq-lem-sampled-cond-s2-v}
\end{equation}
Combining \eqref{eq-lem-sampled-cond-omega-square-1} and \eqref{eq-lem-sampled-cond-s2-v},
\begin{equation}
\sum_{i=1}^N\delta_i\omega_{N,i}^2
=
\frac{u^2}{4N}\sum_{i=1}^N\delta_i v(X_i)
+
o_{P_0^N}(1).
\label{eq-lem-sampled-cond-omega-square}
\end{equation}
Moreover,
\[
\E\left[\frac{u^2}{N}\sum_{i=1}^N\delta_i v(X_i)\right]
\le
u^2\E[v(X)]
<\infty,
\]
so \(\sum_{i=1}^N\delta_i\omega_{N,i}^2=O_{P_0^N}(1)\). For every \(\varepsilon>0\),
\begin{align}
&P_0^N\left(\max_{1\le i\le N}\delta_i|\omega_{N,i}|>\varepsilon\right)\notag\\
&\quad\le
\sum_{i=1}^N
\E\left[
\delta_i\mathbf 1\{|\frac{u}{\sqrt N}s(L_i)|>\varepsilon\}
\right]\notag\\
&\qquad{}+
\sum_{i=1}^N
\E\left[
\delta_i\mathbf 1\left\{|\frac{u}{\sqrt N}a_{u/\sqrt N}(L_i)|>\frac{\varepsilon}{2}\right\}
\right] \notag\\
&\quad\le
N\E\left[
\mathbf 1\{|\frac{u}{\sqrt N}s(L)|>\varepsilon\}
\right]
+
N\E\left[
\mathbf 1\left\{|\frac{u}{\sqrt N}a_{u/\sqrt N}(L)|>\frac{\varepsilon}{2}\right\}
\right] \notag\\
&\quad\le
\frac{u^4}{\varepsilon^4 N}\E|s(L)|^4
+
\frac{4u^2}{\varepsilon^2}\E[a_{u/\sqrt N}(L)^2]
\to0.
\label{eq-lem-sampled-cond-max}
\end{align}
For \(0<\eta<1\), set
\[
A_{N,\eta}
=
\left\{
\max_{1\le i\le N}\delta_i|\omega_{N,i}|\le\eta
\right\},
\quad
c(\eta)
=
\sup_{0<|z|\le\eta}
\left|
\frac{2\log(1+z)-2z+z^2}{z^2}
\right|.
\]
Then
\begin{equation}
c(\eta)\to0
\quad \text{as } \eta\downarrow0.
\label{eq-lem-sampled-cond-log-taylor}
\end{equation}
Moreover,
\begin{align}
&\left|
\sum_{i=1}^N\delta_i
\left[
2\log(1+\omega_{N,i})
-
2\omega_{N,i}
+
\omega_{N,i}^2
\right]
\right|
\mathbf 1_{A_{N,\eta}}
\le
c(\eta)
\sum_{i=1}^N\delta_i\omega_{N,i}^2.
\label{eq-lem-sampled-cond-log-rem-bound}
\end{align}
Hence, for every \(\varepsilon>0\),
\begin{align}
&\limsup_{N\to\infty}
P_0^N\left(
\left|
\sum_{i=1}^N\delta_i
\left[
2\log(1+\omega_{N,i})
-
2\omega_{N,i}
+
\omega_{N,i}^2
\right]
\right|>\varepsilon
\right) \notag\\
&\quad\le
\limsup_{N\to\infty}P_0^N(A_{N,\eta}^c)
+
\limsup_{N\to\infty}
P_0^N\left(
c(\eta)\sum_{i=1}^N\delta_i\omega_{N,i}^2>\varepsilon
\right)
\to0
\end{align}
as \(\eta\downarrow0\), by \eqref{eq-lem-sampled-cond-omega-square},
\eqref{eq-lem-sampled-cond-max}, and \eqref{eq-lem-sampled-cond-log-taylor}. Therefore
\begin{equation}
\sum_{i=1}^N\delta_i
\left[
2\log(1+\omega_{N,i})
-
2\omega_{N,i}
+
\omega_{N,i}^2
\right]
=
o_{P_0^N}(1).
\label{eq-lem-sampled-cond-log-rem}
\end{equation}
Finally, by \eqref{eq-lem-sampled-cond-identities}, \eqref{eq-lem-sampled-cond-a-sum}, \eqref{eq-lem-sampled-cond-omega-square}, and \eqref{eq-lem-sampled-cond-log-rem},
\begin{align*}
&\sum_{i=1}^N\delta_i
\log\frac{f_{u/\sqrt N}(Y_i\mid X_i)}{f_0(Y_i\mid X_i)}\\
&\quad=
2\sum_{i=1}^N\delta_i\omega_{N,i}
-
\sum_{i=1}^N\delta_i\omega_{N,i}^2
+
o_{P_0^N}(1) \\
&\quad=
\frac{u}{\sqrt N}\sum_{i=1}^N\delta_i s(L_i)
+
\frac{2u}{\sqrt N}\sum_{i=1}^N\delta_i a_{u/\sqrt N}(L_i)\\
&\qquad{}-
\frac{u^2}{4N}\sum_{i=1}^N\delta_i v(X_i)
+
o_{P_0^N}(1) \\
&\quad=
\frac{u}{\sqrt N}\sum_{i=1}^N\delta_i s(L_i)
-
\frac{u^2}{2N}\sum_{i=1}^N\delta_i v(X_i)
+
o_{P_0^N}(1).
\end{align*}
\end{proof}

\begin{lemma}[Conditional Lindeberg--Feller central limit theorem]
\label{lem-conditional-lindeberg-feller}
Let \(\mathcal G_N\) be sub-\(\sigma\)-fields and, conditionally on \(\mathcal G_N\), let \(\xi_{N,1},\dots,\xi_{N,N}\) be independent mean-zero \(d\)-vectors. If
\[
\sum_{i=1}^N
\mathbb E[\xi_{N,i}\xi_{N,i}^\top\mid\mathcal G_N]
\xrightarrow[]{p}V
\]
for a nonrandom matrix \(V\), and for every \(\varepsilon>0\),
\[
\sum_{i=1}^N
\mathbb E[\|\xi_{N,i}\|^2
\mathbf 1\{\|\xi_{N,i}\|>\varepsilon\}
\mid\mathcal G_N]
\xrightarrow[]{p}0,
\]
then
\[
\mathbb E\left[
\exp\left\{it^\top\sum_{i=1}^N\xi_{N,i}\right\}
\middle|\mathcal G_N
\right]
\xrightarrow[]{p}
\exp(-t^\top Vt/2)
\]
for every \(t\in\mathbb R^d\). Consequently, \(\sum_i\xi_{N,i}\xrightarrow[]{d} N_d(0,V)\), stably with respect to \(\mathcal G_N\).
\end{lemma}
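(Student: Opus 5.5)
The plan is to reduce to the classical unconditional Lindeberg--Feller argument, applied pathwise along subsequences, and then integrate out. By the Cramér--Wold device it suffices to treat \(d=1\): replace \(\xi_{N,i}\) by \(t^\top\xi_{N,i}\). These scalar variables are still conditionally independent and mean zero. Their conditional variance sum converges in probability to \(t^\top Vt\), and the conditional Lindeberg sum is bounded by \(\|t\|^2\) times the Lindeberg sum at level \(\varepsilon/\|t\|\). So fix scalar \(\xi_{N,i}\) with conditional variance sum \(S_N\xrightarrow{p}\sigma^2\) and conditional Lindeberg sum \(\Lambda_N(\varepsilon)\xrightarrow{p}0\).

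The key step is the subsequence characterization of convergence in probability. Take any subsequence and a countable dense set of \(\varepsilon\) values, say \(\varepsilon=1/m\). By a diagonal argument, extract a further subsequence along which \(S_N\to\sigma^2\) and \(\Lambda_N(1/m)\to0\) for all \(m\), almost surely. Monotonicity of \(\Lambda_N\) in \(\varepsilon\) then gives the convergence for every \(\varepsilon>0\). Fix an \(\omega\) in this probability-one event and work with regular conditional distributions given \(\mathcal G_N\). Under these, the \(\xi_{N,i}\) are independent, mean zero, with variances summing to \(S_N(\omega)\) and satisfying Lindeberg's condition.

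The usual characteristic-function proof of Lindeberg--Feller then applies. It uses the bounds \(|e^{ix}-1-ix+x^2/2|\le\min(|x|^3/6,x^2)\) and \(|\prod a_i-\prod b_i|\le\sum|a_i-b_i|\) for complex numbers of modulus at most one. The Lindeberg condition gives \(\max_i \mathbb E[\xi_{N,i}^2\mid\mathcal G_N]\to0\) and hence controls \(\sum_i\{\exp(-\sigma_{N,i}^2t^2/2)-1+\sigma_{N,i}^2t^2/2\}\). Together these yield
\[
\mathbb E[e^{it\sum_i\xi_{N,i}}\mid\mathcal G_N](\omega)\to e^{-\sigma^2t^2/2}.
\]
Since this holds along a further subsequence of every subsequence almost surely, the conditional characteristic functions converge in probability.

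The unconditional conclusion follows from dominated convergence, since the conditional characteristic functions are bounded by one. Stability is the same argument applied to \(\mathbb E[Z e^{it^\top\sum\xi_{N,i}}]\) for bounded \(Z\) measurable with respect to \(\mathcal G_N\) (or to the limiting \(\sigma\)-field). This equals \(\mathbb E[Z\,\mathbb E[e^{it^\top\sum\xi}\mid\mathcal G_N]]\to \mathbb E[Z]e^{-t^\top Vt/2}\).

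The main obstacle is the measure-theoretic bookkeeping. One needs regular conditional distributions so that conditional independence becomes genuine independence for each fixed \(\omega\). One also needs the Lindeberg hypothesis to hold simultaneously for all \(\varepsilon\) on a single null-set complement, which the countable-\(\varepsilon\) diagonal extraction together with monotonicity handles.
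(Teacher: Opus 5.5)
Your proof is correct, but it takes a different route from the paper. The paper gives no argument of its own: it cites \citet[Theorem~1 and Corollary~3]{Bulinski2017ConditionalCLT} for the convergence of the conditional characteristic functions and remarks that stability follows from that convergence. You instead reduce the statement to the classical Lindeberg--Feller theorem applied $\omega$ by $\omega$. You pass to the scalar array $t^\top\xi_{N,i}$, use the subsequence characterization of convergence in probability (with $\varepsilon=1/m$ and monotonicity of the Lindeberg sums) to obtain an almost-sure event on which the variance and Lindeberg conditions hold, and then run the usual characteristic-function estimates under regular conditional distributions. This buys a self-contained, elementary proof that shows the conditional statement is just the unconditional theorem applied pathwise; it even gives almost-sure convergence along sub-subsequences. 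The cost is the bookkeeping you flag, and it does go through. A regular conditional law of $(\xi_{N,1},\dots,\xi_{N,N})$ exists on $\mathbb R^{dN}$. It is a product measure off a null set: check this on a countable generating $\pi$-system of rational boxes and take a countable union over $N$. Define $S_N$ and $\Lambda_N(\varepsilon)$ through the kernels, so that monotonicity in $\varepsilon$ holds for every $\omega$. The citation avoids all of this at the price of outsourcing the argument. One point to tighten is stability. State it for uniformly bounded $\mathcal G_N$-measurable sequences $Z_N$, such as $Z_N=e^{is^\top Y_N}$ with $Y_N\xrightarrow{d}Y$, because that is how the paper uses it (Remark~\ref{rem-stable-convergence-use} and the proof of Lemma~\ref{lem-joint-full-observed-eif}). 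Your identity together with $|\mathbb E[Z_N\{\mathbb E(e^{it^\top\sum_i\xi_{N,i}}\mid\mathcal G_N)-e^{-t^\top Vt/2}\}]|\le C\,\mathbb E|\mathbb E(e^{it^\top\sum_i\xi_{N,i}}\mid\mathcal G_N)-e^{-t^\top Vt/2}|\to0$ covers this directly. By contrast, your parenthetical about a ``limiting $\sigma$-field'' is meaningful only for nested $\mathcal G_N$ and would need an additional approximation step.
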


\begin{proof}
This is \citet[Theorem~1 and Corollary~3]{Bulinski2017ConditionalCLT}, applied to the triangular array conditionally on \(\mathcal G_N\).  The stable convergence statement follows from the convergence of the conditional characteristic functions.
\end{proof}

\begin{remark}[Use of stable convergence]
\label{rem-stable-convergence-use}
The normalized sum converges jointly with every sequence of statistics measurable with respect to the conditioning sigma-field that converges in distribution, and the limits are independent. In the proof of Theorem~3.1, this is used for \(N^{-1/2}\sum_i\delta_i s(L_i)\) and the \(\bm X_N\)-measurable sequence \(N^{-1/2}\sum_i k(X_i)\). In Lemma~\ref{lem-joint-full-observed-eif}, it is used for the full-data sum minus its conditional expectation given the observed data. The resulting joint convergence with observed-data measurable limits is used in the proofs of Theorems~3.7 and~3.9.
\end{remark}

\begin{proof}[Proof of Theorem~3.1]
Put
\[
k(X)=\mathbb E[g(L)\mid X], \quad s(L)=g(L)-k(X), \quad v(X)=\mathbb E[s(L)^2\mid X].
\]
Then \(\mathbb E[s(L)\mid X]=0\), \(\mathbb E[g(L)^2]=\mathbb E[k(X)^2]+\mathbb E[v(X)]\), and \(v\in L^2((P_0^F)^X)\) because \(g\in L^4(P_0^F)\). Lemma~\ref{lem-dqm-factorization-obs} gives the factorization
\[
f_t(x,y)=m_t(x)f_t(y\mid x)
\]
with marginal score \(k\) for \(X\) and conditional score \(s\) for \(Y\mid X\).

Since \(q_N(\boldsymbol\delta_N\mid\bm X_N)\) does not depend on \(t\), the observed-data likelihood ratio under \(u/\sqrt N\) is
\begin{align}
\log\frac{dP^N_{u/\sqrt N,\mathcal Lg}}{dP_0^N} &= \sum_{i=1}^N \log\frac{m_{u/\sqrt N}(X_i)}{m_0(X_i)} + \sum_{i=1}^N \delta_i \log\frac{f_{u/\sqrt N}(Y_i\mid X_i)}{f_0(Y_i\mid X_i)}.
\label{eq-proof-lan-factorization}
\end{align}
The DQM of the marginal distribution yields
\begin{equation}
\sum_{i=1}^N \log\frac{m_{u/\sqrt N}(X_i)}{m_0(X_i)} = \frac{u}{\sqrt N}\sum_{i=1}^Nk(X_i) - \frac{u^2}{2}\mathbb E[k(X)^2] +o_{P_0^N}(1).
\label{eq-proof-lan-marginal}
\end{equation}
For the conditional likelihood, Lemma~\ref{lem-sampled-conditional-likelihood-expansion} gives
\begin{equation}
\sum_{i=1}^N\delta_i \log\frac{f_{u/\sqrt N}(Y_i\mid X_i)}{f_0(Y_i\mid X_i)} = \frac{u}{\sqrt N}\sum_{i=1}^N\delta_i s(L_i) - \frac{u^2}{2N}\sum_{i=1}^N\delta_i v(X_i) + o_{P_0^N}(1).
\label{eq-proof-lan-conditional}
\end{equation}
We identify the limit of the quadratic term in \eqref{eq-proof-lan-conditional}. We have
\begin{align*}
\frac1N\sum_{i=1}^N\delta_i v(X_i) &= \frac1N\sum_{i=1}^N\pi_{N,i}v(X_i) + \frac1N\sum_{i=1}^N(\delta_i-\pi_{N,i})v(X_i).
\end{align*}
Assumption~{B3}, applied to \(v\in L^2((P_0^F)^X)\), makes the second term \(o_{P_0^N}(1)\). Assumption~{B2} and the law of large numbers give
\[
\frac1N\sum_{i=1}^N\pi_{N,i}v(X_i) = \frac1N\sum_{i=1}^N\pi(X_i)v(X_i)+o_{P_0^N}(1) \xrightarrow[]{p} \mathbb E[\pi(X)v(X)].
\]
Combining this display with \eqref{eq-proof-lan-factorization}, \eqref{eq-proof-lan-marginal}, and \eqref{eq-proof-lan-conditional},
\begin{align*}
\log\frac{dP^N_{u/\sqrt N,\mathcal Lg}}{dP_0^N} &= \frac{u}{\sqrt N}\sum_{i=1}^N\{k(X_i)+\delta_i s(L_i)\} - \frac{u^2}{2} \left[ \mathbb E[k(X)^2]+\mathbb E[\pi(X)v(X)] \right] +o_{P_0^N}(1).
\end{align*}
Since
\[
(\mathcal Lg)(O_i)=k(X_i)+\delta_i s(L_i),
\]
and since \textup{(2.5)} gives
\[
\|\mathcal Lg\|_{\obs}^2 = \mathbb E[k(X)^2]+\mathbb E[\pi(X)v(X)],
\]
the LAN expansion \textup{(3.1)} follows.

We now prove the convergence in distribution of the central sequence. Conditional on \((\boldsymbol\delta_N,\bm X_N)\), the variables \(\delta_i s(L_i)\) are independent, centered, and have conditional variance \(\delta_i v(X_i)\). The preceding convergence of \(N^{-1}\sum_i\delta_i v(X_i)\), the Lindeberg condition implied by \(g\in L^4(P_0^F)\), and Lemma~\ref{lem-conditional-lindeberg-feller} show that
\[
\frac1{\sqrt N}\sum_{i=1}^N\delta_i s(L_i)
\]
converges stably with respect to \(\sigma(\boldsymbol\delta_N,\bm X_N)\) to a random variable with distribution
\[
N(0,\mathbb E[\pi(X)v(X)]).
\]
The ordinary i.i.d.\ central limit theorem gives
\[
\frac1{\sqrt N}\sum_{i=1}^Nk(X_i) \xrightarrow[]{d} N(0,\mathbb E[k(X)^2]).
\]
By Remark~\ref{rem-stable-convergence-use}, the stable convergence in the preceding display is joint with the \(\bm X_N\)-measurable sequence \(N^{-1/2}\sum_i k(X_i)\). The two Gaussian limits are independent. Hence
\[
\Delta_N(\mathcal Lg) \xrightarrow[]{d} N\!\left(0, \mathbb E[k(X)^2]+\mathbb E[\pi(X)v(X)] \right) = N(0,\|\mathcal Lg\|_{\obs}^2).
\]

For a finite collection \(g_1,\dots,g_m\), apply the preceding scalar result to \(\sum_{j=1}^m c_jg_j\) for arbitrary \(c\in\mathbb R^m\). The Cram\'er--Wold theorem gives the joint limit in \textup{(3.3)}.
\end{proof}

\begin{proof}[Proof of Theorem~3.3]
For \(g\in\mathcal H^F\), \textup{(2.5)} and Assumption~{B4} imply
\begin{align*}
\pi_-\|g\|_F^2 \le \pi_-\mathbb E[\operatorname{Var}(g\mid X)] + \mathbb E[\{\mathbb E(g\mid X)\}^2] \le \|\mathcal Lg\|_{\obs}^2 \le \|g\|_F^2.
\end{align*}
Hence \(\mathcal L\) is one-to-one from \(\mathcal H^F\) into \(\mathcal H^{\obs}\), and its inverse on its range is continuous. For \(h=\mathcal Lg\), define
\[
\dot\psi^{\obs}(h)=\dot\psi^F(g).
\]
Then, coordinatewise,
\begin{align*}
|\dot\psi_j^{\obs}(\mathcal Lg)| &=|\langle\phi_j^F,g\rangle_F| \le \|\phi_j^F\|_F\|g\|_F \le \pi_-^{-1/2}\|\phi_j^F\|_F\|\mathcal Lg\|_{\obs}.
\end{align*}
Thus \(\dot\psi^{\obs}\) extends continuously to \(\mathcal H^{\obs}=\overline{\mathcal L\mathcal H^F}\). Along the submodel generated by \(g\),
\begin{align*}
\sqrt N\{\psi^{\obs}(P^N_{u/\sqrt N,\mathcal Lg})-\psi^{\obs}(P^N_{0,\mathcal Lg})\} &= \sqrt N\{\psi(P_{u/\sqrt N,g}^F)-\psi(P_0^F)\} \\
&\to u\,\dot\psi^F(g) = u\,\dot\psi^{\obs}(\mathcal Lg),
\end{align*}
which is the asserted derivative.
\end{proof}

\begin{proof}[Proof of Proposition~3.4]
For every \(a\in\mathcal H^F\), Assumption~{B4} and \textup{(2.5)} give
\begin{equation}
\|\mathcal La\|_{\obs}^2 \ge \pi_-\|a\|_F^2.
\label{eq-eif-proof-coercivity}
\end{equation}
Hence \(\mathcal L\mathcal H^F\) is closed in \(L^2_0(\mathbb P^\pi)\), and \(\mathcal H^{\obs}=\mathcal L\mathcal H^F\). The Riesz representation theorem gives a unique \(\phi_j^{\obs}\in\mathcal H^{\obs}\) for each coordinate of the observed-data derivative. Thus there is a unique \(d_j\in\mathcal H^F\) such that \(\phi_j^{\obs}=\mathcal Ld_j\). For every \(g\in\mathcal H^F\),
\begin{align}
\langle\mathcal Md_j,g\rangle_F &= \langle\mathcal Ld_j,\mathcal Lg\rangle_{\obs} \notag\\
&= \dot\psi_j^{\obs}(\mathcal Lg) = \dot\psi_j^F(g) = \langle\phi_j^F,g\rangle_F.
\label{eq-eif-proof-normal-equation}
\end{align}
Therefore \(\Proj[\mathcal Md_j\mid\mathcal H^F]=\phi_j^F\). Conversely, any solution of \textup{(3.6)} represents the observed-data derivative on \(\mathcal L\mathcal H^F\), so uniqueness follows from \eqref{eq-eif-proof-coercivity}. This is the standard projection characterization of the canonical gradient under missing at random; see \citet{RobinsRotnitzkyZhao1994} and \citet{Tsiatis2006}.
\end{proof}

\begin{theorem}[Specialization of \citet{McNeney2000}, Theorem~2.4]
\label{thm-mcneney-wellner-convolution}
Let \(B=\mathbb R^p\), \(R_n=\sqrt n I_p\), and \(P_{n,0}=P_{n,\theta_0}\). Suppose that the sequence of experiments \(\mathcal P_n=\{P_{n,\theta}:\theta\in\Theta\}\) is LAN at \(\theta_0\), indexed by a linear subspace \((H,\langle\cdot,\cdot\rangle)\) of a Hilbert space. Suppose also that \(\{\nu_n\}\) is differentiable with continuous linear derivative \(\dot\nu:H\to B\), so that
\[
\sqrt n\{\nu_n(P_{n,\theta_n(h)})-\nu_n(P_{n,0})\} \longrightarrow \dot\nu(h), \qquad h\in H.
\]
Let the same symbol \(\dot\nu\) denote its unique continuous extension to \(\overline H\). If \(T_n\) is locally regular for \(\nu_n\), so that under \(P_{n,\theta_n(h)}\),
\[
\sqrt n\{T_n-\nu_n(P_{n,\theta_n(h)})\} \xrightarrow[]{d} Z
\]
for every \(h\in H\), with a limit law independent of \(h\), then there exist \(B\)-valued random elements \(Z_0\) and \(W\) such that
\[
Z\stackrel{d}{=}Z_0+W, \qquad Z_0\perp W, \qquad P\{Z_0\in\dot\nu(\overline H)\}=1.
\]
Moreover, for every \(b^*\in B^*\),
\[
b^*Z_0 \sim N\!\left(0,\|\dot\nu_{b^*}^{T}\|^2\right), \qquad b^*\dot\nu(h) = \langle\dot\nu_{b^*}^{T},h\rangle, \quad h\in H,
\]
where \(\dot\nu_{b^*}^{T}\in\overline{H}\) is the Riesz representer of the continuous linear functional $h\mapsto b^*\dot{\nu}(h)$; that is, it is the unique element of $\overline{H}$ satisfying the displayed identity. In particular, \(Z_0\) is a centered Gaussian vector.
\end{theorem}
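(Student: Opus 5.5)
The plan is to obtain the statement by verifying, one by one, the hypotheses of the general convolution theorem of \citet[Theorem~2.4]{McNeney2000} in the finite-dimensional case $B=\mathbb R^p$, and then to read off its conclusions in coordinates. First, I will fix the correspondence of notation. The index set is the linear subspace $H$ with inner product $\langle\cdot,\cdot\rangle$. The local parameters $\theta_n(h)$ are those along which the LAN expansion
\[
\log\frac{dP_{n,\theta_n(h)}}{dP_{n,0}}=\Delta_n(h)-\tfrac12\|h\|^2+o_{P_{n,0}}(1)
\]
holds, with $(\Delta_n(h_1),\ldots,\Delta_n(h_m))$ converging jointly to a centered Gaussian vector with covariance $\{\langle h_j,h_\ell\rangle\}$. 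The norming operators are $R_n=\sqrt n I_p$, which are continuous linear maps on $B$. Since $B=\mathbb R^p$ is finite-dimensional and separable, every Borel law on $B$ is tight, and weak convergence on $B$ coincides with ordinary convergence in distribution. Hence the tightness and measurability provisions in their theorem hold automatically, and local regularity in our sense is exactly their regularity condition.

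Next, I will check the differentiability hypothesis. The displayed limit $\sqrt n\{\nu_n(P_{n,\theta_n(h)})-\nu_n(P_{n,0})\}\to\dot\nu(h)$, with $\dot\nu:H\to\mathbb R^p$ linear and continuous, is their pathwise-differentiability requirement with $R_n=\sqrt n I_p$. Continuity on $H$ gives a unique continuous extension to $\overline H$ by uniform continuity and density. Their theorem then yields the decomposition $Z\stackrel d=Z_0+W$ with $Z_0\perp W$, and $Z_0$ is a tight Gaussian element supported in the closure of $\dot\nu(H)$. In $\mathbb R^p$, $\dot\nu(\overline H)$ is a linear subspace, hence closed. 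Moreover, $\dot\nu(\overline H)$ contains $\dot\nu(H)$ and lies in its closure by continuity, so it equals the closure of $\dot\nu(H)$. This gives $P\{Z_0\in\dot\nu(\overline H)\}=1$.

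Finally, I will identify the law of $Z_0$. Each $b^*\in B^*$ is a vector $b\in\mathbb R^p$ acting as $b^\top(\cdot)$, so $h\mapsto b^\top\dot\nu(h)$ is a bounded linear functional on $\overline H$. The Riesz theorem supplies a unique $\dot\nu_{b^*}^T\in\overline H$ representing it. In McNeney and Wellner, $Z_0$ is the image under $\dot\nu$ of the isonormal limit of the central sequence, so $b^\top Z_0$ is the limit of $\Delta_n(\dot\nu^T_{b^*})$, approximated within $H$. Its variance is therefore $\|\dot\nu_{b^*}^T\|^2$. Since every linear combination $b^\top Z_0$ is centered normal, the Cram\'er--Wold device shows that $Z_0$ is centered Gaussian.

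The main obstacle I expect is the case $\dot\nu^T_{b^*}\in\overline H\setminus H$, where the LAN expansion is available only on $H$. I would handle this as McNeney and Wellner do. Take $h_k\in H$ with $h_k\to\dot\nu_{b^*}^T$. Then $\operatorname{Var}\{\lim_n\Delta_n(h_k)\}=\|h_k\|^2\to\|\dot\nu^T_{b^*}\|^2$, and the Gaussian limits form a Cauchy sequence in $L^2$. This passage to the closure uses only linearity of $H$ and continuity of $\dot\nu$, both of which are hypotheses of the statement.
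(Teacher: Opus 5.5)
Your proposal is correct and takes the same approach as the paper. The paper gives no separate argument and simply invokes \citet[Theorem~2.4]{McNeney2000} with $B=\mathbb R^p$ and $R_n=\sqrt n I_p$; your added bookkeeping (automatic tightness in $\mathbb R^p$, the identity $\dot\nu(\overline H)=\overline{\dot\nu(H)}$, and the Riesz identification of $\operatorname{Var}(b^\top Z_0)=\|\dot\nu^T_{b^*}\|^2$ via approximation from $H$) correctly fills in the details of that specialization.
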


\begin{proof}[Proof of Theorem~3.8]
Apply Theorem~\ref{thm-mcneney-wellner-convolution} with \(n=N\), \(H=\mathcal T^{\obs}\subset\mathcal H^{\obs}\), \(B=\mathbb R^p\), \(R_N=\sqrt N I_p\), and \(\nu_N(P_{\theta,\eta}^N)=\psi^{\obs}(P_{\theta,\eta}^N)\). For \(h=u\mathcal Lg\), the local law \(P_{n,\theta_n(h)}\) in the notation of \citet{McNeney2000} is \(P^N_{u/\sqrt N,\mathcal Lg}\), with central sequence \(\Delta_N(h)=u\Delta_N(\mathcal Lg)\). Theorems~3.1 and~3.3 verify LAN and differentiability, respectively, and Definition~3.5 is the required local regularity condition. Here \(\overline H=\mathcal H^{\obs}\). For every \(b\in\mathbb R^p\), Proposition~3.4 identifies \(b^\top\phi^{\obs}\in\mathcal H^{\obs}\) as the Riesz representer of \(b^\top\dot\psi^{\obs}\), with
\[
\|b^\top\phi^{\obs}\|_{\obs}^2 = b^\top\Sigma_{\mathrm{sp}}b.
\]
Theorem~\ref{thm-mcneney-wellner-convolution} therefore gives the asserted convolution upon taking \(G_{\mathrm{sp}}=Z_0\), with \(Z_0\sim N_p(0,\Sigma_{\mathrm{sp}})\).
\end{proof}

\begin{lemma}[Central sequences for L2-limits of observed scores]
\label{lem-central-sequence-L2-limits}
Assume Assumption~{A1} and Assumptions~{B1}--{B4}. Let \(a_1,\ldots,a_m\in\mathcal H^F\) be fixed. Then
\[
\left(\Delta_N(\mathcal La_1),\ldots,\Delta_N(\mathcal La_m)\right) \xrightarrow[]{d} N_m\left(0, \left\{\langle\mathcal La_j,\mathcal La_\ell\rangle_{\obs}\right\}_{j,\ell=1}^m \right).
\]
Moreover, if \(a_{j,r}\in\dot{\mathcal P}_{P_0^F}\cap L^4(P_0^F)\) and \(\|a_{j,r}-a_j\|_F\to0\), then
\[
\lim_{r\to\infty}\limsup_{N\to\infty} \mathbb E\left|\Delta_N\{\mathcal L(a_{j,r}-a_j)\}\right|^2=0.
\]
\end{lemma}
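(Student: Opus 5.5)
The plan is to prove the second (approximation) statement first and then deduce the first from Theorem~\ref{thm-obs-lan} by a standard $\varepsilon/3$ approximation argument. Linearity in $g$ gives $\Delta_N(\mathcal La)=N^{-1/2}\sum_i\{k_a(X_i)+\delta_i s_a(L_i)\}$ for any $a\in L^2_0(P_0^F)$, where $k_a=\mathbb E[a\mid X]$ and $s_a=a-k_a$. For the second moment of $b=a_{j,r}-a_j$, I will split $\Delta_N(\mathcal Lb)$ into the phase-I part $N^{-1/2}\sum_i k_b(X_i)$ and the sampled residual part $N^{-1/2}\sum_i\delta_i s_b(L_i)$. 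The first is an i.i.d.\ centered sum with second moment $\mathbb E[k_b(X)^2]$. For the second, conditionally on $(\bm X_N,\boldsymbol\delta_N)$ the summands are independent and centered by Assumption~\ref{ass-B1} and the i.i.d.\ structure of the full data. Its conditional second moment is therefore $N^{-1}\sum_i\delta_i v_b(X_i)$ with $v_b=\mathbb E[s_b^2\mid X]$, whose expectation is at most $\mathbb E[v_b(X)]$ because $\delta_i\le1$. Hence $\mathbb E|\Delta_N(\mathcal Lb)|^2\le 2\{\mathbb E[k_b^2]+\mathbb E[v_b]\}=2\|b\|_F^2$ uniformly in $N$, and this tends to $0$ as $r\to\infty$. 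The bound uses only $b\in L^2$, not $L^4$, which is exactly why the approximation is needed.

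For the joint limit, fix $c\in\mathbb R^m$ and put $a=\sum_jc_ja_j$. By Assumption~\ref{ass-A1}, choose $a_{j,r}\in\dot{\mathcal P}_{P_0^F}\cap L^4(P_0^F)$ with $a_{j,r}\to a_j$ in $L^2(P_0^F)$, and set $a_r=\sum_jc_ja_{j,r}$, which lies in the same linear class. Theorem~\ref{thm-obs-lan} gives $\Delta_N(\mathcal La_r)\xrightarrow{d}N(0,\|\mathcal La_r\|_{\obs}^2)$. Since $\mathcal L$ is bounded, with $\|\mathcal Lb\|_{\obs}\le\|b\|_F$ from \eqref{eq-obs-norm}, we have $\|\mathcal La_r\|_{\obs}^2\to\|\mathcal La\|_{\obs}^2$. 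The uniform second-moment bound above makes $\Delta_N(\mathcal La_r)-\Delta_N(\mathcal La)$ small in probability uniformly in $N$ as $r\to\infty$. A standard approximation lemma (e.g.\ Billingsley's Theorem~3.2, or a direct bounded-Lipschitz argument) then yields $\Delta_N(\mathcal La)\xrightarrow{d}N(0,\|\mathcal La\|_{\obs}^2)$. Expanding $\|\mathcal La\|_{\obs}^2=\sum_{j,\ell}c_jc_\ell\langle\mathcal La_j,\mathcal La_\ell\rangle_{\obs}$ and applying Cram\'er--Wold gives the multivariate statement.

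The main obstacle is the second-moment bound for the sampled residual part under dependent $\boldsymbol\delta_N$: it must be uniform in $N$ and must not invoke Assumptions~\ref{ass-B2}--\ref{ass-B3}. I expect conditioning on the full vector $(\bm X_N,\boldsymbol\delta_N)$ together with non-informativeness to handle this, since it removes all cross terms regardless of design dependence. Assumptions~\ref{ass-B2}--\ref{ass-B4} then enter only through Theorem~\ref{thm-obs-lan} applied to the approximating scores.
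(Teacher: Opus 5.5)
Your proposal is correct and takes essentially the same route as the paper: the decomposition $\Delta_N(\mathcal La)=N^{-1/2}\sum_i k_a(X_i)+N^{-1/2}\sum_i\delta_i s_a(L_i)$, conditioning on $(\bm X_N,\boldsymbol\delta_N)$ to get a uniform $L^2$ bound, approximation of $\sum_j c_ja_j$ by admissible $L^4$ scores, Theorem~\ref{thm-obs-lan}, and Cram\'er--Wold. The differences are cosmetic. The paper notes that the cross term vanishes under the same conditioning, which gives $\mathbb E|\Delta_N(\mathcal Lb)|^2\le\|b\|_F^2$ without your factor $2$. It also closes the approximation step with a direct characteristic-function estimate rather than citing Billingsley's Theorem~3.2.
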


\begin{proof}
For \(a\in\mathcal H^F\), set
\[
k_a(X)=\mathbb E[a(L)\mid X], \qquad s_a(L)=a(L)-k_a(X).
\]
Then
\begin{equation}
(\mathcal La)(O_i)=k_a(X_i)+\delta_i s_a(L_i), \qquad \Delta_N(\mathcal La) = \frac1{\sqrt N}\sum_{i=1}^Nk_a(X_i) + \frac1{\sqrt N}\sum_{i=1}^N\delta_i s_a(L_i),
\label{eq-lem-central-L2-identities}
\end{equation}
and
\begin{equation}
\mathbb E[k_a(X)]=0, \qquad \mathbb E[s_a(L)\mid X]=0, \qquad \mathbb E[k_a(X)^2]+\mathbb E[s_a(L)^2]=\|a\|_F^2.
\label{eq-lem-central-L2-orthogonality}
\end{equation}
For \(b\in\mathcal H^F\), by \eqref{eq-lem-central-L2-identities} and \eqref{eq-lem-central-L2-orthogonality},
\begin{align}
\mathbb E\left|\Delta_N(\mathcal Lb)\right|^2 &= \mathbb E\left[ \left\{ \frac1{\sqrt N}\sum_{i=1}^Nk_b(X_i) \right\}^2 \right] + \mathbb E\left[ \mathbb E\left[ \left. \left( \frac1{\sqrt N}\sum_{i=1}^N\delta_i s_b(L_i) \right)^2 \,\right|\, \bm X_N,\boldsymbol\delta_N \right] \right] \notag\\
&= \mathbb E[k_b(X)^2] + \mathbb E\left[ \frac1N\sum_{i=1}^N\delta_i \mathbb E[s_b(L_i)^2\mid X_i] \right] \notag\\
&\le \mathbb E[k_b(X)^2]+\mathbb E[s_b(L)^2] = \|b\|_F^2.
\label{eq-lem-central-L2-bound}
\end{align}
Hence, for every \(a_{j,r}\in\dot{\mathcal P}_{P_0^F}\cap L^4(P_0^F)\) such that \(\|a_{j,r}-a_j\|_F\to0\),
\begin{equation}
\lim_{r\to\infty}\limsup_{N\to\infty} \mathbb E\left|\Delta_N\{\mathcal L(a_{j,r}-a_j)\}\right|^2 \le \lim_{r\to\infty}\|a_{j,r}-a_j\|_F^2 = 0.
\label{eq-lem-central-L2-approx}
\end{equation}

Choose \(a_{j,r}\in\dot{\mathcal P}_{P_0^F}\cap L^4(P_0^F)\) such that
\[
\|a_{j,r}-a_j\|_F\to0, \qquad j=1,\ldots,m.
\]
For \(c=(c_1,\ldots,c_m)^\top\in\mathbb R^m\), set
\[
a^c=\sum_{j=1}^mc_ja_j, \qquad a_r^c=\sum_{j=1}^mc_ja_{j,r}.
\]
By Assumption~{A1},
\[
a_r^c\in\dot{\mathcal P}_{P_0^F}\cap L^4(P_0^F), \qquad \|a_r^c-a^c\|_F\to0.
\]
Theorem~3.1 gives, for each fixed \(r\),
\begin{equation}
\Delta_N(\mathcal La_r^c) \xrightarrow[]{d} N\left(0,\|\mathcal La_r^c\|_{\obs}^2\right).
\label{eq-lem-central-L2-fixed-r-clt}
\end{equation}
Moreover, by \eqref{eq-lem-central-L2-bound},
\begin{equation}
\lim_{r\to\infty}\limsup_{N\to\infty} \mathbb E\left| \Delta_N\{\mathcal L(a_r^c-a^c)\} \right|^2 =0.
\label{eq-lem-central-L2-linear-combination-approx}
\end{equation}
Also, by \textup{(2.5)},
\begin{equation}
\left|\|\mathcal La_r^c\|_{\obs}-\|\mathcal La^c\|_{\obs}\right| \le \|\mathcal L(a_r^c-a^c)\|_{\obs} \le \|a_r^c-a^c\|_F \to0.
\label{eq-lem-central-L2-norm-convergence}
\end{equation}
For every \(t\in\mathbb R\), the triangle inequality, \(|e^{ix}-e^{iy}|\le |x-y|\), and \eqref{eq-lem-central-L2-fixed-r-clt} imply
\begin{align}
&\limsup_{N\to\infty} \left| \mathbb E\exp\{it\Delta_N(\mathcal La^c)\} - \exp\left\{-\frac{t^2}{2}\|\mathcal La^c\|_{\obs}^2\right\} \right| \notag\\
&\quad\le |t| \limsup_{N\to\infty} \mathbb E\left| \Delta_N\{\mathcal L(a_r^c-a^c)\} \right| + \left| \exp\left\{-\frac{t^2}{2}\|\mathcal La_r^c\|_{\obs}^2\right\} - \exp\left\{-\frac{t^2}{2}\|\mathcal La^c\|_{\obs}^2\right\} \right|.
\label{eq-lem-central-L2-characteristic-bound}
\end{align}
Letting \(r\to\infty\) in \eqref{eq-lem-central-L2-characteristic-bound} gives
\begin{equation}
\Delta_N(\mathcal La^c) \xrightarrow[]{d} N\left(0,\|\mathcal La^c\|_{\obs}^2\right).
\label{eq-lem-central-L2-cw-scalar}
\end{equation}
Since
\begin{align}
\Delta_N(\mathcal La^c) &= c^\top \left(\Delta_N(\mathcal La_1),\ldots,\Delta_N(\mathcal La_m)\right)^\top, \notag\\
\|\mathcal La^c\|_{\obs}^2 &= \sum_{j=1}^m\sum_{\ell=1}^m c_jc_\ell \langle\mathcal La_j,\mathcal La_\ell\rangle_{\obs},
\label{eq-lem-central-L2-cw-variance}
\end{align}
the Cram\'er--Wold theorem proves
\[
\left(\Delta_N(\mathcal La_1),\ldots,\Delta_N(\mathcal La_m)\right) \xrightarrow[]{d} N_m\left(0, \left\{\langle\mathcal La_j,\mathcal La_\ell\rangle_{\obs}\right\}_{j,\ell=1}^m \right).
\]
\end{proof}

For a finite-dimensional vector \(\varphi\) of observed-data functions, write
\[
\Delta_N(\varphi)=\frac1{\sqrt N}\sum_{i=1}^N\varphi(O_i).
\]

\begin{lemma}[Joint convolution with central sequences]
\label{lem-joint-convolution-observed-central}
Assume Assumptions~{A1}--{A2} and~{B1}--{B4}. Let \(\hat\theta_N\) be locally regular in the sense of Definition~3.5, and put
\[
A_N=\sqrt N(\hat\theta_N-\theta_0).
\]
Let \(r_1,\ldots,r_q\in\mathcal H^F\) be fixed and put \(\varphi=(\mathcal Lr_1,\ldots,\mathcal Lr_q)^\top\). Then
\[
(A_N,\Delta_N(\phi^{\obs}),\Delta_N(\varphi)) \xrightarrow[]{d} (G_{\mathrm{sp}}+W,G_{\mathrm{sp}},G_\varphi),
\]
where \((G_{\mathrm{sp}},G_\varphi)\) is Gaussian with covariance induced by \(\langle\cdot,\cdot\rangle_{\obs}\), and \(W\) is independent of \((G_{\mathrm{sp}},G_\varphi)\).
\end{lemma}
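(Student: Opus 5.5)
We first reduce to the case where $\phi^{\obs}$ and $\varphi$ are central sequences of finitely many fixed observed scores. By Proposition~3.4, $\phi^{\obs}=\mathcal L d$ with $d\in(\mathcal H^F)^p$, so the vector $(\Delta_N(\phi^{\obs}),\Delta_N(\varphi))$ equals $(\Delta_N(\mathcal Ld_1),\ldots,\Delta_N(\mathcal Ld_p),\Delta_N(\mathcal Lr_1),\ldots,\Delta_N(\mathcal Lr_q))$. Lemma~\ref{lem-central-sequence-L2-limits} then yields its joint Gaussian limit with covariance $\{\langle\mathcal La,\mathcal Lb\rangle_{\obs}\}$. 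This also gives $\Delta_N(\phi^{\obs})\to G_{\mathrm{sp}}\sim N_p(0,\Sigma_{\mathrm{sp}})$. Tightness of $A_N$ under $P_0^N$ follows from local regularity with $u=0$, so the full triple is tight along subsequences. We therefore fix a subsequence along which $(A_N,\Delta_N(\phi^{\obs}),\Delta_N(\varphi))\to(Z,G_{\mathrm{sp}},G_\varphi)$ jointly. The task is to show that $W:=Z-G_{\mathrm{sp}}$ is independent of $(G_{\mathrm{sp}},G_\varphi)$ and has a law that does not depend on the subsequence.

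The main tool is Le Cam's third lemma combined with local regularity, which is the argument in van der Vaart's proof of the convolution theorem. Take any $h=\mathcal Lg$ with $g\in\dot{\mathcal P}_{P_0^F}\cap L^4(P_0^F)$, and set $u=1$. By Theorem~\ref{thm-obs-lan}, the log-likelihood ratio is $\Delta_N(h)-\tfrac12\|h\|_{\obs}^2+o_{P_0^N}(1)$. Adding $\Delta_N(h)$ to the vector, Lemma~\ref{lem-central-sequence-L2-limits} still gives joint convergence (passing to a further subsequence if necessary). Le Cam's third lemma then says that under $P^N_{1/\sqrt N,h}$ the limit law of $(A_N,\Delta_N(\phi^{\obs}),\Delta_N(\varphi))$ is the exponentially tilted law
\[
E\bigl[\mathbf 1\{(Z,G_{\mathrm{sp}},G_\varphi)\in\cdot\}\,e^{G_h-\|h\|^2_{\obs}/2}\bigr],
\]
where $G_h$ denotes the Gaussian limit of $\Delta_N(h)$. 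Local regularity together with Theorem~3.3 gives $A_N-\sqrt N\{\psi(P^F_{1/\sqrt N,g})-\theta_0\}\to Z$ under the alternative as well, and the centering shift is $\dot\psi^F(g)=\langle\phi^{\obs},h\rangle_{\obs}$. Taking characteristic functions in $(s,t,v)$ and writing $Z=W+G_{\mathrm{sp}}$, the identity reads
\[
E\bigl[e^{is^\top Z+it^\top G_{\mathrm{sp}}+iv^\top G_\varphi}\,e^{G_h-\|h\|^2/2}\bigr]=e^{is^\top\langle\phi^{\obs},h\rangle}\,E\bigl[e^{is^\top Z+i\cdots}\bigr]\cdot(\text{Gaussian shift factor}).
\]
This holds for $h$ in the linear space $\mathcal T^{\obs}$. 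By the continuity bound in Lemma~\ref{lem-central-sequence-L2-limits} and the $L^2$-continuity of both sides in $h$, it extends to $h\in\mathcal H^{\obs}$, and in particular to $h=b^\top\phi^{\obs}+c^\top\varphi$.

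The extension to $h\in\mathcal H^{\obs}$ is what lets us finish with the standard analytic-continuation argument. Restricting $h$ to the span of $\phi^{\obs}$ and $\varphi$ gives an identity for all real tilts $e^{b^\top G_{\mathrm{sp}}+c^\top G_\varphi}$. Both sides are entire in $(b,c)$, so we may replace $(b,c)$ by imaginary values $(-it,-iv)$. This factorizes the joint characteristic function as $E e^{is^\top W}\cdot E e^{i(s+t)^\top G_{\mathrm{sp}}+iv^\top G_\varphi}$, which establishes independence of $W$ and $(G_{\mathrm{sp}},G_\varphi)$. Since the law of $Z$ does not depend on the subsequence, and we may take $\varphi=0$ (whence the law of $W$ is the one in Theorem~\ref{thm-obs-convolution}), the law of $W$ is also subsequence-free, so the full sequence converges.

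The main obstacle is the justification of Le Cam's third lemma along $h=b^\top\phi^{\obs}+c^\top\varphi$, since these need not lie in $\mathcal T^{\obs}$ and so have no actual submodel. We would handle this by approximating: choose $h_r\in\mathcal T^{\obs}$ with $h_r\to h$, apply the third lemma for each $h_r$, and pass to the limit $r\to\infty$ in the displayed identity. For the passage to the limit we use uniform integrability of $e^{G_{h_r}}$, which holds because $G_{h_r}$ are Gaussians with bounded variance, together with the $L^2$ approximation of central sequences from Lemma~\ref{lem-central-sequence-L2-limits}.
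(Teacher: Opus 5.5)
Your proposal is correct and is essentially the paper's argument. It uses the same ingredients: Le Cam's third lemma with local regularity along admissible scores $h_m=\mathcal L b_m$ approximating $h=\beta^\top\phi^{\obs}+\gamma^\top\varphi$, analytic continuation of the exponential tilt to an imaginary argument, factorization of the joint characteristic function, and uniqueness of the subsequential limit because the null law of $A_N$ fixes the law of $W$ (deconvolution by the nonvanishing Gaussian characteristic function).

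The differences are only of order and presentation. The paper continues analytically for each fixed $h_m$ and then lets $m\to\infty$ in bounded characteristic functions (using $|e^{ix}-e^{iy}|\le|x-y|$ and the $L^2$ bound on $\Delta_N(h_m)-\Delta_N(h)$), so it never needs uniform integrability. You pass to the limit in the real tilt first, which is fine because $(G_{h_r},G_h)$ is jointly Gaussian with $\Var(G_{h_r}-G_h)=\|h_r-h\|_{\obs}^2\to0$. Your $(s,t,v)$ display should be read only at $t=v=0$: local regularity controls just the marginal of $A_N$ under the alternative, and a joint shift identity would presuppose the independence being proved. The $(t,v)$-dependence comes instead from continuing the tilt in $(b,c)$, as your next paragraph in fact does.
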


\begin{proof}
Let \(d=(d_1,\ldots,d_p)^\top\in(\mathcal H^F)^p\) be such that
\[
\phi_j^{\obs}=\mathcal Ld_j, \quad j=1,\ldots,p.
\]
By Lemma~\ref{lem-central-sequence-L2-limits}, applied to \(d_1,\ldots,d_p,r_1,\ldots,r_q\),
\begin{equation}
(\Delta_N(\phi^{\obs}),\Delta_N(\varphi))\xrightarrow[]{d}(G_{\mathrm{sp}},G_\varphi),
\label{eq-lem-joint-conv-observed-clt}
\end{equation}
where \((G_{\mathrm{sp}},G_\varphi)\) is centered Gaussian and
\begin{equation}
\begin{aligned}
\operatorname{Cov}\{(G_{\mathrm{sp}})_j,(G_{\mathrm{sp}})_k\}&=\langle\phi_j^{\obs},\phi_k^{\obs}\rangle_{\obs},\\
\operatorname{Cov}\{(G_{\mathrm{sp}})_j,(G_\varphi)_\ell\}&=\langle\phi_j^{\obs},\varphi_\ell\rangle_{\obs},\\
\operatorname{Cov}\{(G_\varphi)_\ell,(G_\varphi)_m\}&=\langle\varphi_\ell,\varphi_m\rangle_{\obs}.
\end{aligned}
\label{eq-lem-joint-conv-observed-covariance}
\end{equation}
Local regularity at \(s=0\) gives tightness of \(A_N\). Hence it is enough to identify the limit along an arbitrary subsequence. Take a further subsequence such that
\begin{equation}
(A_N,\Delta_N(\phi^{\obs}),\Delta_N(\varphi))\xrightarrow[]{d}(A,G_{\mathrm{sp}},G_\varphi).
\label{eq-lem-joint-conv-subseq-limit}
\end{equation}

Fix \(\alpha,\beta\in\mathbb R^p\) and \(\gamma\in\mathbb R^q\). Put
\[
b=\sum_{j=1}^p\beta_jd_j+\sum_{\ell=1}^q\gamma_\ell r_\ell, \qquad h=\mathcal Lb.
\]
Then
\begin{equation}
h=\beta^\top\phi^{\obs}+\gamma^\top\varphi, \qquad \Delta_N(h)=\beta^\top\Delta_N(\phi^{\obs})+\gamma^\top\Delta_N(\varphi),
\label{eq-lem-joint-conv-h-identities}
\end{equation}
and
\begin{equation}
\dot\psi^{\obs}(h)=\mathbb E^\pi[\phi^{\obs}(O)h(O)].
\label{eq-lem-joint-conv-h-derivative}
\end{equation}
Choose \(b_m\in\dot{\mathcal P}_{P_0^F}\cap L^4(P_0^F)\) such that \(\|b_m-b\|_F\to0\), and put \(h_m=\mathcal Lb_m\). Then
\begin{equation}
\|h_m-h\|_{\obs}\to0, \qquad \dot\psi^{\obs}(h_m)\to\dot\psi^{\obs}(h),
\label{eq-lem-joint-conv-hm-approx}
\end{equation}
and Lemma~\ref{lem-central-sequence-L2-limits} gives
\begin{equation}
\lim_{m\to\infty}\limsup_{N\to\infty}\mathbb E\left|\Delta_N(h_m)-\Delta_N(h)\right|^2=0.
\label{eq-lem-joint-conv-central-approx}
\end{equation}

Fix \(m\). Along any further subsequence on which
\[
(A_N,\Delta_N(h_m))\xrightarrow[]{d}(A,H_m),
\]
we have \(H_m\sim N(0,\|h_m\|_{\obs}^2)\). For every \(s\in\mathbb R\), Theorem~3.1 gives
\begin{equation}
\log\frac{dP^N_{s/\sqrt N,h_m}}{dP_0^N}=s\Delta_N(h_m)-\frac{s^2}{2}\|h_m\|_{\obs}^2+o_{P_0^N}(1),
\label{eq-lem-joint-conv-lan-hm}
\end{equation}
and Theorem~3.3 gives
\begin{equation}
\sqrt N\{\psi^{\obs}(P^N_{s/\sqrt N,h_m})-\theta_0\}\to s\,\dot\psi^{\obs}(h_m).
\label{eq-lem-joint-conv-path-shift-hm}
\end{equation}
By Le Cam's third lemma, \eqref{eq-lem-joint-conv-lan-hm}, \eqref{eq-lem-joint-conv-path-shift-hm}, and local regularity,
\begin{equation}
\mathbb E\exp\left\{i\alpha^\top A+sH_m-\frac{s^2}{2}\|h_m\|_{\obs}^2\right\}
=\exp\{is\alpha^\top\dot\psi^{\obs}(h_m)\}\mathbb E\exp\{i\alpha^\top A\}, \quad s\in\mathbb R.
\label{eq-lem-joint-conv-real-tilt-hm}
\end{equation}
Since \(H_m\) is Gaussian, both sides of \eqref{eq-lem-joint-conv-real-tilt-hm} extend to entire functions. Hence
\begin{equation}
\mathbb E\exp\left\{i\alpha^\top A+zH_m-\frac{z^2}{2}\|h_m\|_{\obs}^2\right\}
=\exp\{iz\alpha^\top\dot\psi^{\obs}(h_m)\}\mathbb E\exp\{i\alpha^\top A\}, \quad z\in\mathbb C.
\label{eq-lem-joint-conv-analytic-tilt-hm}
\end{equation}
Putting \(z=i\) in \eqref{eq-lem-joint-conv-analytic-tilt-hm} yields
\begin{equation}
\lim_{N\to\infty}\mathbb E\exp\left\{i\alpha^\top A_N+i\Delta_N(h_m)\right\}
=\exp\left\{-\alpha^\top\dot\psi^{\obs}(h_m)-\frac12\|h_m\|_{\obs}^2\right\}\mathbb E\exp\{i\alpha^\top A\}.
\label{eq-lem-joint-conv-imag-tilt-hm}
\end{equation}
By \eqref{eq-lem-joint-conv-hm-approx} and \eqref{eq-lem-joint-conv-central-approx}, letting \(m\to\infty\) in \eqref{eq-lem-joint-conv-imag-tilt-hm} gives
\begin{equation}
\begin{aligned}
&\mathbb E\exp\left\{i\alpha^\top A+i\beta^\top G_{\mathrm{sp}}+i\gamma^\top G_\varphi\right\}\\
&\quad=\mathbb E\exp\{i\alpha^\top A\}\exp\left\{-\alpha^\top\mathbb E^\pi[\phi^{\obs}(O)h(O)]-\frac12\|h\|_{\obs}^2\right\}.
\end{aligned}
\label{eq-lem-joint-conv-char-limit}
\end{equation}
By \eqref{eq-lem-joint-conv-h-identities}, \eqref{eq-lem-joint-conv-h-derivative}, and \eqref{eq-lem-joint-conv-observed-covariance},
\begin{equation}
\mathbb E^\pi[\phi^{\obs}(O)h(O)]=\operatorname{Cov}(G_{\mathrm{sp}},\beta^\top G_{\mathrm{sp}}+\gamma^\top G_\varphi), \qquad
\|h\|_{\obs}^2=\operatorname{Var}(\beta^\top G_{\mathrm{sp}}+\gamma^\top G_\varphi).
\label{eq-lem-joint-conv-cov-var-identities}
\end{equation}
Replacing \((\beta,\gamma)\) in \eqref{eq-lem-joint-conv-char-limit} by \((\beta-\alpha,\gamma)\) and using \eqref{eq-lem-joint-conv-cov-var-identities},
\begin{align}
&\mathbb E\exp\left\{i\alpha^\top(A-G_{\mathrm{sp}})+i\beta^\top G_{\mathrm{sp}}+i\gamma^\top G_\varphi\right\}\notag\\
&\quad=\mathbb E\exp\{i\alpha^\top A\}\exp\Bigl\{-\operatorname{Cov}\{\alpha^\top G_{\mathrm{sp}},(\beta-\alpha)^\top G_{\mathrm{sp}}+\gamma^\top G_\varphi\}\notag\\
&\qquad\qquad{}-\frac12\operatorname{Var}\{(\beta-\alpha)^\top G_{\mathrm{sp}}+\gamma^\top G_\varphi\}\Bigr\}\notag\\
&\quad=\mathbb E\exp\{i\alpha^\top A\}\exp\left\{\frac12\operatorname{Var}(\alpha^\top G_{\mathrm{sp}})-\frac12\operatorname{Var}(\beta^\top G_{\mathrm{sp}}+\gamma^\top G_\varphi)\right\}.
\label{eq-lem-joint-conv-factorization-prep}
\end{align}
Taking \((\beta,\gamma)=(0,0)\) in \eqref{eq-lem-joint-conv-factorization-prep} gives
\begin{equation}
\mathbb E\exp\{i\alpha^\top(A-G_{\mathrm{sp}})\}
=\mathbb E\exp\{i\alpha^\top A\}\exp\left\{\frac12\operatorname{Var}(\alpha^\top G_{\mathrm{sp}})\right\}.
\label{eq-lem-joint-conv-W-char}
\end{equation}
Combining \eqref{eq-lem-joint-conv-factorization-prep} and \eqref{eq-lem-joint-conv-W-char},
\begin{equation}
\begin{aligned}
&\mathbb E\exp\left\{i\alpha^\top(A-G_{\mathrm{sp}})+i\beta^\top G_{\mathrm{sp}}+i\gamma^\top G_\varphi\right\}\\
&\quad=\mathbb E\exp\{i\alpha^\top(A-G_{\mathrm{sp}})\}\mathbb E\exp\{i\beta^\top G_{\mathrm{sp}}+i\gamma^\top G_\varphi\}.
\end{aligned}
\label{eq-lem-joint-conv-factorization}
\end{equation}
Thus \(W=A-G_{\mathrm{sp}}\) is independent of \((G_{\mathrm{sp}},G_\varphi)\).

Finally, local regularity at \(s=0\) fixes the null limit law of \(A_N\). Hence \eqref{eq-lem-joint-conv-W-char} fixes the law of \(W\), and the subsequential limit in \eqref{eq-lem-joint-conv-subseq-limit} is unique. Therefore
\[
(A_N,\Delta_N(\phi^{\obs}),\Delta_N(\varphi))\xrightarrow[]{d}(G_{\mathrm{sp}}+W,G_{\mathrm{sp}},G_\varphi).
\]
\end{proof}

For full-data functions \(g\in L^2(P_0^F)\), write
\[
\mathbb G_N(g)=\frac1{\sqrt N}\sum_{i=1}^N\{g(L_i)-\mathbb E[g(L)]\},
\]
where the expectation is under \(P_0^F\), including under local alternatives. For \(g\in\mathcal H^F\), this equals \(N^{-1/2}\sum_i g(L_i)\). For \(g,h\in\mathcal H^F\), the corresponding centered Gaussian limits are denoted by \(\mathbb G(g)\), with
\[
\operatorname{Cov}\{\mathbb G(g),\mathbb G(h)\}=\mathbb E[g(L)h(L)].
\]
Both notations apply coordinatewise to vectors. Only finite-dimensional convergence is used below.

Conditional-expectation decompositions also appear in the i.i.d. random-sum analysis of \citet{Zhang2005}. The following lemma establishes the joint and stable limits needed here for the full-data canonical gradient under Assumptions~{B1}--{B3}.

\begin{lemma}[Joint observed-data and full-data limits]
\label{lem-joint-full-observed-eif}
Assume Assumptions~{A1}--{A2} and~{B1}--{B4}. Put
\[
\Sigma_R=\mathbb E\!\left[\{1-\pi(X)\}\operatorname{Var}\{\phi^F(L)\mid X\}\right].
\]
Then, under \(Q_0^N\), there exist centered Gaussian vectors \((G_{\mathrm{sp}},G_L,G_R)\) such that
\[
(\Delta_N(\phi^{\obs}),\Delta_N(\mathcal L\phi^F),\mathbb G_N(\phi^F)-\Delta_N(\mathcal L\phi^F))
\xrightarrow[]{d}(G_{\mathrm{sp}},G_L,G_R),
\]
and
\begin{gather*}
G_R\perp(G_{\mathrm{sp}},G_L), \qquad
\operatorname{Var}(G_{\mathrm{sp}})=\Sigma_{\mathrm{sp}},\\
\operatorname{Cov}(G_{\mathrm{sp}},G_L)=\Sigma_F, \qquad
\operatorname{Var}(G_L)=\Sigma_F-\Sigma_R, \qquad
\operatorname{Var}(G_R)=\Sigma_R.
\end{gather*}
Equivalently, with \(\mathbb G(\phi^F)=G_L+G_R\),
\[
(\Delta_N(\phi^{\obs}),\Delta_N(\mathcal L\phi^F),\mathbb G_N(\phi^F))
\xrightarrow[]{d}(G_{\mathrm{sp}},G_L,\mathbb G(\phi^F)),
\]
\[
\operatorname{Var}\{\mathbb G(\phi^F)\}=\Sigma_F,\qquad
\operatorname{Cov}\{G_{\mathrm{sp}},\mathbb G(\phi^F)\}=\Sigma_F.
\]
The residual convergence is stable with respect to \(\mathcal O_N\) in the following form. For every \(q\ge1\) and every \(\mathcal O_N\)-measurable \(Z_N\in\mathbb R^q\),
\[
Z_N\xrightarrow[]{d}Z
\quad\Longrightarrow\quad
(Z_N,\mathbb G_N(\phi^F)-\Delta_N(\mathcal L\phi^F))\xrightarrow[]{d}(Z,G_R),
\qquad G_R\perp Z.
\]
\end{lemma}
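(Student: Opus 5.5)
The approach is to split the full-data sum into its observed-data projection plus an unobserved residual. Write
\[
\mathbb G_N(\phi^F)-\Delta_N(\mathcal L\phi^F)=\frac1{\sqrt N}\sum_{i=1}^N(1-\delta_i)\{\phi^F(L_i)-\mathbb E[\phi^F(L_i)\mid X_i]\}=:R_N,
\]
using \(\mathbb E\phi^F=0\) and the definition \eqref{eq-L-operator}. By Assumption~\ref{ass-B1} and the i.i.d.\ structure, conditional on \(\mathcal G_N=\sigma(\bm X_N,\boldsymbol\delta_N,\{Y_i:\delta_i=1\})\), which contains \(\mathcal O_N\), the summands of \(R_N\) are independent and centered. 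Their conditional covariance is \((1-\delta_i)V(X_i)\), where \(V(X)=\operatorname{Var}\{\phi^F(L)\mid X\}\).

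The first step applies Lemma~\ref{lem-conditional-lindeberg-feller} to \(R_N\) with this \(\mathcal G_N\). The conditional covariance \(N^{-1}\sum_i(1-\delta_i)V(X_i)\) converges in probability to \(\Sigma_R\) by exactly the argument in the proof of Theorem~\ref{thm-obs-lan}: split off \((\delta_i-\pi_{N,i})\), control it by Assumption~\ref{ass-B3} coordinatewise, replace \(\pi_{N,i}\) by \(\pi(X_i)\) via Assumption~\ref{ass-B2}, and finish with the LLN. Assumption~\ref{ass-B3} needs \(h\in L^2\), but \(V\) is only in \(L^1\) since \(\phi^F\in L^2\). I would therefore truncate, writing \(V=V\mathbf 1\{V\le M\}+V\mathbf 1\{V>M\}\); the tail contributes at most \(N^{-1}\sum_iV(X_i)\mathbf 1\{V(X_i)>M\}\), whose expectation vanishes as \(M\to\infty\). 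The conditional Lindeberg condition follows from \(\mathbb E[\|\phi^F-\mathbb E(\phi^F\mid X)\|^2\mathbf 1\{\|\cdot\|>\varepsilon\sqrt N\}]\to0\) after bounding \(1-\delta_i\le1\) and taking expectations. The lemma then yields stable convergence of \(R_N\) to \(G_R\sim N(0,\Sigma_R)\) with respect to \(\mathcal G_N\supseteq\mathcal O_N\). This gives the final displayed stable statement directly: the conditional characteristic function of \(R_N\) converges in probability to a constant, so \(\mathbb E[e^{ia^\top Z_N+ib^\top R_N}]\to\mathbb E[e^{ia^\top Z}]e^{-b^\top\Sigma_Rb/2}\) by dominated convergence.

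The second step handles the observed-data pair. Since \(\phi^{\obs}=\mathcal Ld\) with \(d\in(\mathcal H^F)^p\) by Proposition~\ref{prop-obs-eif}, and \(\phi^F\in(\mathcal H^F)^p\), Lemma~\ref{lem-central-sequence-L2-limits} applied to \((d,\phi^F)\) gives joint Gaussian convergence of \((\Delta_N(\phi^{\obs}),\Delta_N(\mathcal L\phi^F))\). The covariances are \(\langle\mathcal Ld,\mathcal Ld\rangle_{\obs}=\Sigma_{\mathrm{sp}}\) and \(\langle\mathcal Ld,\mathcal L\phi^F\rangle_{\obs}=\langle\mathcal Md,\phi^F\rangle_F=\langle\Proj[\mathcal Md\mid\mathcal H^F],\phi^F\rangle_F=\Sigma_F\), using \eqref{eq-obs-eif-equation}. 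For the remaining variance, \eqref{eq-obs-norm} gives \(\|\mathcal L\phi^F\|_{\obs}^2=\mathbb E[\pi V]+\mathbb E[\{\mathbb E(\phi^F\mid X)\}^2]=\Sigma_F-\Sigma_R\).

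Since this pair is \(\mathcal O_N\)-measurable, the first step with \(Z_N\) equal to the pair gives the joint limit with \(G_R\) independent of \((G_{\mathrm{sp}},G_L)\). Summing \(G_L+G_R\) gives \(\operatorname{Var}\mathbb G(\phi^F)=\Sigma_F\) and \(\operatorname{Cov}(G_{\mathrm{sp}},\mathbb G(\phi^F))=\Sigma_F+0\). All of this is under \(Q_0^N\), which is the relevant law because \(R_N\) involves unobserved \(Y_i\). The main obstacle is the conditional-variance convergence with \(V\notin L^2\), which the truncation handles, together with checking that stability holds with respect to the enlarged field \(\mathcal G_N\), which requires conditional independence of unsampled \(Y_i\) given \((\bm X_N,\boldsymbol\delta_N)\) and the sampled outcomes. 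That conditional independence follows from Assumption~\ref{ass-B1} and independence across units.
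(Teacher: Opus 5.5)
Your proposal is correct and follows essentially the same route as the paper's proof: the same residual decomposition, truncation to handle the merely integrable conditional variance, and conditional Lindeberg--Feller argument giving stability. It also applies Lemma~\ref{lem-central-sequence-L2-limits} to \((d,\phi^F)\), with the cross-covariance \(\Sigma_F\) coming from the projection equation. The only cosmetic difference is that you condition on \(\sigma(\bm X_N,\boldsymbol\delta_N,\{Y_i:\delta_i=1\})\) rather than on \(\mathcal O_N\), which is the same information.
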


\begin{proof}
Put
\[
m_F(X)=\mathbb E[\phi^F(L)\mid X], \qquad V_F(X)=\operatorname{Var}\{\phi^F(L)\mid X\}.
\]
Then
\begin{align}
(\mathcal L\phi^F)(O_i)&=m_F(X_i)+\delta_i\{\phi^F(L_i)-m_F(X_i)\},\notag\\
\mathbb G_N(\phi^F)-\Delta_N(\mathcal L\phi^F)&=\frac1{\sqrt N}\sum_{i=1}^N(1-\delta_i)\{\phi^F(L_i)-m_F(X_i)\}.
\label{eq-lem-joint-full-observed-identities}
\end{align}
Conditionally on \(\mathcal O_N\), the summands in \(\mathbb G_N(\phi^F)-\Delta_N(\mathcal L\phi^F)\) are independent and satisfy
\begin{align}
\mathbb E\!\left[(1-\delta_i)\{\phi^F(L_i)-m_F(X_i)\}\,\middle|\,\mathcal O_N\right]&=0,\notag\\
\mathbb E\!\left[(1-\delta_i)\{\phi^F(L_i)-m_F(X_i)\}\{\phi^F(L_i)-m_F(X_i)\}^\top\,\middle|\,\mathcal O_N\right]&=(1-\delta_i)V_F(X_i).
\label{eq-lem-joint-full-observed-cond-moments}
\end{align}
Moreover,
\begin{equation}
\mathbb E\|V_F(X)\|\le\mathbb E\|\phi^F(L)-m_F(X)\|^2\le\mathbb E\|\phi^F(L)\|^2<\infty.
\label{eq-lem-joint-full-observed-VF-integrable}
\end{equation}
For every fixed \(M<\infty\), Assumptions~{B2}--{B3} and the law of large numbers give, entrywise,
\begin{align}
&\frac1N\sum_{i=1}^N(1-\delta_i)V_F(X_i)\mathbf1\{\|V_F(X_i)\|\le M\}\notag\\
&\quad=\frac1N\sum_{i=1}^N\{1-\pi_{N,i}\}V_F(X_i)\mathbf1\{\|V_F(X_i)\|\le M\}\notag\\
&\qquad{}-\frac1N\sum_{i=1}^N(\delta_i-\pi_{N,i})V_F(X_i)\mathbf1\{\|V_F(X_i)\|\le M\}\notag\\
&\quad\xrightarrow[]{p}\mathbb E\!\left[\{1-\pi(X)\}V_F(X)\mathbf1\{\|V_F(X)\|\le M\}\right].
\label{eq-lem-joint-full-observed-truncated-var}
\end{align}
By \eqref{eq-lem-joint-full-observed-VF-integrable}, for every \(\varepsilon>0\),
\begin{align}
\lim_{M\to\infty}\left\|\mathbb E\!\left[\{1-\pi(X)\}V_F(X)\mathbf1\{\|V_F(X)\|>M\}\right]\right\|&=0,\notag\\
\lim_{M\to\infty}\limsup_{N\to\infty}Q_0^N\!\left(\left\|\frac1N\sum_{i=1}^N(1-\delta_i)V_F(X_i)\mathbf1\{\|V_F(X_i)\|>M\}\right\|>\varepsilon\right)&=0.
\label{eq-lem-joint-full-observed-var-tail}
\end{align}
Hence
\begin{equation}
\frac1N\sum_{i=1}^N(1-\delta_i)V_F(X_i)\xrightarrow[]{p}\Sigma_R.
\label{eq-lem-joint-full-observed-cond-var}
\end{equation}
For every \(\varepsilon>0\),
\begin{align}
&\sum_{i=1}^N\mathbb E\!\left[\left\|\frac{1-\delta_i}{\sqrt N}\{\phi^F(L_i)-m_F(X_i)\}\right\|^2
\mathbf1\!\left\{\left\|\frac{1-\delta_i}{\sqrt N}\{\phi^F(L_i)-m_F(X_i)\}\right\|>\varepsilon\right\}
\,\middle|\,\mathcal O_N\right]\notag\\
&\quad=\frac1N\sum_{i=1}^N(1-\delta_i)\mathbb E\!\left[\|\phi^F(L_i)-m_F(X_i)\|^2
\mathbf1\{\|\phi^F(L_i)-m_F(X_i)\|>\varepsilon\sqrt N\}\,\middle|\,X_i\right],
\label{eq-lem-joint-full-observed-lindeberg-left}
\end{align}
and
\begin{align}
&\mathbb E\!\left[\frac1N\sum_{i=1}^N(1-\delta_i)\mathbb E\!\left[\|\phi^F(L_i)-m_F(X_i)\|^2
\mathbf1\{\|\phi^F(L_i)-m_F(X_i)\|>\varepsilon\sqrt N\}\,\middle|\,X_i\right]\right]\notag\\
&\quad\le\mathbb E\!\left[\|\phi^F(L)-m_F(X)\|^2\mathbf1\{\|\phi^F(L)-m_F(X)\|>\varepsilon\sqrt N\}\right]\to0.
\label{eq-lem-joint-full-observed-lindeberg}
\end{align}
Lemma~\ref{lem-conditional-lindeberg-feller}, \eqref{eq-lem-joint-full-observed-cond-var}, and \eqref{eq-lem-joint-full-observed-lindeberg} give
\begin{equation}
\mathbb E\!\left[\exp\left(it^\top\{\mathbb G_N(\phi^F)-\Delta_N(\mathcal L\phi^F)\}\right)\,\middle|\,\mathcal O_N\right]
\xrightarrow[]{p}\exp\left(-\frac12t^\top\Sigma_Rt\right),\qquad t\in\mathbb R^p.
\label{eq-lem-joint-full-observed-cond-cf}
\end{equation}
If \(Z_N\) is \(\mathcal O_N\)-measurable, \(Z_N\in\mathbb R^q\), and \(Z_N\xrightarrow[]{d}Z\), then, for \(s\in\mathbb R^q\) and \(t\in\mathbb R^p\),
\begin{align}
&\mathbb E\exp\left[is^\top Z_N+it^\top\{\mathbb G_N(\phi^F)-\Delta_N(\mathcal L\phi^F)\}\right]\notag\\
&\quad=\mathbb E\left[e^{is^\top Z_N}\mathbb E\left[\exp\left(it^\top\{\mathbb G_N(\phi^F)-\Delta_N(\mathcal L\phi^F)\}\right)\middle|\mathcal O_N\right]\right]\notag\\
&\quad\to\mathbb E[e^{is^\top Z}]\exp\left(-\frac12t^\top\Sigma_Rt\right).
\label{eq-lem-joint-full-observed-stable-cf-use}
\end{align}
Thus, with \(G_R\sim N_p(0,\Sigma_R)\) independent of \(Z\),
\begin{equation}
(Z_N,\mathbb G_N(\phi^F)-\Delta_N(\mathcal L\phi^F))\xrightarrow[]{d}(Z,G_R).
\label{eq-lem-joint-full-observed-stable-consequence}
\end{equation}

Let \(d=(d_1,\ldots,d_p)^\top\in(\mathcal H^F)^p\) be defined by
\[
\phi_j^{\obs}=\mathcal Ld_j,\qquad
\Proj[\mathcal Md_j\mid\mathcal H^F]=\phi_j^F,\qquad j=1,\ldots,p.
\]
Lemma~\ref{lem-central-sequence-L2-limits}, applied to \(d_1,\ldots,d_p,\phi_1^F,\ldots,\phi_p^F\), gives
\begin{equation}
(\Delta_N(\phi^{\obs}),\Delta_N(\mathcal L\phi^F))\xrightarrow[]{d}(G_{\mathrm{sp}},G_L),
\label{eq-lem-joint-full-observed-observed-clt}
\end{equation}
where \((G_{\mathrm{sp}},G_L)\) is centered Gaussian and
\begin{equation}
\operatorname{Var}(G_{\mathrm{sp}})=\mathbb E^\pi[\phi^{\obs}(O)\phi^{\obs}(O)^\top]=\Sigma_{\mathrm{sp}}.
\label{eq-lem-joint-full-observed-var-Gsp}
\end{equation}
By \eqref{eq-lem-joint-full-observed-stable-consequence}, with
\[
Z_N=(\Delta_N(\phi^{\obs}),\Delta_N(\mathcal L\phi^F)),\qquad
Z=(G_{\mathrm{sp}},G_L),
\]
\begin{equation}
(\Delta_N(\phi^{\obs}),\Delta_N(\mathcal L\phi^F),\mathbb G_N(\phi^F)-\Delta_N(\mathcal L\phi^F))
\xrightarrow[]{d}(G_{\mathrm{sp}},G_L,G_R),\qquad G_R\perp(G_{\mathrm{sp}},G_L).
\label{eq-lem-joint-full-observed-joint-R}
\end{equation}
For \(1\le j,k\le p\),
\begin{align}
\operatorname{Cov}\{(G_{\mathrm{sp}})_j,(G_L)_k\}
&=\mathbb E^\pi[\phi_j^{\obs}(O)(\mathcal L\phi_k^F)(O)]\notag\\
&=\langle\mathcal Ld_j,\mathcal L\phi_k^F\rangle_{\obs}\notag\\
&=\langle\mathcal Md_j,\phi_k^F\rangle_F\notag\\
&=\langle\Proj[\mathcal Md_j\mid\mathcal H^F],\phi_k^F\rangle_F\notag\\
&=\langle\phi_j^F,\phi_k^F\rangle_F.
\label{eq-lem-joint-full-observed-cov-sp-L}
\end{align}
Therefore
\begin{equation}
\operatorname{Cov}(G_{\mathrm{sp}},G_L)=\Sigma_F.
\label{eq-lem-joint-full-observed-cov-sp-L-matrix}
\end{equation}
Moreover,
\begin{align}
\operatorname{Var}(G_L)&=\mathbb E^\pi[(\mathcal L\phi^F)(O)(\mathcal L\phi^F)(O)^\top]\notag\\
&=\mathbb E\!\left[m_F(X)m_F(X)^\top+\pi(X)V_F(X)\right]\notag\\
&=\Sigma_F-\Sigma_R.
\label{eq-lem-joint-full-observed-var-GL}
\end{align}
By \eqref{eq-lem-joint-full-observed-identities} and \eqref{eq-lem-joint-full-observed-joint-R},
\begin{equation}
(\Delta_N(\phi^{\obs}),\Delta_N(\mathcal L\phi^F),\mathbb G_N(\phi^F))
\xrightarrow[]{d}(G_{\mathrm{sp}},G_L,G_L+G_R).
\label{eq-lem-joint-full-observed-joint-F}
\end{equation}
Set \(\mathbb G(\phi^F)=G_L+G_R\). Since \(G_R\perp(G_{\mathrm{sp}},G_L)\),
\begin{align}
\operatorname{Var}\{\mathbb G(\phi^F)\}&=\operatorname{Var}(G_L)+\operatorname{Var}(G_R)\notag\\
&=(\Sigma_F-\Sigma_R)+\Sigma_R=\Sigma_F,
\label{eq-lem-joint-full-observed-var-GF}
\end{align}
and
\begin{equation}
\operatorname{Cov}\{G_{\mathrm{sp}},\mathbb G(\phi^F)\}
=\operatorname{Cov}(G_{\mathrm{sp}},G_L)=\Sigma_F.
\label{eq-lem-joint-full-observed-cov-sp-GF}
\end{equation}
\end{proof}

\begin{lemma}[Joint full-data LAN and local expansion of the finite-population target]
\label{lem-joint-full-data-lan}
Assume Assumptions~{A1}--{A2} and Assumption~{B1}. For every \(g\in\dot{\mathcal P}_{P_0^F}\cap L^4(P_0^F)\) and fixed \(u\in\mathbb R\),
\begin{equation}
\log\frac{dQ^N_{u/\sqrt N,g}}{dQ_0^N}
=u\mathbb G_N(g)-\frac{u^2}{2}\|g\|_F^2+o_{Q_0^N}(1),\qquad
\mathbb G_N(g)\xrightarrow[]{d}N(0,\|g\|_F^2).
\label{eq-joint-full-data-lan}
\end{equation}
Consequently, \(Q^N_{u/\sqrt N,g}\) and \(Q_0^N\) are mutually contiguous. If Assumption~{A3} also holds, then, under \(Q^N_{u/\sqrt N,g}\),
\begin{equation}
\sqrt N\{\theta_N-\psi(P^F_{u/\sqrt N,g})\}
=\mathbb G_N(\phi^F)-u\dot\psi^F(g)+o_{Q^N_{u/\sqrt N,g}}(1).
\label{eq-local-finite-target-expansion}
\end{equation}
\end{lemma}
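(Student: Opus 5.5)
The plan is to treat the joint law \(Q^N\) as a product over units. Under Assumption~{B1}, the sampling kernel \(q_N(\boldsymbol\delta_N\mid\bm X_N)\) does not depend on \(t\), so the likelihood ratio of \(Q^N_{u/\sqrt N,g}\) against \(Q_0^N\) reduces to the i.i.d.\ full-data likelihood ratio:
\[
\log\frac{dQ^N_{u/\sqrt N,g}}{dQ_0^N}=\sum_{i=1}^N\log\frac{f_{u/\sqrt N}(L_i)}{f_0(L_i)}.
\]
Here \(P_{t,g}^F\ll P_0^F\) by Assumption~{A1}, so the ratio is well defined. The expansion \eqref{eq-joint-full-data-lan} is then the classical LAN theorem for DQM i.i.d.\ models (e.g.\ \citet[Theorem~7.2]{VanDerVaart1998}). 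Alternatively, it can be obtained by reusing the argument of Lemma~\ref{lem-sampled-conditional-likelihood-expansion} with \(\delta_i\equiv1\) and with the joint density in place of the conditional one. Since \(g\in\mathcal H^F\) has mean zero, \(\mathbb G_N(g)=N^{-1/2}\sum_i g(L_i)\), and the ordinary CLT gives \(\mathbb G_N(g)\xrightarrow{d}N(0,\|g\|_F^2)\). The log-likelihood ratio is therefore asymptotically \(N(-u^2\|g\|_F^2/2,\,u^2\|g\|_F^2)\), and Le Cam's first lemma yields mutual contiguity.

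For \eqref{eq-local-finite-target-expansion}, I would first decompose
\[
\sqrt N\{\theta_N-\psi(P^F_{u/\sqrt N,g})\}=\sqrt N(\theta_N-\theta_0)-\sqrt N\{\psi(P^F_{u/\sqrt N,g})-\theta_0\}.
\]
By Assumption~{A2}, the deterministic second term converges to \(u\dot\psi^F(g)\). By Assumption~{A3}, the first term equals \(N^{-1/2}\sum_i\phi^F(L_i)+o_{Q_0^N}(1)\). Since \(\phi^F\in(\mathcal H^F)^p\) has \(P_0^F\)-mean zero, this sum is exactly \(\mathbb G_N(\phi^F)\) under the paper's convention, which centers at the \(P_0^F\) expectation even under local alternatives. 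It remains to transfer the remainder: a term that is \(o_{Q_0^N}(1)\) is also \(o_{Q^N_{u/\sqrt N,g}}(1)\) by the contiguity just established. Combining these pieces gives the displayed expansion.

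The only delicate point is the conceptual one. The expansion must hold under the alternative while \(\mathbb G_N\) stays centered at \(P_0^F\), and this is why contiguity must be proved before invoking Assumption~{A3} under the local law. All other steps are standard. Le Cam's third lemma is not needed at this stage; it would only be used later to identify the shifted limit \(\mathbb G_N(\phi^F)\rightsquigarrow N(u\dot\psi^F(g),\Sigma_F)\), which cancels the \(-u\dot\psi^F(g)\) term.
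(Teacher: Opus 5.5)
Your proposal is correct and matches the paper's proof essentially step for step. You cancel the parameter-free design kernel \(q_N(\boldsymbol\delta_N\mid\bm X_N)\) to reduce to the i.i.d.\ full-data likelihood ratio, then apply the standard DQM LAN expansion and Le Cam's first lemma for mutual contiguity. Finally, you transfer the \(o_{Q_0^N}(1)\) remainder of Assumption~A3 to the local law by contiguity and subtract the deterministic shift \(u\dot\psi^F(g)+o(1)\) from Assumption~A2.
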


\begin{proof}
The conditional design density \(q_N(\boldsymbol\delta_N\mid\bm X_N)\) does not depend on the submodel parameter. Therefore it cancels from the joint likelihood ratio, and
\[
\frac{dQ^N_{u/\sqrt N,g}}{dQ_0^N}
=\prod_{i=1}^N\frac{dP^F_{u/\sqrt N,g}}{dP_0^F}(L_i).
\]
The usual i.i.d. LAN expansion implied by full-data DQM gives \eqref{eq-joint-full-data-lan}. Le Cam's first lemma gives mutual contiguity. By Assumption~{A3},
\[
\sqrt N(\theta_N-\theta_0)=\mathbb G_N(\phi^F)+o_{Q_0^N}(1).
\]
Contiguity transfers the remainder to \(Q^N_{u/\sqrt N,g}\). Assumption~{A2} gives
\[
\sqrt N\{\psi(P^F_{u/\sqrt N,g})-\theta_0\}
=u\dot\psi^F(g)+o(1),
\]
and subtraction proves \eqref{eq-local-finite-target-expansion}.
\end{proof}

\begin{lemma}[Joint convolution with full-data scores]
\label{lem-joint-convolution-full-scores}
Assume Assumptions~{A1}--{A2} and~{B1}--{B4}. Let \(\hat\theta_N\) be locally regular in the sense of Definition~3.5 and put
\[
A_N=\sqrt N(\hat\theta_N-\theta_0).
\]
For an integer \(q\ge0\) and fixed \(g_1,\ldots,g_q\in\mathcal H^F\), under \(Q_0^N\),
\begin{equation}
\bigl(A_N,\mathbb G_N(\phi^F),\mathbb G_N(g_1),\ldots,\mathbb G_N(g_q)\bigr)
\xrightarrow[]{d}
\bigl(G_{\mathrm{sp}}+W,\mathbb G(\phi^F),\mathbb G(g_1),\ldots,\mathbb G(g_q)\bigr),
\label{eq-joint-convolution-full-scores}
\end{equation}
where \((G_{\mathrm{sp}},\mathbb G(\phi^F),\mathbb G(g_1),\ldots,\mathbb G(g_q))\) is centered Gaussian, \(W\) is independent of this Gaussian vector, and
\begin{equation}
\operatorname{Cov}\{G_{\mathrm{sp}},\mathbb G(g_\ell)\}
=\operatorname{Cov}\{\mathbb G(\phi^F),\mathbb G(g_\ell)\}
=\mathbb E\{\phi^F(L)g_\ell(L)\}
=\dot\psi^F(g_\ell).
\label{eq-joint-convolution-full-score-covariances}
\end{equation}
When \(q=0\), all score coordinates are omitted. The marginal covariance of \((G_{\mathrm{sp}},\mathbb G(\phi^F))\) is the one in Lemma~\ref{lem-joint-full-observed-eif}.
\end{lemma}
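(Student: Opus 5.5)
We would reduce the statement to the two joint-limit results already proved, by splitting each full-data empirical process into its observed-data projection plus a residual. For any \(g\in\mathcal H^F\) we write, exactly as in \eqref{eq-lem-joint-full-observed-identities},
\[
\mathbb G_N(g)=\Delta_N(\mathcal Lg)+R_N(g),
\qquad
R_N(g)=\frac1{\sqrt N}\sum_{i=1}^N(1-\delta_i)\{g(L_i)-\mathbb E[g(L)\mid X_i]\}.
\]
We apply this to the vector \(\bar g=(\phi^F_1,\ldots,\phi^F_p,g_1,\ldots,g_q)^\top\). Applying Lemma~\ref{lem-joint-convolution-observed-central} with \(r=\bar g\), so that \(\varphi=\mathcal L\bar g\), gives under \(P_0^N\), and hence under \(Q_0^N\) since these are \(\mathcal O_N\)-measurable, the joint limit
\[
(A_N,\Delta_N(\phi^{\obs}),\Delta_N(\mathcal L\bar g))\xrightarrow{d}(G_{\mathrm{sp}}+W,G_{\mathrm{sp}},G_{\varphi}).
\]
Here \((G_{\mathrm{sp}},G_\varphi)\) is Gaussian with covariances given by \(\langle\cdot,\cdot\rangle_{\obs}\), and \(W\) is independent of \((G_{\mathrm{sp}},G_\varphi)\).

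Next we treat the residual vector \(R_N(\bar g)\) exactly as in the stable-convergence part of Lemma~\ref{lem-joint-full-observed-eif}, now for the vector \(\bar g\) instead of \(\phi^F\) alone. Conditionally on \(\mathcal O_N\), the summands are independent and centered, with conditional covariance \((1-\delta_i)\operatorname{Cov}\{\bar g(L_i)\mid X_i\}\). The truncation argument based on Assumptions~\ref{ass-B2}--\ref{ass-B3} gives convergence in probability of the averaged conditional covariance to \(\Sigma_{\bar g,R}=\mathbb E[\{1-\pi(X)\}\operatorname{Cov}\{\bar g(L)\mid X\}]\). The Lindeberg condition follows from \(\bar g\in L^2\), and Lemma~\ref{lem-conditional-lindeberg-feller} then gives convergence of the conditional characteristic function. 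Taking \(Z_N=(A_N,\Delta_N(\phi^{\obs}),\Delta_N(\mathcal L\bar g))\), which is \(\mathcal O_N\)-measurable, the factorization argument in \eqref{eq-lem-joint-full-observed-stable-cf-use} yields
\[
(Z_N,R_N(\bar g))\xrightarrow{d}(G_{\mathrm{sp}}+W,G_{\mathrm{sp}},G_\varphi,G_{\bar g,R}).
\]
The limit \(G_{\bar g,R}\sim N(0,\Sigma_{\bar g,R})\) is independent of \((G_{\mathrm{sp}}+W,G_{\mathrm{sp}},G_\varphi)\). We then set \(\mathbb G(\bar g)=G_\varphi+G_{\bar g,R}\) and apply the continuous mapping theorem to obtain \eqref{eq-joint-convolution-full-scores}. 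Independence of \(W\) from \((G_{\mathrm{sp}},\mathbb G(\bar g))\) follows because the joint characteristic function factorizes: \(W\perp(G_{\mathrm{sp}},G_\varphi)\) by Lemma~\ref{lem-joint-convolution-observed-central}, and \(G_{\bar g,R}\) is independent of the triple \((W,G_{\mathrm{sp}},G_\varphi)\). The vector \((G_{\mathrm{sp}},\mathbb G(\bar g))\) is Gaussian because it is the sum of two independent Gaussian pieces.

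It remains to verify the covariances in \eqref{eq-joint-convolution-full-score-covariances}. Write \(\phi^{\obs}_j=\mathcal Ld_j\). Since \(G_{\bar g,R}\) is independent of \(G_{\mathrm{sp}}\),
\[
\operatorname{Cov}\{(G_{\mathrm{sp}})_j,\mathbb G(g_\ell)\}=\langle\mathcal Ld_j,\mathcal Lg_\ell\rangle_{\obs}=\langle\mathcal Md_j,g_\ell\rangle_F=\langle\Proj[\mathcal Md_j\mid\mathcal H^F],g_\ell\rangle_F.
\]
This equals \(\langle\phi^F_j,g_\ell\rangle_F=\dot\psi^F_j(g_\ell)\) by Proposition~\ref{prop-obs-eif}, using \(g_\ell\in\mathcal H^F\), which is the same computation as \eqref{eq-lem-joint-full-observed-cov-sp-L}. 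For the full-data pair we use \eqref{eq-obs-inner-product}:
\[
\operatorname{Cov}\{\mathbb G(\phi^F),\mathbb G(g_\ell)\}=\langle\mathcal L\phi^F,\mathcal Lg_\ell\rangle_{\obs}+\mathbb E[\{1-\pi(X)\}\operatorname{Cov}(\phi^F,g_\ell\mid X)],
\]
and the two terms add to \(\mathbb E[\phi^F g_\ell]\). The case \(q=0\) recovers the marginal covariance of Lemma~\ref{lem-joint-full-observed-eif}.

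The main obstacle is the independence bookkeeping in the second step. We must make sure that the stable limit of the residual is independent of the non-Gaussian limit \(A=G_{\mathrm{sp}}+W\), and not merely of the Gaussian central sequences. This is exactly why we place \(A_N\) inside the \(\mathcal O_N\)-measurable conditioning vector \(Z_N\). That step needs only that \(\hat\theta_N\) is observed-data measurable and that \(Z_N\) converges jointly, which Lemma~\ref{lem-joint-convolution-observed-central} supplies.
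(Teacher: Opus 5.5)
Your proposal is correct and follows the paper's proof essentially step for step. You apply Lemma~\ref{lem-joint-convolution-observed-central} to $\mathcal L(\phi^F,g_1,\ldots,g_q)$, obtain the stable Gaussian limit of the nonsampled residual exactly as in Lemma~\ref{lem-joint-full-observed-eif} with $A_N$ among the $\mathcal O_N$-measurable conditioning statistics, add the two pieces, and read off the covariances. The only cosmetic difference is in the covariance checks. You verify $\operatorname{Cov}\{G_{\mathrm{sp}},\mathbb G(g_\ell)\}$ through the normal equation $\Proj[\mathcal M d_j\mid\mathcal H^F]=\phi^F_j$, and $\operatorname{Cov}\{\mathbb G(\phi^F),\mathbb G(g_\ell)\}$ by summing the observed and residual parts. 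The paper instead cites the Riesz identity $\dot\psi^{\obs}(\mathcal Lg_\ell)=\dot\psi^F(g_\ell)$ and the ordinary i.i.d.\ covariance.
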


\begin{proof}
Apply Lemma~\ref{lem-joint-convolution-observed-central} to the observed scores
\[
\mathcal L\phi^F,\quad\mathcal Lg_1,\ldots,\mathcal Lg_q.
\]
Next apply the conditional stable-convergence argument from the proof of Lemma~\ref{lem-joint-full-observed-eif} to the stacked full-data vector \((\phi^F,g_1,\ldots,g_q)\). The corresponding residual vector is
\[
\frac1{\sqrt N}\sum_{i=1}^N(1-\delta_i)
\left[
\begin{pmatrix}\phi^F(L_i)\\g_1(L_i)\\\vdots\\g_q(L_i)\end{pmatrix}
-\mathbb E\left\{
\begin{pmatrix}\phi^F(L_i)\\g_1(L_i)\\\vdots\\g_q(L_i)\end{pmatrix}
\middle|X_i\right\}
\right].
\]
Exactly the truncation, conditional-variance, and conditional-Lindeberg steps used in that lemma show that this residual converges stably to a centered Gaussian vector independent of every observed-data measurable limit. Adding it to the observed-score limits gives \eqref{eq-joint-convolution-full-scores} and preserves the independence of \(W\).

For each \(\ell\),
\[
\operatorname{Cov}\{G_{\mathrm{sp}},\mathbb G(g_\ell)\}
=\langle\phi^{\obs},\mathcal Lg_\ell\rangle_{\obs}
=\dot\psi^{\obs}(\mathcal Lg_\ell)
=\dot\psi^F(g_\ell),
\]
whereas the ordinary full-data i.i.d. covariance gives
\[
\operatorname{Cov}\{\mathbb G(\phi^F),\mathbb G(g_\ell)\}=\mathbb E\{\phi^F(L)g_\ell(L)\}.
\]
This proves \eqref{eq-joint-convolution-full-score-covariances}.
\end{proof}

The preceding lemmas establish joint convergence of \(\sqrt N(\hat\theta_N-\theta_0)\) with any finite collection of \(\mathbb G_N(g)\), \(\Delta_N(\mathcal Lg)\), and \(\mathbb G_N(g)-\Delta_N(\mathcal Lg)\), for \(g\in\mathcal H^F\).

\begin{proof}[Proof of Theorem~3.7]
Write
\[
A_N=\sqrt N(\hat\theta_N-\theta_0),\qquad B_N=\sqrt N(\hat\theta_N-\theta_N).
\]
We note that Assumption~{A3} gives \(B_N=A_N-\mathbb G_N(\phi^F)+o_{Q_0^N}(1)\).

\emph{Superpopulation regularity implies finite-population regularity.}
Suppose first that \(\hat\theta_N\) is locally regular in the sense of Definition~3.5. Fix \(g\in\dot{\mathcal P}_{P_0^F}\cap L^4(P_0^F)\). Lemma~\ref{lem-joint-convolution-full-scores}, together with Assumption~{A3}, gives under \(Q_0^N\)
\begin{equation}
(B_N,\mathbb G_N(g))\xrightarrow[]{d}(B,\mathbb G(g)),\qquad
B=G_{\mathrm{sp}}-\mathbb G(\phi^F)+W.
\label{eq-sp-to-fp-joint-score}
\end{equation}
By \eqref{eq-joint-convolution-full-score-covariances},
\[
\operatorname{Cov}\{G_{\mathrm{sp}}-\mathbb G(\phi^F),\mathbb G(g)\}
=\dot\psi^F(g)-\dot\psi^F(g)=0.
\]
The Gaussian difference \(G_{\mathrm{sp}}-\mathbb G(\phi^F)\) is therefore independent of \(\mathbb G(g)\), and Lemma~\ref{lem-joint-convolution-full-scores} gives \(W\perp(G_{\mathrm{sp}},\mathbb G(\phi^F),\mathbb G(g))\). Hence \(B\perp\mathbb G(g)\). The joint LAN expansion \eqref{eq-joint-full-data-lan} and Le Cam's third lemma show that, under \(Q^N_{u/\sqrt N,g}\),
\[
B_N\xrightarrow[]{d}B
\]
for every fixed \(u\), with the same limit law. This is local regularity in the sense of Definition~3.6.

\emph{Finite-population regularity implies superpopulation regularity.}
Conversely, suppose that \(\hat\theta_N\) is locally regular in the sense of Definition~3.6, and denote the common null and local limit of \(B_N\) by \(B\). Let \(g_1,\ldots,g_q\in\dot{\mathcal P}_{P_0^F}\cap L^4(P_0^F)\). Since \(B_N\xrightarrow[]{d}B\) under \(Q_0^N\) and the score vector has a fixed Gaussian limit, it is enough to identify an arbitrary subsequential joint limit. Along any subsequence, take a further subsequence on which
\[
\bigl(B_N,\mathbb G_N(g_1),\ldots,\mathbb G_N(g_q)\bigr)
\xrightarrow[]{d}
\bigl(B,\mathbb G(g_1),\ldots,\mathbb G(g_q)\bigr).
\]
For \(c\in\mathbb R^q\), the linearity in Assumption~{A1} makes \(g_c=\sum_{j=1}^q c_jg_j\) an admissible score. Applying Le Cam's third lemma to \eqref{eq-joint-full-data-lan} and using finite-population regularity gives, for every \(t\in\mathbb R^p\) and \(u\in\mathbb R\),
\begin{equation}
\mathbb E\left[
\exp(it^\top B)
\exp\left\{u\sum_{j=1}^q c_j\mathbb G(g_j)-\frac{u^2}{2}\|g_c\|_F^2\right\}
\right]
=\mathbb E\exp(it^\top B),
\label{eq-fp-regularity-tilt-invariance}
\end{equation}
Applying the analytic-continuation argument leading to \eqref{eq-lem-joint-conv-imag-tilt-hm} to \eqref{eq-fp-regularity-tilt-invariance} and evaluating at \(u=i\) gives \(B\perp(\mathbb G(g_1),\ldots,\mathbb G(g_q))\), so uniqueness of the subsequential joint limit yields
\[
\bigl(B_N,\mathbb G_N(g_1),\ldots,\mathbb G_N(g_q)\bigr)
\xrightarrow[]{d}
\bigl(B,\mathbb G(g_1),\ldots,\mathbb G(g_q)\bigr),\qquad
B\perp(\mathbb G(g_1),\ldots,\mathbb G(g_q)).
\]
Applying the same \(L^2\)-approximation argument used to pass from \eqref{eq-lem-joint-conv-imag-tilt-hm} to \eqref{eq-lem-joint-conv-char-limit}, with each coordinate of \(\phi^F\) approximated by admissible \(L^4\)-scores under Assumption~{A1}, gives
\begin{equation}
(B_N,\mathbb G_N(\phi^F))\xrightarrow[]{d}(B,\mathbb G(\phi^F)),\qquad
B\perp\mathbb G(\phi^F),\qquad
\mathbb G(\phi^F)\sim N_p(0,\Sigma_F).
\label{eq-fp-to-sp-BF}
\end{equation}
The same approximation with an arbitrary admissible score \(g\) appended gives the joint null limit of \((B_N,\mathbb G_N(\phi^F),\mathbb G_N(g))\), with \(B\) independent of the Gaussian score coordinates. Their covariance is \(\mathbb E[\phi^F(L)g(L)]=\dot\psi^F(g)\). Le Cam's third lemma therefore yields, under \(Q^N_{u/\sqrt N,g}\),
\[
(B_N,\mathbb G_N(\phi^F)-u\dot\psi^F(g))
\xrightarrow[]{d}(B,\mathbb G(\phi^F)).
\]
Using the local expansion \eqref{eq-local-finite-target-expansion},
\begin{align*}
\sqrt N\{\hat\theta_N-\psi(P^F_{u/\sqrt N,g})\}
&=B_N+\sqrt N\{\theta_N-\psi(P^F_{u/\sqrt N,g})\}\\
&=B_N+\mathbb G_N(\phi^F)-u\dot\psi^F(g)+o_{Q^N_{u/\sqrt N,g}}(1)\\
&\xrightarrow[]{d}B+\mathbb G(\phi^F),
\end{align*}
whose law does not depend on \(u\) or \(g\). Since the statistic on the left is observed-data measurable, its distribution under \(Q^N_{u/\sqrt N,g}\) equals its distribution under the observed marginal \(P^N_{u/\sqrt N,\mathcal Lg}\). Thus \(\hat\theta_N\) is locally regular in the sense of Definition~3.5.
\end{proof}

\begin{proof}[Proof of Theorem~3.9]
Write
\[
A_N=\sqrt N(\hat\theta_N-\theta_0),\qquad B_N=\sqrt N(\hat\theta_N-\theta_N).
\]
Under either equivalent condition in Theorem~3.7, Theorem~3.8 and Lemma~\ref{lem-joint-convolution-full-scores}, with no auxiliary score, give under \(Q_0^N\)
\[
(A_N,\mathbb G_N(\phi^F))\xrightarrow[]{d}(G_{\mathrm{sp}}+W,\mathbb G(\phi^F)),\qquad
W\perp(G_{\mathrm{sp}},\mathbb G(\phi^F)),
\]
with
\[
\operatorname{Var}(G_{\mathrm{sp}})=\Sigma_{\mathrm{sp}},\qquad
\operatorname{Var}\{\mathbb G(\phi^F)\}=\Sigma_F,\qquad
\operatorname{Cov}\{G_{\mathrm{sp}},\mathbb G(\phi^F)\}=\Sigma_F.
\]
Assumption~{A3} gives
\[
B_N=A_N-\mathbb G_N(\phi^F)+o_{Q_0^N}(1)
\xrightarrow[]{d}G_{\mathrm{sp}}-\mathbb G(\phi^F)+W.
\]
Let \(G_{\mathrm{fp}}=G_{\mathrm{sp}}-\mathbb G(\phi^F)\). Then \(G_{\mathrm{fp}}\) is Gaussian, \(W\perp G_{\mathrm{fp}}\), and
\[
\operatorname{Var}(G_{\mathrm{fp}})
=\Sigma_{\mathrm{sp}}+\Sigma_F
-\operatorname{Cov}\{G_{\mathrm{sp}},\mathbb G(\phi^F)\}
-\operatorname{Cov}\{\mathbb G(\phi^F),G_{\mathrm{sp}}\}
=\Sigma_{\mathrm{sp}}-\Sigma_F.
\]
This representation proves that \(\Sigma_{\mathrm{fp}}\) is positive semidefinite. The residual component is the same \(W\) as in Theorem~3.8, so finite-population efficiency is equivalent to degeneracy of \(W\).
\end{proof}

\begin{proof}[Proof of Proposition~3.13]
Under Assumption~{A3}, Theorem~3.7 shows that the two notions of local regularity are equivalent. Theorems~3.8 and~3.9 show that the same residual \(W\) occurs in both convolution representations. Hence superpopulation efficiency is equivalent to finite-population efficiency. It remains to establish the asymptotic-linear representations.

Put
\[
A_N=\sqrt N(\hat\theta_N-\theta_0).
\]
Assume first that \(A_N=\Delta_N(\phi^{\obs})+o_{P_0^N}(1)\). Write \(\phi^{\obs}=\mathcal Ld\) coordinatewise. Lemma~\ref{lem-central-sequence-L2-limits}, applied to the coordinates of \(d\) and to \(g\), gives the joint Gaussian convergence of \(\Delta_N(\phi^{\obs})\) and \(\Delta_N(\mathcal Lg)\), with cross-covariance
\[
\mathbb E^\pi[\phi^{\obs}(O)(\mathcal Lg)(O)]=\dot\psi^{\obs}(\mathcal Lg).
\]
For fixed \(u\) and \(g\), LAN gives \(P^N_{u/\sqrt N,\mathcal Lg}\triangleleft P_0^N\), so the remainder is also \(o_{P^N_{u/\sqrt N,\mathcal Lg}}(1)\). Le Cam's third lemma therefore yields, under \(P^N_{u/\sqrt N,\mathcal Lg}\),
\begin{align*}
\Delta_N(\phi^{\obs})
&\xrightarrow[]{d}G_{\mathrm{sp}}+u\,\mathbb E^\pi[\phi^{\obs}(O)(\mathcal Lg)(O)]\\
&=G_{\mathrm{sp}}+u\,\dot\psi^{\obs}(\mathcal Lg),\\
\sqrt N\{\psi^{\obs}&(P^N_{u/\sqrt N,\mathcal Lg})-\theta_0\}
\to u\,\dot\psi^{\obs}(\mathcal Lg).
\end{align*}
Therefore
\[
\sqrt N\{\hat\theta_N-\psi^{\obs}(P^N_{u/\sqrt N,\mathcal Lg})\}
\xrightarrow[]{d}G_{\mathrm{sp}},
\]
which proves local regularity. Since
\[
\Delta_N(\phi^{\obs})\xrightarrow[]{d}N_p(0,\Sigma_{\mathrm{sp}}),
\]
the convolution residual in Theorem~3.8 is degenerate, so the estimator is efficient.

Conversely, let \(\hat\theta_N\) be locally regular and efficient. Lemma~\ref{lem-joint-convolution-observed-central}, applied without an auxiliary \(\varphi\)-coordinate, gives
\[
(A_N,\Delta_N(\phi^{\obs}))\xrightarrow[]{d}(G_{\mathrm{sp}}+W,G_{\mathrm{sp}}),\quad
W\perp G_{\mathrm{sp}}.
\]
By the definition of efficiency in Theorem~3.8, the residual component \(W\) is degenerate at zero. Hence
\[
A_N-\Delta_N(\phi^{\obs})\xrightarrow[]{d}0.
\]
Thus \(A_N-\Delta_N(\phi^{\obs})=o_{P_0^N}(1)\), proving \textup{(3.16)}.

If Assumption~{A3} holds, then
\begin{equation}
\sqrt N(\theta_N-\theta_0)=\mathbb G_N(\phi^F)+o_{Q_0^N}(1).
\label{eq-eff-char-finite-target-expansion-use}
\end{equation}
Moreover, since \(A_N-\Delta_N(\phi^{\obs})\) is \(\mathcal O_N\)-measurable, \textup{(3.16)} implies
\[
A_N-\Delta_N(\phi^{\obs})=o_{Q_0^N}(1).
\]
Together with \eqref{eq-eff-char-finite-target-expansion-use},
\begin{align}
\sqrt N(\hat\theta_N-\theta_N)
&=A_N-\mathbb G_N(\phi^F)+o_{Q_0^N}(1)\notag\\
&=\Delta_N(\phi^{\obs})-\mathbb G_N(\phi^F)+o_{Q_0^N}(1),
\label{eq-eff-char-sp-to-fp}
\end{align}
which is \textup{(3.17)}. Conversely, \textup{(3.17)} and \eqref{eq-eff-char-finite-target-expansion-use} give
\begin{align}
A_N-\Delta_N(\phi^{\obs})
&=\{\sqrt N(\hat\theta_N-\theta_N)-\Delta_N(\phi^{\obs})+\mathbb G_N(\phi^F)\}\notag\\
&\quad{}+\{\sqrt N(\theta_N-\theta_0)-\mathbb G_N(\phi^F)\}\notag\\
&=o_{Q_0^N}(1).
\label{eq-eff-char-fp-to-sp-bar}
\end{align}
That is,
\[
A_N-\Delta_N(\phi^{\obs})=o_{P_0^N}(1),
\]
and the first part of the proof gives \textup{(3.16)}.
\end{proof}

\begin{proof}[Proof of Theorem~3.14]
For \(g,h\in\mathcal H^F\), define
\[
\begin{aligned}
a_\pi(g,h)
&=
\mathbb E\!\left[
\pi(X)\Cov\{g(L),h(L)\mid X\}
+
\mathbb E[g(L)\mid X]\mathbb E[h(L)\mid X]
\right],\\
b(g)&=\mathbb E[\phi^F(L)g(L)].
\end{aligned}
\]
The definition of \(\mathcal L\) gives
\[
a_\pi(g,h)
=
\langle\mathcal Lg,\mathcal Lh\rangle_{\obs},
\]
where the inner product on the right is constructed using \(\pi\). Equation~\textup{(3.6)} is therefore equivalent to
\[
a_\pi(d_\pi,g)=b(g),
\qquad
g\in\mathcal H^F.
\]
The common positivity bound implies
\begin{equation}
\pi_-\|g\|_F^2
\le
a_\pi(g,g)
\le
\|g\|_F^2.
\label{eq-optimal-design-coercivity}
\end{equation}
Completing the square with respect to \(a_\pi\) now gives the variational representation
\begin{align}
\Sigma_{\mathrm{sp}}(\pi)
&=
\sup_{g\in\mathcal H^F}
\{2b(g)-a_\pi(g,g)\}
\notag\\
&=
\sup_{g\in\mathcal H^F}
\left[
2\mathbb E\{\phi^F(L)g(L)\}
-
\mathbb E\!\left[
\pi(X)\Var\{g(L)\mid X\}
+
\{\mathbb E[g(L)\mid X]\}^2
\right]
\right],
\label{eq-optimal-design-variational-representation}
\end{align}
whose unique maximizer is \(d_\pi\). Since the objective in \eqref{eq-optimal-design-variational-representation} is affine in \(\pi\) for each fixed \(g\), \(\Sigma_{\mathrm{sp}}(\pi)\) is convex.

We next prove weak lower semicontinuity. By Assumption~{A1}, the supremum in \eqref{eq-optimal-design-variational-representation} is unchanged when \(g\) is restricted to the dense set
\[
\dot{\mathcal P}_{P_0^F}\cap L^4(P_0^F),
\]
because the objective is continuous in \(g\) under the \(L^2(P_0^F)\) norm. For every \(g\) in this set,
\[
\mathbb E\!\left[\Var\{g(L)\mid X\}^2\right]
\le
\mathbb E\!\left[\{\mathbb E[g(L)^2\mid X]\}^2\right]
\le
\mathbb E[g(L)^4].
\]
Thus \(\Var\{g(L)\mid X\}\in L^2(P_0^F)\), and the objective for fixed \(g\) is weakly continuous and affine in \(\pi\). Its supremum is weakly lower semicontinuous. The set \(\mathfrak D_\rho\) is bounded, closed, and convex in the reflexive space \(L^2(P_0^F)\), and hence is weakly compact. The minimum is therefore attained.

It remains to identify the minimizers. Fix \(\pi,\widetilde\pi\in\mathfrak D_\rho\), put
\[
h=\widetilde\pi-\pi,
\qquad
\pi_t=\pi+th,
\qquad
d_t=d_{\pi_t},
\qquad
0\le t\le1.
\]
The common pointwise bounds imply \(h\in L^\infty(P_0^F)\). The normal equations at \(\pi_t\) and \(\pi\) give
\[
a_\pi(d_t-d_\pi,g)
=
-t\,\mathbb E\!\left[
h(X)\Cov\{d_t(L),g(L)\mid X\}
\right].
\]
Taking \(g=d_t-d_\pi\), applying Cauchy--Schwarz, and using \eqref{eq-optimal-design-coercivity} show that
\[
\|d_t-d_\pi\|_F
\le
\frac{t\|h\|_\infty}{\pi_-}\|d_t\|_F.
\]
Moreover, \(a_{\pi_t}(d_t,d_t)=b(d_t)\) and \eqref{eq-optimal-design-coercivity} imply
\[
\sup_{0\le t\le1}\|d_t\|_F
\le
\frac{\|\phi^F\|_F}{\pi_-}.
\]
Consequently, \(d_t\to d_\pi\) in \(L^2(P_0^F)\) as \(t\downarrow0\), and hence
\[
\Var\{d_t(L)\mid X\}
\longrightarrow
\Var\{d_\pi(L)\mid X\}
\quad\text{in }L^1(P_0^F).
\]

Evaluating the variational objective at \(d_\pi\) and \(d_t\), respectively, gives
\begin{align*}
-\mathbb E\!\left[
h(X)\Var\{d_\pi(L)\mid X\}
\right]
&\le
\frac{\Sigma_{\mathrm{sp}}(\pi_t)-\Sigma_{\mathrm{sp}}(\pi)}{t}\\
&\le
-\mathbb E\!\left[
h(X)\Var\{d_t(L)\mid X\}
\right].
\end{align*}
It follows that the one-sided directional derivative along every feasible line segment is
\[
\left.
\frac{d}{dt}\Sigma_{\mathrm{sp}}\{\pi+t(\widetilde\pi-\pi)\}
\right|_{t=0+}
=
-\mathbb E\!\left[
\Var\{d_\pi(L)\mid X\}
\{\widetilde\pi(X)-\pi(X)\}
\right].
\]
For a convex function on a convex set, \(\pi^*\) minimizes the function if and only if all such directional derivatives at \(\pi^*\) are nonnegative. This is exactly \textup{(3.18)}.

Finally, under Assumption~{A3}, Theorem~3.9 gives, for every \(\pi\in\mathfrak D_\rho\),
\[
\Sigma_{\mathrm{fp}}(\pi)
=
\Sigma_{\mathrm{sp}}(\pi)-\Sigma_F.
\]
Because \(\Sigma_F\) does not depend on \(\pi\), the two criteria have the same minimizers.
\end{proof}

\subsection{Verification for the sampling designs}
\label{app-sampling-examples}

This subsection verifies Assumptions~{B1}--{B4} for the designs in Table~1. For each design, the sampling setting and the regularity conditions are stated before the proposition. Every design below is determined by \(\bm X_N\) and auxiliary randomization independent of \(\bm Y_N\), so Assumption~{B1} is immediate. The proofs therefore focus on Assumptions~{B2}--{B4}.

For use with Theorem~3.14, we record a closed and convex feasible class of limiting inclusion probabilities for each design under an asymptotic budget and, when needed, a finite-population construction showing attainability. For stratified two-stage sampling, the class is specified for the self-weighted \(\pi\)PS/SRS specialization. Except for PPS sampling, \(\rho\) denotes an inclusion or sampling fraction; for PPS sampling, it denotes the number of draws per population unit. Each class may also be intersected with
\(\{\pi:\mathbb E[c(X)\pi(X)]\le C\}\), where \(c\in L^2(P_0^F)\) and \(c(X)\ge0\), provided that the intersection is nonempty.

For Assumption~{B3}, it is enough to consider nonnegative \(h\). Indeed, writing \(h=h^+-h^-\) and using
\[
\Var(A+B\mid\bm X_N)
\le
2\Var(A\mid\bm X_N)+2\Var(B\mid\bm X_N)
\]
reduces the general case to \(h^+\) and \(h^-\).

\subsubsection{A variance bound}

Consider sampling indicators with first-order inclusion probabilities \(\pi_{N,i}\). Impose the conditional covariance condition
\begin{equation}
\Cov(\delta_i,\delta_j\mid\bm X_N)\le0,
\qquad i\ne j.
\label{eq-negative-dependence-condition}
\end{equation}

\begin{lemma}[Variance bound under nonpositive pairwise covariances]
\label{lem-negative-dependence-bound}
Under \eqref{eq-negative-dependence-condition}, Assumption~{B3} holds.
\end{lemma}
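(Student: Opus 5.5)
Expand the conditional variance in Assumption~B3 and discard the cross terms, which is legitimate only after reducing to nonnegative \(h\), as the paragraph before the lemma permits. For \(h\ge0\) with \(\E[h(X)^2]<\infty\), \(\bm X_N\)-measurability of \(\pi_{N,i}\) and \(h(X_i)\) gives
\[
\Var\!\left(\frac1N\sum_{i=1}^N(\delta_i-\pi_{N,i})h(X_i)\,\middle|\,\bm X_N\right)
=\frac1{N^2}\sum_{i=1}^N\pi_{N,i}(1-\pi_{N,i})h(X_i)^2
+\frac1{N^2}\sum_{i\ne j}h(X_i)h(X_j)\Cov(\delta_i,\delta_j\mid\bm X_N).
\]
Since \(h(X_i)h(X_j)\ge0\), condition \eqref{eq-negative-dependence-condition} makes every off-diagonal term nonpositive. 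Using \(\pi_{N,i}(1-\pi_{N,i})\le 1/4\), the variance is therefore bounded by \((4N^2)^{-1}\sum_{i=1}^N h(X_i)^2\).

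It remains to show that this bound is \(o_{P_0^N}(1)\). Since \(\E[h(X)^2]<\infty\), the law of large numbers gives \(N^{-1}\sum_i h(X_i)^2\xrightarrow{p}\E[h(X)^2]\), so the bound equals \(N^{-1}\cdot O_p(1)=o_p(1)\). Markov's inequality works equally well: the expectation of the bound is \(\E[h(X)^2]/(4N)\to0\).

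For general \(h\), write \(h=h^+-h^-\). Both parts are nonnegative and square-integrable, so the inequality \(\Var(A+B\mid\bm X_N)\le 2\Var(A\mid\bm X_N)+2\Var(B\mid\bm X_N)\) reduces the claim to the nonnegative case already treated.

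The only step that needs care is the sign of the cross terms. Nonpositive covariances help only when the weights \(h(X_i)h(X_j)\) are nonnegative, and this is exactly why the reduction to \(h\ge0\) has to come before the expansion. Everything else is routine.
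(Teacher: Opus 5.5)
Your proof is correct and is essentially the paper's argument. The paper also reduces to nonnegative $h$ via $h=h^+-h^-$, discards the nonpositive cross terms, bounds the diagonal, and concludes from $N^{-1}\cdot N^{-1}\sum_i h(X_i)^2=o_{P_0^N}(1)$; the only difference is that it uses the cruder bound $\pi_{N,i}(1-\pi_{N,i})\le 1$ in place of your $1/4$.
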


\begin{proof}
For nonnegative \(h\), write \(h_i=h(X_i)\). Then
\begin{align*}
0
&\le
\Var\left(
\frac1N\sum_{i=1}^N(\delta_i-\pi_{N,i})h_i
\,\middle|\,\bm X_N
\right)
\le
\frac1{N^2}\sum_{i=1}^Nh_i^2
=
\frac1N\left\{\frac1N\sum_{i=1}^Nh(X_i)^2\right\}
=o_{P_0^N}(1).
\end{align*}
\end{proof}

\subsubsection{Poisson sampling}

Conditionally on \(\bm X_N\), let \(\delta_1,\ldots,\delta_N\) be independent Bernoulli variables with
\[
P_0^N(\delta_i=1\mid\bm X_N)=\pi(X_i).
\]
We impose the following assumption.
\begin{enumerate}[label=\textup{(\roman*)}]
\item The function \(\pi\) satisfies
\begin{equation}
0<\pi_-\le\pi(X)\le1
\quad\text{a.s.}
\label{eq-poisson-sampling-positivity}
\end{equation}
\end{enumerate}

\begin{proposition}[Poisson sampling]
\label{prop-poisson-sampling}
Under the preceding assumption, Assumptions~{B1}--{B4} hold.
\end{proposition}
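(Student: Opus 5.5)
The plan is to check the four assumptions in turn; each follows almost immediately from the Poisson construction.

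For Assumption~B1, the indicators are generated conditionally on \(\bm X_N\) by independent Bernoulli draws with success probabilities \(\pi(X_i)\). These probabilities depend only on \(\bm X_N\), and the auxiliary randomization is independent of \(\bm Y_N\). Hence \(\boldsymbol\delta_N\perp\bm Y_N\mid\bm X_N\), as already noted at the start of this subsection.

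For Assumption~B2, by construction \(\pi_{N,i}=P_0^N(\delta_i=1\mid\bm X_N)=\pi(X_i)\) exactly. So \(\sup_i|\pi_{N,i}-\pi(X_i)|=0\), and the limiting inclusion-probability function in Assumption~B2 is the design function \(\pi\) itself.

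For Assumption~B3, the only step requiring any thought, we use conditional independence. For \(i\ne j\) we have \(\operatorname{Cov}(\delta_i,\delta_j\mid\bm X_N)=0\), so condition \eqref{eq-negative-dependence-condition} holds and Lemma~\ref{lem-negative-dependence-bound} applies directly. For completeness, the direct computation is
\[
\operatorname{Var}\Bigl(\frac1N\sum_i(\delta_i-\pi_{N,i})h(X_i)\,\Big|\,\bm X_N\Bigr)=\frac1{N^2}\sum_i\pi(X_i)\{1-\pi(X_i)\}h(X_i)^2\le\frac1N\cdot\frac1N\sum_i h(X_i)^2 .
\]
The right-hand side is \(N^{-1}O_{P_0^N}(1)\) by the law of large numbers, since \(\mathbb E[h(X)^2]<\infty\).

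Assumption~B4 is precisely the imposed condition \eqref{eq-poisson-sampling-positivity}.

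No real obstacle arises; the only point to state carefully is that the covariance bound in Lemma~\ref{lem-negative-dependence-bound} holds with equality to zero here.
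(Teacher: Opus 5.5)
Your proof is correct and matches the paper's argument. The paper likewise uses $\pi_{N,i}=\pi(X_i)$ for Assumption~B2, conditional independence (hence zero pairwise covariances) with Lemma~\ref{lem-negative-dependence-bound} for Assumption~B3, and the imposed positivity condition for Assumption~B4. Your explicit variance computation additionally covers signed $h$ directly, without the $h=h^+-h^-$ reduction the paper uses.
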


\begin{proof}
Assumption~{B2} is immediate because \(\pi_{N,i}=\pi(X_i)\). Conditional independence gives \eqref{eq-negative-dependence-condition}, so Lemma~\ref{lem-negative-dependence-bound} gives Assumption~{B3}. Equation~\eqref{eq-poisson-sampling-positivity} gives Assumption~{B4}.
\end{proof}

Under an upper bound \(\rho\) on the expected sampling fraction, the following feasible class of limiting inclusion probabilities is closed and convex.
\[
\mathfrak D_\rho^{\mathrm{Poi}}
=
\left\{
\pi:
\pi_-\le\pi(X)\le1\ \text{a.s.},
\quad
\mathbb E[\pi(X)]\le\rho
\right\}.
\]

\subsubsection{SRSWOR}

A sample of fixed size \(n_N\) is drawn uniformly without replacement from the \(N\) units. We impose the following assumption.
\begin{enumerate}[label=\textup{(\roman*)}]
\item The sampling fraction satisfies
\begin{equation}
\frac{n_N}{N}\to\rho
\quad\text{for some }\rho\in(0,1).
\label{eq-srs-sampling-fraction}
\end{equation}
\end{enumerate}

\begin{proposition}[SRSWOR]
\label{prop-srswor}
Under the preceding setting and assumption, Assumptions~{B1}--{B4} hold with \(\pi(x)\equiv\rho\).
\end{proposition}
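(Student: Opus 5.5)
We verify the four sampling assumptions in turn, reusing the variance bound of Lemma~\ref{lem-negative-dependence-bound}.

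\textbf{Assumption~B1.} Under SRSWOR the sample is drawn by auxiliary randomization that uses neither $\bm X_N$ nor $\bm Y_N$. Hence $\boldsymbol\delta_N$ is independent of $(\bm X_N,\bm Y_N)$, and B1 holds trivially.

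\textbf{Assumption~B2.} Every unit has inclusion probability $\pi_{N,i}=n_N/N$, which is deterministic and free of $\bm X_N$. Therefore
\[
\sup_i|\pi_{N,i}-\rho|=|n_N/N-\rho|\to0
\]
by \eqref{eq-srs-sampling-fraction}, and B2 holds with the constant limit $\pi(x)\equiv\rho$.

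\textbf{Assumption~B3.} The key step is to check the nonpositive-covariance condition \eqref{eq-negative-dependence-condition}. For $i\ne j$,
\[
P(\delta_i=\delta_j=1)=\frac{n_N(n_N-1)}{N(N-1)},
\]
so
\[
\Cov(\delta_i,\delta_j\mid\bm X_N)=\frac{n_N(n_N-1)}{N(N-1)}-\frac{n_N^2}{N^2}=-\frac{n_N(N-n_N)}{N^2(N-1)}\le0.
\]
This is the classical negative pairwise dependence of sampling without replacement. Lemma~\ref{lem-negative-dependence-bound} then yields B3 directly. The lemma's only remaining ingredient is $N^{-1}\sum_i h(X_i)^2=O_p(1)$, which follows from the i.i.d.\ law of large numbers since $\mathbb E[h(X)^2]<\infty$.

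\textbf{Assumption~B4.} Since $\rho\in(0,1)$, we may take $\pi_-=\rho>0$, and $\pi\equiv\rho$ satisfies \eqref{eq-positivity}.

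\textbf{Remaining subtleties.} There is no real obstacle in this proof. The only point needing care is that $\pi_{N,i}$ in Assumption~B2 is conditional on $\bm X_N$, but it is deterministic here, so uniformity over $i$ is automatic. We also note that no rounding issue arises from $n_N$ being an integer, because only the limit of $n_N/N$ enters. Finally, if one wants to record a feasible class for Theorem~\ref{thm-optimal-design}, it is the singleton $\{\pi\equiv\rho\}$, which is trivially closed and convex.
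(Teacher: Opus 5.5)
Your proof is correct and matches the paper's argument. The common inclusion probability $n_N/N$ gives B2 and B4, and B1 is immediate from auxiliary randomization. For B3, the nonpositive pairwise covariance of SRSWOR indicators, which you compute explicitly where the paper only asserts it, feeds Lemma~\ref{lem-negative-dependence-bound}.
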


\begin{proof}
For every unit, \(\pi_{N,i}=n_N/N\), so Assumption~{B2} follows from \eqref{eq-srs-sampling-fraction}. The indicators are negatively correlated, and Lemma~\ref{lem-negative-dependence-bound} gives Assumption~{B3}. Equation~\eqref{eq-srs-sampling-fraction} gives Assumption~{B4}.
\end{proof}

At the fixed sampling fraction considered here, the feasible class of limiting inclusion probabilities is
\[
\mathfrak D_\rho^{\mathrm{SRS}}
=
\{\pi:\pi(x)\equiv\rho\}.
\]
Thus there is no design choice at the level of the limiting inclusion probability.

\subsubsection{Cluster sampling}

Let
\[
\{1,\ldots,N\}=\bigsqcup_{a=1}^{M_N}G_{N,a}
\]
be an \(\bm X_N\)-measurable cluster partition. Draw \(m_N\) clusters by SRSWOR from the \(M_N\) clusters and observe every unit in each selected cluster. Let \(A_{N,a}\) denote the indicator that cluster \(a\) is selected. Such a partition may group units with similar outcome distributions through their covariates. Assumption~{B1} excludes dependence between \(\boldsymbol\delta_N\) and \(\bm Y_N\) that remains after conditioning on \(\bm X_N\). The following conditions are imposed.
\begin{enumerate}[label=\textup{(\roman*)}]
\item The cluster sampling fraction satisfies
\[
\frac{m_N}{M_N}\to\rho\in(0,1).
\]

\item The largest cluster is asymptotically negligible,
\[
\max_{1\le a\le M_N}\frac{|G_{N,a}|}{N}
=o_{P_0^N}(1).
\]
\end{enumerate}

\begin{proposition}[Cluster sampling]
\label{prop-cluster-sampling}
Under the preceding cluster sampling setting and conditions, Assumptions~{B1}--{B4} hold with \(\pi(x)\equiv\rho\).
\end{proposition}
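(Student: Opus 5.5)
The proof checks Assumptions~B1--B4 one at a time, following the pattern used for Poisson sampling and SRSWOR. B1 is immediate: the partition is \(\bm X_N\)-measurable and cluster selection uses auxiliary randomization independent of \(\bm Y_N\). For B2, every unit in cluster \(a\) satisfies \(\delta_i=A_{N,a}\). Under SRSWOR of clusters this gives \(\pi_{N,i}=m_N/M_N\) exactly, for all \(i\). Hence \(\sup_i|\pi_{N,i}-\rho|=|m_N/M_N-\rho|\to0\), which is B2 with \(\pi\equiv\rho\). B4 then holds with \(\pi_-=\rho>0\).

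B3 is the substantive step. Lemma~\ref{lem-negative-dependence-bound} does not apply here, because units in the same cluster have covariance \(\rho_N(1-\rho_N)>0\), where \(\rho_N=m_N/M_N\). I would instead work at the cluster level. Fix a nonnegative \(h\) with \(\mathbb E[h(X)^2]<\infty\), which is enough by the reduction stated at the start of this subsection. Put \(H_a=\sum_{i\in G_{N,a}}h(X_i)\). Then
\[
\frac1N\sum_{i=1}^N(\delta_i-\pi_{N,i})h(X_i)=\frac1N\sum_{a=1}^{M_N}(A_{N,a}-\rho_N)H_a .
\]
Under SRSWOR of clusters the \(A_{N,a}\) are negatively correlated given \(\bm X_N\), so the argument of Lemma~\ref{lem-negative-dependence-bound}, applied to clusters, gives
\[
\operatorname{Var}\Bigl(\frac1N\sum_{a}(A_{N,a}-\rho_N)H_a\,\Big|\,\bm X_N\Bigr)\le\frac1{N^2}\sum_{a=1}^{M_N}H_a^2 .
\]

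The main obstacle is showing that \(N^{-2}\sum_aH_a^2=o_p(1)\) using only the assumption that the largest cluster is \(o_p(N)\). By Cauchy--Schwarz, \(H_a^2\le|G_{N,a}|\sum_{i\in G_{N,a}}h(X_i)^2\). Summing over clusters,
\[
\frac1{N^2}\sum_aH_a^2\le\Bigl(\max_a\frac{|G_{N,a}|}{N}\Bigr)\cdot\frac1N\sum_{i=1}^Nh(X_i)^2 .
\]
The first factor is \(o_p(1)\) by condition (ii). The second factor is \(O_p(1)\) by the law of large numbers, since \(h\in L^2\) and the \(X_i\) are i.i.d. The partition is allowed to depend on \(\bm X_N\), but this bound is uniform over partitions, so that dependence causes no difficulty. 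The product is \(o_p(1)\), which proves B3.

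Finally, for use with Theorem~\ref{thm-optimal-design}, I would record that the feasible class at a fixed cluster fraction is \(\{\pi\equiv\rho\}\), exactly as for SRSWOR.
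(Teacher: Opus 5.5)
Your proposal is correct and follows essentially the same argument as the paper. Both read off \(\pi_{N,i}=m_N/M_N\) for B2 and B4, and both prove B3 at the cluster level: nonpositive covariances of the cluster-selection indicators bound the variance by \(N^{-2}\sum_a H_{N,a}^2\), and Cauchy--Schwarz reduces this to \(\bigl(\max_a|G_{N,a}|/N\bigr)\cdot N^{-1}\sum_i h(X_i)^2=o_p(1)\).
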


\begin{proof}
For \(i\in G_{N,a}\), \(\delta_i=A_{N,a}\) and \(\pi_{N,i}=m_N/M_N\), which gives Assumptions~{B2} and~{B4}. For nonnegative \(h\), put \(H_{N,a}=\sum_{i\in G_{N,a}}h(X_i)\). The nonpositive pairwise covariances of the indicators for cluster selection give
\begin{align*}
&\Var\left(
\frac1N\sum_{i=1}^N(\delta_i-\pi_{N,i})h(X_i)
\,\middle|\,\bm X_N
\right)\\
&\quad=
\frac1{N^2}\Var\left(
\sum_{a=1}^{M_N}\left(A_{N,a}-\frac{m_N}{M_N}\right)H_{N,a}
\,\middle|\,\bm X_N
\right)\\
&\quad\le
\frac1{N^2}\sum_{a=1}^{M_N}H_{N,a}^2
\le
\left(\max_{1\le a\le M_N}\frac{|G_{N,a}|}{N}\right)
\left\{\frac1N\sum_{i=1}^Nh(X_i)^2\right\}
=o_{P_0^N}(1).
\end{align*}
Thus Assumption~{B3} holds.
\end{proof}

At the fixed cluster sampling fraction considered here, the feasible class of limiting inclusion probabilities is again
\[
\mathfrak D_\rho^{\mathrm{Cl}}
=
\{\pi:\pi(x)\equiv\rho\}.
\]

\subsubsection{PPS sampling with replacement}

Conditionally on \(\bm X_N\), draw \(I_1,\ldots,I_{n_N}\) independently with
\[
P_0^N(I_\ell=i\mid\bm X_N)
=w_{N,i}
=
\frac{r(X_i)}{\sum_{j=1}^Nr(X_j)}.
\]
Let
\[
C_{N,i}=\sum_{\ell=1}^{n_N}\mathbf 1\{I_\ell=i\},
\qquad
\delta_i=\mathbf 1\{C_{N,i}\ge1\}.
\]
Each distinct selected unit contributes one observed value of \(Y_i\). Retaining the multiplicities \(C_{N,i}\) as design information would not change the likelihood ratio or the efficiency bound, because their conditional law given \(\bm X_N\) is known and repeated draws reveal no additional value of \(Y_i\). The following conditions are imposed.
\begin{enumerate}[label=\textup{(\roman*)}]
\item The size measure satisfies
\[
0<c_r\le r(X)\le C_r<\infty
\quad\text{a.s.}
\]

\item The number of draws satisfies
\[
\frac{n_N}{N}\to\rho\in(0,\infty).
\]
\end{enumerate}
Put \(\mu_r=\E[r(X)]\).

\begin{proposition}[PPS sampling with replacement]
\label{prop-pps-with-replacement}
Under the preceding setting and conditions for PPS sampling with replacement, Assumptions~{B1}--{B4} hold with
\[
\pi(x)=1-\exp\left\{-\rho\frac{r(x)}{\mu_r}\right\}.
\]
\end{proposition}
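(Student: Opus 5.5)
B1 is automatic, since the draws $I_1,\dots,I_{n_N}$ depend only on $\bm X_N$ and auxiliary randomization. That leaves B2, B3 and B4. The first-order inclusion probability has the closed form
\[
\pi_{N,i}=1-(1-w_{N,i})^{n_N},
\]
and everything follows from analyzing it together with the multinomial structure of the counts $C_{N,i}$.

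\textbf{Assumption B2.} Write $\bar r_N=N^{-1}\sum_j r(X_j)$, so that $w_{N,i}=r(X_i)/(N\bar r_N)$. By the bounds on $r$,
\[
\max_i w_{N,i}\le C_r/(Nc_r)=O(1/N)
\]
deterministically. Using $\log(1-w)=-w+O(w^2)$ uniformly for $w\le C_r/(Nc_r)$, I would obtain
\[
n_N\log(1-w_{N,i})=-\frac{n_N}{N}\,\frac{r(X_i)}{\bar r_N}+O(n_N/N^2).
\]
The $O(n_N/N^2)$ term is uniform in $i$. The strong law gives $\bar r_N\to\mu_r$, and $n_N/N\to\rho$. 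Since $r(X_i)\le C_r$ and $\bar r_N\ge c_r$,
\[
\sup_i\left|\frac{n_N}{N}\frac{r(X_i)}{\bar r_N}-\rho\frac{r(X_i)}{\mu_r}\right|=o_p(1).
\]
The map $z\mapsto1-e^{-z}$ is $1$-Lipschitz on $[0,\infty)$, which transfers this to $\sup_i|\pi_{N,i}-\pi(X_i)|=o_p(1)$.

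\textbf{Assumption B4.} This follows directly from the formula. We have $\pi(x)\ge1-\exp(-\rho c_r/C_r)=:\pi_->0$ and $\pi(x)\le1$.

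\textbf{Assumption B3.} This is where dependence enters, and I expect it to be the main step. I will show that the indicators $\delta_i=\mathbf 1\{C_{N,i}\ge1\}$ have nonpositive pairwise conditional covariances, and then invoke Lemma~\ref{lem-negative-dependence-bound}. The complements $1-\delta_i=\mathbf 1\{C_{N,i}=0\}$ give, for $i\neq j$,
\[
P(\delta_i=0,\delta_j=0\mid\bm X_N)=(1-w_{N,i}-w_{N,j})^{n_N}\le(1-w_{N,i})^{n_N}(1-w_{N,j})^{n_N}.
\]
The inequality holds because $1-a-b\le(1-a)(1-b)$ for $a,b\ge0$. Hence
\[
\Cov(1-\delta_i,1-\delta_j\mid\bm X_N)\le0,
\]
and this covariance equals $\Cov(\delta_i,\delta_j\mid\bm X_N)$, so condition \eqref{eq-negative-dependence-condition} holds. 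The only delicacy is the reduction to nonnegative $h$, which the lemma's proof uses; this reduction was justified in the preamble of this subsection.

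Had the covariance sign failed, my fallback would have been to bound $\sum_{i\ne j}\Cov(\delta_i,\delta_j\mid\bm X_N)h_ih_j$ directly by $O(1/N)\,(N^{-1}\sum_i h_i)^2$. That is unnecessary given the inequality above.
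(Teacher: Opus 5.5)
Your proposal is correct and follows essentially the same route as the paper: the closed form $\pi_{N,i}=1-(1-w_{N,i})^{n_N}$ together with $\max_i w_{N,i}=O(N^{-1})$ and the law of large numbers gives B2, and the lower bound on $r$ gives B4. For B3, the inequality $1-w_{N,i}-w_{N,j}\le(1-w_{N,i})(1-w_{N,j})$ gives nonpositive pairwise covariances, and Lemma~\ref{lem-negative-dependence-bound} finishes; your use of the complements $1-\delta_i$ and the explicit log-expansion with the Lipschitz step are only slightly more detailed versions of the same argument.
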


\begin{proof}
The first-order inclusion probability is
\[
\pi_{N,i}=1-(1-w_{N,i})^{n_N}.
\]
The law of large numbers and boundedness of \(r\) give
\[
\sup_i\left|n_Nw_{N,i}-\rho\frac{r(X_i)}{\mu_r}\right|
=o_{P_0^N}(1),
\qquad
\max_iw_{N,i}=O_{P_0^N}(N^{-1}).
\]
Hence
\[
\sup_i\left|\pi_{N,i}-1+\exp\left\{-\rho\frac{r(X_i)}{\mu_r}\right\}\right|
=o_{P_0^N}(1),
\]
which proves Assumption~{B2}. For \(i\ne j\),
\begin{align*}
P_0^N(\delta_i=1,\delta_j=1\mid\bm X_N)
&=1-(1-w_{N,i})^{n_N}-(1-w_{N,j})^{n_N}\\
&\quad+(1-w_{N,i}-w_{N,j})^{n_N}.
\end{align*}
Since \(1-w_{N,i}-w_{N,j}\le(1-w_{N,i})(1-w_{N,j})\), the indicators that a unit is selected at least once have nonpositive conditional covariances. Lemma~\ref{lem-negative-dependence-bound} gives Assumption~{B3}. The lower bound on \(r\) gives Assumption~{B4}.
\end{proof}

Fix \(0<a<\rho<b<\infty\). Under a maximum draw rate \(\rho\), the following feasible class of limiting inclusion probabilities is closed and convex.
\[
\mathfrak D_\rho^{\mathrm{PPS}}
=
\left\{
\pi:
a\le-\log\{1-\pi(X)\}\le b\ \text{a.s.},
\quad
\mathbb E[-\log\{1-\pi(X)\}]\le\rho
\right\}.
\]
For any \(\pi\) in this class, take
\[
r(X)=-\log\{1-\pi(X)\},
\qquad
\frac{n_N}{N}\to\mathbb E[r(X)].
\]
The preceding PPS construction then has limiting inclusion probability \(\pi\). The pointwise bounds verify the required positivity and boundedness and define a closed interval for \(\pi(X)\). The aggregate constraint defines a convex and \(L^2(P_0^F)\)-closed set because \(p\mapsto-\log(1-p)\) is convex and Lipschitz on this interval.

\subsubsection{Rejective sampling}

Let the sample be drawn by fixed-size rejective sampling with sample size \(n_N\). Its prescribed first-order inclusion probabilities are
\[
\pi_{N,i}
=
\frac{n_Nr(X_i)}{\sum_{j=1}^Nr(X_j)}.
\]
The law of rejective sampling is obtained by conditioning independent Bernoulli variables with success probabilities \(p_{N,1},\ldots,p_{N,N}\) on their sum being \(n_N\). We choose the representation satisfying \(\sum_i p_{N,i}=n_N\) and put
\[
d_N=\sum_{i=1}^Np_{N,i}(1-p_{N,i}).
\]
The following conditions are imposed.
\begin{enumerate}[label=\textup{(\roman*)}]
\item The size measure satisfies
\[
0<c_r\le r(X)\le C_r<\infty
\quad\text{a.s.}
\]

\item The sample size satisfies
\[
\frac{n_N}{N}\to\rho\in(0,1),
\qquad
\mu_r=\E[r(X)].
\]

\item The prescribed probabilities satisfy
\begin{equation}
0<\pi_{N,i}<1,
\qquad
1\le i\le N,
\qquad
\text{a.s. for every }N.
\label{eq-rejective-finite-N-feasibility}
\end{equation}

\item There are constants \(\varepsilon,c>0\) such that
\begin{equation}
P_0^N\left(
\varepsilon\le\min_{1\le i\le N}\pi_{N,i}
\le\max_{1\le i\le N}\pi_{N,i}\le1-\varepsilon,
\quad
d_N\ge cN
\right)\to1.
\label{eq-rejective-nondegeneracy}
\end{equation}
\end{enumerate}

\begin{proposition}[Rejective sampling]
\label{prop-rejective-sampling}
Under the preceding rejective sampling setting and conditions, Assumptions~{B1}--{B4} hold with
\[
\pi(x)=\rho\frac{r(x)}{\mu_r}.
\]
\end{proposition}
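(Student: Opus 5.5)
We verify the four sampling assumptions in turn; Assumption~B1 is immediate because the rejective design is generated from \(\bm X_N\) and auxiliary randomization independent of \(\bm Y_N\).

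\emph{Assumption~B2.} The prescribed probabilities are \(\pi_{N,i}=n_Nr(X_i)/\sum_jr(X_j)\). We rewrite this as
\[
\pi_{N,i}=(n_N/N)\,r(X_i)\Big/\Big(N^{-1}\textstyle\sum_jr(X_j)\Big).
\]
Boundedness of \(r\), the law of large numbers \(N^{-1}\sum_jr(X_j)\to\mu_r\), and \(n_N/N\to\rho\) then give
\[
\sup_i|\pi_{N,i}-\rho r(X_i)/\mu_r|\le C_r\,\bigl|(n_N/N)/(N^{-1}\textstyle\sum_jr(X_j))-\rho/\mu_r\bigr|=o_{P_0^N}(1).
\]
The point to check is that the rejective design actually has first-order inclusion probabilities \(\pi_{N,i}\). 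This is the content of choosing the canonical Bernoulli parameters \(p_{N,i}\) as in \citet{Hajek1964}. Feasibility \eqref{eq-rejective-finite-N-feasibility} guarantees that such \(p_{N,i}\) exist for every \(N\), and \(\pi_{N,i}\) is \(\bm X_N\)-measurable.

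\emph{Assumption~B4.} Since \(r\ge c_r\), we have \(\pi(x)=\rho r(x)/\mu_r\ge\rho c_r/C_r>0\). For \(\pi\le1\) we use the nondegeneracy condition \eqref{eq-rejective-nondegeneracy}: on an event of probability tending to one, \(\max_i\pi_{N,i}\le1-\varepsilon\). Combined with B2, this forces \(\rho r(X)/\mu_r\le1-\varepsilon\) on the support a.s. To see this, note that if the set \(\{\rho r(X)/\mu_r>1-\varepsilon/2\}\) had positive probability, some \(X_i\) would fall in it with probability tending to one, contradicting B2.

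\emph{Assumption~B3.} This is the main step, and we plan to reduce it to Lemma~\ref{lem-negative-dependence-bound}. Rejective sampling is conditional Poisson sampling. Conditioning independent Bernoulli variables on their sum yields a strongly Rayleigh (in particular negatively associated) measure, and negative association gives \(\Cov(\delta_i,\delta_j\mid\bm X_N)\le0\) for \(i\ne j\). We intend to cite this fact (Hájek 1964 computes the second-order probabilities explicitly and shows \(\pi_{ij}\le\pi_i\pi_j\); alternatively Brändén--Borcea--Liggett). Lemma~\ref{lem-negative-dependence-bound} then yields B3 directly, using only the boundedness of the per-unit contribution. Consequently the condition \(d_N\ge cN\) is not strictly needed for B3; it is used only to invoke Hájek's asymptotic approximation \(p_{N,i}\approx\pi_{N,i}\), should we want to avoid exact canonical parameters.

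The main obstacle is justifying the nonpositive pairwise covariance rigorously. Our fallback, if we do not cite negative association, uses Hájek's expansion
\[
\pi_{ij}=\pi_i\pi_j\{1-(1-\pi_i)(1-\pi_j)/d_N+O(d_N^{-2})\},
\]
which holds uniformly on the event in \eqref{eq-rejective-nondegeneracy}. With \(h\ge0\), this gives
\[
\Var\Big(N^{-1}\textstyle\sum_i(\delta_i-\pi_{N,i})h_i\mid\bm X_N\Big)\le N^{-2}\sum_ih_i^2+O(d_N^{-2})\,N^{-2}\Big(\sum_ih_i\Big)^2.
\]
Since \(d_N\ge cN\), both terms are \(o_{P_0^N}(1)\). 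On the complementary event, whose probability tends to zero, the bound is irrelevant for convergence in probability.
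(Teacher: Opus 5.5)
Your proposal is correct and follows the paper's proof: B2 from the law of large numbers and boundedness of \(r\); B4 from the nondegeneracy condition together with B2 (you get the lower bound directly from \(r\ge c_r\), which also works); and B3 from nonpositive pairwise conditional covariances of conditional Poisson sampling, combined with Lemma~\ref{lem-negative-dependence-bound}. The only differences are minor. The paper cites \citet{ChenDempsterLiu1994} rather than strong Rayleigh or negative association for these covariances, and that reference (rather than Hájek) is also what guarantees canonical parameters reproducing the prescribed \(\pi_{N,i}\); your Hájek-expansion fallback is therefore unnecessary.
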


\begin{proof}
Equation~\eqref{eq-rejective-finite-N-feasibility} makes the prescribed probabilities feasible for every \(N\). The law of large numbers and boundedness of \(r\) give
\[
\sup_i\left|\pi_{N,i}-\rho\frac{r(X_i)}{\mu_r}\right|
=o_{P_0^N}(1),
\]
which proves Assumption~{B2}. Equation~\eqref{eq-rejective-nondegeneracy} and Assumption~{B2} give Assumption~{B4}. The conditional-Bernoulli representation is given in \citet{Hajek1964}. The result of \citet{ChenDempsterLiu1994} establishes \eqref{eq-negative-dependence-condition}; thus, Lemma~\ref{lem-negative-dependence-bound} gives Assumption~{B3}.
\end{proof}

At a fixed sampling fraction \(\rho\), the following feasible class of limiting inclusion probabilities is closed and convex.
\[
\mathfrak D_\rho^{\mathrm{Rej}}
=
\left\{
\pi:
\pi_-\le\pi(X)\le1-\varepsilon\ \text{a.s.},
\quad
\mathbb E[\pi(X)]=\rho
\right\}.
\]
For any \(\pi\) in this class, prescribe
\[
\pi_{N,i}
=
\operatorname{expit}
\{\operatorname{logit}\pi(X_i)+\lambda_N\},
\qquad
\sum_{i=1}^N\pi_{N,i}=n_N,
\]
where the second equation uniquely determines \(\lambda_N\). The common pointwise margins, the law of large numbers, and \(n_N/N\to\rho\) imply
\[
\lambda_N=o_{P_0^N}(1),
\qquad
\sup_{1\le i\le N}
|\pi_{N,i}-\pi(X_i)|
=o_{P_0^N}(1).
\]
Because \(0<\pi_{N,i}<1\) and \(\sum_{i=1}^N\pi_{N,i}=n_N\), \citet{ChenDempsterLiu1994} implies that there is a rejective law with these prescribed first-order inclusion probabilities. The displayed convergence gives Assumption~{B2}. Non-informativeness, negative dependence, and the pointwise lower bound give Assumptions~{B1}, {B3}, and~{B4}.

\subsubsection{Stratified sampling}

Let
\[
\mathcal X=\bigsqcup_{j=1}^J\mathcal X_j
\]
be a measurable partition. Given \(\bm X_N\), define
\[
S_{N,j}=\{i\le N:X_i\in\mathcal X_j\},
\qquad
N_{N,j}=|S_{N,j}|.
\]
Independently across strata, draw \(n_{N,j}\) units by SRSWOR from \(S_{N,j}\). We impose the following assumptions.
\begin{enumerate}[label=\textup{(\roman*)}]
\item The number of strata is fixed and
\[
P(X\in\mathcal X_j)>0,
\qquad j=1,\ldots,J.
\]

\item The within-stratum sampling fractions satisfy
\begin{equation}
\frac{n_{N,j}}{N_{N,j}}\to p_j\in(0,1],
\qquad j=1,\ldots,J.
\label{eq-stratified-sampling-fractions}
\end{equation}
\end{enumerate}

\begin{proposition}[Stratified sampling]
\label{prop-stratified-sampling}
Under the preceding setting and assumptions, Assumptions~{B1}--{B4} hold with
\[
\pi(x)=\sum_{j=1}^Jp_j\mathbf 1\{x\in\mathcal X_j\}.
\]
\end{proposition}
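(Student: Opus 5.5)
The plan follows the template of the SRSWOR and cluster verifications, working stratum by stratum. Assumption~B1 holds immediately, since the strata $S_{N,j}$ are $\bm X_N$-measurable and the within-stratum SRSWOR draws use auxiliary randomization independent of $\bm Y_N$.

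For Assumption~B2, note that for $i\in S_{N,j}$ we have $\pi_{N,i}=n_{N,j}/N_{N,j}$, while $\pi(X_i)=p_j$. With $J$ fixed,
\[
\sup_{1\le i\le N}|\pi_{N,i}-\pi(X_i)|\le\max_{1\le j\le J}\left|\frac{n_{N,j}}{N_{N,j}}-p_j\right|,
\]
and the right-hand side tends to zero by \eqref{eq-stratified-sampling-fractions}. The one point to check is that $N_{N,j}\ge1$ with probability tending to one. This follows from the law of large numbers, which gives $N_{N,j}/N\to P(X\in\mathcal X_j)>0$, so the ratios are well defined and \eqref{eq-stratified-sampling-fractions} is meaningful. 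I read that condition as convergence in $P_0^N$-probability, since $N_{N,j}$ is random.

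For Assumption~B4, take $\pi_-=\min_j p_j$. This is positive because each $p_j\in(0,1]$ and $J$ is finite, and trivially $\pi\le1$.

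For Assumption~B3, I will verify the nonpositive-covariance condition \eqref{eq-negative-dependence-condition} and then invoke Lemma~\ref{lem-negative-dependence-bound}. There are two cases for a pair $i\ne j$.
\begin{itemize}
\item If $i$ and $j$ lie in different strata, the draws are conditionally independent given $\bm X_N$, so the covariance is zero.
\item If they lie in the same stratum $S_{N,h}$, SRSWOR gives
\[
\Cov(\delta_i,\delta_j\mid\bm X_N)=\frac{n_{N,h}(n_{N,h}-1)}{N_{N,h}(N_{N,h}-1)}-\left(\frac{n_{N,h}}{N_{N,h}}\right)^2\le0,
\]
which equals zero when $n_{N,h}=N_{N,h}$, covering $p_h=1$. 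This is the same negative-correlation fact used in Proposition~\ref{prop-srswor}.
\end{itemize}

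I do not expect a genuine obstacle. The only delicate point is the randomness of the stratum sizes. This is handled by working on the event $\min_j N_{N,j}\ge2$, whose probability tends to one; the bounds in B3 are conditional on $\bm X_N$ and need only hold with probability tending to one.
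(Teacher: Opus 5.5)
Your proposal is correct and matches the paper's proof: the stratum-wise inclusion probabilities $n_{N,j}/N_{N,j}$ together with fixed $J$ give Assumptions~B2 and~B4, and within-stratum negative correlation plus independence across strata feed Lemma~\ref{lem-negative-dependence-bound} to give Assumption~B3. Your restriction to the event $\min_j N_{N,j}\ge2$ is harmless but not needed, because a stratum with a single unit contributes no pair $i\ne j$, so the covariance condition holds whenever such a pair exists.
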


\begin{proof}
For \(i\in S_{N,j}\), \(\pi_{N,i}=n_{N,j}/N_{N,j}\). Since \(J\) is fixed, \eqref{eq-stratified-sampling-fractions} gives Assumptions~{B2} and~{B4}. Within each stratum the indicators are negatively correlated, and the designs are independent across strata. Lemma~\ref{lem-negative-dependence-bound} gives Assumption~{B3}.
\end{proof}

For the fixed partition above, an upper bound \(\rho\) on the limiting overall sampling fraction gives the following closed and convex feasible class.
\[
\mathfrak D_\rho^{\mathrm{Str}}
=
\left\{
\pi(x)=\sum_{j=1}^Jp_j\mathbf 1\{x\in\mathcal X_j\}:
\pi_-\le p_j\le1,
\quad
\sum_{j=1}^JP(X\in\mathcal X_j)p_j\le\rho
\right\}.
\]
Each member is attained by choosing integers \(n_{N,j}\) such that \(n_{N,j}/N_{N,j}\to p_j\).

\subsubsection{Stratified two-stage sampling}
\label{app-stratified-two-stage}

Let \(\mathcal X=\bigsqcup_{h=1}^H\mathcal X_h\) be a fixed measurable partition with \(P(X\in\mathcal X_h)>0\). Given \(\bm X_N\), write
\[
S_{N,h}=\{i\le N:X_i\in\mathcal X_h\},
\qquad
N_{N,h}=|S_{N,h}|,
\]
and let
\[
S_{N,h}=\bigsqcup_{a=1}^{M_{N,h}}G_{N,h,a}
\]
be an \(\bm X_N\)-measurable partition into PSUs. Put \(M_N=\sum_{h=1}^H M_{N,h}\) and \(N_{N,h,a}=|G_{N,h,a}|\). At the first stage, let \(\delta^{(1)}_{N,h,a}\) indicate selection of PSU \((h,a)\), with
\[
\pi^{(1)}_{N,h,a}
=
P_0^N\{\delta^{(1)}_{N,h,a}=1\mid\bm X_N\}.
\]
The first-stage designs are independent across strata, and distinct first-stage indicators within each stratum have nonpositive conditional covariance given \(\bm X_N\).

For every PSU, let \(\delta^{(2)}_{N,h,a,i}\) denote the potential second-stage selection indicator of \(i\in G_{N,h,a}\), with
\[
\pi^{(2)}_{N,h,a,i}
=
P_0^N\{\delta^{(2)}_{N,h,a,i}=1\mid\bm X_N\}.
\]
Conditional on \(\bm X_N\), the second-stage randomizations are independent across PSUs and independent of the first-stage randomization. Within each PSU,
\[
\Cov\{\delta^{(2)}_{N,h,a,i},\delta^{(2)}_{N,h,a,j}\mid\bm X_N\}\le0,
\qquad i\ne j.
\]
All stage-specific randomizations are independent of \(\bm Y_N\) conditional on \(\bm X_N\). For \(i\in G_{N,h,a}\), define
\begin{equation}
\delta_i
=
\delta^{(1)}_{N,h,a}\delta^{(2)}_{N,h,a,i},
\qquad
\pi_{N,i}
=
\pi^{(1)}_{N,h,a}\pi^{(2)}_{N,h,a,i}.
\label{eq-two-stage-ultimate-inclusion}
\end{equation}
Assume
\begin{enumerate}[label=\textup{(\roman*)}]
\item There are measurable functions \(\pi^{(1)},\pi^{(2)}:\mathcal X\to(0,1]\), determined by the two stage-specific designs, such that
\begin{align}
\sup_{h,a}\sup_{i\in G_{N,h,a}}
\left|
\pi^{(1)}_{N,h,a}-\pi^{(1)}(X_i)
\right|
&=o_{P_0^N}(1),
\notag\\
\sup_{h,a}\sup_{i\in G_{N,h,a}}
\left|
\pi^{(2)}_{N,h,a,i}-\pi^{(2)}(X_i)
\right|
&=o_{P_0^N}(1),
\label{eq-two-stage-stagewise-stability}
\end{align}
and \(\pi^{(1)}(X)\pi^{(2)}(X)\ge\pi_-\) a.s. for some \(\pi_->0\);
\item
\begin{equation}
\max_{1\le h\le H}\max_{1\le a\le M_{N,h}}
\frac{|G_{N,h,a}|}{N}
=o_{P_0^N}(1).
\label{eq-two-stage-negligible-psu}
\end{equation}
\end{enumerate}

\begin{proposition}[Stratified two-stage sampling]
\label{prop-stratified-two-stage}
Under the preceding setting and assumptions, Assumptions~{B1}--{B4} hold.
\end{proposition}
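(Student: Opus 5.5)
Assumption~B1 is immediate, since both stages are generated from \(\bm X_N\) and auxiliary randomization independent of \(\bm Y_N\). For Assumptions~B2 and~B4 we set \(\pi(x)=\pi^{(1)}(x)\pi^{(2)}(x)\). Using \eqref{eq-two-stage-ultimate-inclusion} and the elementary bound
\[
|ab-cd|\le|a-c|+|b-d|\qquad\text{for }a,b,c,d\in[0,1],
\]
the stagewise convergence \eqref{eq-two-stage-stagewise-stability} gives
\[
\sup_i|\pi_{N,i}-\pi(X_i)|=o_{P_0^N}(1),
\]
which is Assumption~B2. The assumed lower bound \(\pi^{(1)}\pi^{(2)}\ge\pi_-\) then gives Assumption~B4 directly.

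The substantive step is Assumption~B3. Following the reduction at the start of the subsection, we take \(h\ge0\) and write \(h_i=h(X_i)\). For \(i\in G_{N,h,a}\) we decompose
\[
\delta_i-\pi_{N,i}
=\bigl(\delta^{(1)}_{N,h,a}-\pi^{(1)}_{N,h,a}\bigr)\pi^{(2)}_{N,h,a,i}
+\delta^{(1)}_{N,h,a}\bigl(\delta^{(2)}_{N,h,a,i}-\pi^{(2)}_{N,h,a,i}\bigr).
\]
This splits \(N^{-1}\sum_i(\delta_i-\pi_{N,i})h_i\) into a first-stage term \(T_1\) and a second-stage term \(T_2\), and it suffices to bound each conditional variance, using \(\Var(T_1+T_2)\le2\Var T_1+2\Var T_2\).

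For \(T_1\), put \(H_{N,h,a}=\sum_{i\in G_{N,h,a}}\pi^{(2)}_{N,h,a,i}h_i\ge0\). The first-stage designs are independent across strata and have nonpositive within-stratum covariances, so
\[
\Var(T_1\mid\bm X_N)\le N^{-2}\sum_{h,a}H_{N,h,a}^2 .
\]
Exactly as in the proof of Proposition~\ref{prop-cluster-sampling}, Cauchy--Schwarz gives \(H_{N,h,a}^2\le|G_{N,h,a}|\sum_{i\in G_{N,h,a}}h_i^2\). Hence
\[
\Var(T_1\mid\bm X_N)\le\Bigl(\max_{h,a}\frac{|G_{N,h,a}|}{N}\Bigr)\cdot\frac1N\sum_i h_i^2=o_{P_0^N}(1)
\]
by \eqref{eq-two-stage-negligible-psu} and the law of large numbers.

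For \(T_2\), condition additionally on the first-stage indicators. Given \(\bm X_N\), the second stage is independent of the first stage, so \(T_2\) has conditional mean zero given \(\bm X_N\) and the first-stage indicators. Therefore
\[
\Var(T_2\mid\bm X_N)=\E\bigl[\Var(T_2\mid\bm X_N,\delta^{(1)})\mid\bm X_N\bigr].
\]
The inner variance is handled by independence across PSUs together with nonpositive within-PSU covariances:
\[
\Var(T_2\mid\bm X_N,\delta^{(1)})\le N^{-2}\sum_i\delta^{(1)}h_i^2\le N^{-1}\cdot N^{-1}\sum_i h_i^2=o_{P_0^N}(1).
\]
The only mildly delicate point is the cross term between \(T_1\) and \(T_2\). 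It vanishes because \(T_2\) has zero conditional mean given \(\delta^{(1)}\), but the crude bound \(\Var(T_1+T_2)\le2\Var T_1+2\Var T_2\) avoids it in any case. I expect the \(T_1\) bound, which is where PSU negligibility is needed, to be the main obstacle, and it is handled as in the cluster case. Combining the bounds on \(T_1\) and \(T_2\) gives Assumption~B3.
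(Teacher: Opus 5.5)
Your proposal is correct and follows the paper's proof essentially step for step: the same bound $|ab-cd|\le|a-c|+|b-d|$ for Assumptions~B2 and~B4, the same decomposition of $\delta_i-\pi_{N,i}$ into a first-stage term (controlled by the nonpositive first-stage covariances, Cauchy--Schwarz, and PSU negligibility) and a second-stage term (controlled conditionally on the first-stage indicators). The only cosmetic difference is that you also allow the crude bound $\Var(T_1+T_2)\le 2\Var T_1+2\Var T_2$, whereas the paper uses directly that $T_1$ and $T_2$ are conditionally uncorrelated.
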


\begin{proof}
The measurability and conditional independence assumptions give Assumption~{B1}. Since all stage-specific probabilities lie in \((0,1]\), \eqref{eq-two-stage-ultimate-inclusion} and~\eqref{eq-two-stage-stagewise-stability} give
\begin{align*}
\sup_{h,a}\sup_{i\in G_{N,h,a}}
&\left|
\pi_{N,i}-\pi^{(1)}(X_i)\pi^{(2)}(X_i)
\right| \\
\le 
&\sup_{h,a}\sup_{i\in G_{N,h,a}}
\left|\pi^{(1)}_{N,h,a}-\pi^{(1)}(X_i)\right|
+
\sup_{h,a}\sup_{i\in G_{N,h,a}}
\left|\pi^{(2)}_{N,h,a,i}-\pi^{(2)}(X_i)\right|
=o_{P_0^N}(1).
\end{align*}
Thus Assumptions~{B2} and~{B4} hold with \(\pi(x)=\pi^{(1)}(x)\pi^{(2)}(x)\).

It remains to verify Assumption~{B3}. It is enough to take a nonnegative function \(g\) with \(\E\{g(X)^2\}<\infty\) and write \(g_i=g(X_i)\). Since
\[
\delta^{(1)}_{N,h,a}\delta^{(2)}_{N,h,a,i}
-
\pi^{(1)}_{N,h,a}\pi^{(2)}_{N,h,a,i}
=
\pi^{(2)}_{N,h,a,i}\{\delta^{(1)}_{N,h,a}-\pi^{(1)}_{N,h,a}\}
+
\delta^{(1)}_{N,h,a}\{\delta^{(2)}_{N,h,a,i}-\pi^{(2)}_{N,h,a,i}\},
\]
we have
\[
\frac1N\sum_{i=1}^N(\delta_i-\pi_{N,i})g_i=T_{1N}+T_{2N},
\]
where
\[
T_{1N}
=
\frac1N\sum_{h,a}
\{\delta^{(1)}_{N,h,a}-\pi^{(1)}_{N,h,a}\}
\sum_{i\in G_{N,h,a}}\pi^{(2)}_{N,h,a,i}g_i
\]
and
\[
T_{2N}
=
\frac1N\sum_{h,a}\delta^{(1)}_{N,h,a}
\sum_{i\in G_{N,h,a}}
\{\delta^{(2)}_{N,h,a,i}-\pi^{(2)}_{N,h,a,i}\}g_i.
\]
Because \(\E(T_{2N}\mid\bm X_N,\{\delta^{(1)}_{N,h,a}\}_{h,a})=0\), the two terms are conditionally uncorrelated. The first-stage covariance condition and Cauchy--Schwarz give
\[
\Var(T_{1N}\mid\bm X_N)
\le
\left\{
\max_{h,a}\frac{|G_{N,h,a}|}{N}
\right\}
\left\{\frac1N\sum_{i=1}^Ng_i^2\right\}
=o_{P_0^N}(1).
\]
Conditional on \((\bm X_N,\{\delta^{(1)}_{N,h,a}\}_{h,a})\), the second-stage covariance condition and independence across PSUs give
\[
\Var(T_{2N}\mid\bm X_N,\{\delta^{(1)}_{N,h,a}\}_{h,a})
\le
\frac1N\left\{\frac1N\sum_{i=1}^Ng_i^2\right\}
=o_{P_0^N}(1).
\]
Thus Assumption~{B3} holds.
\end{proof}

We next specialize the preceding two-stage design to self-weighted \(\pi\)PS/SRS sampling. This is the standard self-weighting two-stage PPS construction: PSU selection proportional to size followed by a fixed-size simple random sample within each selected PSU \citep{SinghHarter2015,KaltonEtAl2021}. Here $\pi$PS denotes prescribed first-order inclusion probabilities proportional to PSU size, not PPS draws with replacement. Retain the preceding stratified two-stage setting and \eqref{eq-two-stage-negligible-psu}, but do not impose \eqref{eq-two-stage-stagewise-stability}. For each stratum \(h\), let \(m^{(1)}_{N,h}>0\) and \(m^{(2)}_{N,h}\in\mathbb N\) be \(\sigma(\bm X_N)\)-measurable and satisfy
\[
0<\frac{m^{(1)}_{N,h}N_{N,h,a}}{N_{N,h}}\le1,\qquad 1\le m^{(2)}_{N,h}\le\min_{1\le a\le M_{N,h}}N_{N,h,a},
\]
for every \(a\), with \(m^{(1)}_{N,h}\in\mathbb N\) under rejective sampling. At the first stage, sample PSUs by Poisson or rejective sampling with prescribed first-order inclusion probabilities
\begin{equation}
\pi^{(1)}_{N,h,a}=\frac{m^{(1)}_{N,h}N_{N,h,a}}{N_{N,h}}.
\label{eq-self-weighted-first-stage}
\end{equation}
At the second stage, draw \(m^{(2)}_{N,h}\) units by SRSWOR from each selected PSU. Hence, for \(i\in G_{N,h,a}\),
\begin{equation}
\pi^{(2)}_{N,h,a,i}=\frac{m^{(2)}_{N,h}}{N_{N,h,a}},\qquad \pi_{N,i}=\pi^{(1)}_{N,h,a}\pi^{(2)}_{N,h,a,i}=\rho_{N,h},\qquad \rho_{N,h}=\frac{m^{(1)}_{N,h}m^{(2)}_{N,h}}{N_{N,h}}.
\label{eq-self-weighted-ultimate-probability}
\end{equation}

Here \(h\) indexes strata and \(a\) indexes PSUs within stratum \(h\). The quantities \(N_{N,h}\), \(M_{N,h}\), and \(N_{N,h,a}\) are the stratum population size, the number of PSUs in the stratum, and the size of PSU \(G_{N,h,a}\), respectively. The quantity \(m^{(1)}_{N,h}\) is the expected number of selected PSUs under Poisson sampling and the fixed number under rejective sampling, while \(m^{(2)}_{N,h}\) is the number of units sampled from each selected PSU. The quantities \(\pi^{(1)}_{N,h,a}\), \(\pi^{(2)}_{N,h,a,i}\), and \(\rho_{N,h}\) are the first-stage, second-stage, and ultimate inclusion probabilities, respectively.

\begin{corollary}[Self-weighted \(\pi\)PS/SRS sampling]
\label{cor-self-weighted-two-stage}
Suppose that
\[
\rho_{N,h}\xrightarrow[]{p}\rho_h\in(0,1],\qquad h=1,\ldots,H.
\]
Then Assumptions~{B1}--{B4} hold with
\[
\pi(x)=\sum_{h=1}^H\rho_h\mathbf1\{x\in\mathcal X_h\}.
\]
\end{corollary}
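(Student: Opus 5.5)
We verify Assumptions~B1--B4 directly for the self-weighted \(\pi\)PS/SRS design; Proposition~\ref{prop-stratified-two-stage} cannot be invoked as stated, because \eqref{eq-two-stage-stagewise-stability} is not assumed. The argument in that proposition only needs separate stagewise limits to establish B2. Its B3 argument uses just the covariance structure of the two stages and \eqref{eq-two-stage-negligible-psu}. So the plan is to reuse that B3 argument and check B2 and B4 from the product identity \eqref{eq-self-weighted-ultimate-probability}.

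\textbf{B1.} Both stages are generated from \(\sigma(\bm X_N)\)-measurable quantities (\(m^{(1)}_{N,h}\), \(m^{(2)}_{N,h}\), the PSU partition) together with auxiliary randomization independent of \(\bm Y_N\). This gives B1 immediately.

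\textbf{B2 and B4.} For \(i\in S_{N,h}\), \eqref{eq-self-weighted-ultimate-probability} gives \(\pi_{N,i}=\rho_{N,h}\) exactly. Since \(H\) is fixed,
\[
\sup_i|\pi_{N,i}-\pi(X_i)|\le\max_{h}|\rho_{N,h}-\rho_h|=o_{P_0^N}(1),
\]
which is B2. B4 holds with \(\pi_-=\min_h\rho_h>0\), and \(\pi\le1\) because \(\rho_h\le1\).

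\textbf{B3.} First we check the structural hypotheses used in the B3 part of the proof of Proposition~\ref{prop-stratified-two-stage}.
\begin{enumerate}[label=(\roman*)]
\item First-stage indicators within a stratum have nonpositive conditional covariances. Under Poisson sampling they are conditionally independent. Under rejective sampling this follows from \citet{ChenDempsterLiu1994}, exactly as in Proposition~\ref{prop-rejective-sampling}; the prescribed probabilities \eqref{eq-self-weighted-first-stage} lie in \((0,1]\) and sum to the integer \(m^{(1)}_{N,h}\).
\item The first-stage designs are independent across strata by construction.
\item Within each PSU the second stage is SRSWOR, so its indicators are negatively correlated. The second stages are independent across PSUs and of the first stage.
\end{enumerate}
With \(g\ge0\) and \(\E g(X)^2<\infty\), we then repeat the decomposition \(T_{1N}+T_{2N}\). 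The bound \(\Var(T_{1N}\mid\bm X_N)\le\max_{h,a}(|G_{N,h,a}|/N)\cdot N^{-1}\sum_i g_i^2\) uses only \(\pi^{(2)}\le1\), Cauchy--Schwarz within PSUs, and \eqref{eq-two-stage-negligible-psu}. The bound \(\Var(T_{2N}\mid\cdot)\le N^{-2}\sum_ig_i^2\) uses only the second-stage covariance condition. Both are \(o_{P_0^N}(1)\), and the two terms are conditionally uncorrelated. This gives B3.

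\textbf{Main obstacle.} The only delicate point is the rejective first stage at probability-one PSUs, where \(\pi^{(1)}_{N,h,a}=1\) is allowed. The Hájek conditional-Bernoulli representation and the negative-dependence result of \citet{ChenDempsterLiu1994} are stated for probabilities strictly inside \((0,1)\). To handle this, I would treat certainty PSUs as selected with probability one and remove them. Their indicators are degenerate and contribute zero covariance. The remaining PSUs are then drawn by rejective sampling with a reduced integer size, and their prescribed probabilities lie in \((0,1)\). This preserves \eqref{eq-negative-dependence-condition} for all pairs.
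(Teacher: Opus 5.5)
Your proposal is correct and matches the paper's proof: B2 and B4 follow from the exact identity \(\pi_{N,i}=\rho_{N,h}\) and \(\min_h\rho_h>0\), while B1 and B3 carry over unchanged from the proof of Proposition~\ref{prop-stratified-two-stage}, which uses only the two-stage covariance structure and \eqref{eq-two-stage-negligible-psu}. Your explicit check of first-stage negative dependence, including separating certainty PSUs under rejective sampling, is consistent with the paper, which builds that covariance condition into the retained two-stage setting and separates certainty PSUs in the same way in its attainability discussion.
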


\begin{proof}
Equation~\eqref{eq-self-weighted-ultimate-probability} gives
\[
\sup_{1\le i\le N}\left|\pi_{N,i}-\sum_{h=1}^H\rho_h\mathbf1\{X_i\in\mathcal X_h\}\right|\le\max_{1\le h\le H}|\rho_{N,h}-\rho_h|=o_{P_0^N}(1),
\]
which proves Assumption~{B2}, while \(\min_h\rho_h>0\) gives Assumption~{B4}. The arguments for Assumptions~{B1} and~{B3} in the proof of Proposition~\ref{prop-stratified-two-stage} apply unchanged because they use only the common sampling structure and \eqref{eq-two-stage-negligible-psu}.
\end{proof}

Thus exact self-weighting replaces separate stagewise convergence by convergence of \(\rho_{N,h}\), allowing the PSU sizes \(N_{N,h,a}\) to vary with \(N\) without requiring either stage-specific inclusion probability to converge. Self-weighting holds within strata; unequal $\rho_{N,h}$ still give unequal base weights across strata. The cancellation in \eqref{eq-self-weighted-ultimate-probability} uses the same actual PSU size in both stages; outdated or estimated size measures need not give exact self-weighting \citep{KaltonEtAl2021}. Other first-stage implementations, such as systematic PPS, require their own design-condition checks.

For optimal design, fix the second-stage sample-size sequences in the preceding construction, put
\[
N_{N,h}^{\max}=\max_{1\le a\le M_{N,h}}N_{N,h,a},\qquad \gamma_{N,h}=\frac{m^{(2)}_{N,h}}{N_{N,h}^{\max}},
\]
and suppose that
\begin{equation}
\gamma_{N,h}\xrightarrow[]{p}\gamma_h\in(0,1],\qquad h=1,\ldots,H.
\label{eq-two-stage-second-stage-capacity}
\end{equation}
Under an upper bound \(\rho\) on the limiting expected ultimate sampling fraction, suppose that
\begin{equation}
\mathfrak D_\rho^{\pi\mathrm{PS/SRS}}=\left\{\pi(x)=\sum_{h=1}^Hp_h\mathbf1\{x\in\mathcal X_h\}\,\middle|\,\pi_-\le p_h\le\gamma_h,\quad \sum_{h=1}^HP(X\in\mathcal X_h)p_h\le\rho\right\}
\label{eq-two-stage-optimal-feasible-class}
\end{equation}
is nonempty. This class is closed and convex in \(L^2(P_0^F)\), and every member is attainable with either Poisson or rejective sampling at the first stage.

Indeed, fix a member with stratum-specific values \(p_h\), and choose
\[
m^{(1)}_{N,h}=
\begin{cases}
\displaystyle \frac{N_{N,h}}{m^{(2)}_{N,h}}\min\{p_h,\gamma_{N,h}\}, & \text{under Poisson sampling},\\[6pt]
\displaystyle \max\left\{1,\left\lfloor\frac{N_{N,h}}{m^{(2)}_{N,h}}\min\{p_h,\gamma_{N,h}\}\right\rfloor\right\}, & \text{under rejective sampling}.
\end{cases}
\]
Then
\[
0<m^{(1)}_{N,h}\le\frac{N_{N,h}}{N_{N,h}^{\max}},
\]
so the first-stage inclusion probabilities in \eqref{eq-self-weighted-first-stage} are feasible. Under Poisson sampling, \(\rho_{N,h}=\min\{p_h,\gamma_{N,h}\}\). Under rejective sampling,
\[
\left|\rho_{N,h}-\min\{p_h,\gamma_{N,h}\}\right|\le\frac{m^{(2)}_{N,h}}{N_{N,h}}\le\frac{N_{N,h}^{\max}/N}{N_{N,h}/N}=o_{P_0^N}(1).
\]
Hence \(\rho_{N,h}\xrightarrow[]{p}p_h\) under either design. Under rejective sampling, the prescribed first-stage inclusion probabilities sum to the integer \(m^{(1)}_{N,h}\), and, after separating any certainty PSUs, \citet{ChenDempsterLiu1994} gives a rejective law with these first-order inclusion probabilities. Corollary~\ref{cor-self-weighted-two-stage} therefore verifies Assumptions~{B1}--{B4}. Finally,
\[
\frac1N\sum_{i=1}^N\pi_{N,i}=\sum_{h=1}^H\frac{N_{N,h}}N\rho_{N,h}\xrightarrow[]{p}\sum_{h=1}^HP(X\in\mathcal X_h)p_h\le\rho.
\]

\subsection{Details for efficient estimation}
\label{app-efficient-estimation}

This subsection proves the results in Section~4 and gives their design-specific sufficient conditions. Throughout, the fixed number of folds, the phase-I-measurable fold partition, and the training sigma-fields are those of Section~4, including the independent randomization in Remark~4.1. Additional fold and nuisance conditions are imposed only where stated. All stochastic orders are under \(P_0^N\) unless \(Q_0^N\) is displayed. Assumptions~{B2} and~{B4} imply
\begin{equation}
P_0^N\left(
\min_{1\le i\le N}\pi_{N,i}\ge\frac{\pi_-}{2}
\right)\to1.
\label{eq-app-c-uniform-positivity}
\end{equation}
On the event in \eqref{eq-app-c-uniform-positivity},
\begin{equation}
\max_{1\le i\le N}
\left|\pi_{N,i}^{-1}-\pi(X_i)^{-1}\right|
\le
\frac{2}{\pi_-^2}
\sup_{1\le i\le N}|\pi_{N,i}-\pi(X_i)|
=
o_{P_0^N}(1).
\label{eq-app-c-inverse-probability-convergence}
\end{equation}

\subsubsection{Proof of the general result}
\label{app-efficient-general-proof}

All stochastic orders in this subsubsection are under \(P_0^N\) unless \(Q_0^N\) is displayed.

\begin{proof}[Proof of Theorem~4.2\textup{(i)}]
Condition~{C3} gives
\begin{equation}
\sqrt N\hat\Psi_N(\theta_0)
=
\frac1{\sqrt N}\sum_{i=1}^N
\phi^{\obs}_{0,i}
+o_p(1).
\label{eq-general-score-at-truth-proof}
\end{equation}
The central sequence on the right side of \eqref{eq-general-score-at-truth-proof} is \(O_p(1)\), and hence
\begin{equation}
\hat\Psi_N(\theta_0)=O_p(N^{-1/2}).
\label{eq-general-score-at-truth-rate}
\end{equation}

We first prove \(\hat\theta_N\xrightarrow[]{p}\theta_0\). Fix \(\varepsilon>0\), and put
\[
c_\varepsilon
=
\frac12
\inf_{\theta\in U,\ \|\theta-\theta_0\|\ge\varepsilon}
\|\Psi(\theta)\|.
\]
Equation~\textup{(4.5)} gives \(c_\varepsilon>0\). Moreover,
\begin{align}
P_0^N(\|\hat\theta_N-\theta_0\|\ge\varepsilon)
&\le
P_0^N(\hat\theta_N\notin U)
+
P_0^N\{\|\hat\Psi_N(\hat\theta_N)\|\ge c_\varepsilon\}
\notag\\
&\quad+
P_0^N\left\{
\sup_{\theta\in U}\|\hat\Psi_N(\theta)-\Psi(\theta)\|
\ge c_\varepsilon
\right\}.
\label{eq-general-consistency-probability-bound}
\end{align}
Each probability on the right side of \eqref{eq-general-consistency-probability-bound} tends to zero by Condition~{C1}. Thus \(\hat\theta_N\xrightarrow[]{p}\theta_0\).

Choose a deterministic sequence \(\rho_N\downarrow0\) such that
\begin{equation}
P_0^N(\|\hat\theta_N-\theta_0\|\le\rho_N)\to1.
\label{eq-general-local-neighborhood-probability}
\end{equation}
On the intersection of \(\{\hat\theta_N\in U\}\) and the event in \eqref{eq-general-local-neighborhood-probability}, the integral form of the Taylor expansion gives
\begin{align}
\hat\Psi_N(\hat\theta_N)-\hat\Psi_N(\theta_0)
&=
\left\{
\int_0^1
\frac1N\sum_{i=1}^N
\frac{\partial}{\partial\theta^\top}
\hat\phi^{\obs}_{N,i}
\{\theta_0+t(\hat\theta_N-\theta_0)\}
\,dt
\right\}
(\hat\theta_N-\theta_0)
\notag\\
&=
-(\hat\theta_N-\theta_0)
+o_p(\|\hat\theta_N-\theta_0\|),
\label{eq-general-local-linearization-proof}
\end{align}
where the second equality follows from Condition~{C2}. Equations~\textup{(4.3)}, \eqref{eq-general-score-at-truth-rate}, and \eqref{eq-general-local-linearization-proof} imply
\begin{equation}
\hat\theta_N-\theta_0=O_p(N^{-1/2}).
\label{eq-general-root-n-rate}
\end{equation}
Substituting \eqref{eq-general-root-n-rate} into \eqref{eq-general-local-linearization-proof} yields
\begin{equation}
\sqrt N(\hat\theta_N-\theta_0)
=
\sqrt N\hat\Psi_N(\theta_0)+o_p(1).
\label{eq-general-estimator-linearization-proof}
\end{equation}
Equations~\eqref{eq-general-score-at-truth-proof} and \eqref{eq-general-estimator-linearization-proof} prove \textup{(4.13)}. The remainder in \textup{(4.13)} is observed-data measurable, so its \(P_0^N\)-probability equals its \(Q_0^N\)-probability. Subtracting \textup{(2.9)} therefore proves \textup{(4.14)}. Proposition~3.13, Theorem~3.7, and Theorem~3.9 then give local regularity and efficiency in the senses of Definitions~3.5 and~3.6.
\end{proof}

\subsubsection{Proof of the general variance result}
\label{app-efficient-variance-proof}

The notation, variance estimators, and Condition~{C4} are given in Subsection~4.1.

\begin{lemma}[Laws of large numbers for observed-data scores]
\label{lem-observed-score-lln}
Assume Assumptions~{B1}--{B4}. Let
\(d\in\{L^2_0(P_0^F)\}^q\) be fixed. Then
\begin{equation}
\frac1N\sum_{i=1}^N(\mathcal Ld)(O_i)
\xrightarrow[]{p}0,
\label{eq-observed-score-lln}
\end{equation}
and
\begin{equation}
\frac1N\sum_{i=1}^N
(\mathcal Ld)(O_i)(\mathcal Ld)(O_i)^\top
\xrightarrow[]{p}
\E^\pi[(\mathcal Ld)(O)(\mathcal Ld)(O)^\top].
\label{eq-observed-score-quadratic-lln}
\end{equation}
\end{lemma}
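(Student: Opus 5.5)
We prove both statements coordinatewise, so it suffices to take \(q=1\) for \eqref{eq-observed-score-lln} and to reduce \eqref{eq-observed-score-quadratic-lln} to scalar entries via polarization. For \(a\in L^2_0(P_0^F)\), write \(k_a(X)=\E[a(L)\mid X]\) and \(s_a(L)=a(L)-k_a(X)\). Then \((\mathcal La)(O_i)=k_a(X_i)+\delta_i s_a(L_i)\).

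For the first moment, the term \(N^{-1}\sum_i k_a(X_i)\to\E[k_a(X)]=0\) by the i.i.d.\ law of large numbers. For the sampled residual, condition on \((\bm X_N,\boldsymbol\delta_N)\). By Assumption~B1 the \(s_a(L_i)\) are conditionally independent and centered, so the conditional second moment of \(N^{-1}\sum_i\delta_i s_a(L_i)\) is \(N^{-2}\sum_i\delta_i v_a(X_i)\), where \(v_a(X)=\E[s_a(L)^2\mid X]\). Its expectation is at most \(N^{-1}\E[s_a^2]\to0\), and Chebyshev's inequality finishes the argument.

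For the quadratic term, polarization reduces it to showing
\[
N^{-1}\sum_i(\mathcal La)(O_i)^2\xrightarrow[]{p}\E[k_a^2]+\E[\pi v_a].
\]
We expand \((\mathcal La)(O_i)^2=k_a(X_i)^2+2\delta_i k_a(X_i)s_a(L_i)+\delta_i s_a(L_i)^2\) and treat the three pieces in turn.

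\begin{enumerate}[label=\textup{(\roman*)}]
\item The first piece converges to \(\E[k_a^2]\) by the i.i.d.\ law of large numbers.

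\item The cross term has conditional mean zero given \((\bm X_N,\boldsymbol\delta_N)\), but its conditional variance involves \(k_a^2v_a\), which need not be integrable. Truncate instead: replace \(k_a\) by \(k_a\mathbf 1\{|k_a|\le M\}\). The truncated part is handled by the same Chebyshev argument as above. The remainder is bounded in \(L^1\) by \(2\E[|k_a|\mathbf1\{|k_a|>M\}|s_a|]\), which tends to \(0\) as \(M\to\infty\) by Cauchy--Schwarz.

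\item The last piece is the main obstacle, since only second moments of \(a\) are available. Split \(\delta_i s_a^2=\delta_i v_a+\delta_i(s_a^2-v_a)\), and truncate \(s_a^2\) at level \(M\) as in the proof of Lemma~\ref{lem-joint-full-observed-eif}.
\begin{itemize}
\item For the bounded part, the centered term \(N^{-1}\sum_i\delta_i\{s_a^2\mathbf1(\cdot)-\E[s_a^2\mathbf1(\cdot)\mid X_i]\}\) has conditional variance \(O(M^2/N)\).
\item The \(X\)-measurable bounded part \(w_M(X)=\E[s_a^2\mathbf 1\{s_a^2\le M\}\mid X]\) is bounded by \(M\), hence lies in \(L^2\). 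Writing \(\delta_i=\pi_{N,i}+(\delta_i-\pi_{N,i})\), Assumption~B3 kills the fluctuation. Assumption~B2 together with the law of large numbers gives \(N^{-1}\sum_i\pi_{N,i}w_M(X_i)\to\E[\pi w_M]\).
\item The tail \(N^{-1}\sum_i\delta_i s_a^2\mathbf1\{s_a^2>M\}\) has expectation at most \(\E[s_a^2\mathbf 1\{s_a^2>M\}]\to0\) uniformly in \(N\). Likewise \(\E[\pi w_M]\to\E[\pi v_a]\) by monotone convergence.
\end{itemize}
Letting \(N\to\infty\) and then \(M\to\infty\) yields the claimed limit \(\E[\pi v_a]\).
\end{enumerate}

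Finally, collecting the three pieces and applying \eqref{eq-obs-norm} gives \(\E[k_a^2]+\E[\pi v_a]=\E^\pi[(\mathcal La)^2]\). Polarization then yields the matrix statement \eqref{eq-observed-score-quadratic-lln}.
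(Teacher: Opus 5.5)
Your proof is correct, and the first-moment argument is the same as the paper's. For the quadratic part you take a different route.

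\textbf{The paper's route.} It works directly with the matrix and first assumes \(d\) is bounded. Using \(\delta_i^2=\delta_i\), it writes
\((\mathcal Ld)(O_i)(\mathcal Ld)(O_i)^\top=\delta_i d(L_i)d(L_i)^\top+(1-\delta_i)\E[d(L_i)\mid X_i]\E[d(L_i)\mid X_i]^\top\).
It then splits this into three terms:
a conditionally centered sum with \(O(N^{-1})\) second moment;
a \(\pi_{N,i}\)-weighted term handled by B2 and the law of large numbers;
and a \((\delta_i-\pi_{N,i})\)-weighted term handled by B3 applied to the bounded conditional covariance.
It passes to general \(d\) by a single global truncation \(d_M=d\mathbf 1\{\|d\|\le M\}\). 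This uses the uniform bound \(\sup_N\E[N^{-1}\sum_i\|(\mathcal Ld)(O_i)-(\mathcal Ld_M)(O_i)\|^2]\le2\E\|d-d_M\|^2\) together with \(\|aa^\top-bb^\top\|\le\|a-b\|(\|a\|+\|b\|)\).

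\textbf{Your route.} You reduce to scalars by polarization, expand \((k_a+\delta_is_a)^2\), and truncate only where integrability actually fails: \(k_a\) in the cross term and \(s_a^2\) in the sampled square.

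\textbf{Comparison.} The paper's version is more economical: one approximation step covers every term at once, in matrix form, with no polarization. Yours makes explicit why truncation is needed. The cross term's conditional variance involves \(k_a^2v_a\), which need not be integrable, and \(v_a\) need not lie in \(L^2\), so B3 cannot be applied to it directly. Yours also shows directly that the cross term vanishes.

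One small presentational point: in (iii) you announce the split \(\delta_is_a^2=\delta_iv_a+\delta_i(s_a^2-v_a)\), but your bullets do not use it. They effectively center at \(w_M\) rather than \(v_a\), which is the right choice for the reason above.
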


\begin{proof}
Since
\[
(\mathcal Ld)(O_i)
=
\E[d(L_i)\mid X_i]
+
\delta_i\{d(L_i)-\E[d(L_i)\mid X_i]\},
\]
Assumption~{B1} gives
\begin{align*}
&\E\left[
\left\|
\frac1N\sum_{i=1}^N
\delta_i\{d(L_i)-\E[d(L_i)\mid X_i]\}
\right\|^2
\,\middle|\,
\bm X_N,\boldsymbol\delta_N
\right]\\
&\quad=
\frac1{N^2}\sum_{i=1}^N
\delta_i
\E\left[
\|d(L_i)-\E[d(L_i)\mid X_i]\|^2
\,\middle|\,
X_i
\right].
\end{align*}
The expectation of the right side is at most
\[
\frac1N\E\|d(L)-\E[d(L)\mid X]\|^2,
\]
which converges to zero. The ordinary law of large numbers gives
\[
\frac1N\sum_{i=1}^N\E[d(L_i)\mid X_i]
\xrightarrow[]{p}\E[d(L)]=0.
\]
This proves \eqref{eq-observed-score-lln}.

Suppose first that \(d\) is bounded. Since \(\delta_i^2=\delta_i\),
\begin{align}
&(\mathcal Ld)(O_i)(\mathcal Ld)(O_i)^\top
\notag\\
&\quad=
\delta_i d(L_i)d(L_i)^\top
+
(1-\delta_i)
\E[d(L_i)\mid X_i]\E[d(L_i)\mid X_i]^\top.
\label{eq-observed-score-square-expansion}
\end{align}
Consequently,
\begin{align}
&\frac1N\sum_{i=1}^N
(\mathcal Ld)(O_i)(\mathcal Ld)(O_i)^\top
\notag\\
&\quad=
\frac1N\sum_{i=1}^N
\delta_i
\left[
 d(L_i)d(L_i)^\top
-
\E\{d(L_i)d(L_i)^\top\mid X_i\}
\right]
\notag\\
&\qquad+
\frac1N\sum_{i=1}^N
\left[
\pi_{N,i}\E\{d(L_i)d(L_i)^\top\mid X_i\}
+
(1-\pi_{N,i})
\E[d(L_i)\mid X_i]\E[d(L_i)\mid X_i]^\top
\right]
\notag\\
&\qquad+
\frac1N\sum_{i=1}^N
(\delta_i-\pi_{N,i})
\left[
\E\{d(L_i)d(L_i)^\top\mid X_i\}
-
\E[d(L_i)\mid X_i]\E[d(L_i)\mid X_i]^\top
\right].
\label{eq-observed-score-quadratic-decomposition}
\end{align}
Conditionally on \((\bm X_N,\boldsymbol\delta_N)\), the first sum on the right side of \eqref{eq-observed-score-quadratic-decomposition} is centered, and each of its entries has second moment of order \(N^{-1}\). Assumption~{B3}, applied entrywise, makes the third sum \(o_p(1)\). Assumption~{B2} and the ordinary law of large numbers give
\begin{align*}
&\frac1N\sum_{i=1}^N
\left[
\pi_{N,i}\E\{d(L_i)d(L_i)^\top\mid X_i\}
+
(1-\pi_{N,i})
\E[d(L_i)\mid X_i]\E[d(L_i)\mid X_i]^\top
\right]
\\
&\quad\xrightarrow[]{p}
\E\left[
\pi(X)d(L)d(L)^\top
+
\{1-\pi(X)\}\E[d(L)\mid X]\E[d(L)\mid X]^\top
\right],
\end{align*}
which is the limit in \eqref{eq-observed-score-quadratic-lln}.

For general \(d\in\{L^2_0(P_0^F)\}^q\), let
\[
d_M(L)=d(L)\mathbf 1\{\|d(L)\|\le M\}.
\]
The conditional expectation is an \(L^2\)-contraction, and hence
\begin{align*}
&\sup_N
\E\left[
\frac1N\sum_{i=1}^N
\|(\mathcal Ld)(O_i)-(\mathcal Ld_M)(O_i)\|^2
\right]
\le
2\E\|d(L)-d_M(L)\|^2
\to0.
\end{align*}
Together with
\[
\|aa^\top-bb^\top\|
\le
\|a-b\|(\|a\|+\|b\|),
\]
Cauchy--Schwarz shows that the empirical matrices and their limits for \(d_M\) converge to those for \(d\) as \(M\to\infty\). Applying the bounded quadratic argument to \(d_M\) proves \eqref{eq-observed-score-quadratic-lln}.
\end{proof}

\begin{lemma}[HT law of large numbers]
\label{lem-horvitz-thompson-lln}
Assume Assumptions~{B1}--{B4}. If \(h(L)\) is a fixed finite-dimensional vector- or matrix-valued function satisfying \(\E\|h(L)\|<\infty\), then
\begin{equation}
\frac1N\sum_{i=1}^N
\frac{\delta_i}{\pi_{N,i}}h(L_i)
\xrightarrow[]{p}
\E[h(L)].
\label{eq-horvitz-thompson-lln}
\end{equation}
\end{lemma}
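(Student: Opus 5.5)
The plan is to decompose the HT sum into a design-fluctuation part, a conditional-on-$X$ part, and a deterministic part, after first truncating $h$ so that all moments exist. Work coordinatewise, so $h$ is scalar, and fix $M<\infty$. Put $h_M(L)=h(L)\mathbf 1\{\|h(L)\|\le M\}$. Work on the event in \eqref{eq-app-c-uniform-positivity}, whose probability tends to one; there $\delta_i/\pi_{N,i}\le 2/\pi_-$. For the truncation error, Assumption~B1 and $\E(\delta_i\mid\bm X_N)=\pi_{N,i}$ give
\[
\E\Bigl[\frac1N\sum_{i=1}^N\frac{\delta_i}{\pi_{N,i}}\,|h(L_i)-h_M(L_i)|\,\Big|\,\bm X_N\Bigr]
=\frac1N\sum_{i=1}^N\E\bigl[|h-h_M|(L_i)\mid X_i\bigr].
\]
Its expectation equals $\E|h-h_M|(L)\to0$ as $M\to\infty$, uniformly in $N$. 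Markov's inequality, applied conditionally on $\bm X_N$, then makes the truncation negligible in the sense $\lim_M\limsup_N P_0^N(\cdot>\varepsilon)=0$.

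For bounded $h_M$, write $k_M(X)=\E[h_M(L)\mid X]$ and split
\[
\frac1N\sum_{i=1}^N\frac{\delta_i}{\pi_{N,i}}h_M(L_i)
=\frac1N\sum_{i=1}^N\frac{\delta_i}{\pi_{N,i}}\{h_M(L_i)-k_M(X_i)\}
+\frac1N\sum_{i=1}^N\frac{\delta_i-\pi_{N,i}}{\pi_{N,i}}k_M(X_i)
+\frac1N\sum_{i=1}^N k_M(X_i).
\]
Each term is handled separately.

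\textbf{First term.} Conditionally on $(\bm X_N,\boldsymbol\delta_N)$, Assumption~B1 makes the summands independent and centered. The conditional second moment is at most $4M^2/(\pi_-^2N)\to0$, so this term is $o_p(1)$.

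\textbf{Third term.} This converges to $\E[h_M(L)]$ by the ordinary law of large numbers.

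\textbf{Second term.} This is the main obstacle. Assumption~B3 controls only sums of the form $\sum(\delta_i-\pi_{N,i})h(X_i)$, with a fixed function of $X_i$ as weight, whereas here the weight $1/\pi_{N,i}$ depends on $N$ and on all of $\bm X_N$. I would replace $1/\pi_{N,i}$ by $1/\pi(X_i)$. By \eqref{eq-app-c-inverse-probability-convergence}, the replacement error is bounded by
\[
\max_i|\pi_{N,i}^{-1}-\pi(X_i)^{-1}|\cdot\frac1N\sum_{i=1}^N|\delta_i-\pi_{N,i}|\,|k_M(X_i)|\le o_p(1)\cdot 2M.
\]
The remaining sum is $N^{-1}\sum_i(\delta_i-\pi_{N,i})\tilde k(X_i)$ with $\tilde k=k_M/\pi$. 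Since $|\tilde k|\le M/\pi_-$, we have $\E\tilde k(X)^2<\infty$. Assumption~B3 gives that its conditional variance given $\bm X_N$ is $o_p(1)$, and its conditional mean is zero. A conditional Chebyshev bound, in the form $P(|T_N|>\varepsilon\mid\bm X_N)\le\min\{1,\Var(T_N\mid\bm X_N)/\varepsilon^2\}$ followed by dominated convergence, then makes this sum $o_p(1)$.

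Finally, combine the pieces. For each fixed $M$, the empirical HT average of $h_M$ converges in probability to $\E h_M(L)$. Both the truncation error and $|\E h-\E h_M|$ vanish as $M\to\infty$, uniformly in $N$. A standard $\varepsilon/3$ argument then yields \eqref{eq-horvitz-thompson-lln}. Vector- and matrix-valued $h$ follow entrywise.
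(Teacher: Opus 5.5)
Your proposal is correct and follows the paper's proof. You use the same three-term decomposition for bounded $h$: a conditionally centered residual term controlled by Assumption~B1 and uniform positivity, an ordinary law-of-large-numbers term, and a design-fluctuation term handled by replacing $1/\pi_{N,i}$ with $1/\pi(X_i)$ and invoking Assumption~B3. You then finish with the same truncation argument based on $\E[N^{-1}\sum_i(\delta_i/\pi_{N,i})\|h(L_i)\|\mathbf 1\{\|h(L_i)\|>M\}]=\E[\|h(L)\|\mathbf 1\{\|h(L)\|>M\}]$. The only differences are expository: you truncate first, and you spell out the conditional Chebyshev and dominated-convergence steps that the paper leaves implicit.
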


\begin{proof}
Suppose first that \(h\) is bounded. Then
\begin{align}
\frac1N\sum_{i=1}^N\frac{\delta_i}{\pi_{N,i}}h(L_i)
&=
\frac1N\sum_{i=1}^N
\frac{\delta_i}{\pi_{N,i}}
\{h(L_i)-\E[h(L_i)\mid X_i]\}
\notag\\
&\quad+
\frac1N\sum_{i=1}^N\E[h(L_i)\mid X_i]
\notag\\
&\quad+
\frac1N\sum_{i=1}^N
(\delta_i-\pi_{N,i})
\frac{\E[h(L_i)\mid X_i]}{\pi_{N,i}}.
\label{eq-horvitz-thompson-lln-decomposition}
\end{align}
Conditionally on \((\bm X_N,\boldsymbol\delta_N)\), the first sum on the right side of \eqref{eq-horvitz-thompson-lln-decomposition} is centered, and its second moment is \(O_p(N^{-1})\) by \eqref{eq-app-c-uniform-positivity}. For the last sum,
\begin{align*}
&\frac1N\sum_{i=1}^N
(\delta_i-\pi_{N,i})
\frac{\E[h(L_i)\mid X_i]}{\pi_{N,i}}
=
\frac1N\sum_{i=1}^N
(\delta_i-\pi_{N,i})
\frac{\E[h(L_i)\mid X_i]}{\pi(X_i)}
+o_p(1)
=o_p(1)
\end{align*}
by \eqref{eq-app-c-inverse-probability-convergence} and Assumption~{B3}. The middle sum in \eqref{eq-horvitz-thompson-lln-decomposition} converges to \(\E[h(L)]\).

For a general integrable \(h\), apply the result to
\(h(L)\mathbf 1\{\|h(L)\|\le M\}\). Assumption~{B1} and the definition of \(\pi_{N,i}\) give
\begin{align*}
&\E\left[
\frac1N\sum_{i=1}^N
\frac{\delta_i}{\pi_{N,i}}
\|h(L_i)\|\mathbf 1\{\|h(L_i)\|>M\}
\right]
=
\E[\|h(L)\|\mathbf 1\{\|h(L)\|>M\}],
\end{align*}
which converges to zero as \(M\to\infty\). This proves \eqref{eq-horvitz-thompson-lln}.
\end{proof}

\begin{proof}[Proof of Theorem~4.2\textup{(ii)}]
Write \(\phi^{\obs}_0=\mathcal Ld\). Proposition~3.4 gives
\(d\in(\mathcal H^F)^p\), so Lemma~\ref{lem-observed-score-lln} yields
\[
\frac1N\sum_{i=1}^N
\phi^{\obs}_{0,i}\{\phi^{\obs}_{0,i}\}^\top
\xrightarrow[]{p}
\Sigma_{\mathrm{sp}}.
\]
For vectors \(a\) and \(b\),
\[
\|aa^\top-bb^\top\|
\le
\|a-b\|(\|a\|+\|b\|).
\]
Condition~{C4}, Cauchy--Schwarz, and the preceding law of large numbers imply
\[
\hat\Sigma_{\mathrm{sp},N}
-
\frac1N\sum_{i=1}^N
\phi^{\obs}_{0,i}\{\phi^{\obs}_{0,i}\}^\top
=o_p(1).
\]
This proves \textup{(4.15)}.

Since \(\phi^F_0\in(\mathcal H^F)^p\), Lemma~\ref{lem-horvitz-thompson-lln}, applied to
\(\phi^F_0(L)\phi^F_0(L)^\top\), gives
\[
\frac1N\sum_{i=1}^N
\frac{\delta_i}{\pi_{N,i}}
\phi^F_{0,i}\{\phi^F_{0,i}\}^\top
\xrightarrow[]{p}
\Sigma_F.
\]
The weighted form of the same matrix inequality, Condition~{C4}, and Cauchy--Schwarz yield
\[
\hat\Sigma_{F,N}
-
\frac1N\sum_{i=1}^N
\frac{\delta_i}{\pi_{N,i}}
\phi^F_{0,i}\{\phi^F_{0,i}\}^\top
=o_p(1).
\]
Hence \(\hat\Sigma_{F,N}\xrightarrow[]{p}\Sigma_F\). Under Assumption~{A3},
\[
\widetilde{\Sigma}_{\mathrm{fp},N}
=
\hat\Sigma_{\mathrm{sp},N}-\hat\Sigma_{F,N}
\xrightarrow[]{p}
\Sigma_{\mathrm{sp}}-\Sigma_F
=
\Sigma_{\mathrm{fp}}.
\]
\end{proof}

\subsubsection{Reduction for the mean target}
\label{app-mean-target-conditions}

Example~3.11 gives the notation for the mean target. Section~4 gives the remaining notation. For convenience, we restate Conditions~\ref{cond-mean-regression-app}--\ref{cond-mean-sampling-term-app} from Subsection~4.2.

\begin{enumerate}[label=(M\arabic*),ref=M\arabic*]
\item\label{cond-mean-regression-app}
\textbf{Mean-square consistency.}
For \(i\in I_k\), \(\hat m_{-k}(X_i)\) is \(\mathcal T_{N,k}\)-measurable, and
\begin{equation}
\frac1N\sum_{i=1}^N
\{\hat m_{-k(i)}(X_i)-m_0(X_i)\}^2
=o_{P_0^N}(1).
\label{eq-mean-regression-L2-condition-app}
\end{equation}

\item\label{cond-mean-sampling-term-app}
\textbf{Remainder involving the sampling indicators.}
\begin{equation}
\frac1{\sqrt N}\sum_{i=1}^N
(\delta_i-\pi_{N,i})
\frac{\hat m_{-k(i)}(X_i)-m_0(X_i)}{\pi_{N,i}}
=o_{P_0^N}(1).
\label{eq-mean-sampling-term-condition-app}
\end{equation}
\end{enumerate}

\begin{proposition}[Verification of the general conditions for the mean]
\label{thm-mean-sufficient-C-conditions}
Assume \(\mathcal H^F=L^2_0(P_0^F)\), Assumption~{A2}, Assumptions~{B1}--{B4}, and Conditions~\ref{cond-mean-regression-app}--\ref{cond-mean-sampling-term-app}. Then Conditions~{C1}--{C3} hold for \(\hat\mu_N\).
\end{proposition}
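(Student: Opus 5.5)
The plan is to verify Conditions~C1--C3 directly for the estimating map
\[
\hat\Psi_N(\mu)=\frac1N\sum_{i=1}^N\hat\phi^{\obs}_{N,i}(\mu)=\hat\mu_N-\mu,
\]
which is affine in \(\mu\) with slope \(-1\). The two easy conditions come first.

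\emph{Conditions C1 and C2.} Since \(\partial\hat\phi^{\obs}_{N,i}(\mu)/\partial\mu=-1\) for every \(i\), the averaged Jacobian equals \(-I_1\) exactly, so Condition~C2 holds trivially. By construction \(\hat\Psi_N(\hat\mu_N)=0\), which gives \eqref{eq-general-root-condition}. Take \(U\) to be any bounded open interval around \(\mu_0\) and \(\Psi(\mu)=\mu_0-\mu\). Identifiability \eqref{eq-general-identification-condition} is then immediate. The uniform law \eqref{eq-general-uniform-lln-condition} reduces to \(\hat\mu_N\xrightarrow{p}\mu_0\), because \(\hat\Psi_N(\mu)-\Psi(\mu)=\hat\mu_N-\mu_0\) does not depend on \(\mu\). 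This consistency will follow from the C3 expansion below: \(\sqrt N(\hat\mu_N-\mu_0)\) equals a central sequence plus \(o_p(1)\), and that central sequence is \(O_p(1)\) by Theorem~\ref{thm-obs-lan} or Lemma~\ref{lem-central-sequence-L2-limits}. Since \(\hat\mu_N\in U\) with probability tending to one, C1 is complete once C3 is established.

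\emph{Condition C3.} Write \(\Delta_i=\hat m_{-k(i)}(X_i)-m_0(X_i)\). The error \(\hat\phi^{\obs}_{N,i}(\mu_0)-\phi^{\obs}_{0,i}\) splits algebraically as
\[
\Delta_i\Bigl(1-\frac{\delta_i}{\pi_{N,i}}\Bigr)
+\delta_i\Bigl(\frac1{\pi_{N,i}}-\frac1{\pi(X_i)}\Bigr)\{Y_i-m_0(X_i)\}.
\]
Summed and scaled by \(N^{-1/2}\), the first term is \(-N^{-1/2}\sum_i(\delta_i-\pi_{N,i})\Delta_i/\pi_{N,i}\), which is \(o_p(1)\) exactly by Condition~M2.

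The second term carries the weight \(c_{N,i}=\pi_{N,i}^{-1}-\pi(X_i)^{-1}\), which is \(\sigma(\bm X_N)\)-measurable. Let \(\varepsilon_i=Y_i-m_0(X_i)\). Conditionally on \((\bm X_N,\boldsymbol\delta_N)\), Assumption~B1 and the i.i.d.\ structure make the \(\delta_i c_{N,i}\varepsilon_i\) independent and centered. Their conditional second moment is therefore
\[
\frac1N\sum_{i=1}^N\delta_i c_{N,i}^2\Var(Y_i\mid X_i)
\le\max_i c_{N,i}^2\cdot\frac1N\sum_{i=1}^N\Var(Y_i\mid X_i).
\]
By \eqref{eq-app-c-inverse-probability-convergence}, \(\max_i c_{N,i}^2=o_p(1)\), and the average of conditional variances is \(O_p(1)\) by the law of large numbers, using \(\E[Y^2]<\infty\), which follows from \(\mathcal H^F=L^2_0\) and A2 with \(\phi^F=Y-\mu_0\). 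A conditional Chebyshev argument then makes this term \(o_p(1)\).

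\emph{Expected obstacle.} The delicate point is that \(\hat m_{-k}\) may depend on the \(\delta_j\) and \(Y_j\) outside fold \(k\). The conditioning step for the \(\varepsilon_i\) term is clean only because the weights \(c_{N,i}\) involve no outcomes at all. The part of the first term that does involve fitted nuisances is not handled by splitting; it is absorbed entirely by Condition~M2. Condition~M1 is not needed here, only for the variance part of Theorem~\ref{thm-mean-efficient-estimation}.
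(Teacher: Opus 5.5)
Your proposal is correct and is essentially the paper's proof. The same affine form $\hat\Psi_N(\mu)=\hat\mu_N-\mu$ gives C1--C2, and C3 comes from the same two-term decomposition of $\hat\phi^{\obs}_{N,i}(\mu_0)-\phi^{\obs}_{0,i}$: M2 absorbs the nuisance term, and a conditional Chebyshev bound handles the $\pi_{N,i}^{-1}-\pi(X_i)^{-1}$ term. The only difference is that the paper gets consistency from $N^{-1}\sum_i\phi^{\obs}_{0,i}=o_p(1)$ (Lemma~\ref{lem-observed-score-lln}) rather than from $\Delta_N(\phi^{\obs})=O_p(1)$.

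If you keep your route, justify that $O_p(1)$ claim by the elementary bound $\E[\Delta_N(\mathcal Lb)^2]\le\|b\|_F^2$, applied to $b=m_0(X)-\mu_0+\{Y-m_0(X)\}/\pi(X)\in L^2_0(P_0^F)$; this bound uses only B1 and the i.i.d.\ structure. Neither citation you give works directly here: Theorem~\ref{thm-obs-lan} covers only $L^4$ scores in $\dot{\mathcal P}_{P_0^F}$, and Lemma~\ref{lem-central-sequence-L2-limits} invokes A1, which is not among this proposition's hypotheses.
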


\begin{proof}
\[
Y-\mu_0=\phi^F\in\mathcal H^F\subset L_0^2(P_0^F)
\quad\Longrightarrow\quad
\E[Y^2]<\infty.
\]
For the mean target,
\begin{equation}
\phi^{\obs}_{0,i}
=
m_0(X_i)-\mu_0
+
\frac{\delta_i}{\pi(X_i)}\{Y_i-m_0(X_i)\}.
\label{eq-mean-oracle-observed-eif}
\end{equation}
The function
\[
m_0(X)-\mu_0+\frac{Y-m_0(X)}{\pi(X)}
\]
belongs to \(L^2_0(P_0^F)\), and its observed-data score is \eqref{eq-mean-oracle-observed-eif}. Lemma~\ref{lem-observed-score-lln} therefore gives
\begin{equation}
\frac1N\sum_{i=1}^N\phi^{\obs}_{0,i}=o_p(1).
\label{eq-mean-oracle-score-average}
\end{equation}
Moreover,
\begin{align}
&\Var\left(
\frac1{\sqrt N}\sum_{i=1}^N
\delta_i
\left\{
\frac1{\pi_{N,i}}-\frac1{\pi(X_i)}
\right\}
\{Y_i-m_0(X_i)\}
\,\middle|\,
\bm X_N,\boldsymbol\delta_N
\right)
\notag\\
&\quad=
\frac1N\sum_{i=1}^N
\delta_i
\left\{
\frac1{\pi_{N,i}}-\frac1{\pi(X_i)}
\right\}^2
\Var(Y_i\mid X_i)
\notag\\
&\quad\le
\max_{1\le i\le N}
\left|
\frac1{\pi_{N,i}}-\frac1{\pi(X_i)}
\right|^2
\frac1N\sum_{i=1}^N\delta_i\Var(Y_i\mid X_i)
=o_p(1),
\label{eq-mean-inclusion-probability-remainder-variance}
\end{align}
where
\[
\E\left[
\frac1N\sum_{i=1}^N\delta_i\Var(Y_i\mid X_i)
\right]
\le
\E[\{Y-m_0(X)\}^2]
<\infty.
\]
Thus conditional Chebyshev's inequality gives
\begin{equation}
\frac1{\sqrt N}\sum_{i=1}^N
\delta_i
\left\{
\frac1{\pi_{N,i}}-\frac1{\pi(X_i)}
\right\}
\{Y_i-m_0(X_i)\}
=o_p(1).
\label{eq-mean-inclusion-probability-remainder}
\end{equation}

The estimator satisfies
\begin{align}
\hat\mu_N-\mu_0
&=
\frac1N\sum_{i=1}^N\phi^{\obs}_{0,i}
\notag\\
&\quad+
\frac1N\sum_{i=1}^N
\delta_i
\left\{
\frac1{\pi_{N,i}}-\frac1{\pi(X_i)}
\right\}
\{Y_i-m_0(X_i)\}
\notag\\
&\quad-
\frac1N\sum_{i=1}^N
(\delta_i-\pi_{N,i})
\frac{\hat m_{-k(i)}(X_i)-m_0(X_i)}{\pi_{N,i}}.
\label{eq-mean-first-order-reduction}
\end{align}
Equations~\eqref{eq-mean-oracle-score-average}--\eqref{eq-mean-inclusion-probability-remainder} and Condition~\ref{cond-mean-sampling-term-app} imply \(\hat\mu_N\xrightarrow[]{p}\mu_0\). Since
\[
\hat\Psi_N(\mu)=\hat\mu_N-\mu,
\]
Condition~{C1} follows with \(\Psi(\mu)=\mu_0-\mu\), and Condition~{C2} follows because the derivative is exactly \(-1\).

At \(\mu_0\),
\begin{align}
\hat\phi^{\obs}_{N,i}(\mu_0)-\phi^{\obs}_{0,i}
&=
-\frac{\delta_i-\pi_{N,i}}{\pi_{N,i}}
\{\hat m_{-k(i)}(X_i)-m_0(X_i)\}
\notag\\
&\quad+
\delta_i
\left\{
\frac1{\pi_{N,i}}-\frac1{\pi(X_i)}
\right\}
\{Y_i-m_0(X_i)\}.
\label{eq-mean-score-decomposition}
\end{align}
Condition~\ref{cond-mean-sampling-term-app} and
\eqref{eq-mean-inclusion-probability-remainder}
imply that summing
\eqref{eq-mean-score-decomposition} over \(i\) and dividing by
\(\sqrt N\) yields \(o_p(1)\). This proves Condition~C3.
\end{proof}

\begin{proposition}[Variance conditions for the mean]
\label{prop-mean-variance-conditions}
Assume \(\mathcal H^F=L^2_0(P_0^F)\), Assumption~{A2}, Assumptions~{B1}--{B4}, and Condition~\ref{cond-mean-regression-app}. Then Condition~{C4} holds with \(\hat\phi_i^F=Y_i-\hat\mu_N\). The residual-form estimator in \textup{(4.22)} is consistent for \(\Sigma_{\mathrm{fp}}\).
\end{proposition}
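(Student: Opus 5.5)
The plan has three parts: verify \eqref{eq-general-observed-eif-L2} and \eqref{eq-general-full-eif-L2} of Condition~C4 for the mean, then show that the residual-form estimator \eqref{eq-fp-var-estimator-mean-positive} converges to \(\Sigma_{\mathrm{fp}}\). Throughout we work on the event \eqref{eq-app-c-uniform-positivity} and use the inverse-probability bound \eqref{eq-app-c-inverse-probability-convergence}. We also use \(\E[Y^2]<\infty\) and consistency of \(\hat\mu_N\). Consistency does not need Condition~M2: in the reduction \eqref{eq-mean-first-order-reduction} the last term is divided by \(N\), not \(\sqrt N\). By Assumption~B3 its conditional variance given \(\bm X_N\) (with \(\hat m\) treated via fold-wise conditioning), or simply Cauchy--Schwarz with Condition~M1 and \(\pi_{N,i}\ge\pi_-/2\), bounds it by \(o_p(1)\).

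\textbf{Observed-data EIF.} Write
\[
\hat\phi^{\obs}_{N,i}-\phi^{\obs}_{0,i}=-(\hat\mu_N-\mu_0)+\Bigl(1-\frac{\delta_i}{\pi_{N,i}}\Bigr)\{\hat m_{-k(i)}(X_i)-m_0(X_i)\}+\delta_i\Bigl\{\frac1{\pi_{N,i}}-\frac1{\pi(X_i)}\Bigr\}\{Y_i-m_0(X_i)\}.
\]
Average the squares over \(i\) and handle the three terms separately. The first term gives \((\hat\mu_N-\mu_0)^2=o_p(1)\). The second gives at most \((1+2/\pi_-)^2\) times the quantity in Condition~M1, so it is \(o_p(1)\). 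The third gives at most \(\max_i|\pi_{N,i}^{-1}-\pi(X_i)^{-1}|^2\,N^{-1}\sum_i\{Y_i-m_0(X_i)\}^2=o_p(1)\) by \eqref{eq-app-c-inverse-probability-convergence} and the LLN.

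\textbf{Full-data EIF.} Here \(\hat\phi^F_i-\phi^F_{0,i}=\mu_0-\hat\mu_N\), so \eqref{eq-general-full-eif-L2} equals \((\hat\mu_N-\mu_0)^2\,N^{-1}\sum_i\delta_i/\pi_{N,i}\). Lemma~\ref{lem-horvitz-thompson-lln} with \(h\equiv1\) makes the average \(O_p(1)\), so this is \(o_p(1)\). Hence C4 holds.

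\textbf{Residual-form estimator.} The main work is showing \eqref{eq-fp-var-estimator-mean-positive} converges to \(\Sigma_{\mathrm{fp}}=\E[\{\pi(X)^{-1}-1\}\Var(Y\mid X)]\). Set \(w_i=\pi_{N,i}^{-1}-1\) and \(e_i=Y_i-m_0(X_i)\). We compare \(\hat\Sigma_{\mathrm{fp},N}\) with the oracle \(N^{-1}\sum_i(\delta_i/\pi_{N,i})w_ie_i^2\).
\begin{enumerate}
\item For the oracle, replace \(w_i\) by \(\pi(X_i)^{-1}-1\). The error is bounded by \(\max_i|\pi_{N,i}^{-1}-\pi(X_i)^{-1}|\) times the \(O_p(1)\) average \(N^{-1}\sum_i(\delta_i/\pi_{N,i})e_i^2\), which is \(O_p(1)\) by Lemma~\ref{lem-horvitz-thompson-lln}.
\item Lemma~\ref{lem-horvitz-thompson-lln} applied to \(h(L)=\{\pi(X)^{-1}-1\}\{Y-m_0(X)\}^2\), which is integrable, then gives the oracle limit \(\E[\{\pi(X)^{-1}-1\}e^2]=\Sigma_{\mathrm{fp}}\).
\item For the feasible version, put \(\hat e_i=Y_i-\hat m_{-k(i)}(X_i)\). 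Use \(|\hat e_i^2-e_i^2|\le|\hat e_i-e_i|(|\hat e_i|+|e_i|)\) with the weights \(\delta_iw_i/\pi_{N,i}\le4/\pi_-^2\), and apply Cauchy--Schwarz. The bound is a constant times \(\{N^{-1}\sum_i(\hat m_{-k(i)}-m_0)^2\}^{1/2}\{N^{-1}\sum_i(\hat e_i^2+e_i^2)\delta_i\}^{1/2}\). The first factor is \(o_p(1)\) by Condition~M1 and the second is \(O_p(1)\), so the difference is \(o_p(1)\).
\end{enumerate}
The only delicate point is that the cross-fitted \(\hat m\) may depend on the \(\delta_i\). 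The Cauchy--Schwarz route avoids any conditional-independence argument, so Condition~M2 is not needed here.
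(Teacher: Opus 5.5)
Your proposal is correct and follows the paper's proof essentially step for step: consistency of \(\hat\mu_N\) from Condition~M1 via Cauchy--Schwarz, the same three-term decomposition of \(\hat\phi^{\obs}_{N,i}(\hat\mu_N)-\phi^{\obs}_{0,i}\), the identity \((\hat\mu_N-\mu_0)^2N^{-1}\sum_i\delta_i/\pi_{N,i}\) for the full-data part, and, for \(\hat\Sigma_{\mathrm{fp},N}\), an oracle reduction followed by two applications of the HT law of large numbers. Drop the parenthetical Assumption~B3 route to consistency. B3 covers only fixed functions \(h\), and conditioning on the fold's training field changes the law of the in-fold indicators, so B3 does not apply there; only your Cauchy--Schwarz argument is justified.
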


\begin{proof}
The last term in \eqref{eq-mean-first-order-reduction} satisfies
\begin{align*}
&\left|
\frac1N\sum_{i=1}^N
(\delta_i-\pi_{N,i})
\frac{\hat m_{-k(i)}(X_i)-m_0(X_i)}{\pi_{N,i}}
\right|
\\
&\quad\le
\frac2{\pi_-}
\left[
\frac1N\sum_{i=1}^N
\{\hat m_{-k(i)}(X_i)-m_0(X_i)\}^2
\right]^{1/2}
=o_p(1).
\end{align*}
Equations~\eqref{eq-mean-oracle-score-average} and~\eqref{eq-mean-inclusion-probability-remainder} therefore show that Condition~\ref{cond-mean-regression-app} alone gives \(\hat\mu_N\xrightarrow[]{p}\mu_0\).

Equation~\eqref{eq-mean-score-decomposition} and \eqref{eq-app-c-uniform-positivity} give
\begin{align*}
&\frac1N\sum_{i=1}^N
|\hat\phi^{\obs}_{N,i}(\hat\mu_N)-\phi^{\obs}_{0,i}|^2
\\
&\quad\le
\frac{12}{\pi_-^2}
\frac1N\sum_{i=1}^N
\{\hat m_{-k(i)}(X_i)-m_0(X_i)\}^2
\\
&\qquad+
3\max_{1\le i\le N}
|\pi_{N,i}^{-1}-\pi(X_i)^{-1}|^2
\frac1N\sum_{i=1}^N
\delta_i\{Y_i-m_0(X_i)\}^2
+3(\hat\mu_N-\mu_0)^2.
\end{align*}
The expectation of
\(N^{-1}\sum_i\delta_i\{Y_i-m_0(X_i)\}^2\)
 is bounded by \(\E[\{Y-m_0(X)\}^2]\). Hence the preceding display is \(o_p(1)\) by Condition~\ref{cond-mean-regression-app} and \eqref{eq-app-c-inverse-probability-convergence}.

Also,
\[
\frac1N\sum_{i=1}^N
\frac{\delta_i}{\pi_{N,i}}
|\hat\phi_i^F-\phi^F_{0,i}|^2
=
(\hat\mu_N-\mu_0)^2
\frac1N\sum_{i=1}^N\frac{\delta_i}{\pi_{N,i}}
=o_p(1),
\]
because
\[
\E\left[
\frac1N\sum_{i=1}^N\frac{\delta_i}{\pi_{N,i}}
\right]=1.
\]
This proves Condition~{C4}.

On the event in \eqref{eq-app-c-uniform-positivity},
\[
0\le
\frac{\delta_i}{\pi_{N,i}}
\left(\frac1{\pi_{N,i}}-1\right)
\le
\frac4{\pi_-^2}\delta_i.
\]
Condition~\ref{cond-mean-regression-app} and Cauchy--Schwarz therefore give
\begin{align}
\hat\Sigma_{\mathrm{fp},N}
&=
\frac1N\sum_{i=1}^N
\frac{\delta_i}{\pi_{N,i}}
\left(\frac1{\pi_{N,i}}-1\right)
\{Y_i-m_0(X_i)\}^2
+o_p(1).
\label{eq-mean-residual-variance-oracle-reduction}
\end{align}
Moreover,
\begin{align*}
&\left|
\frac1N\sum_{i=1}^N
\frac{\delta_i}{\pi_{N,i}}
\left\{
\frac1{\pi_{N,i}}-\frac1{\pi(X_i)}
\right\}
\{Y_i-m_0(X_i)\}^2
\right|
\\
&\quad\le
\max_{1\le i\le N}
|\pi_{N,i}^{-1}-\pi(X_i)^{-1}|
\frac1N\sum_{i=1}^N
\frac{\delta_i}{\pi_{N,i}}
\{Y_i-m_0(X_i)\}^2
=o_p(1)
\end{align*}
by \eqref{eq-app-c-inverse-probability-convergence} and Lemma~\ref{lem-horvitz-thompson-lln}. A second application of Lemma~\ref{lem-horvitz-thompson-lln}, now to
\[
\left\{\frac1{\pi(X)}-1\right\}\{Y-m_0(X)\}^2,
\]
gives
\begin{align*}
&\frac1N\sum_{i=1}^N
\frac{\delta_i}{\pi_{N,i}}
\left\{\frac1{\pi(X_i)}-1\right\}
\{Y_i-m_0(X_i)\}^2
\xrightarrow[]{p}
\E\left[
\left\{\frac1{\pi(X)}-1\right\}
\Var(Y\mid X)
\right]
=
\Sigma_{\mathrm{fp}}.
\end{align*}
Together with \eqref{eq-mean-residual-variance-oracle-reduction}, this proves the consistency of \textup{(4.22)}.
\end{proof}

\begin{proof}[Proof of Theorem~4.3]
Proposition~\ref{thm-mean-sufficient-C-conditions} and Theorem~4.2\textup{(i)} give
\textup{(4.13)}--\textup{(4.14)}. Since
\[
\phi^{\obs}_{0,i}
=
m_0(X_i)-\mu_0+
\frac{\delta_i}{\pi(X_i)}\{Y_i-m_0(X_i)\},
\qquad
\phi^F_{0,i}=Y_i-\mu_0,
\]
these expansions are \textup{(4.23)}--\textup{(4.24)}. Proposition~\ref{prop-mean-variance-conditions} and Theorem~4.2\textup{(ii)} give the first convergence in \textup{(4.25)}, and Proposition~\ref{prop-mean-variance-conditions} gives the second.
\end{proof}

\subsubsection{Design-specific conditions and results for mean estimation}
\label{app-mean-design-results}

The sampling settings and their notation are specified in Appendix~\ref{app-sampling-examples}. For the mean estimator in Subsection~4.2, the conditions below provide design-specific ways to verify Condition~\ref{cond-mean-sampling-term-app}. The fold conditions supplement the standing phase-I measurability of the partition.

\begin{enumerate}[label=(F\arabic*),ref=F\arabic*]
\item\label{cond-fold-balance}
\textbf{Limiting fold proportions.}
The number of folds is fixed. There are constants \(\lambda_1,\ldots,\lambda_K\in(0,1)\), with \(\sum_{k=1}^K\lambda_k=1\), such that
\begin{equation}
\frac{|I_k|}{N}\xrightarrow[]{p}\lambda_k,
\qquad
k=1,\ldots,K.
\label{eq-fold-size-stability}
\end{equation}

\item\label{cond-fold-sampling-structure}
\textbf{Construction of folds under stratified sampling, cluster sampling, or stratified two-stage sampling.}
\begin{enumerate}[label=\textup{(\roman*)},ref=\theenumi(\roman*)]

\item For stratified sampling, there are constants \(\lambda_{kj}\in(0,1)\), with \(\sum_{k=1}^K\lambda_{kj}=1\), such that
\begin{equation}
\frac{|I_k\cap S_{N,j}|}{N_{N,j}}
\xrightarrow[]{p}\lambda_{kj},
\qquad
k=1,\ldots,K,
\quad
j=1,\ldots,J.
\label{eq-stratified-fold-balance}
\end{equation}

\item For cluster sampling, every cluster is contained in one fold, and there are constants \(\lambda_1^G,\ldots,\lambda_K^G\in(0,1)\), with \(\sum_{k=1}^K\lambda_k^G=1\), such that
\begin{equation}
\frac1{M_N}
\sum_{a=1}^{M_N}\mathbf 1\{G_{N,a}\subset I_k\}
\xrightarrow[]{p}\lambda_k^G,
\qquad
k=1,\ldots,K.
\label{eq-cluster-fold-size-stability}
\end{equation}

\item\label{cond-fold-two-stage-psu}
For stratified two-stage sampling, every PSU is contained in one fold. For each stratum \(h=1,\ldots,H\), there are constants \(\lambda^G_{kh}\in(0,1)\), with \(\sum_{k=1}^K\lambda^G_{kh}=1\), such that
\begin{equation}
\frac1{M_{N,h}}
\sum_{a=1}^{M_{N,h}}
\mathbf 1\{G_{N,h,a}\subset I_k\}
\xrightarrow[]{p}\lambda^G_{kh},
\qquad k=1,\ldots,K.
\label{eq-two-stage-fold-balance}
\end{equation}

\end{enumerate}
\end{enumerate}

For cluster and stratified two-stage sampling, the following condition controls sums of regression errors within the groups sharing a first-stage selection indicator. 

\begin{enumerate}[label=(M\arabic*),ref=M\arabic*,start=3]
\item
\textbf{Squared sums of regression errors within clusters or PSUs.}
\begin{enumerate}[label=\textup{(\roman*)},ref=\theenumi(\roman*)]
\item\label{cond-regression-within-cluster-sums}
For cluster sampling,
\begin{equation}
\frac1N\sum_{a=1}^{M_N}
\left[
\sum_{i\in G_{N,a}}
\{\hat m_{-k(i)}(X_i)-m_0(X_i)\}
\right]^2
=o_{P_0^N}(1).
\label{eq-regression-within-cluster-sums}
\end{equation}

\item\label{cond-two-stage-within-psu-sums}
For stratified two-stage sampling,
\begin{equation}
\frac1N\sum_{h=1}^H\sum_{a=1}^{M_{N,h}}
\left[
\sum_{i\in G_{N,h,a}}
\{\hat m_{-k(i)}(X_i)-m_0(X_i)\}
\right]^2
=o_{P_0^N}(1).
\label{eq-two-stage-within-psu-sums}
\end{equation}
\end{enumerate}
\end{enumerate}

Conditions~\ref{cond-regression-within-cluster-sums} and \ref{cond-two-stage-within-psu-sums} control accumulation of regression errors at the cluster and first-stage PSU levels, respectively. Neither of these two conditions implies or is implied by Condition~\ref{cond-mean-regression-app}.
Under cluster sampling, cluster membership \(\{G_{N,a}\}_{a=1}^{M_N}\) is \(\bm X_N\)-measurable, so the i.i.d.\ full-data assumption and \textup{(2.1)} imply \(\mathbb E[Y_i\mid X_i,\{G_{N,a}\}_{a=1}^{M_N},\boldsymbol\delta_N]=\mathbb E[Y_i\mid X_i]=m_0(X_i)\). Hence the same conditional mean applies to sampled and nonsampled clusters, while Conditions~\ref{cond-mean-regression-app} and~\ref{cond-regression-within-cluster-sums} add the consistency and rate requirements for estimation.

For the rejective first-stage case below, let \(p^{(1)}_{N,h,a}\) be the canonical Bernoulli probabilities and put
\[
d^{(1)}_{N,h}
=
\sum_{a=1}^{M_{N,h}}p^{(1)}_{N,h,a}\{1-p^{(1)}_{N,h,a}\}.
\]
The additional condition for this case is that there exist constants \(\varepsilon,c>0\) such that
\begin{equation}
P_0^N\left(
\begin{gathered}
\min_{1\le h\le H}\frac{d^{(1)}_{N,h}}{M_{N,h}}\ge c,\\
\varepsilon\le\min_{h,a}\pi^{(1)}_{N,h,a}
\le\max_{h,a}\pi^{(1)}_{N,h,a}\le1-\varepsilon,\\
\min_{1\le h\le H}\frac{M_{N,h}}{M_N}\ge c
\end{gathered}
\right)\to1.
\label{eq-two-stage-rejective-first-stage-condition}
\end{equation}

\begin{theorem}[Efficiency under the sampling designs]
\label{thm-mean-sampling-designs}
Assume Assumptions~{A1}--{A2} and the corresponding sampling setting in Appendix~\ref{app-sampling-examples} together with one of the following sets of conditions.
\begin{enumerate}[label=\textup{(\roman*)}]
\item Poisson sampling, with Condition~\ref{cond-mean-regression-app}.
\item SRSWOR, with Conditions~\ref{cond-fold-balance} and~\ref{cond-mean-regression-app}.
\item Stratified sampling, with Conditions~\ref{cond-fold-sampling-structure}\textup{(i)} and~\ref{cond-mean-regression-app}.
\item PPS sampling with replacement, with Conditions~\ref{cond-fold-balance} and~\ref{cond-mean-regression-app}.
\item Rejective sampling, with Conditions~\ref{cond-fold-balance} and~\ref{cond-mean-regression-app}.
\item Cluster sampling, with Conditions~\ref{cond-fold-sampling-structure}\textup{(ii)}, \ref{cond-mean-regression-app}, and~\ref{cond-regression-within-cluster-sums}.
\item Stratified two-stage sampling with Poisson sampling of PSUs at the first stage, with Conditions~\ref{cond-fold-two-stage-psu}, \ref{cond-mean-regression-app}, and~\ref{cond-two-stage-within-psu-sums}.
\item Stratified two-stage sampling with rejective sampling of PSUs at the first stage, with Conditions~\ref{cond-fold-two-stage-psu}, \ref{cond-mean-regression-app}, and~\ref{cond-two-stage-within-psu-sums}, together with \eqref{eq-two-stage-rejective-first-stage-condition}.
\end{enumerate}
Then Condition~\ref{cond-mean-sampling-term-app} holds. Hence Theorem~4.3 gives \textup{(4.23)}, \textup{(4.24)}, and \textup{(4.25)}, and \(\hat\mu_N\) is locally regular and efficient for both targets.
\end{theorem}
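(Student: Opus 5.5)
The plan is to prove that Condition~\ref{cond-mean-sampling-term-app} holds for each design; Theorem~4.3 then applies directly. Write $e_i=\hat m_{-k(i)}(X_i)-m_0(X_i)$ and
\[
T_N=\frac1{\sqrt N}\sum_{k=1}^K T_{N,k},\qquad
T_{N,k}=\sum_{i\in I_k}(\delta_i-\pi_{N,i})\frac{e_i}{\pi_{N,i}}.
\]
Because $K$ is fixed, it suffices to show $N^{-1/2}T_{N,k}=o_p(1)$ for each $k$. For $i\in I_k$, $e_i$ is $\mathcal T_{N,k}$-measurable. So the natural step is to condition on $\mathcal T_{N,k}$, which contains $\bm X_N$ and the indicators outside $I_k$. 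Let $\tilde\pi_{N,i}^{(k)}=P_0^N(\delta_i=1\mid\mathcal T_{N,k})$ for $i\in I_k$, and split
\[
T_{N,k}=\sum_{i\in I_k}(\delta_i-\tilde\pi^{(k)}_{N,i})\frac{e_i}{\pi_{N,i}}
+\sum_{i\in I_k}(\tilde\pi^{(k)}_{N,i}-\pi_{N,i})\frac{e_i}{\pi_{N,i}}
=:V_{N,k}+D_{N,k}.
\]
By Assumption~{B1}, outcomes do not enter the design, so conditioning on the observed outcomes outside the fold does not change the law of the fold's indicators.

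The \emph{variance term} $V_{N,k}$ is conditionally centered. In every unit-level design here, the conditional law of the fold indicators is again a design with nonpositive pairwise covariances:
\begin{enumerate}[label=\textup{(\alph*)}]
\item Poisson: independent Bernoulli.
\item SRSWOR: SRSWOR of the residual sample size from $I_k$.
\item Stratified: independent SRSWOR within each stratum--fold cell.
\item PPS with replacement: the remaining draws fall in $I_k$ i.i.d.\ with renormalized weights.
\item Rejective: conditional Bernoulli restricted to $I_k$ given its residual size, negatively dependent by \citet{ChenDempsterLiu1994}.
\end{enumerate}
The argument of Lemma~\ref{lem-negative-dependence-bound} together with \eqref{eq-app-c-uniform-positivity} then gives
\[
\operatorname{Var}\bigl(N^{-1/2}V_{N,k}\mid\mathcal T_{N,k}\bigr)\le \frac{4}{\pi_-^2}\,\frac1N\sum_i e_i^2=o_p(1)
\]
by Condition~\ref{cond-mean-regression-app}, and conditional Chebyshev finishes this term.

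For the \emph{drift term} $D_{N,k}$, it suffices to show
\[
\max_{i\in I_k}\bigl|\tilde\pi^{(k)}_{N,i}-\pi_{N,i}\bigr|=O_p(N^{-1/2}).
\]
Then
\[
N^{-1/2}|D_{N,k}|\le O_p(1)\,N^{-1}\sum_i|e_i|\le O_p(1)\Bigl(N^{-1}\sum_ie_i^2\Bigr)^{1/2}=o_p(1).
\]
Under Poisson sampling, $\tilde\pi^{(k)}_{N,i}=\pi_{N,i}$ exactly, so $D_{N,k}=0$. Under SRSWOR, $\tilde\pi^{(k)}_{N,i}=n_{N,k}/|I_k|$ with $n_{N,k}$ hypergeometric. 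Condition~\ref{cond-fold-balance} and the hypergeometric variance give $n_{N,k}/|I_k|-n_N/N=O_p(N^{-1/2})$. Stratified sampling is handled cell by cell using Condition~\ref{cond-fold-sampling-structure}(i). Under PPS with replacement, $\tilde\pi^{(k)}_{N,i}=1-(1-w_{N,i}/W_k)^{n_{N,k}}$, where $W_k=\sum_{j\in I_k}w_{N,j}$ and $n_{N,k}$ is binomial with mean $n_NW_k$. A delta-method expansion with bounded $r$ gives the $O_p(N^{-1/2})$ rate.

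Rejective sampling is the main obstacle. Both $\pi_{N,i}$ and $\tilde\pi^{(k)}_{N,i}$ are conditional-Bernoulli inclusion probabilities, built from the same canonical $p_{N,i}$ but with sample sizes $n_N$ and the random $n_{N,k}$. I would use \citet{Hajek1964}'s expansion
\[
\pi_i=p_i\bigl\{1-(1-p_i)(p_i-\bar p)/d+O(d^{-2})\bigr\},
\]
and its analogue for a shifted canonical parameter. From these, I expect the bound $\max_i|\tilde\pi^{(k)}_{N,i}-\pi_{N,i}|\lesssim |n_{N,k}-\E(n_{N,k}\mid\bm X_N)|/N+O(1/N)$ using $d_N\ge cN$ from \eqref{eq-rejective-nondegeneracy}. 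Since the fold count is itself a rejective-sample count with variance $O(N)$, this is $O_p(N^{-1/2})$. If the Hájek constants prove awkward, a fallback is to compare the fold's conditional law with a Poisson law carrying tilted parameters $\operatorname{expit}(\operatorname{logit}p_{N,i}+\lambda_{N,k})$, with $\lambda_{N,k}=O_p(N^{-1/2})$ by a local CLT.

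Cluster and two-stage sampling work at the level of the groups $G$ that share a first-stage indicator. Because each group lies wholly in one fold, $T_{N,k}$ becomes a sum over groups in $I_k$.
\begin{itemize}
\item Cluster sampling: set $E_a=\sum_{i\in G_{N,a}}e_i$. Then $T_{N,k}=\sum_a(A_{N,a}-m_N/M_N)E_a/(m_N/M_N)$. Conditionally, the selection within the fold is SRSWOR of clusters, so the variance term is bounded by $N^{-1}\sum_aE_a^2=o_p(1)$ via Condition~\ref{cond-regression-within-cluster-sums}. The drift is $O_p(M_N^{-1/2})$ by Condition~\ref{cond-fold-sampling-structure}(ii). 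Combined with $\sum_a|E_a|\le\{M_N\sum_aE_a^2\}^{1/2}$, it contributes $O_p(M_N^{-1/2})N^{-1/2}M_N^{1/2}\{o_p(N)\}^{1/2}=o_p(1)$.
\item Two-stage sampling: I reuse the decomposition $T_{1N}+T_{2N}$ from the proof of Proposition~\ref{prop-stratified-two-stage}, with $g_i$ replaced by $e_i/\pi_{N,i}$. Since $\pi^{(2)}_{N,h,a,i}e_i/\pi_{N,i}=e_i/\pi^{(1)}_{N,h,a}$, the first-stage term involves exactly the PSU sums in Condition~\ref{cond-two-stage-within-psu-sums}. It is handled like the cluster case: with Poisson first-stage selection there is no drift, and with rejective first-stage selection the argument above is used, with \eqref{eq-two-stage-rejective-first-stage-condition} replacing \eqref{eq-rejective-nondegeneracy}. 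The second-stage term is conditionally centered given the first stage and the training data, with variance at most $4\pi_-^{-2}N^{-1}\sum_ie_i^2$.
\end{itemize}
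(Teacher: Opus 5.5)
Your plan follows the paper's proof closely. The shared steps are: the foldwise reduction; conditioning on a training $\sigma$-field that makes $e_i$ measurable; a centered term controlled by nonpositive conditional covariances; a drift term controlled by $\max_{i\in I_k}|\tilde\pi^{(k)}_{N,i}-\pi_{N,i}|=O_p(N^{-1/2})$ (the paper's Lemma~\ref{lem-unit-foldwise-criterion}); group sums and Condition M3 for clusters; the first-stage/second-stage split for two-stage designs; and an odds tilt for rejective sampling.

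One step fails as written. Under PPS with replacement you set $\tilde\pi^{(k)}_{N,i}=P_0^N(\delta_i=1\mid\mathcal T_{N,k})$. But $\mathcal T_{N,k}$ records only which outside units were selected, not their multiplicities. So the number of draws $n_{N,k}=n_N-\sum_{j\in I_k^c}C_{N,j}$ falling in $I_k$ is not $\mathcal T_{N,k}$-measurable. Your formula $1-(1-w_{N,i}/W_k)^{n_{N,k}}$, and the claim that the fold's draws are i.i.d.\ with renormalized weights, hold only after conditioning on $n_{N,k}$. Given $\mathcal T_{N,k}$ alone, the fold's law is a mixture over $n_{N,k}$. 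The mixing adds the nonnegative term $\operatorname{Cov}\{f_i(n_{N,k}),f_j(n_{N,k})\mid\mathcal T_{N,k}\}$ to each pairwise covariance, where $f_i(n)=1-(1-w_{N,i}/W_k)^n$ is increasing in $n$. Nonpositive covariance is therefore no longer guaranteed.

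The repair is the paper's: condition instead on $\mathcal G_{N,k}=\sigma(\mathcal T_{N,k},\{C_{N,j}:j\in I_k^c\})$. The centered/drift split only needs some $\mathcal G_{N,k}\supset\mathcal T_{N,k}$, since $e_i$ stays measurable. Your binomial delta-method bound then goes through.

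The same adjustment is needed for rejective first-stage selection in two-stage sampling. There the fold's PSU count requires the outside first-stage indicators $\delta^{(1)}_{N,h,a}$. $\mathcal T_{N,k}$ need not reveal these when a selected PSU can contribute no sampled unit, so the paper adjoins them to the conditioning field.

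Two smaller points.

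\begin{itemize}
\item Since $e_i$ is signed, Lemma~\ref{lem-negative-dependence-bound}, which is stated for nonnegative weights, must be applied to the positive and negative parts separately, as in Lemma~\ref{lem-signed-negative-covariance}. This only doubles your constants.
\item For rejective sampling, the rate $\lambda_{N,k}=O_p(N^{-1/2})$ of the tilt does not need a local CLT. It follows from the fold-count fluctuation $O_p(\sqrt N)$ together with the derivative bound $\sum_{i\in I_k}p_{N,i}(t)\{1-p_{N,i}(t)\}\gtrsim N$. The local-CLT-type ingredient is something else: a uniform $O(D^{-1/2})$ bound comparing conditional-Bernoulli inclusion probabilities with their canonical parameters when those parameters sum to an integer. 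It is used twice: for $\pi_{N,i}$ versus $p_{N,i}$, and inside the fold after tilting so that the parameters sum to $S_{N,k}$. The paper proves it in Lemma~\ref{lem-bernoulli-conditioning-bound} by Fourier inversion and Darroch's mode result. Your H\'ajek-expansion route would need a remainder that is similarly uniform in $i$.
\end{itemize}
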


\begin{proposition}[Sufficient rates under cluster sampling and stratified two-stage sampling]
\label{prop-cluster-two-stage-regression-rates}
Suppose that \(\hat m_{-k}(X_i)\) is \(\mathcal T_{N,k}\)-measurable for \(i\in I_k\).
\begin{enumerate}[label=\textup{(\roman*)}]
\item Under cluster sampling, if
\begin{equation}
\max_{1\le a\le M_N}|G_{N,a}|
\left[
\frac1N\sum_{i=1}^N
\{\hat m_{-k(i)}(X_i)-m_0(X_i)\}^2
\right]
=o_{P_0^N}(1),
\label{eq-cluster-size-regression-rate}
\end{equation}
then Conditions~\ref{cond-mean-regression-app} and~\ref{cond-regression-within-cluster-sums} hold.

\item Under stratified two-stage sampling, if
\begin{equation}
\left\{
\max_{1\le h\le H}\max_{1\le a\le M_{N,h}}|G_{N,h,a}|
\right\}
\left[
\frac1N\sum_{i=1}^N
\{\hat m_{-k(i)}(X_i)-m_0(X_i)\}^2
\right]
=o_{P_0^N}(1),
\label{eq-two-stage-size-regression-rate}
\end{equation}
then Conditions~\ref{cond-mean-regression-app} and~\ref{cond-two-stage-within-psu-sums} hold.
\end{enumerate}
\end{proposition}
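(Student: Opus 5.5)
The plan is a direct Cauchy--Schwarz argument. Write \(e_i=\hat m_{-k(i)}(X_i)-m_0(X_i)\) and \(\hat R_N=N^{-1}\sum_{i=1}^N e_i^2\). The measurability requirement in Condition~\ref{cond-mean-regression-app} is already part of the hypothesis, so only the rate statements need proof.

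For part (i), first obtain Condition~\ref{cond-mean-regression-app}. Every cluster is nonempty, so \(\max_a|G_{N,a}|\ge1\). Hence
\[
0\le\hat R_N\le\Bigl(\max_{1\le a\le M_N}|G_{N,a}|\Bigr)\hat R_N=o_{P_0^N}(1)
\]
by \eqref{eq-cluster-size-regression-rate}. Next obtain Condition~\ref{cond-regression-within-cluster-sums}. Apply Cauchy--Schwarz within each cluster:
\[
\Bigl(\sum_{i\in G_{N,a}}e_i\Bigr)^2\le|G_{N,a}|\sum_{i\in G_{N,a}}e_i^2\le\Bigl(\max_{b}|G_{N,b}|\Bigr)\sum_{i\in G_{N,a}}e_i^2 .
\]
Summing over \(a\) and dividing by \(N\) uses the fact that \(\{G_{N,a}\}\) partitions \(\{1,\dots,N\}\). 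This bounds the left side of \eqref{eq-regression-within-cluster-sums} by \(\bigl(\max_a|G_{N,a}|\bigr)\hat R_N\), which is \(o_{P_0^N}(1)\) by the hypothesis.

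Part (ii) follows by the same argument with the clusters replaced by the PSUs \(G_{N,h,a}\). These PSUs jointly partition the population across all strata \(h=1,\dots,H\). Cauchy--Schwarz within each PSU, followed by summation over \((h,a)\), bounds the left side of \eqref{eq-two-stage-within-psu-sums} by \(\bigl(\max_{h,a}|G_{N,h,a}|\bigr)\hat R_N\). Moreover \(\hat R_N\) is itself bounded by this product because \(\max_{h,a}|G_{N,h,a}|\ge1\). Both conditions therefore follow from \eqref{eq-two-stage-size-regression-rate}.

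I do not expect a genuine obstacle. The only point needing care is that the partition covers every unit exactly once, so the per-group sums of squares add up to \(N\hat R_N\). No fold structure or sampling-design property is used.
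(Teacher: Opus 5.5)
Your proposal is correct and follows the paper's proof essentially verbatim: the bound \(\max|G|\ge1\) turns the hypothesis into Condition~\ref{cond-mean-regression-app}, and Cauchy--Schwarz within each cluster or PSU, summed over the partition, gives Conditions~\ref{cond-regression-within-cluster-sums} and~\ref{cond-two-stage-within-psu-sums}. Your remark that the PSUs across all strata jointly partition \(\{1,\dots,N\}\) is exactly the point the summation step uses.
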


\begin{proof}[Proof of Proposition~\ref{prop-cluster-two-stage-regression-rates}]
For part \textup{(i)}, because \(\max_a|G_{N,a}|\ge1\), \eqref{eq-cluster-size-regression-rate} implies the mean-square condition in Condition~\ref{cond-mean-regression-app}. Together with the assumed measurability, this proves Condition~\ref{cond-mean-regression-app}. For every cluster,
\[
\left[
\sum_{i\in G_{N,a}}
\{\hat m_{-k(i)}(X_i)-m_0(X_i)\}
\right]^2
\le
|G_{N,a}|
\sum_{i\in G_{N,a}}
\{\hat m_{-k(i)}(X_i)-m_0(X_i)\}^2.
\]
Summing over clusters and using \eqref{eq-cluster-size-regression-rate} proves Condition~\ref{cond-regression-within-cluster-sums}.

For part \textup{(ii)}, because every PSU is nonempty,
\[
\max_{1\le h\le H}\max_{1\le a\le M_{N,h}}|G_{N,h,a}|\ge1.
\]
Hence \eqref{eq-two-stage-size-regression-rate} implies Condition~\ref{cond-mean-regression-app}. Moreover, for every PSU,
\[
\left[
\sum_{i\in G_{N,h,a}}
\{\hat m_{-k(i)}(X_i)-m_0(X_i)\}
\right]^2
\le
N_{N,h,a}
\sum_{i\in G_{N,h,a}}
\{\hat m_{-k(i)}(X_i)-m_0(X_i)\}^2.
\]
Summing over strata and PSUs and using \eqref{eq-two-stage-size-regression-rate} proves Condition~\ref{cond-two-stage-within-psu-sums}.
\end{proof}

The conditions sufficient for the efficiency bound need not suffice for its attainment by an estimated regression. For example, if the empirical mean squared error of the conditional-mean estimator is \(O_p(N^{-1/2})\) (i.e., the empirical RMSE is \(O_p(N^{-1/4})\)), Proposition~\ref{prop-cluster-two-stage-regression-rates} allows the largest cluster or PSU size to diverge as \(o_p(\sqrt N)\), which is stronger than the \(o_p(N)\) condition sufficient for the convolution theorems.

\subsubsection{Verification of mean estimation for the sampling designs}

\label{app-mean-sampling-term}

Since \(K\) is fixed, it is enough to prove, for every \(k=1,\ldots,K\), that
\begin{equation}
\frac1{\sqrt N}\sum_{i\in I_k}
(\delta_i-\pi_{N,i})
\frac{\hat m_{-k}(X_i)-m_0(X_i)}{\pi_{N,i}}
=o_p(1).
\label{eq-condition-M2-by-fold}
\end{equation}
For \(z\in\mathbb R\), write \(z_+=\max(z,0)\) and \(z_-=\max(-z,0)\).

\begin{lemma}[Signed sums under nonpositive pairwise covariances]
\label{lem-signed-negative-covariance}
Let \(\mathcal G\) be a \(\sigma\)-field. Suppose that, conditionally on \(\mathcal G\),
\begin{equation}
\Cov(\delta_i,\delta_j\mid\mathcal G)\le0,
\qquad i\ne j.
\label{eq-signed-negative-covariance-assumption}
\end{equation}
Then, for any \(\mathcal G\)-measurable real numbers \(a_i\),
\begin{equation}
\Var\left(
\sum_i\{\delta_i-\E(\delta_i\mid\mathcal G)\}a_i
\,\middle|\,
\mathcal G
\right)
\le2\sum_i a_i^2.
\label{eq-signed-negative-covariance-bound}
\end{equation}
\end{lemma}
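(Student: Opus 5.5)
The plan is to split the coefficients into positive and negative parts and then reuse the nonpositive-covariance argument of Lemma~\ref{lem-negative-dependence-bound}, which works only for nonnegative weights. Write $a_i=a_{i+}-a_{i-}$ with $a_{i\pm}$ $\mathcal G$-measurable and nonnegative. Set $\bar\delta_i=\delta_i-\E(\delta_i\mid\mathcal G)$ and
\[
S_\pm=\sum_i\bar\delta_i a_{i\pm}.
\]
Then the sum of interest is $S_+-S_-$. The elementary inequality $\Var(A-B\mid\mathcal G)\le2\Var(A\mid\mathcal G)+2\Var(B\mid\mathcal G)$ reduces the problem to bounding $\Var(S_\pm\mid\mathcal G)$.

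For a nonnegative weight vector $b_i$, expand the conditional variance:
\[
\Var\Bigl(\sum_i\bar\delta_i b_i\,\Big|\,\mathcal G\Bigr)=\sum_i b_i^2\Var(\delta_i\mid\mathcal G)+\sum_{i\ne j}b_ib_j\Cov(\delta_i,\delta_j\mid\mathcal G).
\]
The cross terms are nonpositive, because $b_ib_j\ge0$ and the assumption \eqref{eq-signed-negative-covariance-assumption} holds. Indicators also satisfy $\Var(\delta_i\mid\mathcal G)\le1/4$. Together these give $\Var(S_\pm\mid\mathcal G)\le\frac14\sum_i a_{i\pm}^2$. Using the cruder bound $\le1$ instead of $1/4$ also works, as long as the constants are tracked.

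Combining the two pieces,
\[
\Var(S_+-S_-\mid\mathcal G)\le2\cdot\tfrac14\Bigl(\sum_i a_{i+}^2+\sum_i a_{i-}^2\Bigr)=\tfrac12\sum_i a_i^2,
\]
since $a_{i+}a_{i-}=0$ gives $a_i^2=a_{i+}^2+a_{i-}^2$. This is well within the stated constant $2$; the constant $2$ in the lemma is what one obtains if $\Var(\delta_i\mid\mathcal G)$ is bounded by $1$.

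There is no genuine obstacle. The only point needing care is to keep the weights $\mathcal G$-measurable, so they can be pulled out of the conditional covariances, and to handle finiteness. The sums are finite, and the indicators are bounded, so all conditional moments exist.
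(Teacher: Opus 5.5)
Your proof is correct and follows the paper's argument: split $a_i$ into its positive and negative parts, apply $\Var(U-V\mid\mathcal G)\le2\Var(U\mid\mathcal G)+2\Var(V\mid\mathcal G)$, and drop the nonpositive cross terms for each nonnegative weight vector. The paper implicitly bounds $\Var(\delta_i\mid\mathcal G)$ by $1$ to get the constant $2$. Your use of the Bernoulli bound $1/4$, which is valid because the $\delta_i$ are indicators in every application, sharpens the constant to $1/2$.
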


\begin{proof}
By \(a_i=(a_i)_+-(a_i)_-\) and \(\Var(U-V)\le2\Var(U)+2\Var(V)\),
\begin{align}
&\Var\left(
\sum_i\{\delta_i-\E(\delta_i\mid\mathcal G)\}a_i
\,\middle|\,
\mathcal G
\right)
\notag\\
&\quad\le
2\Var\left(
\sum_i\{\delta_i-\E(\delta_i\mid\mathcal G)\}(a_i)_+
\,\middle|\,
\mathcal G
\right)
+
2\Var\left(
\sum_i\{\delta_i-\E(\delta_i\mid\mathcal G)\}(a_i)_-
\,\middle|\,
\mathcal G
\right).
\label{eq-signed-negative-covariance-split}
\end{align}
For either sign, \eqref{eq-signed-negative-covariance-assumption} gives
\begin{align}
&\Var\left(
\sum_i\{\delta_i-\E(\delta_i\mid\mathcal G)\}(a_i)_{\pm}
\,\middle|\,
\mathcal G
\right)
\notag\\
&\quad=
\sum_i(a_i)_{\pm}^2\Var(\delta_i\mid\mathcal G)
+2\sum_{i<j}(a_i)_{\pm}(a_j)_{\pm}
\Cov(\delta_i,\delta_j\mid\mathcal G)
\le\sum_i(a_i)_{\pm}^2.
\label{eq-signed-negative-covariance-one-sign}
\end{align}
Equations~\eqref{eq-signed-negative-covariance-split} and~\eqref{eq-signed-negative-covariance-one-sign} give \eqref{eq-signed-negative-covariance-bound}.
\end{proof}

\begin{lemma}[A criterion for sampling individual units within each fold]
\label{lem-unit-foldwise-criterion}
Assume Assumptions~{B2} and~{B4}, and Condition~\ref{cond-mean-regression-app}. Fix \(k\). Suppose that there is a \(\sigma\)-field \(\mathcal G_{N,k}\supset\mathcal T_{N,k}\) such that
\begin{equation}
\Cov(\delta_i,\delta_j\mid\mathcal G_{N,k})\le0,
\qquad i,j\in I_k,
\quad i\ne j,
\label{eq-unit-foldwise-negative-covariance}
\end{equation}
and
\begin{equation}
\max_{i\in I_k}
\left|
P_0^N(\delta_i=1\mid\mathcal G_{N,k})-\pi_{N,i}
\right|
=O_p(N^{-1/2}).
\label{eq-unit-foldwise-inclusion-stability}
\end{equation}
Then \eqref{eq-condition-M2-by-fold} holds.
\end{lemma}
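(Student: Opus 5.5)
The plan is a conditional second-moment bound on the fold sum, followed by conditional Chebyshev. Put
\[
a_i=\frac{\hat m_{-k}(X_i)-m_0(X_i)}{\pi_{N,i}},\qquad i\in I_k .
\]
Each \(a_i\) is \(\mathcal T_{N,k}\)-measurable by Condition~\ref{cond-mean-regression-app}. It is also \(\bm X_N\)-measurable through \(\pi_{N,i}\), and \(\bm X_N\) is part of \(\mathcal T_{N,k}\). Hence \(a_i\) is \(\mathcal G_{N,k}\)-measurable. Write \(\tilde\pi_{N,i}=P_0^N(\delta_i=1\mid\mathcal G_{N,k})\) and split the summand in \eqref{eq-condition-M2-by-fold} as
\[
(\delta_i-\pi_{N,i})a_i=(\delta_i-\tilde\pi_{N,i})a_i+(\tilde\pi_{N,i}-\pi_{N,i})a_i .
\]
The two resulting sums over \(I_k\), each divided by \(\sqrt N\), are handled separately.

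\emph{Centered term.} Conditional on \(\mathcal G_{N,k}\), this sum has mean zero. By \eqref{eq-unit-foldwise-negative-covariance}, Lemma~\ref{lem-signed-negative-covariance} applies with \(\mathcal G=\mathcal G_{N,k}\) to the indicators indexed by \(I_k\). It bounds the conditional variance of \(N^{-1/2}\sum_{i\in I_k}(\delta_i-\tilde\pi_{N,i})a_i\) by
\[
\frac{2}{N}\sum_{i\in I_k}a_i^2 .
\]
On the event \eqref{eq-app-c-uniform-positivity}, which has probability tending to one by Assumptions~\ref{ass-B2} and~\ref{ass-B4}, we have \(a_i^2\le 4\pi_-^{-2}\{\hat m_{-k}(X_i)-m_0(X_i)\}^2\). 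The bound is therefore \(o_p(1)\) by \eqref{eq-mean-regression-L2-condition}. Conditional Chebyshev then shows the term is \(o_p(1)\): for fixed \(\varepsilon\), the conditional probability is at most \(\min\{1,\varepsilon^{-2}V_N\}\) with \(V_N=o_p(1)\), and its expectation tends to zero by bounded convergence.

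\emph{Bias term.} Cauchy--Schwarz together with \eqref{eq-unit-foldwise-inclusion-stability} gives
\[
\Bigl|\frac1{\sqrt N}\sum_{i\in I_k}(\tilde\pi_{N,i}-\pi_{N,i})a_i\Bigr|
\le \sqrt N\max_{i\in I_k}|\tilde\pi_{N,i}-\pi_{N,i}|\cdot
\Bigl(\frac1N\sum_{i\in I_k}a_i^2\Bigr)^{1/2}\cdot\sqrt{\frac{|I_k|}{N}} .
\]
The first factor is \(O_p(1)\), and the second is \(o_p(1)\) by the same bound as before. Hence this term is also \(o_p(1)\).

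Combining the two terms gives \eqref{eq-condition-M2-by-fold} for each fixed \(k\). The only delicate point is that \(\hat m_{-k}\) may depend on sampling indicators outside \(I_k\), which are correlated with those inside. The lemma's hypotheses absorb this dependence: the conditioning field \(\mathcal G_{N,k}\supset\mathcal T_{N,k}\) freezes the weights \(a_i\), negative dependence controls the variance, and the price of conditioning, namely the shift of the inclusion probabilities to \(\tilde\pi_{N,i}\), is of order \(N^{-1/2}\). That shift is therefore killed by the mean-square consistency in Condition~\ref{cond-mean-regression-app}.
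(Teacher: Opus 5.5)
Your proposal is correct and follows essentially the same route as the paper: the same split at $P_0^N(\delta_i=1\mid\mathcal G_{N,k})$, Lemma~\ref{lem-signed-negative-covariance} with conditional Chebyshev for the centered term, and Cauchy--Schwarz with \eqref{eq-unit-foldwise-inclusion-stability} for the bias term. The only difference is cosmetic: you keep the harmless factor $\sqrt{|I_k|/N}\le1$ in the bias bound, and you spell out the conditional-Chebyshev step in more detail.
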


\begin{proof}
The left side of \eqref{eq-condition-M2-by-fold} equals
\begin{align}
&\frac1{\sqrt N}\sum_{i\in I_k}
\left\{
\delta_i-P_0^N(\delta_i=1\mid\mathcal G_{N,k})
\right\}
\frac{\hat m_{-k}(X_i)-m_0(X_i)}{\pi_{N,i}}
\notag\\
&\quad+
\frac1{\sqrt N}\sum_{i\in I_k}
\left\{
P_0^N(\delta_i=1\mid\mathcal G_{N,k})-\pi_{N,i}
\right\}
\frac{\hat m_{-k}(X_i)-m_0(X_i)}{\pi_{N,i}}.
\label{eq-unit-foldwise-decomposition}
\end{align}
On the event in \eqref{eq-app-c-uniform-positivity}, Lemma~\ref{lem-signed-negative-covariance} gives
\begin{align}
&\Var\left(
\frac1{\sqrt N}\sum_{i\in I_k}
\left\{
\delta_i-P_0^N(\delta_i=1\mid\mathcal G_{N,k})
\right\}
\frac{\hat m_{-k}(X_i)-m_0(X_i)}{\pi_{N,i}}
\,\middle|\,
\mathcal G_{N,k}
\right)
\notag\\
&\quad\le
\frac8{\pi_-^2N}
\sum_{i\in I_k}
\{\hat m_{-k}(X_i)-m_0(X_i)\}^2
=o_p(1).
\label{eq-unit-foldwise-centered-variance}
\end{align}
Conditional Chebyshev's inequality and \eqref{eq-unit-foldwise-centered-variance} make the first term in \eqref{eq-unit-foldwise-decomposition} \(o_p(1)\). For the second term,
\begin{align}
&\left|
\frac1{\sqrt N}\sum_{i\in I_k}
\left\{
P_0^N(\delta_i=1\mid\mathcal G_{N,k})-\pi_{N,i}
\right\}
\frac{\hat m_{-k}(X_i)-m_0(X_i)}{\pi_{N,i}}
\right|
\notag\\
&\quad\le
\frac2{\pi_-}\sqrt N
\max_{i\in I_k}
\left|
P_0^N(\delta_i=1\mid\mathcal G_{N,k})-\pi_{N,i}
\right|
\left[
\frac1N\sum_{i\in I_k}
\{\hat m_{-k}(X_i)-m_0(X_i)\}^2
\right]^{1/2}
=o_p(1)
\label{eq-unit-foldwise-mean-bound}
\end{align}
by \eqref{eq-unit-foldwise-inclusion-stability} and Condition~\ref{cond-mean-regression-app}.
\end{proof}

For each design that samples individual units, it remains to choose \(\mathcal G_{N,k}\) and verify \eqref{eq-unit-foldwise-negative-covariance}--\eqref{eq-unit-foldwise-inclusion-stability}.

\begin{lemma}[A criterion for sampling clusters within each fold]
\label{lem-cluster-foldwise-criterion}
Assume Conditions~\ref{cond-fold-sampling-structure}~\textup{(ii)} and~\ref{cond-regression-within-cluster-sums}. Fix \(k\), and assume that \(\hat m_{-k}(X_i)\) is \(\mathcal T_{N,k}\)-measurable for every \(i\in I_k\). Suppose that
\[
\min_{1\le a\le M_N}
P_0^N(A_{N,a}=1\mid\bm X_N)
\]
is bounded away from zero in probability. If there is a \(\sigma\)-field \(\mathcal G_{N,k}\supset\mathcal T_{N,k}\) such that
\begin{equation}
\Cov(A_{N,a},A_{N,b}\mid\mathcal G_{N,k})\le0
\label{eq-cluster-foldwise-negative-covariance}
\end{equation}
for distinct clusters contained in \(I_k\), and
\begin{equation}
\max_{a:G_{N,a}\subset I_k}
\left|
P_0^N(A_{N,a}=1\mid\mathcal G_{N,k})
-
P_0^N(A_{N,a}=1\mid\bm X_N)
\right|
=O_p(M_N^{-1/2}),
\label{eq-cluster-foldwise-inclusion-stability}
\end{equation}
then
\begin{align}
&\frac1{\sqrt N}
\sum_{a:G_{N,a}\subset I_k}
\frac{
A_{N,a}-P_0^N(A_{N,a}=1\mid\bm X_N)
}
{P_0^N(A_{N,a}=1\mid\bm X_N)}
\sum_{i\in G_{N,a}}
\{\hat m_{-k}(X_i)-m_0(X_i)\}
=o_p(1).
\label{eq-cluster-foldwise-conclusion}
\end{align}
\end{lemma}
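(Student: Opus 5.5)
I will mirror the proof of Lemma~\ref{lem-unit-foldwise-criterion}, working at the level of clusters instead of units. Write \(\pi^G_{N,a}=P_0^N(A_{N,a}=1\mid\bm X_N)\), \(\tilde\pi^G_{N,a}=P_0^N(A_{N,a}=1\mid\mathcal G_{N,k})\), and \(E_{a}=\sum_{i\in G_{N,a}}\{\hat m_{-k}(X_i)-m_0(X_i)\}\). Because every cluster contained in \(I_k\) lies entirely in one fold and \(\hat m_{-k}(X_i)\) is \(\mathcal T_{N,k}\)-measurable, each \(E_a\) is \(\mathcal G_{N,k}\)-measurable, and so is \(\pi^G_{N,a}\), since \(\sigma(\bm X_N)\subset\mathcal T_{N,k}\subset\mathcal G_{N,k}\). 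I will use the hypothesis on the minimum first-stage probability to fix \(c>0\) with \(\min_a\pi^G_{N,a}\ge c\) on an event whose probability tends to one, and I will work on that event throughout. I then split the left side of \eqref{eq-cluster-foldwise-conclusion} as
\[
\frac1{\sqrt N}\sum_{a:G_{N,a}\subset I_k}\frac{A_{N,a}-\tilde\pi^G_{N,a}}{\pi^G_{N,a}}E_a
+\frac1{\sqrt N}\sum_{a:G_{N,a}\subset I_k}\frac{\tilde\pi^G_{N,a}-\pi^G_{N,a}}{\pi^G_{N,a}}E_a .
\]

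For the first term, I apply Lemma~\ref{lem-signed-negative-covariance} with \(\mathcal G=\mathcal G_{N,k}\), indicators \(A_{N,a}\), and weights \(a_a=E_a/(\sqrt N\pi^G_{N,a})\); the covariance condition \eqref{eq-cluster-foldwise-negative-covariance} supplies its hypothesis. This gives a conditional variance of at most \(2c^{-2}N^{-1}\sum_a E_a^2\), which is \(o_p(1)\) by Condition~\ref{cond-regression-within-cluster-sums}. Conditional Chebyshev's inequality then makes the first term \(o_p(1)\).

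For the second term, I bound its absolute value by
\[
c^{-1}\max_a|\tilde\pi^G_{N,a}-\pi^G_{N,a}|\,N^{-1/2}\sum_a|E_a|
\le c^{-1}\max_a|\tilde\pi^G_{N,a}-\pi^G_{N,a}|\,M_N^{1/2}\Bigl(N^{-1}\sum_aE_a^2\Bigr)^{1/2},
\]
using Cauchy--Schwarz over the at most \(M_N\) clusters. By \eqref{eq-cluster-foldwise-inclusion-stability}, the product \(M_N^{1/2}\max_a|\cdot|\) is \(O_p(1)\), and the remaining factor is \(o_p(1)\) by Condition~\ref{cond-regression-within-cluster-sums}.

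I expect the main point to check to be exactly this matching of scales: the stability rate \(M_N^{-1/2}\) in \eqref{eq-cluster-foldwise-inclusion-stability} must absorb the \(M_N^{1/2}\) produced by Cauchy--Schwarz over clusters. The normalization in Condition~\ref{cond-regression-within-cluster-sums} is in \(N\), not \(M_N\), and that is what makes the two scales combine correctly. Condition~\ref{cond-fold-sampling-structure}(ii) is used only to guarantee that clusters do not straddle folds, which is what makes each \(E_a\) measurable with respect to \(\mathcal G_{N,k}\).
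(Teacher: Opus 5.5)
Your proof is correct and follows the paper's argument. Both use the same split into a term centered at \(P_0^N(A_{N,a}=1\mid\mathcal G_{N,k})\), handled by Lemma~\ref{lem-signed-negative-covariance}, conditional Chebyshev, and Condition~\ref{cond-regression-within-cluster-sums}, plus a \(\mathcal G_{N,k}\)-measurable bias term handled by the \(O_p(M_N^{-1/2})\) stability rate and Cauchy--Schwarz over clusters.

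The differences are minor. You bound the number of clusters in \(I_k\) by \(M_N\) directly, whereas the paper cites \eqref{eq-cluster-fold-size-stability} for that factor. Contrary to your closing remark, measurability of each \(E_a\) needs no fold condition: the sum already runs only over clusters contained in \(I_k\).
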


\begin{proof}
Centering \(A_{N,a}\) at its conditional mean gives two terms as in \eqref{eq-unit-foldwise-decomposition}. Lemma~\ref{lem-signed-negative-covariance} and Condition~\ref{cond-regression-within-cluster-sums} give
\begin{align}
&\Var\left(
\frac1{\sqrt N}
\sum_{a:G_{N,a}\subset I_k}
\frac{
A_{N,a}-P_0^N(A_{N,a}=1\mid\mathcal G_{N,k})
}
{P_0^N(A_{N,a}=1\mid\bm X_N)}
\sum_{i\in G_{N,a}}
\{\hat m_{-k}(X_i)-m_0(X_i)\}
\,\middle|\,
\mathcal G_{N,k}
\right)
\notag\\
&\quad\le
\frac{O_p(1)}N
\sum_{a=1}^{M_N}
\left[
\sum_{i\in G_{N,a}}
\{\hat m_{-k(i)}(X_i)-m_0(X_i)\}
\right]^2
=o_p(1).
\label{eq-cluster-foldwise-centered-variance}
\end{align}
The remaining term is bounded by
\begin{align}
&\frac{O_p(1)}{\sqrt N}
\max_{a:G_{N,a}\subset I_k}
\left|
P_0^N(A_{N,a}=1\mid\mathcal G_{N,k})
-
P_0^N(A_{N,a}=1\mid\bm X_N)
\right|
\notag\\
&\qquad\times
\sum_{a:G_{N,a}\subset I_k}
\left|
\sum_{i\in G_{N,a}}
\{\hat m_{-k}(X_i)-m_0(X_i)\}
\right|
\notag\\
&\quad\le
\frac{O_p(1)}{\sqrt{M_NN}}
\left\{
\sum_{a=1}^{M_N}\mathbf 1\{G_{N,a}\subset I_k\}
\right\}^{1/2}
\notag\\
&\qquad\times
\left[
\sum_{a=1}^{M_N}
\left\{
\sum_{i\in G_{N,a}}
\{\hat m_{-k(i)}(X_i)-m_0(X_i)\}
\right\}^2
\right]^{1/2}
=o_p(1)
\label{eq-cluster-foldwise-mean-bound}
\end{align}
by \eqref{eq-cluster-foldwise-inclusion-stability}, \eqref{eq-cluster-fold-size-stability}, and Condition~\ref{cond-regression-within-cluster-sums}.
\end{proof}

For cluster sampling, it remains to verify \eqref{eq-cluster-foldwise-negative-covariance}--\eqref{eq-cluster-foldwise-inclusion-stability}. For stratified two-stage sampling, analogous bounds are established at the PSU level within each stratum.

We next provide lemmas for designs that require more involved arguments to establish \eqref{eq-unit-foldwise-inclusion-stability}, or \eqref{eq-condition-M2-by-fold} directly.

\begin{lemma}[Bernoulli conditioning bound]
\label{lem-bernoulli-conditioning-bound}
Let \(B_1,\ldots,B_m\) be independent Bernoulli variables with
\[
P(B_j=1)=q_j\in(0,1),
\qquad
S=\sum_{j=1}^m B_j,
\qquad
\mu=\sum_{j=1}^m q_j,
\qquad
D=\sum_{j=1}^m q_j(1-q_j).
\]
If \(\mu\in\mathbb Z\) and \(D\ge1\), then
\begin{equation}
\max_{1\le j\le m}
\left|P(B_j=1\mid S=\mu)-q_j\right|
\le
\frac{5\pi}{9\sqrt D}.
\label{eq-bernoulli-conditioning-bound}
\end{equation}
\end{lemma}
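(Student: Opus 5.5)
The plan is to reduce the conditional probability to a ratio of point probabilities of Poisson--binomial sums, and then bound that ratio with a Fourier estimate and a mode argument. Fix \(j\) and put \(S_{-j}=S-B_j\), \(\mu_{-j}=\mu-q_j\) and \(D_{-j}=D-q_j(1-q_j)\ge D-\tfrac14\ge\tfrac34D\), using \(D\ge1\). Independence gives
\[
P(B_j=1\mid S=\mu)=\frac{q_jP(S_{-j}=\mu-1)}{q_jP(S_{-j}=\mu-1)+(1-q_j)P(S_{-j}=\mu)}.
\]
Subtracting \(q_j\) and simplifying yields the identity
\[
P(B_j=1\mid S=\mu)-q_j
=\frac{q_j(1-q_j)\{P(S_{-j}=\mu-1)-P(S_{-j}=\mu)\}}{P(S=\mu)}.
\]
It therefore suffices to show two things: a numerator bound of order \(1/D\) for the adjacent-point difference, and a denominator bound \(P(S=\mu)\gtrsim D^{-1/2}\). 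Together with \(q_j(1-q_j)\le\tfrac14\), these give a bound of order \(D^{-1/2}\).

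\emph{Numerator.} I would use Fourier inversion on \([-\pi,\pi]\). Let \(\varphi_{-j}(t)=\prod_{\ell\ne j}(1-q_\ell+q_\ell e^{it})\). Then
\[
P(S_{-j}=\mu-1)-P(S_{-j}=\mu)=\frac1{2\pi}\int_{-\pi}^{\pi}\varphi_{-j}(t)e^{-i\mu t}(e^{it}-1)\,dt.
\]
Each factor satisfies \(|1-q+qe^{it}|^2=1-4q(1-q)\sin^2(t/2)\le\exp\{-4q(1-q)\sin^2(t/2)\}\), so \(|\varphi_{-j}(t)|\le\exp\{-2D_{-j}\sin^2(t/2)\}\). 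Also \(|e^{it}-1|=2|\sin(t/2)|\). On \([0,\pi]\) I would use \(\sin(t/2)\ge t/\pi\) in the exponent and \(\sin(t/2)\le t/2\) in the prefactor. This reduces the integral to a Gaussian first moment:
\[
\frac1\pi\int_0^\pi t\,e^{-2D_{-j}t^2/\pi^2}\,dt\le\frac{\pi}{4D_{-j}}\le\frac{\pi}{3D}.
\]

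\emph{Denominator.} Since \(\mu\) is an integer equal to the mean, Darroch's theorem makes \(\mu\) a mode of the Poisson--binomial law. Chebyshev's inequality gives \(P(|S-\mu|\le2\sqrt D)\ge\tfrac34\). At most \(4\sqrt D+1\le5\sqrt D\) integers lie in that window, and each carries probability at most \(P(S=\mu)\). Hence \(P(S=\mu)\ge3/(20\sqrt D)\).

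Combining the three factors \(\tfrac14\cdot\tfrac{\pi}{3D}\cdot\tfrac{20\sqrt D}{3}\) gives exactly \(\tfrac{5\pi}{9\sqrt D}\), which is the stated constant. The bound is uniform in \(j\), so it also controls the maximum over \(j\).

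The main obstacle is keeping the constants sharp enough in two places: the elementary sine inequalities on the integration range, and the mode lower bound. The mode lower bound is also the step most in need of care, since the identification of the mean as a mode must be invoked correctly (it requires \(\mu\in\mathbb Z\)). If that identification were unavailable, I would instead obtain the denominator bound from a Fourier lower estimate of \(P(S=\mu)\), at the cost of a worse constant.
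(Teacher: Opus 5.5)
Your proposal is correct and follows the paper's proof essentially step for step. It uses the same identity $P(B_j=1\mid S=\mu)-q_j=q_j(1-q_j)\{P(S_{-j}=\mu-1)-P(S_{-j}=\mu)\}/P(S=\mu)$, the same Fourier bound $\pi/(4V_j)$ on adjacent point masses via $\sin(|t|/2)\ge|t|/\pi$, the same Darroch-mode plus Chebyshev bound $P(S=\mu)\ge 3/(20\sqrt D)$, and $V_j\ge 3D/4$, yielding exactly $5\pi/(9\sqrt D)$; your closed Chebyshev window in place of the paper's open one changes nothing.
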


\begin{proof}
For a sum \(T\) of independent Bernoulli variables, write
\[
p_T(\ell)=P(T=\ell),
\qquad
V=\Var(T),
\qquad
\varphi_T(t)=\E(e^{itT}).
\]
If \(V>0\), Fourier inversion gives
\begin{align}
|p_T(\ell)-p_T(\ell-1)|
&\le
\frac1{2\pi}
\int_{-\pi}^{\pi}
|1-e^{it}|\,|\varphi_T(t)|\,dt
\notag\\
&\le
\frac1{2\pi}
\int_{-\pi}^{\pi}
|t|\exp\left(-\frac{2Vt^2}{\pi^2}\right)dt
\le
\frac{\pi}{4V}.
\label{eq-bernoulli-adjacent-mass-bound}
\end{align}
Indeed,
\[
|1-q+qe^{it}|^2
=1-4q(1-q)\sin^2(t/2)
\le
\exp\{-4q(1-q)\sin^2(t/2)\},
\]
and \(\sin(|t|/2)\ge |t|/\pi\) for \(|t|\le\pi\), which gives the second line of \eqref{eq-bernoulli-adjacent-mass-bound}.

Since \(\mu\) is an integer, it is a mode of the distribution of \(S\) by \citet{Darroch1964}. Chebyshev's inequality therefore gives
\begin{align}
P(S=\mu)
&\ge
\frac{P(|S-\mu|<2\sqrt D)}
{\#\{\ell\in\mathbb Z:|\ell-\mu|<2\sqrt D\}}
\ge
\frac{3}{4(4\sqrt D+1)}
\ge
\frac{3}{20\sqrt D}.
\label{eq-bernoulli-mass-at-mean-bound}
\end{align}
Fix \(j\), put \(T_j=S-B_j\), and define
\[
a_j=P(T_j=\mu-1),
\qquad
b_j=P(T_j=\mu).
\]
Then
\begin{equation}
P(B_j=1\mid S=\mu)-q_j
=
\frac{q_j(1-q_j)(a_j-b_j)}{P(S=\mu)}.
\label{eq-bernoulli-conditioning-identity}
\end{equation}
The variance \(V_j\) of \(T_j\) satisfies
\[
V_j=D-q_j(1-q_j)\ge D-\frac14\ge\frac{3D}{4}.
\]
Combining \eqref{eq-bernoulli-adjacent-mass-bound}--\eqref{eq-bernoulli-conditioning-identity} yields
\[
\left|P(B_j=1\mid S=\mu)-q_j\right|
\le
\frac14\frac{\pi}{4V_j}\frac{20\sqrt D}{3}
\le
\frac{5\pi}{9\sqrt D}.
\]
\end{proof}

\begin{lemma}[Stability of conditional inclusion probabilities under rejective sampling]
\label{lem-rejective-foldwise-stability}
Under the rejective sampling conditions in Appendix~\ref{app-sampling-examples} and Condition~\ref{cond-fold-balance}, for every fixed \(k\),
\begin{align}
\Cov(\delta_i,\delta_j\mid\mathcal T_{N,k})&\le0,
\qquad i,j\in I_k,
\quad i\ne j,
\label{eq-rejective-fold-negative-covariance}
\\
\max_{i\in I_k}
\left|
P_0^N(\delta_i=1\mid\mathcal T_{N,k})-\pi_{N,i}
\right|
&=O_p(N^{-1/2}).
\label{eq-rejective-foldwise-stability}
\end{align}
\end{lemma}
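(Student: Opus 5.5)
The plan is to exploit the conditional-Bernoulli representation of rejective sampling. We condition on the training field $\mathcal T_{N,k}$ and show that the indicators in fold $I_k$ again form a rejective (conditional-Bernoulli) design. Write the design as independent $B_i\sim\mathrm{Bernoulli}(p_{N,i})$, $i=1,\ldots,N$, given $\bm X_N$, conditioned on $\sum_iB_i=n_N$. Then $\mathcal T_{N,k}$ reveals $\bm X_N$, the fold partition, the indicators $\{\delta_j:j\in I_k^c\}$, and the outcomes of sampled units outside $I_k$. By Assumption~B1 and the i.i.d.\ structure, the outcomes carry no further information about $\delta$ once $\bm X_N$ and $\{\delta_j\}_{j\notin I_k}$ are given. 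Hence the conditional law of $(\delta_i)_{i\in I_k}$ given $\mathcal T_{N,k}$ equals its law given $(\bm X_N,\{\delta_j\}_{j\notin I_k})$. This is the law of independent Bernoulli$(p_{N,i})$, $i\in I_k$, conditioned on $\sum_{i\in I_k}B_i=n_{N,k}:=n_N-\sum_{j\notin I_k}\delta_j$, which is again a rejective design on $I_k$. The result of \citet{ChenDempsterLiu1994} then gives \eqref{eq-rejective-fold-negative-covariance} directly.

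For \eqref{eq-rejective-foldwise-stability}, we compare the two conditional inclusion probabilities through the unconditioned Bernoulli probabilities. We write
\[
P_0^N(\delta_i=1\mid\mathcal T_{N,k})-\pi_{N,i}=\{P(B_i=1\mid S_k=n_{N,k})-p_{N,i}\}+\{p_{N,i}-\pi_{N,i}\},
\]
where $S_k=\sum_{i\in I_k}B_i$. The second bracket is $O_p(N^{-1/2})$ uniformly in $i$ by Lemma~\ref{lem-bernoulli-conditioning-bound} applied to the full population. That lemma requires $\mu=\sum_ip_{N,i}=n_N\in\mathbb Z$, which holds by the chosen representation, and it requires $d_N\ge cN$, which holds by \eqref{eq-rejective-nondegeneracy}.

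The first bracket is the crux, because $n_{N,k}$ is random and need not equal $\mu_k=\sum_{i\in I_k}p_{N,i}$, which may not even be an integer. Lemma~\ref{lem-bernoulli-conditioning-bound} as stated only handles conditioning exactly at the integer mean, so it does not apply directly. I would instead reuse its proof ingredients. From the identity \eqref{eq-bernoulli-conditioning-identity} with general level $\ell$, we have
\[
P(B_i=1\mid S_k=\ell)-p_{N,i}=p_{N,i}(1-p_{N,i})\,\frac{a_i(\ell)-b_i(\ell)}{P(S_k=\ell)},
\]
where $a_i(\ell)=P(T_i=\ell-1)$, $b_i(\ell)=P(T_i=\ell)$ and $T_i=S_k-B_i$. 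The numerator is at most $\pi/(4V_i)=O(N^{-1})$ by \eqref{eq-bernoulli-adjacent-mass-bound}, since $D_k=\sum_{i\in I_k}p_{N,i}(1-p_{N,i})\ge c'N$. This lower bound on $D_k$ follows from Condition~\ref{cond-fold-balance} together with the margins $\varepsilon\le p_{N,i}\le1-\varepsilon$ inherited from \eqref{eq-rejective-nondegeneracy}; the uniform closeness $\max_i|p_{N,i}-\pi_{N,i}|=O_p(N^{-1/2})$ just established transfers the margins to the $p_{N,i}$. It then suffices to show $P(S_k=n_{N,k})\gtrsim N^{-1/2}$ with high probability. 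For that we need $|n_{N,k}-\mu_k|=O_p(\sqrt N)$ together with a local lower bound on the point masses of $S_k$ within $O(\sqrt{D_k})$ of its mean.

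This is the main obstacle. For the first part, $n_{N,k}=\sum_{i\in I_k}\delta_i$ under the actual rejective law, with $\E[n_{N,k}\mid\bm X_N]=\sum_{i\in I_k}\pi_{N,i}=\mu_k+O_p(\sqrt N)$. Its conditional variance is at most $\sum_{i\in I_k}\pi_{N,i}\le N$ by negative dependence, so Chebyshev gives $|n_{N,k}-\mu_k|=O_p(\sqrt N)$. For the local mass bound at distance $O(\sqrt{D_k})$, I would use a local limit theorem for Bernoulli sums: Fourier inversion with the characteristic-function bound $|\varphi(t)|\le\exp(-2D_kt^2/\pi^2)$ from the preceding proof, refined by a third-order Taylor term for small $|t|$. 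This yields $P(S_k=\ell)=(2\pi D_k)^{-1/2}\exp\{-(\ell-\mu_k)^2/(2D_k)\}+O(D_k^{-1})$ uniformly in $\ell$, which is bounded below by a constant times $N^{-1/2}$ whenever $|\ell-\mu_k|\le M\sqrt N$. Alternatively, the log-concavity of Bernoulli-sum masses combined with Darroch's mode bound would give the same lower bound more crudely. Combining the two brackets yields $\max_{i\in I_k}|P_0^N(\delta_i=1\mid\mathcal T_{N,k})-\pi_{N,i}|=O_p(N^{-1})+O_p(N^{-1/2})=O_p(N^{-1/2})$.
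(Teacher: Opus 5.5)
Your argument is correct, but at the crux it takes a different route from the paper. Up to that point the two proofs coincide. Both identify the conditional law given $\mathcal T_{N,k}$ as a conditional-Bernoulli law on $I_k$ with the original canonical probabilities and count $S_{N,k}=n_N-\sum_{j\in I_k^c}\delta_j$. Both obtain \eqref{eq-rejective-fold-negative-covariance} from the Chen--Dempster--Liu result, bound $\max_i|p_{N,i}-\pi_{N,i}|$ by Lemma~\ref{lem-bernoulli-conditioning-bound} on the full array, and show $S_{N,k}-\sum_{i\in I_k}p_{N,i}=O_p(\sqrt N)$.

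The paper then removes the off-mean difficulty by exponential tilting. Replacing $p_{N,i}$ by $p_{N,i}(t)=e^tp_{N,i}/(1-p_{N,i}+e^tp_{N,i})$ multiplies every odds ratio by $e^t$, so the law given the sum is unchanged. Choosing $t_{N,k}$ with $\sum_{i\in I_k}p_{N,i}(t_{N,k})=S_{N,k}$ puts the conditioning level at an integer mean, and Lemma~\ref{lem-bernoulli-conditioning-bound} applies verbatim to the tilted array. Monotonicity of the tilted sum in $t$, with derivative of order $|I_k|$, gives $|t_{N,k}|=O_p(N^{-1/2})$, and $|p_{N,i}(t)-p_{N,i}|\le|t|/4$ closes the triangle inequality.

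You instead keep the untilted probabilities and need $P(S_k=\ell)\gtrsim N^{-1/2}$ uniformly over $|\ell-\mu_k|\le M\sqrt N$. You get this from a local limit theorem with $O(D_k^{-1})$ error. Your Fourier sketch (cubic cumulant control for small $|t|$, and $|\varphi(t)|\le\exp(-2D_kt^2/\pi^2)$ elsewhere) does deliver this, with constants depending only on the probability margins. The tilting route is more self-contained: it uses only Darroch's mode theorem, Chebyshev, and the adjacent-mass bound already proved. Yours imports a quantitative local CLT but works directly at any conditioning level in a $\sqrt N$-window without reparametrizing.

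Two minor corrections. First, in your final line the first bracket is not $O_p(N^{-1})$. Its numerator is $O(N^{-1})$ and its denominator is of order $N^{-1/2}$, so it is $O_p(N^{-1/2})$; the conclusion is unaffected. Second, the suggested fallback via log-concavity plus Darroch's mode bound would not suffice on its own. Those properties do not control the mass at distance $M\sqrt{D_k}$ for arbitrary $M$. For example, a discrete uniform law on an interval of length about $\sqrt{12D_k}$ is log-concave, with variance about $D_k$ and mode mass of order $D_k^{-1/2}$, yet it vanishes beyond $\sqrt{3D_k}$ from its mean. A normal-approximation input would still be needed, so the local limit theorem route is the one to rely on.
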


\begin{proof}
Let \(p_{N,1},\ldots,p_{N,N}\) be the canonical Bernoulli probabilities in the rejective representation, chosen so that
\[
\sum_{i=1}^N p_{N,i}=n_N,
\qquad
d_N=\sum_{i=1}^N p_{N,i}(1-p_{N,i}).
\]
Conditionally on \(\bm X_N\), Lemma~\ref{lem-bernoulli-conditioning-bound} applied to the full Bernoulli array gives
\begin{equation}
\max_{1\le i\le N}|\pi_{N,i}-p_{N,i}|
=O_p(d_N^{-1/2})
=O_p(N^{-1/2}).
\label{eq-rejective-canonical-first-order-approximation}
\end{equation}
Together with \eqref{eq-rejective-nondegeneracy}, this implies that, for some \(c_0\in(0,1/2)\),
\begin{equation}
P_0^N\left(
 c_0\le\min_i p_{N,i}
 \le\max_i p_{N,i}\le1-c_0
\right)\to1.
\label{eq-rejective-canonical-uniform-bounds}
\end{equation}

Put
\[
S_{N,k}=\sum_{i\in I_k}\delta_i
=n_N-\sum_{i\in I_k^c}\delta_i,
\qquad
b_{N,k}=\sum_{i\in I_k}p_{N,i}.
\]
By phase-I measurability of the folds and \eqref{eq-negative-dependence-condition},
\begin{equation}
\E\left[
\left\{
S_{N,k}-\sum_{i\in I_k}\pi_{N,i}
\right\}^2
\,\middle|\,
\mathcal I_N,\bm X_N
\right]
\le |I_k|.
\label{eq-rejective-fold-count-variance}
\end{equation}
Moreover, \eqref{eq-rejective-canonical-first-order-approximation} and Condition~\ref{cond-fold-balance} give
\begin{equation}
\left|b_{N,k}-\sum_{i\in I_k}\pi_{N,i}\right|
\le
|I_k|\max_i|p_{N,i}-\pi_{N,i}|
=O_p(\sqrt N).
\label{eq-rejective-fold-count-centering}
\end{equation}
Hence
\begin{equation}
S_{N,k}-b_{N,k}=O_p(\sqrt N).
\label{eq-rejective-fold-count-rate}
\end{equation}

For \(t\in\mathbb R\), define
\begin{equation}
p_{N,i}(t)
=
\frac{e^tp_{N,i}}{1-p_{N,i}+e^tp_{N,i}},
\qquad
F_{N,k}(t)=\sum_{i\in I_k}p_{N,i}(t).
\label{eq-rejective-odds-tilt}
\end{equation}
On the event in \eqref{eq-rejective-canonical-uniform-bounds}, there is a constant \(c_1>0\) such that
\begin{equation}
\inf_{|t|\le1}F_{N,k}'(t)
=
\inf_{|t|\le1}
\sum_{i\in I_k}p_{N,i}(t)\{1-p_{N,i}(t)\}
\ge c_1|I_k|.
\label{eq-rejective-tilt-derivative-bound}
\end{equation}
Consequently,
\begin{equation}
F_{N,k}(1)-F_{N,k}(0)\ge c_1|I_k|,
\qquad
F_{N,k}(0)-F_{N,k}(-1)\ge c_1|I_k|.
\label{eq-rejective-tilt-bracketing}
\end{equation}
Equations~\eqref{eq-rejective-fold-count-rate} and~\eqref{eq-rejective-tilt-bracketing}, together with Condition~\ref{cond-fold-balance}, imply
\begin{equation}
P_0^N\{F_{N,k}(-1)<S_{N,k}<F_{N,k}(1)\}\to1.
\label{eq-rejective-tilt-existence-event}
\end{equation}
Since \(F_{N,k}\) is strictly increasing, on the event in \eqref{eq-rejective-tilt-existence-event} there is a unique \(t_{N,k}\in(-1,1)\) satisfying
\[
F_{N,k}(t_{N,k})=S_{N,k}.
\]
Set \(t_{N,k}=0\) off this event. The mean value theorem and \eqref{eq-rejective-tilt-derivative-bound} give
\begin{equation}
|t_{N,k}|
\le
\frac{|S_{N,k}-F_{N,k}(0)|}{c_1|I_k|}
=O_p(N^{-1/2}).
\label{eq-rejective-tilt-rate}
\end{equation}
Consequently,
\begin{equation}
d_{N,k}
:=
\sum_{i\in I_k}p_{N,i}(t_{N,k})
\{1-p_{N,i}(t_{N,k})\},
\qquad
d_{N,k}^{-1}=O_p(N^{-1}).
\label{eq-rejective-fold-variance-order}
\end{equation}
The variables \(S_{N,k}\), \(t_{N,k}\), and
\((p_{N,i}(t_{N,k}))_{i\in I_k}\) are \(\mathcal T_{N,k}\)-measurable.

For \(d=(d_i)_{i\in I_k}\in\{0,1\}^{I_k}\), Assumption~{B1} and phase-I measurability of the folds give
\begin{align}
P_0^N\{(\delta_i)_{i\in I_k}=d\mid\mathcal T_{N,k}\}
&\propto
\prod_{i\in I_k}
\left(\frac{p_{N,i}}{1-p_{N,i}}\right)^{d_i}
\mathbf 1\left\{\sum_{i\in I_k}d_i=S_{N,k}\right\}
\notag\\
&\propto
\prod_{i\in I_k}
\left(\frac{p_{N,i}(t_{N,k})}{1-p_{N,i}(t_{N,k})}\right)^{d_i}
\mathbf 1\left\{\sum_{i\in I_k}d_i=S_{N,k}\right\}.
\label{eq-rejective-fold-conditional-law}
\end{align}
If \(S_{N,k}\in\{0,|I_k|\}\), \eqref{eq-rejective-fold-negative-covariance} is immediate. Otherwise, \citet{ChenDempsterLiu1994} gives \eqref{eq-rejective-fold-negative-covariance}.

On the event in \eqref{eq-rejective-tilt-existence-event}, the second proportionality in \eqref{eq-rejective-fold-conditional-law} identifies the conditional law as rejective sampling on \(I_k\) with canonical probabilities \(p_{N,i}(t_{N,k})\). On this event, conditionally on \(\mathcal T_{N,k}\), Lemma~\ref{lem-bernoulli-conditioning-bound} and \eqref{eq-rejective-fold-variance-order} give
\begin{equation}
\max_{i\in I_k}
\left|
P_0^N(\delta_i=1\mid\mathcal T_{N,k})
-p_{N,i}(t_{N,k})
\right|
=O_p(N^{-1/2}).
\label{eq-rejective-conditional-first-order-approximation}
\end{equation}
Since
\[
\partial_t p_{N,i}(t)
=p_{N,i}(t)\{1-p_{N,i}(t)\}
\le\frac14,
\]
\eqref{eq-rejective-tilt-rate} gives
\begin{equation}
\max_{i\in I_k}|p_{N,i}(t_{N,k})-p_{N,i}|
\le\frac{|t_{N,k}|}{4}
=O_p(N^{-1/2}).
\label{eq-rejective-tilted-canonical-difference}
\end{equation}
The triangle inequality applied to
\eqref{eq-rejective-canonical-first-order-approximation},
\eqref{eq-rejective-conditional-first-order-approximation}, and
\eqref{eq-rejective-tilted-canonical-difference} proves
\eqref{eq-rejective-foldwise-stability}.
\end{proof}

\begin{lemma}[Foldwise control under stratified two-stage sampling]
\label{lem-two-stage-foldwise-criterion}
Assume the setting and conditions of either Proposition~\ref{prop-stratified-two-stage} or Corollary~\ref{cor-self-weighted-two-stage}, together with Conditions~\ref{cond-mean-regression-app}, \ref{cond-fold-two-stage-psu}, and~\ref{cond-two-stage-within-psu-sums}. Suppose that one of the following first-stage designs is used independently across strata.
\begin{enumerate}[label=\textup{(\roman*)}]
\item The first-stage design is Poisson sampling within each stratum.
\item The first-stage design is rejective sampling within each stratum and satisfies \eqref{eq-two-stage-rejective-first-stage-condition}.
\end{enumerate}
Then \eqref{eq-condition-M2-by-fold} holds for every fold \(k\).
\end{lemma}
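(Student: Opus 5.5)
The plan is to split \eqref{eq-condition-M2-by-fold} along the two stages of selection and to control each stage with the foldwise tools already available. Write $e_i=\hat m_{-k}(X_i)-m_0(X_i)$ for $i\in I_k$. For $i\in G_{N,h,a}$, the decomposition used in the proof of Proposition~\ref{prop-stratified-two-stage} gives
\[
\frac{\delta_i-\pi_{N,i}}{\pi_{N,i}}
=\frac{\delta^{(1)}_{N,h,a}-\pi^{(1)}_{N,h,a}}{\pi^{(1)}_{N,h,a}}
+\delta^{(1)}_{N,h,a}\,\frac{\delta^{(2)}_{N,h,a,i}-\pi^{(2)}_{N,h,a,i}}{\pi_{N,i}} .
\]
By Condition~\ref{cond-fold-two-stage-psu}, every PSU lies in a single fold. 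Hence the left side of \eqref{eq-condition-M2-by-fold} equals $T^{(1)}_{N,k}+T^{(2)}_{N,k}$, where
\[
T^{(1)}_{N,k}=\frac1{\sqrt N}\sum_{h}\sum_{a:\,G_{N,h,a}\subset I_k}\frac{\delta^{(1)}_{N,h,a}-\pi^{(1)}_{N,h,a}}{\pi^{(1)}_{N,h,a}}\sum_{i\in G_{N,h,a}}e_i
\]
is a PSU-level sum and $T^{(2)}_{N,k}$ is the second-stage remainder. Because $\pi^{(2)}\le1$, the positivity $\pi_{N,i}\ge\pi_-/2$ from \eqref{eq-app-c-uniform-positivity} also gives $\pi^{(1)}_{N,h,a}\ge\pi_-/2$ with probability tending to one. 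This holds in the self-weighted case as well, since there too $\pi_{N,i}=\rho_{N,h}$ is bounded below.

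For $T^{(2)}_{N,k}$, condition on $\mathcal G_{N,k}=\sigma(\mathcal T_{N,k},\{\delta^{(1)}_{N,h,a}\}_{h,a})$. The second-stage randomizations are independent across PSUs and independent of the first stage, and Assumption~\ref{ass-B1} makes them independent of $\bm Y_N$. Therefore, given $\mathcal G_{N,k}$, the indicators $\delta^{(2)}_{N,h,a,i}$ for PSUs in $I_k$ keep their means $\pi^{(2)}_{N,h,a,i}$ and their nonpositive within-PSU covariances, and they are independent across PSUs. The weights $\delta^{(1)}_{N,h,a}e_i/\pi_{N,i}$ are $\mathcal G_{N,k}$-measurable. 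Lemma~\ref{lem-signed-negative-covariance}, applied PSU by PSU, then bounds the conditional variance by $C\pi_-^{-2}N^{-1}\sum_{i}e_i^2=o_p(1)$ under Condition~\ref{cond-mean-regression-app}. Conditional Chebyshev gives $T^{(2)}_{N,k}=o_p(1)$.

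For $T^{(1)}_{N,k}$, I would imitate Lemma~\ref{lem-cluster-foldwise-criterion} stratum by stratum, with PSU indicators in place of cluster indicators. A key observation is that, since $m^{(2)}_{N,h}\ge1$, every selected PSU contributes at least one sampled unit. Hence $\mathcal T_{N,k}$ reveals the first-stage indicators of all PSUs outside $I_k$. Let $\mathcal G'_{N,k}=\mathcal T_{N,k}$.

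In case (i), Poisson first stage, the fold-$k$ PSU indicators are independent of everything in $\mathcal T_{N,k}$ except $\bm X_N$. So their conditional means are exactly $\pi^{(1)}_{N,h,a}$ and they are independent. Lemma~\ref{lem-signed-negative-covariance} then gives a conditional variance of order $N^{-1}\sum_{h,a}(\sum_{i\in G_{N,h,a}}e_i)^2=o_p(1)$ by Condition~\ref{cond-two-stage-within-psu-sums}.

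In case (ii), rejective first stage independently across strata, I would rerun the proof of Lemma~\ref{lem-rejective-foldwise-stability} inside each stratum, with the $M_{N,h}$ PSUs as units and the fold fractions from \eqref{eq-two-stage-fold-balance}. The number of fold-$k$ selected PSUs is $m^{(1)}_{N,h}$ minus a $\mathcal T_{N,k}$-measurable count. Tilting the canonical probabilities $p^{(1)}_{N,h,a}$ identifies the conditional law as rejective sampling on the fold-$k$ PSUs. Chen--Dempster--Liu then gives nonpositive conditional covariances, and Lemma~\ref{lem-bernoulli-conditioning-bound} together with \eqref{eq-two-stage-rejective-first-stage-condition} gives the stability $O_p(M_{N,h}^{-1/2})=O_p(M_N^{-1/2})$. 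The last equality uses $M_{N,h}/M_N\ge c$.

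With these two ingredients, the two-term bound in the proof of Lemma~\ref{lem-cluster-foldwise-criterion} applies: a centered part controlled by Condition~\ref{cond-two-stage-within-psu-sums}, and a mean-shift part bounded by $O_p\{(M_NN)^{-1/2}\}\,M_N^{1/2}\{\sum_{h,a}(\sum_{i\in G_{N,h,a}}e_i)^2\}^{1/2}=o_p(1)$. Summing over the finitely many strata finishes the argument.

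I expect the main obstacle to be case (ii). The difficulty is justifying that conditioning on $\mathcal T_{N,k}$ reduces the fold-$k$ first stage to a rejective law with a $\mathcal T_{N,k}$-measurable tilt. This requires checking that the second-stage data outside $I_k$ carry no information beyond the outside first-stage indicators. It also requires verifying the tilt-existence event with PSU counts, where fold balance is only in PSU numbers and not in population size. I would handle this first, reusing \eqref{eq-rejective-tilt-derivative-bound}--\eqref{eq-rejective-tilt-rate} verbatim with $M_{N,h}$ replacing $N$.
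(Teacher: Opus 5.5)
Your argument follows the paper's proof almost step for step. It uses the same split of \eqref{eq-condition-M2-by-fold} into a PSU-level first-stage remainder and a second-stage remainder, and the same conditioning on all first-stage indicators for the second stage. It also reruns Lemma~\ref{lem-rejective-foldwise-stability} at the PSU level with $M_{N,h}$ in place of $N$, and applies the same centered-plus-mean-shift bound from Lemma~\ref{lem-cluster-foldwise-criterion}.

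One step fails in part of the stated generality: taking $\mathcal T_{N,k}$ itself as the first-stage conditioning field, on the grounds that $m^{(2)}_{N,h}\ge1$ makes every selected PSU visible. That inequality belongs only to the self-weighted setting of Corollary~\ref{cor-self-weighted-two-stage}. Under the setting of Proposition~\ref{prop-stratified-two-stage}, the second stage need only have nonpositive within-PSU covariances, so a selected PSU can yield no sampled unit (e.g.\ Bernoulli sampling within PSUs). In that case three things go wrong:
\begin{itemize}
\item $\mathcal T_{N,k}$ does not reveal the outside first-stage indicators;
\item the fold-$k$ count $S^{(1)}_{N,kh}$ is not $\mathcal T_{N,k}$-measurable;
\item in case~(ii), the law of the fold-$k$ PSU indicators given $\mathcal T_{N,k}$ is a mixture of rejective laws over that count.
\end{itemize}
By the law of total covariance, the mixture adds the term $\Cov\{P(\delta^{(1)}_{N,h,a}=1\mid S^{(1)}_{N,kh},\mathcal T_{N,k}),\,P(\delta^{(1)}_{N,h,b}=1\mid S^{(1)}_{N,kh},\mathcal T_{N,k})\mid\mathcal T_{N,k}\}$. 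This term is nonnegative, because conditional-Bernoulli inclusion probabilities are nondecreasing in the sample size. Nonpositive conditional covariances can therefore fail. Then Lemma~\ref{lem-signed-negative-covariance} does not apply, and neither does your construction of a $\mathcal T_{N,k}$-measurable tilt.

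The repair is what the paper does. Condition the first stage on $\mathcal G_{N,k}=\sigma(\mathcal T_{N,k},\{\delta^{(1)}_{N,h,a}:G_{N,h,a}\subset I_k^c\}_{h,a})$; the two-term argument only needs a $\sigma$-field containing $\mathcal T_{N,k}$. With this choice the errors $e_i$ stay measurable and $S^{(1)}_{N,kh}$ becomes measurable in both settings. The stage randomizations are mutually independent and independent of $\bm Y_N$ given $\bm X_N$, and folds contain whole PSUs. Hence the outside outcomes and second-stage draws carry no information about the fold-$k$ first stage beyond $\bm X_N$ and the adjoined indicators. This also disposes of the obstacle you flagged.

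Your other worry does not arise. The tilt argument runs entirely over PSUs, so only the PSU-count balance \eqref{eq-two-stage-fold-balance} and $M_{N,h}/M_N\ge c$ enter, not balance in population size.
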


\begin{proof}
Fix \(k\), put \(e_i=\hat m_{-k(i)}(X_i)-m_0(X_i)\), and let \(J_{N,kh}=\{a:G_{N,h,a}\subset I_k\}\). To prove \eqref{eq-condition-M2-by-fold}, decompose its left side into the first-stage and second-stage remainders
\begin{align}
\frac1{\sqrt N}\sum_{i\in I_k}\frac{\delta_i-\pi_{N,i}}{\pi_{N,i}}e_i
&=R^{(1)}_{N,k}+R^{(2)}_{N,k},
\label{eq-two-stage-foldwise-remainder-decomposition}\\
R^{(1)}_{N,k}
&=\frac1{\sqrt N}\sum_{h=1}^H\sum_{a\in J_{N,kh}}\frac{\delta^{(1)}_{N,h,a}-\pi^{(1)}_{N,h,a}}{\pi^{(1)}_{N,h,a}}\sum_{i\in G_{N,h,a}}e_i,
\notag\\
R^{(2)}_{N,k}
&=\frac1{\sqrt N}\sum_{h=1}^H\sum_{a\in J_{N,kh}}\frac{\delta^{(1)}_{N,h,a}}{\pi^{(1)}_{N,h,a}}\sum_{i\in G_{N,h,a}}\frac{\delta^{(2)}_{N,h,a,i}-\pi^{(2)}_{N,h,a,i}}{\pi^{(2)}_{N,h,a,i}}e_i.
\notag
\end{align}
We show that both remainders are \(o_p(1)\). The identity follows from \eqref{eq-two-stage-ultimate-inclusion}. In particular, the second-stage inclusion probabilities cancel from \(R^{(1)}_{N,k}\), whose coefficients involve the unweighted PSU sums \(\sum_{i\in G_{N,h,a}}e_i\).

Both Proposition~\ref{prop-stratified-two-stage} and Corollary~\ref{cor-self-weighted-two-stage} imply Assumptions~{B2} and~{B4}. Hence, for some constant \(c_0>0\), with probability tending to one,
\[
\min\{\pi^{(1)}_{N,h,a},\pi^{(2)}_{N,h,a,i}\}\ge\pi^{(1)}_{N,h,a}\pi^{(2)}_{N,h,a,i}=\pi_{N,i}\ge c_0
\]
for all \(h,a\) and \(i\in G_{N,h,a}\). All bounds below are on this event.

\emph{First-stage conditional inclusion probabilities.}
For either design, take
\[
\mathcal G_{N,k}=\sigma\!\left(\mathcal T_{N,k},\{\delta^{(1)}_{N,h,a}:G_{N,h,a}\subset I_k^c\}_{h,a}\right).
\]
The errors \(e_i\), \(i\in I_k\), are \(\mathcal G_{N,k}\)-measurable. Given \(\bm X_N\) and the first-stage indicators outside \(I_k\), the training data contain only additional information from outcomes and second-stage randomizations outside \(I_k\). By the sampling assumptions and phase-I measurability of the folds, this information does not change the conditional law of the first-stage indicators inside \(I_k\). The first-stage vectors in different strata therefore remain conditionally independent given \(\mathcal G_{N,k}\).

Under \textup{(i)}, the first-stage indicators inside \(I_k\) are conditionally independent, with conditional inclusion probabilities \(\pi^{(1)}_{N,h,a}\).

Under \textup{(ii)}, the fixed first-stage sample size in each stratum gives
\[
S^{(1)}_{N,kh}=\sum_{a\in J_{N,kh}}\delta^{(1)}_{N,h,a}=\sum_{a=1}^{M_{N,h}}p^{(1)}_{N,h,a}-\sum_{a\notin J_{N,kh}}\delta^{(1)}_{N,h,a},
\]
so \(S^{(1)}_{N,kh}\) is \(\mathcal G_{N,k}\)-measurable. Conditional on \(\mathcal G_{N,k}\), the indicators in \(J_{N,kh}\) have the rejective law in \eqref{eq-rejective-fold-conditional-law}, with units replaced by PSUs and the count fixed at \(S^{(1)}_{N,kh}\). Their pairwise conditional covariances are therefore nonpositive.

To bound the change in their inclusion probabilities, define \(p^{(1)}_{N,h,a}(t)\) as in \eqref{eq-rejective-odds-tilt}. On the PSU-level analogue of the event in \eqref{eq-rejective-tilt-existence-event}, choose \(t_{N,kh}\) so that
\[
\sum_{a\in J_{N,kh}}p^{(1)}_{N,h,a}(t_{N,kh})=S^{(1)}_{N,kh},
\]
and set \(t_{N,kh}=0\) otherwise. Under the conditions in \textup{(ii)} and \eqref{eq-two-stage-fold-balance}, the PSU-level versions of \eqref{eq-rejective-canonical-first-order-approximation}, \eqref{eq-rejective-tilt-rate}, \eqref{eq-rejective-conditional-first-order-approximation}, and \eqref{eq-rejective-tilted-canonical-difference} give
\begin{align}
\max_a|\pi^{(1)}_{N,h,a}-p^{(1)}_{N,h,a}|&=O_p(M_{N,h}^{-1/2}),\notag\\
|t_{N,kh}|&=O_p(M_{N,h}^{-1/2}),\notag\\
\max_{a\in J_{N,kh}}\left|P_0^N\{\delta^{(1)}_{N,h,a}=1\mid\mathcal G_{N,k}\}-p^{(1)}_{N,h,a}(t_{N,kh})\right|&=O_p(M_{N,h}^{-1/2}),\notag\\
\max_{a\in J_{N,kh}}|p^{(1)}_{N,h,a}(t_{N,kh})-p^{(1)}_{N,h,a}|&=O_p(M_{N,h}^{-1/2}).
\label{eq-two-stage-rejective-ref-chain}
\end{align}
Since \(H\) is fixed and \(\min_hM_{N,h}/M_N\ge c\) with probability tending to one, the triangle inequality yields
\begin{equation}
\max_{1\le h\le H}\max_{a\in J_{N,kh}}\left|P_0^N\{\delta^{(1)}_{N,h,a}=1\mid\mathcal G_{N,k}\}-\pi^{(1)}_{N,h,a}\right|=O_p(M_N^{-1/2}).
\label{eq-two-stage-foldwise-first-stage-stability}
\end{equation}
Thus, under either design,
\begin{equation}
\operatorname{Cov}\{\delta^{(1)}_{N,h,a},\delta^{(1)}_{N,h,b}\mid\mathcal G_{N,k}\}\le0,\qquad a,b\in J_{N,kh},\quad a\ne b,
\label{eq-two-stage-foldwise-first-stage-covariance}
\end{equation}
and the corresponding cross-stratum conditional covariances are zero. Under \textup{(i)}, the left side of \eqref{eq-two-stage-foldwise-first-stage-stability} is zero.

\emph{First-stage remainder.}
By \eqref{eq-two-stage-foldwise-first-stage-covariance} and Lemma~\ref{lem-signed-negative-covariance},
\begin{align}
\operatorname{Var}(R^{(1)}_{N,k}\mid\mathcal G_{N,k})
&\le\frac2N\sum_{h=1}^H\sum_{a\in J_{N,kh}}\frac{(\sum_{i\in G_{N,h,a}}e_i)^2}{(\pi^{(1)}_{N,h,a})^2}\notag\\
&\le\frac2{c_0^2N}\sum_{h=1}^H\sum_{a\in J_{N,kh}}\left(\sum_{i\in G_{N,h,a}}e_i\right)^2=o_p(1)
\label{eq-two-stage-foldwise-first-stage-variance}
\end{align}
by Condition~\ref{cond-two-stage-within-psu-sums}. To control its conditional mean, use \eqref{eq-two-stage-foldwise-first-stage-stability}, Cauchy--Schwarz, and \(\sum_h|J_{N,kh}|\le M_N\) to obtain
\begin{align}
&\left|\mathbb E[R^{(1)}_{N,k}\mid\mathcal G_{N,k}]\right|\notag\\
&\quad=\left|\frac1{\sqrt N}\sum_{h=1}^H\sum_{a\in J_{N,kh}}\frac{P_0^N\{\delta^{(1)}_{N,h,a}=1\mid\mathcal G_{N,k}\}-\pi^{(1)}_{N,h,a}}{\pi^{(1)}_{N,h,a}}\sum_{i\in G_{N,h,a}}e_i\right|\notag\\
&\quad\le\frac{O_p(M_N^{-1/2})}{c_0\sqrt N}\sum_{h=1}^H\sum_{a\in J_{N,kh}}\left|\sum_{i\in G_{N,h,a}}e_i\right|\notag\\
&\quad\le\frac{O_p(M_N^{-1/2})\sqrt{M_N}}{c_0}\left\{\frac1N\sum_{h=1}^H\sum_{a\in J_{N,kh}}\left(\sum_{i\in G_{N,h,a}}e_i\right)^2\right\}^{1/2}=o_p(1).
\label{eq-two-stage-foldwise-first-stage-mean}
\end{align}
Equations~\eqref{eq-two-stage-foldwise-first-stage-variance}--\eqref{eq-two-stage-foldwise-first-stage-mean} and conditional Chebyshev's inequality give \(R^{(1)}_{N,k}=o_p(1)\).

\emph{Second-stage remainder.}
To hold the first-stage indicators fixed, condition on
\[
\mathcal H_{N,k}=\sigma\!\left(\mathcal G_{N,k},\{\delta^{(1)}_{N,h,a}\}_{h,a}\right).
\]
Because folds contain whole PSUs, the second-stage randomizations inside \(I_k\) retain their original conditional laws given \(\bm X_N\) and remain independent across PSUs. Their conditional means are \(\pi^{(2)}_{N,h,a,i}\), and their pairwise conditional covariances within each PSU are nonpositive. Hence \(\mathbb E[R^{(2)}_{N,k}\mid\mathcal H_{N,k}]=0\), and Lemma~\ref{lem-signed-negative-covariance} gives
\begin{align*}
&\operatorname{Var}(R^{(2)}_{N,k}\mid\mathcal H_{N,k})\\
&\quad=\frac1N\sum_{h=1}^H\sum_{a\in J_{N,kh}}\frac{\delta^{(1)}_{N,h,a}}{(\pi^{(1)}_{N,h,a})^2}\operatorname{Var}\!\left(\sum_{i\in G_{N,h,a}}\frac{\delta^{(2)}_{N,h,a,i}-\pi^{(2)}_{N,h,a,i}}{\pi^{(2)}_{N,h,a,i}}e_i\,\middle|\,\mathcal H_{N,k}\right)\\
&\quad\le\frac2N\sum_{h=1}^H\sum_{a\in J_{N,kh}}\frac{\delta^{(1)}_{N,h,a}}{(\pi^{(1)}_{N,h,a})^2}\sum_{i\in G_{N,h,a}}\frac{e_i^2}{(\pi^{(2)}_{N,h,a,i})^2}\\
&\quad\le\frac2{c_0^2N}\sum_{i\in I_k}e_i^2=o_p(1),
\end{align*}
where the last inequality uses \(\pi^{(1)}_{N,h,a}\pi^{(2)}_{N,h,a,i}=\pi_{N,i}\ge c_0\), and the last equality follows from Condition~\ref{cond-mean-regression-app}. Conditional Chebyshev's inequality gives \(R^{(2)}_{N,k}=o_p(1)\). Equation~\eqref{eq-two-stage-foldwise-remainder-decomposition} now proves \eqref{eq-condition-M2-by-fold}.
\end{proof}

\begin{proof}[Proof of Theorem~\ref{thm-mean-sampling-designs}]
\medskip\noindent\emph{Poisson sampling.}
Take \(\mathcal G_{N,k}=\mathcal T_{N,k}\). Then
\[
P_0^N(\delta_i=1\mid\mathcal T_{N,k})=\pi_{N,i},
\qquad
\Cov(\delta_i,\delta_j\mid\mathcal T_{N,k})=0,
\qquad i\ne j.
\]
Lemma~\ref{lem-unit-foldwise-criterion} proves \eqref{eq-condition-M2-by-fold}.

\medskip\noindent\emph{SRSWOR.}
Take \(\mathcal G_{N,k}=\mathcal T_{N,k}\). For \(i,j\in I_k\), \(i\ne j\),
\begin{equation}
P_0^N(\delta_i=1\mid\mathcal T_{N,k})
=
\frac{n_N-\sum_{\ell\in I_k^c}\delta_\ell}{|I_k|},
\label{eq-srs-conditional-inclusion}
\end{equation}
\begin{align}
&\Cov(\delta_i,\delta_j\mid\mathcal T_{N,k})
\notag\\
&\quad=
\frac{\left(n_N-\sum_{\ell\in I_k^c}\delta_\ell\right)
\left(n_N-\sum_{\ell\in I_k^c}\delta_\ell-1\right)}
{|I_k|(|I_k|-1)}
-
\left(
\frac{n_N-\sum_{\ell\in I_k^c}\delta_\ell}{|I_k|}
\right)^2
\le0.
\label{eq-srs-conditional-covariance}
\end{align}
Moreover,
\begin{align}
&\E\left[
\left\{
\sum_{\ell\in I_k^c}
\left(\delta_\ell-\frac{n_N}{N}\right)
\right\}^2
\,\middle|\,
\mathcal I_N,\bm X_N
\right]
\le
\sum_{\ell\in I_k^c}
\Var(\delta_\ell\mid\mathcal I_N,\bm X_N)
\le N,
\label{eq-srs-outside-count-bound}
\end{align}
where the first inequality follows from the nonpositive pairwise covariances of SRSWOR. Hence
\begin{align}
&\max_{i\in I_k}
\left|
P_0^N(\delta_i=1\mid\mathcal T_{N,k})-\frac{n_N}{N}
\right|
=
\frac1{|I_k|}
\left|
\sum_{\ell\in I_k^c}
\left(\delta_\ell-\frac{n_N}{N}\right)
\right|
=O_p(N^{-1/2})
\label{eq-srs-fold-inclusion-stability}
\end{align}
by \eqref{eq-fold-size-stability} and \eqref{eq-srs-outside-count-bound}. Lemma~\ref{lem-unit-foldwise-criterion} proves \eqref{eq-condition-M2-by-fold}.

\medskip\noindent\emph{Stratified sampling.}
Take \(\mathcal G_{N,k}=\mathcal T_{N,k}\). Within stratum \(j\), equations~\eqref{eq-srs-conditional-inclusion}--\eqref{eq-srs-fold-inclusion-stability} apply with
\[
I_k,
\quad
n_N,
\quad
N
\]
replaced by
\[
I_k\cap S_{N,j},
\quad
n_{N,j},
\quad
N_{N,j},
\]
respectively. Indicators from different strata are conditionally independent. Therefore
\begin{equation}
\Cov(\delta_i,\delta_\ell\mid\mathcal T_{N,k})\le0,
\qquad i,\ell\in I_k,
\quad i\ne\ell,
\label{eq-stratified-fold-negative-covariance}
\end{equation}
and, for every fixed \(j\),
\begin{align}
&\max_{i\in I_k\cap S_{N,j}}
\left|
P_0^N(\delta_i=1\mid\mathcal T_{N,k})
-
\frac{n_{N,j}}{N_{N,j}}
\right|
=O_p(N_{N,j}^{-1/2})
=O_p(N^{-1/2})
\label{eq-stratified-fold-inclusion-stability}
\end{align}
by \eqref{eq-stratified-fold-balance} and \(N_{N,j}/N\xrightarrow[]{p}P(X\in\mathcal X_j)>0\). Since \(J\) is fixed, Lemma~\ref{lem-unit-foldwise-criterion} proves \eqref{eq-condition-M2-by-fold}.

\medskip\noindent\emph{PPS sampling with replacement.}
Put
\begin{equation}
W_{N,k}=\sum_{i\in I_k}w_{N,i},
\qquad
R_{N,k}=n_N-\sum_{j\in I_k^c}C_{N,j},
\label{eq-pps-fold-mass-and-count}
\end{equation}
and take
\begin{equation}
\mathcal G_{N,k}
=
\sigma\left(
\mathcal T_{N,k},
\{C_{N,j}\mid j\in I_k^c\}
\right).
\label{eq-pps-fold-conditioning-field}
\end{equation}
The bounds on \(r\) and \eqref{eq-fold-size-stability} give, for some constant \(c>0\),
\begin{equation}
P_0^N\{c\le W_{N,k}\le1-c\}\to1,
\qquad
\max_iw_{N,i}=O_p(N^{-1}).
\label{eq-pps-fold-weight-bounds}
\end{equation}
Since each draw falls in \(I_k\) with conditional probability \(W_{N,k}\),
\begin{equation}
\E(R_{N,k}\mid\bm X_N,\mathcal I_N)=n_NW_{N,k},
\qquad
\Var(R_{N,k}\mid\bm X_N,\mathcal I_N)
=n_NW_{N,k}(1-W_{N,k})\le n_N,
\label{eq-pps-fold-count-moments}
\end{equation}
and hence
\begin{equation}
R_{N,k}-n_NW_{N,k}=O_p(\sqrt N).
\label{eq-pps-fold-count-rate}
\end{equation}
Conditionally on \(\mathcal G_{N,k}\), the remaining \(R_{N,k}\) draws are independent with probabilities \(w_{N,i}/W_{N,k}\), \(i\in I_k\). Thus
\begin{equation}
P_0^N(\delta_i=1\mid\mathcal G_{N,k})
=
1-
\left(1-\frac{w_{N,i}}{W_{N,k}}\right)^{R_{N,k}},
\label{eq-pps-fold-conditional-inclusion}
\end{equation}
and, for distinct \(i,j\in I_k\),
\begin{align}
&\Cov(\delta_i,\delta_j\mid\mathcal G_{N,k})
\notag\\
&\quad=
\left(1-\frac{w_{N,i}+w_{N,j}}{W_{N,k}}\right)^{R_{N,k}}
-
\left(1-\frac{w_{N,i}}{W_{N,k}}\right)^{R_{N,k}}
\left(1-\frac{w_{N,j}}{W_{N,k}}\right)^{R_{N,k}}
\le0.
\label{eq-pps-fold-negative-covariance}
\end{align}
For \(u,v\le0\) and \(0\le x<1\),
\begin{equation}
|e^u-e^v|
=
\left|\int_v^u e^t\,dt\right|
\le|u-v|,
\qquad
0\le-\log(1-x)-x
=
\int_0^x\frac{t}{1-t}\,dt
\le\frac{x^2}{2(1-x)}.
\label{eq-pps-elementary-log-bounds}
\end{equation}
Equations~\eqref{eq-pps-fold-weight-bounds} and~\eqref{eq-pps-elementary-log-bounds} give, uniformly over \(i\in I_k\),
\begin{align}
\left|\log\left(1-\frac{w_{N,i}}{W_{N,k}}\right)\right|
&\le
\frac{w_{N,i}/W_{N,k}}{1-w_{N,i}/W_{N,k}}
\le O_p(1)w_{N,i},
\label{eq-pps-log-first-bound}
\\
&\left|
W_{N,k}\log\left(1-\frac{w_{N,i}}{W_{N,k}}\right)
-
\log(1-w_{N,i})
\right|
\notag\\
&\quad\le
W_{N,k}
\left|
-\log\left(1-\frac{w_{N,i}}{W_{N,k}}\right)
-
\frac{w_{N,i}}{W_{N,k}}
\right|
+
\left|-\log(1-w_{N,i})-w_{N,i}\right|
\notag\\
&\quad\le
\frac{w_{N,i}^2}
{2W_{N,k}\{1-w_{N,i}/W_{N,k}\}}
+
\frac{w_{N,i}^2}{2(1-w_{N,i})}
\le O_p(1)w_{N,i}^2.
\label{eq-pps-log-second-bound}
\end{align}
Since \(\pi_{N,i}=1-(1-w_{N,i})^{n_N}\), equations~\eqref{eq-pps-fold-conditional-inclusion} and~\eqref{eq-pps-elementary-log-bounds} give
\begin{align}
&\left|
P_0^N(\delta_i=1\mid\mathcal G_{N,k})-\pi_{N,i}
\right|
\notag\\
&\quad=
\left|
\exp\left\{
R_{N,k}\log\left(1-\frac{w_{N,i}}{W_{N,k}}\right)
\right\}
-
\exp\{n_N\log(1-w_{N,i})\}
\right|
\notag\\
&\quad\le
\left|
R_{N,k}\log\left(1-\frac{w_{N,i}}{W_{N,k}}\right)
-
n_N\log(1-w_{N,i})
\right|
\notag\\
&\quad\le
|R_{N,k}-n_NW_{N,k}|
\left|\log\left(1-\frac{w_{N,i}}{W_{N,k}}\right)\right|
+
n_N
\left|
W_{N,k}\log\left(1-\frac{w_{N,i}}{W_{N,k}}\right)
-
\log(1-w_{N,i})
\right|
\notag\\
&\quad\le
O_p(1)
\left[
|R_{N,k}-n_NW_{N,k}|w_{N,i}
+n_Nw_{N,i}^2
\right].
\label{eq-pps-inclusion-difference-bound}
\end{align}
Hence
\begin{align}
&\max_{i\in I_k}
\left|
P_0^N(\delta_i=1\mid\mathcal G_{N,k})-\pi_{N,i}
\right|
=O_p(N^{-1/2})
\label{eq-pps-fold-inclusion-stability}
\end{align}
by \eqref{eq-pps-fold-count-rate}, \eqref{eq-pps-fold-weight-bounds}, and~\eqref{eq-pps-inclusion-difference-bound}. Lemma~\ref{lem-unit-foldwise-criterion} proves \eqref{eq-condition-M2-by-fold}.

\medskip\noindent\emph{Rejective sampling.}
Take \(\mathcal G_{N,k}=\mathcal T_{N,k}\). Lemma~\ref{lem-rejective-foldwise-stability} and Lemma~\ref{lem-unit-foldwise-criterion} prove \eqref{eq-condition-M2-by-fold}.

\medskip\noindent\emph{Cluster sampling.}
Take \(\mathcal G_{N,k}=\mathcal T_{N,k}\). For every cluster \(G_{N,a}\subset I_k\),
\begin{equation}
P_0^N(A_{N,a}=1\mid\mathcal T_{N,k})
=
\frac{m_N-\sum_{b:G_{N,b}\subset I_k^c}A_{N,b}}
{\sum_{b=1}^{M_N}\mathbf 1\{G_{N,b}\subset I_k\}}.
\label{eq-cluster-fold-conditional-inclusion}
\end{equation}
Equation~\eqref{eq-srs-conditional-covariance}, with units replaced by clusters, gives
\begin{equation}
\Cov(A_{N,a},A_{N,b}\mid\mathcal T_{N,k})\le0
\label{eq-cluster-fold-negative-covariance}
\end{equation}
for distinct clusters contained in \(I_k\). Moreover,
\begin{align}
&\E\left[
\left\{
\sum_{b:G_{N,b}\subset I_k^c}
\left(A_{N,b}-\frac{m_N}{M_N}\right)
\right\}^2
\,\middle|\,
\mathcal I_N,\bm X_N
\right]
\le M_N,
\label{eq-cluster-outside-count-bound}
\end{align}
by the nonpositive pairwise covariances of cluster SRSWOR. Hence
\begin{align}
&\max_{a:G_{N,a}\subset I_k}
\left|
P_0^N(A_{N,a}=1\mid\mathcal T_{N,k})-\frac{m_N}{M_N}
\right|
\notag\\
&\quad=
\frac{
\left|
\sum_{b:G_{N,b}\subset I_k^c}
\left(A_{N,b}-m_N/M_N\right)
\right|
}
{\sum_{b=1}^{M_N}\mathbf 1\{G_{N,b}\subset I_k\}}
=O_p(M_N^{-1/2})
\label{eq-cluster-fold-inclusion-stability}
\end{align}
by \eqref{eq-cluster-fold-size-stability} and \eqref{eq-cluster-outside-count-bound}. Lemma~\ref{lem-cluster-foldwise-criterion} proves \eqref{eq-condition-M2-by-fold}.

\medskip\noindent\emph{Stratified two-stage sampling with Poisson sampling of PSUs.}
Lemma~\ref{lem-two-stage-foldwise-criterion}\textup{(i)} proves \eqref{eq-condition-M2-by-fold}.

\medskip\noindent\emph{Stratified two-stage sampling with rejective sampling of PSUs.}
Lemma~\ref{lem-two-stage-foldwise-criterion}\textup{(ii)} proves \eqref{eq-condition-M2-by-fold}.

Since \(K\) is fixed, the conclusions for the individual folds give Condition~\ref{cond-mean-sampling-term-app} in every case. The remaining conclusions follow from Theorem~4.3.
\end{proof}

\subsubsection{Reduction for regression targets}
\label{app-regression-target-conditions}

Work under the assumptions of Example~3.12. Section~4 gives the remaining notation, and Conditions~{R1}--{R3} are stated in Subsection~4.3.

For mean estimation, Subsections~\ref{app-mean-design-results}--\ref{app-mean-sampling-term} verify Condition~\ref{cond-mean-sampling-term-app} from Condition~\ref{cond-mean-regression-app} and, where needed, additional design-specific conditions. For regression, validation-fold residuals remain independent and centered conditional on \(\mathcal T_{N,k}\) and \(\boldsymbol\delta_N\), so Conditions~{R1}--{R2} control the nuisance remainder directly, as shown in Lemma~\ref{lem-regression-crossfit-remainders}. Thus, under Conditions~{R1}--{R3} and the standing fold measurability, no design-specific verification beyond Assumptions~{B1}--{B4}, already established in Appendix~\ref{app-sampling-examples}, is required.

\begin{proof}[Verification of Example~3.12]
For \(g\in\mathcal H^F\), put \(\dot\beta_g=\dot\psi^F(g)\). The tangent-space assumption in Example~3.12 gives
\begin{equation}
\mathbb E[\varepsilon g(L)\mid X]
=
\dot\mu_0(X)^\top\dot\beta_g.
\label{eq-regression-tangent-identity}
\end{equation}
Moreover,
\[
\mathbb E\!\left[
\varepsilon
I_F^{-1}\frac{\dot\mu_0(X)}{\sigma^2(X)}\varepsilon
\,\middle|\,X
\right]
=
I_F^{-1}\dot\mu_0(X),
\]
so the full-data candidate in \textup{(3.13)} belongs to \((\mathcal H^F)^p\). Equation~\eqref{eq-regression-tangent-identity} yields
\[
\mathbb E\!\left[
I_F^{-1}\frac{\dot\mu_0(X)}{\sigma^2(X)}\varepsilon g(L)
\right]
=
\dot\beta_g,
\]
which proves the first identity in \textup{(3.13)}. The same tangent-space argument applies to
\[
I_\pi^{-1}\frac{\dot\mu_0(X)}{\sigma^2(X)}\varepsilon.
\]
Its conditional expectation given \(X\) is zero. Applying \(\mathcal L\) therefore gives the observed-data candidate in \textup{(3.13)}. Finally,
\begin{align*}
&\mathbb E^\pi\!\left[
I_\pi^{-1}\delta\frac{\dot\mu_0(X)}{\sigma^2(X)}\varepsilon
(\mathcal Lg)(O)
\right]
=
I_\pi^{-1}\mathbb E\!\left[
\pi(X)\frac{\dot\mu_0(X)}{\sigma^2(X)}
\mathbb E[\varepsilon g(L)\mid X]
\right]
=
\dot\beta_g
\end{align*}
by \eqref{eq-regression-tangent-identity}. This proves the second identity in \textup{(3.13)}. Taking covariance matrices and applying \textup{(3.9)} gives \textup{(3.14)}.
\end{proof}

\begin{lemma}[Expansion of the finite-population regression target]
\label{lem-regression-finite-target-expansion}
Under Condition~{R1}, the target in \textup{(3.15)} satisfies Assumption~{A3} with the full-data EIF in \textup{(3.13)}.
\end{lemma}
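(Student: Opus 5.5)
The plan is a standard Z-estimator expansion for the census root \(\beta_N\) of \textup{(3.15)}. All statements are about the full-data sample \(L_1,\ldots,L_N\); the design plays no role. We may therefore work with the i.i.d.\ law of the \(L_i\), which is the \(L\)-marginal of \(Q_0^N\). Put
\[
u_\beta(L)=\frac{\dot\mu_\beta(X)}{\sigma^2(X)}\{Y-\mu(X,\beta)\},\qquad
U_N(\beta)=\frac1N\sum_{i=1}^N u_\beta(L_i).
\]
Then \(U_N(\beta_N)=0\) with probability tending to one, \(\beta_N\xrightarrow[]{p}\beta_0\), and \(\beta_N\in U\) with probability tending to one, all by Condition~{R1}. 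The target representation is
\[
\sqrt N(\beta_N-\beta_0)=I_F^{-1}N^{-1/2}\sum_i u_{\beta_0}(L_i)+o_p(1),
\]
because \(I_F^{-1}u_{\beta_0}=\phi^F\) by \textup{(3.13)}.

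First I would Taylor-expand \(U_N\) around \(\beta_0\) on the convex set \(U\), using the integral form:
\[
0=U_N(\beta_N)=U_N(\beta_0)+\int_0^1 \dot U_N\{\beta_0+t(\beta_N-\beta_0)\}\,dt\,(\beta_N-\beta_0).
\]
Here the Jacobian is
\[
\dot U_N(\beta)=N^{-1}\sum_i\Bigl[\frac{\ddot\mu_\beta(X_i)}{\sigma^2(X_i)}\{Y_i-\mu(X_i,\beta)\}-\frac{\dot\mu_\beta(X_i)\dot\mu_\beta(X_i)^\top}{\sigma^2(X_i)}\Bigr],
\]
where the first term is interpreted as a tensor contracted with the residual. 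The leading term \(\sqrt N U_N(\beta_0)\) is a normalized i.i.d.\ sum of mean-zero vectors, since \(\mathbb E[\varepsilon\mid X]=0\). Its second moments are finite because \(\|u_{\beta_0}\|\le b(X)|\varepsilon|/\sigma_-^2\) and \(\mathbb E[b^4+Y^4]<\infty\). Hence it is \(O_p(1)\).

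The main step is uniform convergence of the Jacobian: \(\sup_{\beta\in U}\|\dot U_N(\beta)-J(\beta)\|=o_p(1)\), where \(J\) is continuous and \(J(\beta_0)=-I_F\). I would obtain this from a uniform law of large numbers over the compact set \(U\), for example Jennrich's ULLN. It applies because \(\beta\mapsto\dot U_N\) has continuous summands, by the \(C^2\) assumption, and an integrable envelope,
\[
\sigma_-^{-2}\{b(X)(|Y|+b(X))+b(X)^2\},
\]
which is integrable by the fourth-moment bound. Continuity of \(J\) follows by dominated convergence. At \(\beta_0\), the \(\ddot\mu\) term has mean zero since \(\mathbb E[\varepsilon\mid X]=0\), leaving \(-I_F\).

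Combining this with \(\beta_N\xrightarrow[]{p}\beta_0\), the averaged Jacobian converges in probability to \(-I_F\), which is nonsingular by Condition~{R1}. Therefore \(\sqrt N(\beta_N-\beta_0)=I_F^{-1}\sqrt N U_N(\beta_0)+o_p(1)\), as required.

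The only delicate point is the uniform law: pointwise convergence of the Jacobian alone would not handle the random intermediate points. A secondary point is that the remainder must be \(o_{Q_0^N}(1)\). This is immediate, since every quantity involved is \(\sigma(L_1,\ldots,L_N)\)-measurable and so its law is the same under \(Q_0^N\) as under the i.i.d.\ full-data law.
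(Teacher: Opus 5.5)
Your proposal is correct and takes the same route as the paper's proof. Both expand the census estimating equation around \(\beta_0\), use \(\sqrt N S_N(\beta_0)=O_{Q_0^N}(1)\), show the Jacobian converges to \(-I_F\) (the \(\ddot\mu\) term vanishes because \(\mathbb E[\varepsilon\mid X]=0\)), and combine this with consistency of \(\beta_N\). Your explicit uniform law of large numbers over the compact set \(U\), with the integrable envelope from Condition~R1, simply spells out what the paper compresses into \emph{uniformly on a shrinking neighborhood of} \(\beta_0\).
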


\begin{proof}
Put
\[
S_N(\beta)
=
\frac1N\sum_{i=1}^N
\frac{\dot\mu_\beta(X_i)}{\sigma^2(X_i)}
\{Y_i-\mu(X_i,\beta)\}.
\]
At \(\beta_0\),
\[
\sqrt N S_N(\beta_0)
=
\frac1{\sqrt N}\sum_{i=1}^N
\frac{\dot\mu_0(X_i)}{\sigma^2(X_i)}\varepsilon_i
=O_{Q_0^N}(1).
\]
Condition~{R1} and the law of large numbers give, uniformly on a shrinking neighborhood of \(\beta_0\),
\begin{equation}
-\frac{\partial S_N(\beta)}{\partial\beta^\top}
\xrightarrow[]{p}
I_F.
\label{eq-regression-finite-target-jacobian}
\end{equation}
Indeed, at \(\beta_0\), the summand on the left is
\[
\frac{
\dot\mu_0(X_i)\dot\mu_0(X_i)^\top
-\ddot\mu_0(X_i)\varepsilon_i}
{\sigma^2(X_i)},
\]
and the second term has mean zero. Taylor expansion of \(S_N(\beta_N)=0\), consistency of \(\beta_N\), and \eqref{eq-regression-finite-target-jacobian} yield
\[
\sqrt N(\beta_N-\beta_0)
=
I_F^{-1}
\frac1{\sqrt N}\sum_{i=1}^N
\frac{\dot\mu_0(X_i)}{\sigma^2(X_i)}\varepsilon_i
+o_{Q_0^N}(1),
\]
which is Assumption~{A3} by \textup{(3.13)}.
\end{proof}

\begin{lemma}[Uniform convergence of the regression estimating equation]
\label{lem-regression-score-ulln}
Suppose Assumptions~{B1}--{B4} and Conditions~{R1}--{R2} hold. Then
\begin{equation}
\sup_{\beta\in U}\|\hat U_N(\beta)-\Psi_R(\beta)\|
=o_{P_0^N}(1).
\label{eq-regression-score-ulln}
\end{equation}
\end{lemma}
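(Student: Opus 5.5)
I would prove the uniform law of large numbers by splitting $\hat U_N$ into an oracle part with the true variance and a nuisance part. With
\[
U_N^0(\beta)=\frac1N\sum_{i=1}^N\delta_i\frac{\dot\mu_\beta(X_i)}{\sigma^2(X_i)}\{Y_i-\mu(X_i,\beta)\},
\]
I will show $\sup_{\beta\in U}\|\hat U_N(\beta)-U_N^0(\beta)\|=o_p(1)$ and $\sup_{\beta\in U}\|U_N^0(\beta)-\Psi_R(\beta)\|=o_p(1)$.

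\emph{Nuisance part.} I work on the event in \eqref{eq-regression-variance-bounds}, which has probability tending to one. There
\[
|\hat\sigma^{-2}_{-k(i)}(X_i)-\sigma^{-2}(X_i)|\le c^{-2}|\hat\sigma^2_{-k(i)}(X_i)-\sigma^2(X_i)|,
\]
and Condition~\ref{cond-regression-regularity} gives $\|\dot\mu_\beta\|\le b$ and $|Y-\mu(X,\beta)|\le|Y|+b(X)$, uniformly in $\beta\in U$. Hence
\[
\sup_\beta\|\hat U_N-U_N^0\|\le c^{-2}\frac1N\sum_i b(X_i)\,|\hat\sigma^2_{-k(i)}-\sigma^2|(X_i)\,\{|Y_i|+b(X_i)\}.
\]
By Cauchy--Schwarz this is at most
\[
c^{-2}\Bigl[\frac1N\sum_i b(X_i)^2(\hat\sigma^2_{-k(i)}-\sigma^2)^2\Bigr]^{1/2}\Bigl[\frac1N\sum_i(|Y_i|+b(X_i))^2\Bigr]^{1/2}.
\]
The first factor is $o_p(1)$ by \eqref{eq-regression-variance-L2}, and the second is $O_p(1)$ by the ordinary law of large numbers and the fourth moments in Condition~\ref{cond-regression-regularity}. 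This step needs no design structure.

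\emph{Oracle part.} First I get pointwise convergence for fixed $\beta$. Write $Y_i-\mu(X_i,\beta)=\varepsilon_i+\{\mu(X_i,\beta_0)-\mu(X_i,\beta)\}$. For the $\varepsilon$-term I apply Lemma~\ref{lem-observed-score-lln} to the full-data function $d_\beta(L)=\dot\mu_\beta(X)\varepsilon/\sigma^2(X)$, which lies in $L^2_0$ and has $\mathbb E[d_\beta\mid X]=0$, so that $(\mathcal Ld_\beta)(O)=\delta d_\beta(L)$ and the average tends to $0$. For the second term, an $X$-measurable integrable function $h_\beta(X)$ multiplied by $\delta_i$, I decompose $\delta_i=\pi_{N,i}+(\delta_i-\pi_{N,i})$. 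Then Assumption~\ref{ass-B2} together with the law of large numbers handles the $\pi_{N,i}$ piece. Assumption~\ref{ass-B3} handles the $(\delta_i-\pi_{N,i})$ piece, applied with a truncation of $h_\beta$ as in the proof of Lemma~\ref{lem-observed-score-lln}, since $h_\beta$ is only guaranteed to be in $L^2$ via $b^2\in L^2$. The limit is $\Psi_R(\beta)$.

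Second, I upgrade to uniformity using compactness of $U$ and a stochastic Lipschitz bound. The derivative satisfies
\[
\bigl\|\partial_\beta\{\delta\dot\mu_\beta\sigma^{-2}(Y-\mu(X,\beta))\}\bigr\|\le \sigma_-^{-2}\{b(X)(|Y|+b(X))+b(X)^2\}=:B(L),
\]
with $\mathbb E B<\infty$. So $\|U_N^0(\beta)-U_N^0(\beta')\|\le\|\beta-\beta'\|N^{-1}\sum_iB(L_i)=O_p(1)\|\beta-\beta'\|$. Also $\Psi_R$ is Lipschitz with constant $\mathbb E B$. Taking a finite $\eta$-net of $U$, pointwise convergence at the net points combined with the equicontinuity bound gives the supremum bound, after letting $\eta\downarrow0$.

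I expect the main obstacle to be the pointwise step for the non-centered term. Assumption~\ref{ass-B3} is stated only for square-integrable $h$, so the truncation argument must be done carefully, with the tail controlled by $\mathbb E[\delta_i|h|\mathbf 1\{|h|>M\}]\le\mathbb E[|h|\mathbf 1\{|h|>M\}]$. Everything else follows directly from the stated moment bounds.
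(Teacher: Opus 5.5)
Your proof is correct, but it organizes the argument differently from the paper. The paper splits $\hat U_N(\beta)-\Psi_R(\beta)$ into three terms:
\begin{itemize}
\item a residual term $N^{-1}\sum_i\delta_i\dot\mu_\beta(X_i)\varepsilon_i/\hat\sigma^2_{-k(i)}(X_i)$ that keeps the \emph{estimated} weights;
\item a weight-error term that multiplies only $\mu(X_i,\beta_0)-\mu(X_i,\beta)$;
\item an oracle conditional-mean term.
\end{itemize}
It controls the residual term through the cross-fitting structure. Conditionally on $\mathcal T_{N,k}$ and $\boldsymbol\delta_N$, the validation-fold residuals are independent and centered, so this term has conditional second moment $O_p(N^{-1})$. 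Separate moduli of continuity and an $\eta$-net then give uniformity.

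You instead subtract the entire oracle score $U_N^0$. You then bound the weight error against the whole residual $Y_i-\mu(X_i,\beta)$, including $\varepsilon_i$, using Cauchy--Schwarz and \eqref{eq-regression-variance-L2}. Because only an $o_p(1)$ bound is required here, this crude bound suffices. It also shows that the lemma never uses the $\mathcal T_{N,k}$-measurability in Condition~\ref{cond-regression-variance-estimation}. It needs only \eqref{eq-regression-variance-bounds}--\eqref{eq-regression-variance-L2} and the moments in Condition~\ref{cond-regression-regularity}. The oracle $\varepsilon$-term is then handled by your appeal to Lemma~\ref{lem-observed-score-lln}.

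What the paper's route buys is continuity with the $\sqrt N$-scale argument in Lemma~\ref{lem-regression-crossfit-remainders}. There your Cauchy--Schwarz bound would give only $\sqrt N\cdot o_p(1)$, and the conditional centering from cross-fitting becomes essential.

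One minor point: the truncation you flag as the main obstacle is unnecessary. Your $h_\beta(X)=\dot\mu_\beta(X)\{\mu(X,\beta_0)-\mu(X,\beta)\}/\sigma^2(X)$ satisfies $\|h_\beta(X)\|\le 2\sigma_-^{-2}b(X)^2$. Since $\mathbb E[b(X)^4]<\infty$, $h_\beta$ is already square-integrable. Assumption~\ref{ass-B3} therefore applies to it directly, coordinatewise, which is what the paper does.
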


\begin{proof}
For the oracle conditional-mean term, put
\begin{equation}
q_\beta(X)
=
\frac{\dot\mu_\beta(X)}{\sigma^2(X)}
\{\mu(X,\beta_0)-\mu(X,\beta)\}.
\label{eq-regression-oracle-conditional-mean-term}
\end{equation}
Condition~{R1} and the mean value theorem give, for some constant \(C_0<\infty\),
\begin{equation}
\sup_{\beta\in U}\|q_\beta(X)\|
\le C_0b(X)^2,
\qquad
\|q_\beta(X)-q_{\beta'}(X)\|
\le C_0b(X)^2\|\beta-\beta'\|.
\label{eq-regression-oracle-term-envelope}
\end{equation}
By \(Y_i-\mu(X_i,\beta)=\varepsilon_i+\mu(X_i,\beta_0)-\mu(X_i,\beta)\),
\begin{align}
\hat U_N(\beta)-\Psi_R(\beta)
&=
\frac1N\sum_{i=1}^N
\delta_i
\frac{\dot\mu_\beta(X_i)}{\hat\sigma^2_{-k(i)}(X_i)}
\varepsilon_i
\notag\\
&\quad+
\frac1N\sum_{i=1}^N
\delta_i\dot\mu_\beta(X_i)
\{\mu(X_i,\beta_0)-\mu(X_i,\beta)\}
\left\{
\frac1{\hat\sigma^2_{-k(i)}(X_i)}-
\frac1{\sigma^2(X_i)}
\right\}
\notag\\
&\quad+
\frac1N\sum_{i=1}^N\delta_iq_\beta(X_i)
-\mathbb E\{\pi(X)q_\beta(X)\}.
\label{eq-regression-ulln-decomposition}
\end{align}

Fix \(k\) and \(\beta\in U\). On the event in \textup{(4.29)}, conditional on \(\mathcal T_{N,k}\) and \(\boldsymbol\delta_N\),
\begin{align}
&\mathbb E\!\left[
\left\|
\frac1N\sum_{i\in I_k}
\delta_i
\frac{\dot\mu_\beta(X_i)}{\hat\sigma^2_{-k}(X_i)}
\varepsilon_i
\right\|^2
\,\middle|\,
\mathcal T_{N,k},\boldsymbol\delta_N
\right]
\le
\frac{\sigma_+^2}{c^2N^2}\sum_{i\in I_k}b(X_i)^2
=O_p(N^{-1}).
\label{eq-regression-ulln-residual-fixed}
\end{align}
For every \(\eta>0\),
\begin{align}
&\sup_{\beta,\beta'\in U,\ \|\beta-\beta'\|\le\eta}
\left\|
\frac1N\sum_{i=1}^N
\delta_i
\frac{\dot\mu_\beta(X_i)-\dot\mu_{\beta'}(X_i)}
{\hat\sigma^2_{-k(i)}(X_i)}
\varepsilon_i
\right\|
\le
\frac{\eta}{cN}\sum_{i=1}^Nb(X_i)|\varepsilon_i|
=O_p(\eta).
\label{eq-regression-ulln-residual-modulus}
\end{align}
Since \(U\) is compact and \(K\) is fixed, \eqref{eq-regression-ulln-residual-fixed}--\eqref{eq-regression-ulln-residual-modulus} and a finite \(\eta\)-net yield
\begin{equation}
\sup_{\beta\in U}
\left\|
\frac1N\sum_{i=1}^N
\delta_i
\frac{\dot\mu_\beta(X_i)}{\hat\sigma^2_{-k(i)}(X_i)}
\varepsilon_i
\right\|
=o_p(1).
\label{eq-regression-ulln-residual-uniform}
\end{equation}

Moreover, on the same event,
\begin{align}
&\sup_{\beta\in U}
\left\|
\frac1N\sum_{i=1}^N
\delta_i\dot\mu_\beta(X_i)
\{\mu(X_i,\beta_0)-\mu(X_i,\beta)\}
\left\{
\frac1{\hat\sigma^2_{-k(i)}(X_i)}-
\frac1{\sigma^2(X_i)}
\right\}
\right\|
\notag\\
&\quad\le
\frac{2}{c\sigma_-^2}
\left[
\frac1N\sum_{i=1}^N
b(X_i)^2
\{\hat\sigma^2_{-k(i)}(X_i)-\sigma^2(X_i)\}^2
\right]^{1/2}
\left\{\frac1N\sum_{i=1}^Nb(X_i)^2\right\}^{1/2}
=o_p(1).
\label{eq-regression-ulln-weight-uniform}
\end{align}

For every fixed \(\beta\in U\), Assumption~{B3}, applied coordinatewise using \eqref{eq-regression-oracle-term-envelope}, and the ordinary law of large numbers give
\begin{equation}
\frac1N\sum_{i=1}^N(\delta_i-\pi_{N,i})q_\beta(X_i)=o_p(1),
\qquad
\frac1N\sum_{i=1}^N\pi(X_i)q_\beta(X_i)
-\mathbb E\{\pi(X)q_\beta(X)\}=o_p(1).
\label{eq-regression-ulln-oracle-fixed}
\end{equation}
Equation~\eqref{eq-regression-oracle-term-envelope} also gives
\begin{align}
&\sup_{\beta,\beta'\in U,\ \|\beta-\beta'\|\le\eta}
\left\|
\frac1N\sum_{i=1}^N
(\delta_i-\pi_{N,i})
\{q_\beta(X_i)-q_{\beta'}(X_i)\}
\right\|
\le C_0\eta\frac1N\sum_{i=1}^Nb(X_i)^2
=O_p(\eta),
\label{eq-regression-ulln-design-modulus}
\\
&\sup_{\beta,\beta'\in U,\ \|\beta-\beta'\|\le\eta}
\left\|
\frac1N\sum_{i=1}^N\pi(X_i)
\{q_\beta(X_i)-q_{\beta'}(X_i)\}
-\mathbb E[\pi(X)\{q_\beta(X)-q_{\beta'}(X)\}]
\right\|
\notag\\
&\quad\le
C_0\eta
\left\{
\frac1N\sum_{i=1}^Nb(X_i)^2+\mathbb E[b(X)^2]
\right\}
=O_p(\eta),
\label{eq-regression-ulln-iid-modulus}
\end{align}
and Assumption~{B2} implies
\begin{equation}
\sup_{\beta\in U}
\left\|
\frac1N\sum_{i=1}^N
\{\pi_{N,i}-\pi(X_i)\}q_\beta(X_i)
\right\|
\le
C_0\sup_{1\le i\le N}|\pi_{N,i}-\pi(X_i)|
\frac1N\sum_{i=1}^Nb(X_i)^2
=o_p(1).
\label{eq-regression-ulln-inclusion-probability}
\end{equation}
A finite \(\eta\)-net, \eqref{eq-regression-ulln-oracle-fixed}--\eqref{eq-regression-ulln-inclusion-probability}, and \(\eta\downarrow0\) give
\begin{equation}
\sup_{\beta\in U}
\left\|
\frac1N\sum_{i=1}^N\delta_iq_\beta(X_i)
-\mathbb E\{\pi(X)q_\beta(X)\}
\right\|
=o_p(1).
\label{eq-regression-ulln-oracle-uniform}
\end{equation}
Equations~\eqref{eq-regression-ulln-decomposition}, \eqref{eq-regression-ulln-residual-uniform}, \eqref{eq-regression-ulln-weight-uniform}, and \eqref{eq-regression-ulln-oracle-uniform} prove \eqref{eq-regression-score-ulln}.
\end{proof}

\begin{lemma}[Cross-fitted conditional-mean remainders]
\label{lem-regression-crossfit-remainders}
Assume Assumption~{B1} and Conditions~{R1}--{R2}. Then
\begin{equation}
\frac1{\sqrt N}\sum_{i=1}^N
\delta_i
\left\{
\frac1{\hat\sigma^2_{-k(i)}(X_i)}
-
\frac1{\sigma^2(X_i)}
\right\}
\dot\mu_0(X_i)\varepsilon_i
=o_{P_0^N}(1),
\label{eq-regression-crossfit-weight-remainder}
\end{equation}
and
\begin{equation}
\frac1N\sum_{i=1}^N
\delta_i
\frac{\ddot\mu_0(X_i)}{\hat\sigma^2_{-k(i)}(X_i)}
\varepsilon_i
=o_{P_0^N}(1).
\label{eq-regression-crossfit-hessian-remainder}
\end{equation}
\end{lemma}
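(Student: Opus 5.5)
Both displays will follow from conditional second-moment bounds given the fold training field \(\mathcal T_{N,k}\) together with \(\boldsymbol\delta_N\). The key fact is this: for \(i\in I_k\), \(\hat\sigma^2_{-k}(X_i)\) is \(\mathcal T_{N,k}\)-measurable, while \(\mathcal T_{N,k}\) contains only \(\bm X_N\), the fold partition, and \((\delta_j,\delta_jY_j)\) for \(j\notin I_k\). By Assumption~B1 and the i.i.d.\ full-data structure, conditional on \(\mathcal F_{N,k}=\sigma(\mathcal T_{N,k},\boldsymbol\delta_N)\), the errors \(\varepsilon_i\), \(i\in I_k\), are therefore independent with \(\E[\varepsilon_i\mid\mathcal F_{N,k}]=0\) and \(\E[\varepsilon_i^2\mid\mathcal F_{N,k}]=\sigma^2(X_i)\le\sigma_+^2\). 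Including \(\boldsymbol\delta_N\) is harmless because B1 makes \(\boldsymbol\delta_N\) independent of \(\bm Y_N\) given \(\bm X_N\). Outcomes outside \(I_k\) are independent of those inside given \(\bm X_N\), so conditioning on the out-of-fold observed outcomes does not change the in-fold law.

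Since \(K\) is fixed, I will prove each display foldwise. For \eqref{eq-regression-crossfit-weight-remainder}, fix \(k\) and put \(w_i=\delta_i\{\hat\sigma^{-2}_{-k}(X_i)-\sigma^{-2}(X_i)\}\dot\mu_0(X_i)\), which is \(\mathcal F_{N,k}\)-measurable. The foldwise sum \(N^{-1/2}\sum_{i\in I_k}w_i\varepsilon_i\) then has conditional mean zero. Its conditional second moment is
\[
\frac1N\sum_{i\in I_k}\|w_i\|^2\sigma^2(X_i)
\le\frac{\sigma_+^2}{c^2\sigma_-^4}\,\frac1N\sum_{i=1}^N b(X_i)^2\{\hat\sigma^2_{-k(i)}(X_i)-\sigma^2(X_i)\}^2 .
\]
This bound holds on the event in \eqref{eq-regression-variance-bounds}, using \(|a^{-1}-b^{-1}|=|a-b|/(ab)\) and \(\|\dot\mu_0\|\le b\). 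The right side is \(o_p(1)\) by \eqref{eq-regression-variance-L2}. A conditional Chebyshev inequality, applied on an event whose probability tends to one, then gives \(o_p(1)\) for each fold. To handle the truncation cleanly, I will multiply each summand by the \(\mathcal F_{N,k}\)-measurable indicator that the bounds hold for fold \(k\).

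For \eqref{eq-regression-crossfit-hessian-remainder}, I will argue the same way with \(w_i=\delta_i\ddot\mu_0(X_i)/\hat\sigma^2_{-k}(X_i)\) and normalization \(N^{-1}\). The conditional second moment is at most \(\sigma_+^2c^{-2}N^{-2}\sum_i b(X_i)^2\). This is \(O_p(N^{-1})\) by the law of large numbers with \(\E b(X)^2<\infty\), so the display is \(o_p(1)\).

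The main point needing care is the conditional independence and centering claim of the first paragraph. In particular, I must check that the extra conditioning on \(\boldsymbol\delta_N\) and on the out-of-fold \((\delta_jY_j)\) leaves the in-fold \(\varepsilon_i\) centered with conditional variance \(\sigma^2(X_i)\). The steps are to write the joint density, factor it using B1, and note that the factors for \(Y_i\), \(i\in I_k\), are \(f(y_i\mid X_i)\). The fold partition and any seed from Remark~\ref{rem-randomized-folds} are phase-I measurable and do not affect this. Once that is established, the remaining bounds are routine.
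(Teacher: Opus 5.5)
Your proposal is correct and is essentially the paper's proof: a foldwise conditional second-moment bound given \(\mathcal T_{N,k}\) and \(\boldsymbol\delta_N\), using that the in-fold residuals are conditionally independent and centered, the truncation event \eqref{eq-regression-variance-bounds}, the weighted \(L^2\) condition \eqref{eq-regression-variance-L2}, and conditional Chebyshev, with the Hessian term handled by the same argument at the \(N^{-1}\) scale. The differences are cosmetic. Your constant \(\sigma_+^2/(c^2\sigma_-^4)\) is looser than the paper's \(1/(c^2\sigma_-^2)\). You also spell out, via B1, the product form of the outcome law, and the phase-I measurability of the folds, the conditional-independence step that the paper simply asserts.
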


\begin{proof}
On the event in \textup{(4.29)}, Condition~{R1} gives
\begin{equation}
\sigma^2(X_i)
\left\{
\frac1{\hat\sigma^2_{-k(i)}(X_i)}
-
\frac1{\sigma^2(X_i)}
\right\}^2
\le
\frac{
\{\hat\sigma^2_{-k(i)}(X_i)-\sigma^2(X_i)\}^2}
{c^2\sigma_-^2}.
\label{eq-regression-inverse-variance-bound}
\end{equation}
Since \(K\) is fixed, consider one fold \(I_k\). Conditional on \(\mathcal T_{N,k}\) and \(\boldsymbol\delta_N\), the variables \(\{\varepsilon_i\mid i\in I_k\}\) are independent and centered. Hence
\begin{align}
&\mathbb E\!\left[
\left\|
\frac1{\sqrt N}\sum_{i\in I_k}
\delta_i
\left\{
\frac1{\hat\sigma^2_{-k}(X_i)}
-
\frac1{\sigma^2(X_i)}
\right\}
\dot\mu_0(X_i)\varepsilon_i
\right\|^2
\,\middle|\,
\mathcal T_{N,k},\boldsymbol\delta_N
\right]
\notag\\
&\quad\le
\frac1{c^2\sigma_-^2N}\sum_{i\in I_k}
 b(X_i)^2
\{\hat\sigma^2_{-k}(X_i)-\sigma^2(X_i)\}^2
=o_p(1)
\label{eq-regression-crossfit-conditional-second-moment}
\end{align}
by \textup{(4.30)}. Conditional Chebyshev's inequality and summation over the folds prove \eqref{eq-regression-crossfit-weight-remainder}.

On the same event, the conditional second moment of the left side of \eqref{eq-regression-crossfit-hessian-remainder}, restricted to \(I_k\), is at most
\[
\frac{\sigma_+^2}{c^2N^2}\sum_{i\in I_k}\delta_i b(X_i)^2.
\]
Its expectation is \(O(N^{-1})\), so conditional Chebyshev's inequality proves \eqref{eq-regression-crossfit-hessian-remainder}.
\end{proof}

\begin{proposition}[Verification of the general conditions for regression]
\label{prop-regression-sufficient-C-conditions}
Assume Assumptions~{B1}--{B4} and Conditions~{R1}--{R3}. Then Conditions~{C1}--{C3} hold for the contributions in \textup{(4.28)}.
\end{proposition}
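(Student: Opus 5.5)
We verify Conditions~C1--C3 in turn, taking \(\hat\Psi_N(\beta)=\hat I_{\pi,N}^{-1}\hat U_N(\beta)\). This is the average of the contributions in \textup{(4.28)}.

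\emph{Step 1: the information estimator.} First we show \(\hat I_{\pi,N}\xrightarrow[]{p}I_\pi\) whenever \(\hat\beta_N\xrightarrow[]{p}\beta_0\), and more generally uniformly over shrinking neighbourhoods. We replace \(\hat\sigma^2_{-k}\) by \(\sigma^2\) using \eqref{eq-regression-variance-bounds}, \eqref{eq-regression-variance-L2} and Cauchy--Schwarz with the envelope \(b\). We then replace \(\hat\beta_N\) by \(\beta_0\) using the Lipschitz bound \(\|\dot\mu_\beta-\dot\mu_{\beta_0}\|\le b\|\beta-\beta_0\|\). Finally, the HT-type law of large numbers (Lemma~\ref{lem-horvitz-thompson-lln}, or directly Assumptions~B2--B3 applied to the \(X\)-measurable function \(\dot\mu_0\dot\mu_0^\top/\sigma^2\), which is in \(L^2\) since \(\mathbb E b^4<\infty\)) gives the limit \(\mathbb E[\pi\dot\mu_0\dot\mu_0^\top/\sigma^2]=I_\pi\). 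Since \(I_\pi\) is positive definite, \(\hat I_{\pi,N}^{-1}=O_p(1)\).

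\emph{Step 2: C1.} Lemma~\ref{lem-regression-score-ulln} gives uniform convergence of \(\hat U_N\) to \(\Psi_R\) on \(U\). The approximate root \eqref{eq-regression-root} together with the identification \eqref{eq-regression-score-identification} gives consistency of \(\hat\beta_N\), exactly as in the proof of Theorem~4.2(i); hence Step~1 applies. To obtain the deterministic \(\Psi\) of C1 we take \(\Psi(\beta)=I_\pi^{-1}\Psi_R(\beta)\). Then
\[
\sup_U\|\hat\Psi_N-\Psi\|\le\|\hat I_{\pi,N}^{-1}-I_\pi^{-1}\|\sup_U\|\hat U_N\|+\|I_\pi^{-1}\|\sup_U\|\hat U_N-\Psi_R\|=o_p(1).
\]
Here \(\sup_U\|\Psi_R\|<\infty\) by the envelope bound. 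Identification transfers because \(I_\pi^{-1}\) is invertible, and \(\|\hat\Psi_N(\hat\beta_N)\|\le O_p(1)\,o_p(N^{-1/2})\).

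\emph{Step 3: C2.} Differentiating gives
\[
\partial_{\beta^\top}\hat U_N(\beta)=N^{-1}\sum_i\delta_i\hat\sigma^{-2}_{-k(i)}(X_i)\{\ddot\mu_\beta(X_i)\varepsilon_i+\ddot\mu_\beta(X_i)(\mu(X_i,\beta_0)-\mu(X_i,\beta))-\dot\mu_\beta\dot\mu_\beta^\top\}.
\]
Near \(\beta_0\) the middle term is \(O_p(\rho_N)\) via \(b^2\). The \(\dot\mu\dot\mu^\top\) term tends to \(I_\pi\) by Step~1. The Hessian-residual term is handled at \(\beta_0\) by \eqref{eq-regression-crossfit-hessian-remainder}. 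Its modulus in \(\beta\) is controlled only if \(\ddot\mu\) is Lipschitz, which R1 does not give; we therefore expect this to be the main obstacle. Our first attempt is to use the continuity of \(\ddot\mu_\beta\) with envelope \(b\) and dominated convergence: \(N^{-1}\sum_i b(X_i)|\varepsilon_i|\sup_{\|\beta-\beta_0\|\le\rho_N}\|\ddot\mu_\beta-\ddot\mu_0\|(X_i)\) has expectation tending to zero. Multiplying by \(\hat I_{\pi,N}^{-1}\to I_\pi^{-1}\), and noting that the derivative of \(\hat\Psi_N\) is \(\hat I_{\pi,N}^{-1}\partial\hat U_N\) because \(\hat I_{\pi,N}\) is held fixed at \(\hat\beta_N\), gives \eqref{eq-general-jacobian-condition}.

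\emph{Step 4: C3.} We write
\[
\sqrt N\hat\Psi_N(\beta_0)-N^{-1/2}\sum_i\phi^{\obs}_{0,i}=(\hat I_{\pi,N}^{-1}-I_\pi^{-1})\sqrt N\hat U_N(\beta_0)+I_\pi^{-1}\Big\{\sqrt N\hat U_N(\beta_0)-N^{-1/2}\sum_i\delta_i\dot\mu_0\varepsilon_i/\sigma^2\Big\}.
\]
The braced term is exactly \eqref{eq-regression-crossfit-weight-remainder}, hence \(o_p(1)\). The oracle sum is \(O_p(1)\) by Lemma~\ref{lem-central-sequence-L2-limits}, so \(\sqrt N\hat U_N(\beta_0)=O_p(1)\) and the first term is \(o_p(1)\) by Step~1. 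A further discrepancy arises from \(\phi^{\obs}\) using \(\pi(X)\)-free weights: none here, since \(\phi^{\obs}_{0,i}=I_\pi^{-1}\delta_i\dot\mu_0\varepsilon_i/\sigma^2\) involves no \(\pi_{N,i}\). This completes C3.
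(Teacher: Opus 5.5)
Your proposal is correct and follows essentially the same route as the paper's proof. You obtain consistency of $\hat\beta_N$ from Lemma~\ref{lem-regression-score-ulln} together with \eqref{eq-regression-root}--\eqref{eq-regression-score-identification}, get $\hat I_{\pi,N}\xrightarrow[]{p}I_\pi$ by oracle replacement, take $\Psi=I_\pi^{-1}\Psi_R$ for C1, control the differentiated contributions on shrinking neighbourhoods through \eqref{eq-regression-crossfit-hessian-remainder} together with continuity and the envelope for C2, and use the two-term decomposition driven by \eqref{eq-regression-crossfit-weight-remainder} for C3. Your Lipschitz bound for $\dot\mu_\beta$ and your dominated-convergence bound for the Hessian term simply spell out what the paper compresses into ``continuity and the envelope.''
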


\begin{proof}
For \(\epsilon>0\), put
\[
c_\epsilon
=
\frac12
\inf_{\beta\in U,\ \|\beta-\beta_0\|\ge\epsilon}
\|\Psi_R(\beta)\|.
\]
Equation~\textup{(4.32)} gives \(c_\epsilon>0\), and
\begin{align}
P_0^N(\|\hat\beta_N-\beta_0\|\ge\epsilon)
&\le
P_0^N(\hat\beta_N\notin U)
+
P_0^N\{\|\hat U_N(\hat\beta_N)\|\ge c_\epsilon\}
\notag\\
&\quad+
P_0^N\!\left\{
\sup_{\beta\in U}
\|\hat U_N(\beta)-\Psi_R(\beta)\|
\ge c_\epsilon
\right\}
\to0
\label{eq-regression-consistency-bound}
\end{align}
by \textup{(4.31)}, \textup{(4.32)}, and Lemma~\ref{lem-regression-score-ulln}. Hence
\begin{equation}
\hat\beta_N\xrightarrow[]{p}\beta_0.
\label{eq-regression-estimator-consistency}
\end{equation}

Define the oracle empirical information matrix
\[
I_{\pi,N}^0
=
\frac1N\sum_{i=1}^N
\delta_i
\frac{\dot\mu_0(X_i)\dot\mu_0(X_i)^\top}{\sigma^2(X_i)}.
\]
Assumptions~{B2}--{B3} give
\begin{equation}
I_{\pi,N}^0\xrightarrow[]{p}I_\pi.
\label{eq-regression-oracle-information-consistency}
\end{equation}
On the event in \textup{(4.29)},
\begin{align}
\|\hat I_{\pi,N}-I_{\pi,N}^0\|
&\le
\frac1N\sum_{i=1}^N
b(X_i)^2
\left|
\frac1{\hat\sigma^2_{-k(i)}(X_i)}
-
\frac1{\sigma^2(X_i)}
\right|
\notag\\
&\quad+
\frac1{cN}\sum_{i=1}^N
\left\|
\dot\mu_{\hat\beta_N}(X_i)
\dot\mu_{\hat\beta_N}(X_i)^\top
-
\dot\mu_0(X_i)\dot\mu_0(X_i)^\top
\right\|.
\label{eq-regression-information-difference-bound}
\end{align}
The first term is \(o_p(1)\) by Cauchy--Schwarz, \textup{(4.30)}, the lower bounds in Conditions~{R1}--{R2}, and the law of large numbers. The second is \(o_p(1)\) by \eqref{eq-regression-estimator-consistency}, continuity, and the envelope in Condition~{R1}. Thus \eqref{eq-regression-oracle-information-consistency}--\eqref{eq-regression-information-difference-bound} give
\begin{equation}
\hat I_{\pi,N}\xrightarrow[]{p}I_\pi,
\qquad
\hat I_{\pi,N}^{-1}\xrightarrow[]{p}I_\pi^{-1}.
\label{eq-regression-observed-information-consistency-proof}
\end{equation}
Since
\[
\hat\Psi_N(\beta)=\hat I_{\pi,N}^{-1}\hat U_N(\beta),
\qquad
\Psi(\beta)=I_\pi^{-1}\Psi_R(\beta),
\]
Equations~\textup{(4.31)}, \textup{(4.32)}, and \eqref{eq-regression-score-ulln}, together with \eqref{eq-regression-observed-information-consistency-proof}, imply Condition~{C1}.

For \(\beta\in U\), differentiation gives
\begin{align}
\frac1N\sum_{i=1}^N
\frac{\partial}{\partial\beta^\top}
\hat\phi^{\obs}_{N,i}(\beta)
&=
\hat I_{\pi,N}^{-1}
\frac1N\sum_{i=1}^N
\delta_i
\frac{\ddot\mu_\beta(X_i)}{\hat\sigma^2_{-k(i)}(X_i)}
\{Y_i-\mu(X_i,\beta)\}
\notag\\
&\quad-
\hat I_{\pi,N}^{-1}
\frac1N\sum_{i=1}^N
\delta_i
\frac{\dot\mu_\beta(X_i)\dot\mu_\beta(X_i)^\top}
{\hat\sigma^2_{-k(i)}(X_i)}.
\label{eq-regression-estimated-eif-gradient}
\end{align}
For every \(\rho_N\downarrow0\), \eqref{eq-regression-crossfit-hessian-remainder}, continuity, and the envelope in Condition~{R1} give
\begin{align}
&\sup_{\beta\in U,\ \|\beta-\beta_0\|\le\rho_N}
\left\|
\frac1N\sum_{i=1}^N
\delta_i
\frac{\ddot\mu_\beta(X_i)}{\hat\sigma^2_{-k(i)}(X_i)}
\{Y_i-\mu(X_i,\beta)\}
\right\|
=o_p(1),
\notag\\
&\sup_{\beta\in U,\ \|\beta-\beta_0\|\le\rho_N}
\left\|
\frac1N\sum_{i=1}^N
\delta_i
\frac{\dot\mu_\beta(X_i)\dot\mu_\beta(X_i)^\top}
{\hat\sigma^2_{-k(i)}(X_i)}
-I_\pi
\right\|
=o_p(1).
\label{eq-regression-local-gradient-components}
\end{align}
Equations~\eqref{eq-regression-estimated-eif-gradient}, \eqref{eq-regression-observed-information-consistency-proof}, and~\eqref{eq-regression-local-gradient-components} prove Condition~{C2}.

By \textup{(3.13)},
\[
\phi^{\obs}_{0,i}
=
I_\pi^{-1}\delta_i
\frac{\dot\mu_0(X_i)}{\sigma^2(X_i)}\varepsilon_i.
\]
Therefore
\begin{align}
&\frac1{\sqrt N}\sum_{i=1}^N
\{\hat\phi^{\obs}_{N,i}(\beta_0)-\phi^{\obs}_{0,i}\}
\notag\\
&\quad=
(\hat I_{\pi,N}^{-1}-I_\pi^{-1})
\frac1{\sqrt N}\sum_{i=1}^N
\delta_i\frac{\dot\mu_0(X_i)}{\sigma^2(X_i)}\varepsilon_i
\notag\\
&\qquad+
\hat I_{\pi,N}^{-1}
\frac1{\sqrt N}\sum_{i=1}^N
\delta_i
\left\{
\frac1{\hat\sigma^2_{-k(i)}(X_i)}
-
\frac1{\sigma^2(X_i)}
\right\}
\dot\mu_0(X_i)\varepsilon_i.
\label{eq-regression-C3-decomposition}
\end{align}
The first centered sum is \(O_p(1)\). Equations~\eqref{eq-regression-observed-information-consistency-proof} and~\eqref{eq-regression-crossfit-weight-remainder} make both terms in \eqref{eq-regression-C3-decomposition} \(o_p(1)\), proving Condition~{C3}.
\end{proof}

\begin{proposition}[Variance estimation for regression]
\label{prop-regression-variance-consistency}
Under the assumptions of Proposition~\ref{prop-regression-sufficient-C-conditions},
\[
\hat I_{\pi,N}\xrightarrow[]{p}I_\pi,
\qquad
\hat I_{F,N}\xrightarrow[]{p}I_F.
\]
Consequently, \textup{(4.37)} holds.
\end{proposition}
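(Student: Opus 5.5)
The plan is to prove the two information limits separately and then read off \textup{(4.37)} by continuity of matrix inversion. The convergence \(\hat I_{\pi,N}\xrightarrow[]{p}I_\pi\) is already contained in the proof of Proposition~\ref{prop-regression-sufficient-C-conditions}, see \eqref{eq-regression-observed-information-consistency-proof}. That argument first compares \(\hat I_{\pi,N}\) with the oracle matrix \(I_{\pi,N}^0\) through \eqref{eq-regression-information-difference-bound}. It then applies Assumptions~{B2}--{B3} to \(I_{\pi,N}^0\), using the envelope \(b(X)^2/\sigma_-^2\), which lies in \(L^2\) by the fourth-moment bound in Condition~{R1}. I will simply cite it, so the only new work is \(\hat I_{F,N}\).

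For \(\hat I_{F,N}\), I will introduce the oracle full-population matrix
\[
I_{F,N}^0=\frac1N\sum_{i=1}^N\frac{\dot\mu_0(X_i)\dot\mu_0(X_i)^\top}{\sigma^2(X_i)}.
\]
This is an i.i.d.\ average of an integrable matrix, since its norm is bounded by \(b(X)^2/\sigma_-^2\). The ordinary law of large numbers therefore gives \(I_{F,N}^0\xrightarrow[]{p}I_F\), and no design argument is needed because \(\hat I_{F,N}\) sums over all units. Next I bound \(\|\hat I_{F,N}-I_{F,N}^0\|\) exactly as in \eqref{eq-regression-information-difference-bound}, with the factor \(\delta_i\) dropped. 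I work on the event in \textup{(4.29)}, which has probability tending to one.

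\begin{itemize}
\item \emph{Weight-error term.} It is at most \(N^{-1}\sum_i b(X_i)^2|\hat\sigma^{-2}_{-k(i)}(X_i)-\sigma^{-2}(X_i)|\). Using \(|\hat\sigma^{-2}-\sigma^{-2}|\le|\hat\sigma^2-\sigma^2|/(c\sigma_-^2)\) and Cauchy--Schwarz, it is bounded by a constant times \(\{N^{-1}\sum_i b(X_i)^2(\hat\sigma^2-\sigma^2)^2\}^{1/2}\{N^{-1}\sum_i b(X_i)^2\}^{1/2}\). This is \(o_p(1)\) by \textup{(4.30)} and the law of large numbers.
\item \emph{Parameter-error term.} It is at most \(c^{-1}N^{-1}\sum_i\|\dot\mu_{\hat\beta_N}\dot\mu_{\hat\beta_N}^\top-\dot\mu_0\dot\mu_0^\top\|(X_i)\). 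By the mean value theorem and the envelope on \(\ddot\mu_\beta\) over the convex set \(U\), this is at most \(2c^{-1}\|\hat\beta_N-\beta_0\|\,N^{-1}\sum_i b(X_i)^2\). It is \(o_p(1)\) by \eqref{eq-regression-estimator-consistency}.
\end{itemize}

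Since \(I_\pi\) and \(I_F\) are positive definite by Condition~{R1}, the continuous mapping theorem gives \(\hat I_{\pi,N}^{-1}\xrightarrow[]{p}I_\pi^{-1}\) and \(\hat I_{F,N}^{-1}\xrightarrow[]{p}I_F^{-1}\). Taking the difference yields \eqref{eq-regression-variance-estimators}\(\to\)\eqref{eq-regression-variance-consistency}, which is \textup{(4.37)}.

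The main point requiring care is that the mean-value bound for the parameter-error term uses the envelope \(b\) for \(\ddot\mu\) uniformly on \(U\). This is valid only on the event \(\hat\beta_N\in U\), which has probability tending to one by Condition~{R3}. Otherwise the argument is routine.
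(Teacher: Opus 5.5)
Your proposal is correct and follows the paper's proof essentially step for step: the paper also cites the earlier convergence of \(\hat I_{\pi,N}\), replaces \(\hat I_{F,N}\) by the oracle full-population average using consistency of \(\hat\beta_N\), the rate condition \textup{(4.30)}, and the ordinary law of large numbers, and then concludes by continuity of matrix inversion. Your explicit weight-error and parameter-error bounds, stated on the event \(\hat\beta_N\in U\) together with the bounds \textup{(4.29)} on the fitted variances, spell out the one-line reduction that the paper leaves implicit.
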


\begin{proof}
The first convergence is \eqref{eq-regression-observed-information-consistency-proof}. For the second, \eqref{eq-regression-estimator-consistency}, \textup{(4.30)}, and the ordinary law of large numbers give
\[
\hat I_{F,N}
=
\frac1N\sum_{i=1}^N
\frac{\dot\mu_0(X_i)\dot\mu_0(X_i)^\top}{\sigma^2(X_i)}
+o_p(1)
\xrightarrow[]{p}I_F.
\]
Continuity of matrix inversion proves \textup{(4.37)}.
\end{proof}

\begin{proof}[Proof of Theorem~4.4]
Lemma~\ref{lem-regression-finite-target-expansion} gives Assumption~{A3}. Proposition~\ref{prop-regression-sufficient-C-conditions} and Theorem~4.2\textup{(i)} give \textup{(4.13)}--\textup{(4.14)}. Substitution of \textup{(3.13)} yields \textup{(4.35)}--\textup{(4.36)}. Proposition~\ref{prop-regression-variance-consistency} gives \textup{(4.37)}. The local regularity and efficiency statements follow from Proposition~3.13.
\end{proof}

\begin{theorem}[Efficiency under the sampling designs]
\label{thm-regression-sampling-designs}
Consider one of the sampling designs listed in Theorem~\ref{thm-mean-sampling-designs}, under its corresponding setting in Appendix~\ref{app-sampling-examples}. Assume Assumptions~{A1}--{A2} and Conditions~{R1}--{R3}. Then \eqref{eq-regression-score-ulln} and \textup{(4.35)}--\textup{(4.37)} hold, and \(\hat\beta_N\) is locally regular and efficient for both regression targets.
\end{theorem}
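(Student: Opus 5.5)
The plan is to reduce everything to the general regression result, Theorem~4.4, whose proof only used Assumptions~B1--B4 and Conditions~R1--R3. For each design listed in Theorem~\ref{thm-mean-sampling-designs}, the corresponding proposition in Appendix~\ref{app-sampling-examples} verifies Assumptions~B1--B4 under that design's setting. These are Propositions~\ref{prop-poisson-sampling}, \ref{prop-srswor}, \ref{prop-stratified-sampling}, \ref{prop-pps-with-replacement}, \ref{prop-rejective-sampling}, \ref{prop-cluster-sampling}, \ref{prop-stratified-two-stage}, and Corollary~\ref{cor-self-weighted-two-stage} for the two-stage cases with Poisson or rejective first stage. Each of these checks is taken as given.

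Next I would invoke Lemma~\ref{lem-regression-score-ulln}, which needs only B1--B4 and R1--R2, to obtain \eqref{eq-regression-score-ulln}. I would then apply Proposition~\ref{prop-regression-sufficient-C-conditions} to get C1--C3, and Proposition~\ref{prop-regression-variance-consistency} to get the variance convergences (4.37). Lemma~\ref{lem-regression-finite-target-expansion} supplies Assumption~A3 for the finite-population target. Theorem~4.2(i) and substitution of the EIFs in (3.13) then give (4.35)--(4.36), and Proposition~3.13 together with Theorems~3.7 and~3.9 yields local regularity and efficiency for both targets.

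The only point needing care is that the cross-fitting remainder in Lemma~\ref{lem-regression-crossfit-remainders} does not depend on the design's dependence structure. I would recheck this explicitly under dependent designs. Conditional on $\mathcal T_{N,k}$ and $\boldsymbol\delta_N$, Assumption~B1 together with independence of the $L_i$ gives $\mathbb E[\varepsilon_i\mid \bm X_N,\boldsymbol\delta_N,\{Y_j\}_{j\notin I_k}]=0$, and the $\varepsilon_i$ for $i\in I_k$ remain conditionally independent. This holds because $\boldsymbol\delta_N$ carries no $Y$-information beyond $\bm X_N$, and the folds are phase-I measurable; the latter holds even when folds are randomized, by Remark~\ref{rem-randomized-folds}.

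I expect this conditional-independence check to be the main (mild) obstacle. I would state it once in general form as
\[
(\varepsilon_i)_{i\in I_k}\perp \{(\delta_j,\delta_jY_j)\}_{j\in I_k^c}\ \Big|\ \bm X_N,\boldsymbol\delta_N,\mathcal I_N,
\]
and deduce it from the factorization \eqref{eq-obs-lik} of the joint law. In that factorization $q_N$ depends only on $\bm X_N$, so conditioning on $\boldsymbol\delta_N$ leaves the $Y_i\mid X_i$ independent with their original laws. Since no fold-balance or within-cluster error condition enters the regression argument, none of Conditions~F1--F2 or M3 is needed.
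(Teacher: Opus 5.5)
Your proposal is correct and follows the paper's proof. The paper verifies Assumptions~B1--B4 for each design through the corresponding result in Appendix~\ref{app-sampling-examples}, obtains \eqref{eq-regression-score-ulln} from Lemma~\ref{lem-regression-score-ulln}, and then applies Theorem~4.4, which packages exactly the chain you spell out (Proposition~\ref{prop-regression-sufficient-C-conditions}, Proposition~\ref{prop-regression-variance-consistency}, and Lemma~\ref{lem-regression-finite-target-expansion}). Your extra conditional-independence check is sound but redundant: it is already built into Lemma~\ref{lem-regression-crossfit-remainders}, which assumes only Assumption~B1 and Conditions~R1--R2, so no design-specific verification is needed at that step.
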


\begin{proof}[Proof of Theorem~\ref{thm-regression-sampling-designs}]
For each design listed in Theorem~\ref{thm-mean-sampling-designs}, the corresponding proposition in Appendix~\ref{app-sampling-examples} verifies Assumptions~{B1}--{B4}. Lemma~\ref{lem-regression-score-ulln} gives \eqref{eq-regression-score-ulln}, and Theorem~4.4 applies.
\end{proof}

\section{Additional Numerical and Empirical Details}
\label{app-additional-numerical-details}

\subsection{Numerical experiments and empirical implementation}
\label{app-details-sections-five-six}
All random PSU partitions are generated independently of outcomes and subsequent sampling randomization. Strata, PSU memberships, inclusion probabilities, and fold assignments are included in the phase-I information. Independent Monte Carlo randomizations are used for population generation, phase-I partitions, and sampling. All reported repetitions are retained; computational failures stop the corresponding experiment rather than being silently omitted.

For the design comparisons, the mean reference variances are
\[
\begin{aligned}
\Sigma_{\mathrm{bound,sp},b}
&=\frac1N\sum_i\{m_0(X_i)-\mu_0\}^2
 +\frac1N\sum_i\frac{\sigma^2(X_i)}{\pi_{N,i}},\\
\Sigma_{\mathrm{bound,fp},b}
&=\frac1N\sum_i\left(\frac1{\pi_{N,i}}-1\right)\sigma^2(X_i),
\end{aligned}
\]
where the generating variance scale is used in the California Academic Performance Index (API)-calibrated experiment in Subsection~\ref{app-api-calibration-implementation}. Mean standard errors use \textup{(4.21)}--\textup{(4.22)}, with either true or cross-fitted conditional means. Regression references use the relevant diagonal entries of the empirical versions of $I_\pi^{-1}$ and $I_\pi^{-1}-I_F^{-1}$ in \textup{(3.14)}, replacing expectations by full-population averages and $\pi(X_i)$ by $\pi_{N,i}$. Estimated regression standard errors instead use the sampled information matrix, as in \textup{(4.34)}, with the oracle conditional variance.

\subsubsection{Design-invariance experiment with random stratum counts}
\label{app-fixed-strata-implementation}
The experiment uses $N=10{,}000$, five folds, and 1,000 repetitions. The labels $H_i$ are i.i.d. with the probabilities specified in Subsection~5.1; the realized stratum counts are not held fixed. This retains the contribution $\Var(\alpha_H)$ to the superpopulation variance. The covariates and two conditionally Gaussian outcomes are generated independently across units. Stratum intercepts are fixed DGP constants, not quantities estimated when generating a population.

Within each stratum, the number of PSUs is the integer nearest $N_{N,h}/25$, constrained so that PSU sizes lie between 18 and 32. A random permutation of stratum units is divided into groups whose sizes differ by at most one. A random permutation of the PSUs assigns whole groups cyclically to the five folds. The exact ultimate inclusion probabilities are
\[
\pi_{N,i}=
\begin{cases}
\rho_h, & \text{Poisson and two-stage designs},\\[2pt]
\operatorname{round}(\rho_hN_{N,h})/N_{N,h},&\text{stratified SRSWOR},\\[2pt]
\operatorname{round}(\rho_hM_{N,h})/M_{N,h},&\text{one-stage cluster SRSWOR},
\end{cases}
\qquad H_i=h.
\]
In the two-stage design, the first-stage probability of PSU $(h,a)$ is $\rho_hN_{N,h,a}/10$ and ten units are drawn by SRSWOR within each selected PSU. All four inclusion-probability sequences converge to $\rho_{H_i}$. Exact equality across designs at finite $N$ is neither imposed nor needed.

The feasible mean learner is inverse-inclusion-probability weighted least squares on an intercept, four stratum indicators, $U_1$, $X_2$, and $X_2^2$, fitted outside each validation fold. A numerical ridge of $10^{-7}$ is added to the unnormalized weighted Gram matrix, with $10^{-11}$ on the intercept. The dictionary contains the true mean.

\subsubsection{Efficiency-gain map}
\label{app-efficiency-gain-implementation}
The mean experiment uses $N=8{,}000$, 5,000 repetitions per $(R^2,\rho)$ setting, and five balanced folds assigned by phase-I randomization. The feasible learner uses an intercept, $X_1$, $X_2$, and $X_2^2$ with inverse-inclusion-probability weighted least squares and the same numerical ridge as above. The regression experiment also uses $N=8{,}000$, 5,000 repetitions, and Poisson sampling at $\rho=0.25$. Its errors are conditionally Gaussian. Within each training fold, preliminary complete-case OLS residuals are squared and averaged separately in the two known variance regions. The resulting two estimated levels are assigned to the validation units, with a lower floor of $10^{-4}$. Thus the location of the variance change is known, whereas its two levels are estimated; no unknown change point is learned.

Table~\ref{tab:nuisance-cost} uses 1,000 repetitions at each $N\in\{2{,}000,8{,}000,32{,}000\}$. Both rows concern superpopulation targets. The mean setting is $R^2=0.75$, $\rho=0.25$; the regression setting has high-leverage heteroskedasticity, $c=8$, and $\rho=0.25$. All efficiency-gain and nuisance-cost intervals use 500 paired bootstrap resamples of the common Monte Carlo repetition indices.

\subsubsection{API-calibrated comparison with self-weighted \texorpdfstring{\(\pi\)PS/SRS}{piPS/SRS} sampling}
\label{app-api-calibration-implementation}
\label{sec-api-empirical}

As a realism check on the design comparison in Subsection~5.1, we construct repeated-sampling populations calibrated to the California API data. School type defines three fixed strata. We resample $N=6{,}000$ phase-I covariate rows, estimate both the conditional mean and conditional variance of the 2000 API from \texttt{apipop}, and generate outcomes by adding standardized residual draws scaled by the fitted conditional standard deviation. This construction preserves the empirical covariate distribution and the fitted relationship between the covariates and the outcome while allowing heteroskedasticity, rather than imposing an analytically specified outcome model.

We compare Poisson sampling, stratified SRSWOR, and a self-weighted \(\pi\)PS/SRS design within school type. In the last design, PSUs are selected by Poisson sampling at the first stage. All three designs have identical ultimate inclusion probabilities. The mean target uses the feasible cross-fitted EIF estimator. For regression, we generate a correctly specified conditional mean with API-calibrated covariates and conditional variances and use the oracle conditional variance in the estimating equation and the finite-population target in \textup{(3.15)}, so that the comparison isolates the sampling design.

\paragraph*{Implementation}
Continuous covariates are median-imputed and standardized using \texttt{apipop}. The generating mean is an ordinary least-squares additive fit with school-type indicators, four-degree-of-freedom natural splines for standardized \texttt{api99}, \texttt{meals}, and \texttt{ell}, and linear terms for the other five continuous covariates listed in Section~6. A Gamma log-link regression of squared centered residuals plus $10^{-3}$ uses school type and linear terms for \texttt{api99}, \texttt{meals}, \texttt{ell}, \texttt{mobility}, \texttt{avg.ed}, and log enrollment. Fitted variance scales are bounded below by the fifth percentile of the squared centered residuals. Standardized residuals are then recentered and divided by their sample standard deviation before resampling. This specifies the residual-pool normalization used in the saved run.

Each of 1,000 populations resamples $N=6{,}000$ complete covariate rows independently with replacement. For the mean experiment, the fitted mean and variance scale generate outcomes with independent draws from that residual pool. The superpopulation mean is the average generating mean over \texttt{apipop}. The feasible learner uses the same fixed generating spline dictionary, inverse-inclusion-probability weighting, and a numerical ridge of $10^{-5}$; knots are not recomputed in a resampled population. For regression,
\[
Z=(1,\mathrm{api99}_z,\mathrm{meals}_z,\mathrm{ell}_z,\mathrm{avg.ed}_z)^\top,
\qquad \beta_0=(660,95,-18,-8,5)^\top,
\]
and independent Gaussian errors have the fitted conditional variance scale. This generated variance is used by oracle GLS and in the finite-population target.

For elementary, high, and middle schools, the nominal sampling fractions are $(0.08,0.16,0.12)$. In each population, the rounded stratum sample size divided by the realized stratum count defines a common exact inclusion probability for all three designs. PSUs have sizes between 18 and 32. The two-stage design samples eight units within each selected PSU and uses the first-stage probability equal to the ultimate inclusion probability times PSU size divided by eight. Whole-PSU five-fold assignments are made within school type before sampling.

\paragraph*{Results}
The SD/bound ratio and coverage are defined in Section~5.

\begin{table}[H]

\centering
\small
\caption{API-calibrated experiment. Entries report the SD ratio in \textup{(5.1)} and 95\% estimated-SE Wald coverage in separate columns.}
\label{tab:api-calibrated-revised}
\begin{tabular}{llcc}
\toprule
Design & Target & SD/bound & Coverage \\
\midrule
Stratum-specific Poisson & Mean SP & 1.050 & 0.943 \\
Stratum-specific Poisson & Mean FP & 1.030 & 0.944 \\
Stratum-specific Poisson & $\beta_{\mathrm{api99}}$ SP & 1.015 & 0.947 \\
Stratum-specific Poisson & $\beta_{\mathrm{api99}}$ FP & 1.010 & 0.946 \\
Stratified SRSWOR & Mean SP & 1.010 & 0.954 \\
Stratified SRSWOR & Mean FP & 1.015 & 0.952 \\
Stratified SRSWOR & $\beta_{\mathrm{api99}}$ SP & 0.999 & 0.949 \\
Stratified SRSWOR & $\beta_{\mathrm{api99}}$ FP & 1.006 & 0.947 \\
Two-stage self-weighted & Mean SP & 0.999 & 0.954 \\
Two-stage self-weighted & Mean FP & 1.008 & 0.946 \\
Two-stage self-weighted & $\beta_{\mathrm{api99}}$ SP & 0.990 & 0.958 \\
Two-stage self-weighted & $\beta_{\mathrm{api99}}$ FP & 0.988 & 0.957 \\
\bottomrule
\end{tabular}
\end{table}

Across the 12 combinations in Table~\ref{tab:api-calibrated-revised}, the empirical-to-bound standard-deviation ratio ranges from 0.988 to 1.050 and coverage ranges from 0.943 to 0.958. For the self-weighted \(\pi\)PS/SRS design, the ratios range from 0.988 to 1.008. The largest finite-sample deviation is the superpopulation mean under Poisson sampling, for which the ratio is 1.050 and coverage is 0.943.

\begin{table}[H]

\centering
\small
\caption{Oracle and feasible mean estimators in the API-calibrated experiment.}
\label{tab:api-calibrated-mean-oracle-feasible}
\begin{tabular}{lllcc}
\toprule
Design & Target & Estimator & SD/bound & Coverage \\
\midrule
Stratum-specific Poisson & Mean SP & Oracle EIF & 1.037 & 0.937 \\
Stratum-specific Poisson & Mean SP & Feasible EIF & 1.050 & 0.943 \\
Stratum-specific Poisson & Mean FP & Oracle EIF & 1.000 & 0.948 \\
Stratum-specific Poisson & Mean FP & Feasible EIF & 1.030 & 0.944 \\
Stratified SRSWOR & Mean SP & Oracle EIF & 1.001 & 0.947 \\
Stratified SRSWOR & Mean SP & Feasible EIF & 1.010 & 0.954 \\
Stratified SRSWOR & Mean FP & Oracle EIF & 0.983 & 0.954 \\
Stratified SRSWOR & Mean FP & Feasible EIF & 1.015 & 0.952 \\
Two-stage self-weighted & Mean SP & Oracle EIF & 0.986 & 0.956 \\
Two-stage self-weighted & Mean SP & Feasible EIF & 0.999 & 0.954 \\
Two-stage self-weighted & Mean FP & Oracle EIF & 0.973 & 0.956 \\
Two-stage self-weighted & Mean FP & Feasible EIF & 1.008 & 0.946 \\
\bottomrule
\end{tabular}
\end{table}

Table~\ref{tab:api-calibrated-mean-oracle-feasible} shows that oracle and feasible mean estimation are close, although the feasible estimator has a modest finite-sample cost. For the finite-population mean, its empirical variance is approximately 6--7\% above the oracle variance across the three designs.
\FloatBarrier

\subsubsection{California API real-data implementation}
\label{app-real-data-implementation}
The records in \texttt{apipop} and \texttt{apistrat} are matched by the school identifier \texttt{cds}. Continuous phase-I covariates are median-imputed and standardized using the population. GREG is a full-sample survey-weighted linear fit; cross-fitted linear augmentation uses the same dictionary outside each fold. The spline learner adds four-degree-of-freedom cubic B-splines for each continuous covariate, retaining the intercept, linear terms, and school-type indicators without penalization. Only spline coefficients are ridge-penalized. Within each training fold, observation weights are normalized to mean one and generalized cross-validation selects the penalty from 13 log-spaced values between $10^{-1}$ and $10^6$. All basis construction uses phase-I covariates only.

The repeated-sampling diagnostic draws 1,000 independent stratified SRSWOR samples of sizes 100, 50, and 50 from the fixed elementary-, high-, and middle-school populations. Nonsampled outcomes are masked during each fit; the complete outcome vector is used only for generating these validation samples and evaluating finite-population error. This diagnostic does not approximate the joint superpopulation--design experiment.

\subsubsection{Variance estimation for the API mean}
\label{app-api-mean-variance}
Let $s_h$ be the sampled units in stratum $h$, $f_h=n_h/N_h$, and $s_{r,h}^2$ the usual sample variance of $r_i=Y_i-\hat m_i$ over $s_h$. Here $\hat m_i=0$ for HT, an in-sample prediction for GREG, and an out-of-fold prediction for the two cross-fitted estimators. The variance estimates on the original scale are
\[
\widehat{\Var}_{\mathrm{fp}}(\hat\mu)=
\sum_h\left(\frac{N_h}{N}\right)^2\frac{1-f_h}{n_h}s_{r,h}^2,
\qquad
\widehat{\Var}_{\mathrm{sp}}(\hat\mu)=
\widehat{\Var}_{\mathrm{fp}}(\hat\mu)+\frac{\hat V_Y}{N},
\]
where
\[
\hat V_Y=\max\left\{
\frac1N\sum_{i:\delta_i=1}\frac{Y_i^2}{\pi_{N,i}}
-\left(\frac1N\sum_{i:\delta_i=1}\frac{Y_i}{\pi_{N,i}}\right)^2,\,0\right\}.
\]
These formulas are not algebraically identical to \textup{(4.21)}--\textup{(4.22)}. Under fixed-stratum sampling and consistent conditional-mean estimation satisfying the stated regularity conditions, multiplying them by $N$ yields the same probability limits. With an imperfect working regression, they instead provide residual-linearization diagnostics and need not estimate the efficiency bounds.

\subsubsection{API conditional-mean regression}
\label{app-api-regression-implementation}
Let $Z$ contain an intercept, the eight standardized continuous covariates, and school-type indicators. In each training fold, complete-case OLS supplies residuals and their mean square $s_{-k}^2$. A Gamma log-link model is fitted to residual squares, floored at $10^{-8}s_{-k}^2$ and divided by $s_{-k}^2$. Its dictionary contains linear and quadratic terms for continuous covariates and unsquared school-type indicators, with linearly dependent columns removed. The Gamma objective is minimized in QR-preconditioned coordinates by damped Newton steps with a backtracking line search; the preconditioning does not change the objective or add a penalty. Convergence is checked by the score residual and by agreement with a BFGS fit to the same objective. A failed numerical check stops the analysis rather than substituting another variance model. All five fits passed these checks, requiring 5--7 Newton steps.

The fitted shape is calibrated using only the training data to match the floored residual second moment, and validation predictions are clipped to $[0.02s_{-k}^2,50s_{-k}^2]$. Before this calibration, exponentiation is restricted to linear predictors in $[-30,30]$ as a numerical safeguard. The mean lower and upper clipping fractions over the five validation folds were approximately 0.0013 and 0.0003. The out-of-fold residual diagnostic in Section~6.2 uses predictions from the corresponding training-fold OLS fit and observed validation outcomes only. Its inverse-inclusion-probability weighted version normalizes these weights to sum to one over the sampled units. Neither diagnostic is used to select the variance model or to rescale its predictions.

Writing $w_i=1$ for OLS and $w_i=1/\hat\sigma^2_{-k(i)}(X_i)$ for GLS, the displayed sandwich covariance on the $\sqrt N$ scale is
\[
\left(\frac1N\sum_i\delta_iw_iZ_iZ_i^\top\right)^{-1}
\left(\frac1N\sum_i\delta_iw_i^2Z_iZ_i^\top
       (Y_i-Z_i^\top\hat\beta)^2\right)
\left(\frac1N\sum_i\delta_iw_iZ_iZ_i^\top\right)^{-1}.
\]
The GLS plug-in covariance replaces this sandwich by
$\{N^{-1}\sum_i\delta_iw_iZ_iZ_i^\top\}^{-1}$.
The reported standard errors divide the diagonal entries by $N$ before taking square roots. These are superpopulation calculations under the conditional mean working model; the sandwich is not presented as a design-robust variance for a fixed-population projection coefficient.

\subsubsection{Oracle regression with an unbounded variance function}
\label{app-oracle-unbounded-variance}
The lognormal variance in Subsection~5.1 lies outside the bounded-variance sufficient condition in Section~4. The oracle calculation can nevertheless be verified without modifying that condition. Put
\[
I^{\mathrm{or}}_{\pi,N}=\frac1N\sum_i\delta_i\frac{Z_iZ_i^\top}{\sigma^2(X_i)},
\qquad
I^{\mathrm{or}}_{F,N}=\frac1N\sum_i\frac{Z_iZ_i^\top}{\sigma^2(X_i)}.
\]
On their nonsingularity event, the linear normal equations give exactly
\[
\begin{split}
\sqrt N(\hat\beta_N^{\mathrm{or}}-\beta_0)
&=(I^{\mathrm{or}}_{\pi,N})^{-1}
\frac1{\sqrt N}\sum_i\delta_i\frac{Z_i\varepsilon_i}{\sigma^2(X_i)},\\
\sqrt N(\beta_N-\beta_0)
&=(I^{\mathrm{or}}_{F,N})^{-1}
\frac1{\sqrt N}\sum_i\frac{Z_i\varepsilon_i}{\sigma^2(X_i)}.
\end{split}
\]
Here $U_1$ is Gaussian, $X_2$ is bounded, and $\sigma^2(X)=\exp(0.65U_1)$. Gaussian exponential moments imply
$\mathbb E\{\|Z\|^4/\sigma^4(X)\}<\infty$; conditional Gaussian errors also give finite fourth moments for $Z\varepsilon/\sigma^2(X)$. Assumptions~{B2}--{B3}, applied to the weighted Gram entries, yield $I^{\mathrm{or}}_{\pi,N}\to_p I_\pi$ and $I^{\mathrm{or}}_{F,N}\to_p I_F$. Both limits are positive definite. Conditional on the covariates and sampling indicators, the residual contributions are independent and centered, and their finite fourth moments verify the Lindeberg condition. The joint conditional CLT and Slutsky's theorem therefore give the expansions in \textup{(4.35)}--\textup{(4.36)} and the variances $I_\pi^{-1}$ and $I_\pi^{-1}-I_F^{-1}$. Inverting the empirical Gram matrices consistently estimates these variances. This argument justifies the oracle experiment only; it does not extend the feasible-estimator theorem to arbitrary unbounded variance learners.

\subsection{Additional numerical results}

\begin{table}[H]

\centering
\small
\caption{Oracle and feasible mean estimation with i.i.d. stratum membership. Entries are empirical SD divided by the square root of the mean reference variance and 95\% estimated-SE Wald coverage. Five whole-PSU folds are used for the feasible estimator.}
\label{tab:fixed-strata-mean-oracle-feasible}
\begin{tabular}{lllcc}
\toprule
Design & Target & Estimator & SD/bound & Coverage \\
\midrule
Stratum-specific Poisson & Mean SP & Oracle EIF & 0.978 & 0.958 \\
Stratum-specific Poisson & Mean SP & Feasible EIF & 0.978 & 0.953 \\
Stratum-specific Poisson & Mean FP & Oracle EIF & 0.993 & 0.951 \\
Stratum-specific Poisson & Mean FP & Feasible EIF & 0.993 & 0.949 \\
Stratified SRSWOR & Mean SP & Oracle EIF & 0.977 & 0.953 \\
Stratified SRSWOR & Mean SP & Feasible EIF & 0.978 & 0.953 \\
Stratified SRSWOR & Mean FP & Oracle EIF & 1.014 & 0.946 \\
Stratified SRSWOR & Mean FP & Feasible EIF & 1.016 & 0.947 \\
One-stage cluster & Mean SP & Oracle EIF & 1.011 & 0.953 \\
One-stage cluster & Mean SP & Feasible EIF & 1.012 & 0.957 \\
One-stage cluster & Mean FP & Oracle EIF & 1.013 & 0.950 \\
One-stage cluster & Mean FP & Feasible EIF & 1.016 & 0.952 \\
Two-stage self-weighted & Mean SP & Oracle EIF & 1.004 & 0.956 \\
Two-stage self-weighted & Mean SP & Feasible EIF & 1.008 & 0.961 \\
Two-stage self-weighted & Mean FP & Oracle EIF & 1.035 & 0.947 \\
Two-stage self-weighted & Mean FP & Feasible EIF & 1.046 & 0.942 \\
\bottomrule
\end{tabular}
\end{table}

\FloatBarrier

\begin{table}[H]

\centering
\footnotesize
\caption{Illustrative efficiency gains. The Monte Carlo interval is a paired bootstrap interval for the feasible quantity.}
\label{tab:efficiency-gain-examples}
\begin{tabular}{llrrrr}
\toprule
Setting & Quantity & Theory & Oracle & Feasible & MC interval \\
\midrule
Mean SP: rho=0.25, R2=0.75 & Variance reduction & 0.562 & 0.566 & 0.566 & [0.549, 0.584] \\
Mean FP: rho=0.25, R2=0.75 & Variance reduction & 0.750 & 0.751 & 0.751 & [0.738, 0.762] \\
Regression: high leverage, c=8 & OLS/efficient variance & 2.380 & 2.416 & 2.411 & [2.316, 2.515] \\
Regression: low leverage, c=8 & OLS/efficient variance & 1.161 & 1.148 & 1.148 & [1.126, 1.170] \\
\bottomrule
\end{tabular}
\end{table}

\begin{table}[H]

\centering
\small
\caption{Finite-sample cost of nuisance estimation. Entries are feasible-to-oracle variance ratios with 95\% paired Monte Carlo bootstrap intervals. Both targets are superpopulation targets: $R^2=0.75$ and $\rho=0.25$ for the mean; high-leverage heteroskedasticity with $c=8$ and $\rho=0.25$ for regression.}
\label{tab:nuisance-cost}
\begin{tabular}{lrrrr}
\toprule
Target & $N$ & Variance ratio & MC lower & MC upper \\
\midrule
Mean & 2000 & 1.000 & 0.992 & 1.008 \\
Regression & 2000 & 1.002 & 0.993 & 1.012 \\
Mean & 8000 & 0.997 & 0.992 & 1.001 \\
Regression & 8000 & 1.002 & 0.998 & 1.006 \\
Mean & 32000 & 0.998 & 0.996 & 1.001 \\
Regression & 32000 & 1.002 & 0.999 & 1.004 \\
\bottomrule
\end{tabular}
\end{table}

\FloatBarrier

\begin{table}[H]

\centering
\small
\caption{Prediction diagnostics for the realized California API sample.}
\label{tab:api-mean-prediction-diagnostics}
\begin{tabular}{llrr}
\toprule
Method & Evaluation & Weighted RMSE & Weighted $R^2$ \\
\midrule
GREG & In-sample & 24.043 & 0.962 \\
Linear augmentation & Out-of-fold & 26.792 & 0.953 \\
Spline EIF & Out-of-fold & 27.010 & 0.952 \\
\bottomrule
\end{tabular}
\end{table}

\begin{table}[H]

\centering
\small
\caption{Repeated stratified sampling from the fixed California API population.}
\label{tab:api-mean-repeated-sampling}
\resizebox{\textwidth}{!}{%
\begin{tabular}{lrrrrrr}
\toprule
Estimator & Bias & Emp. SD & Avg. est. SD & SE/SD & Coverage & Var./GREG \\
\midrule
HT & 13.047 & 804.519 & 775.896 & 0.964 & 0.945 & 24.094 \\
GREG & -3.320 & 163.902 & 153.213 & 0.935 & 0.924 & 1.000 \\
Cross-fitted linear augmentation & -2.351 & 165.391 & 166.994 & 1.010 & 0.936 & 1.018 \\
Cross-fitted spline EIF & -0.961 & 169.286 & 169.760 & 1.003 & 0.939 & 1.067 \\
\bottomrule
\end{tabular}%
}
\par\smallskip\parbox{0.95\textwidth}{\footnotesize Bias and standard deviations are on the $\sqrt{N}$ scale. The last column is empirical variance relative to GREG. This is a design-conditional finite-population diagnostic based on repeated samples from the fixed realized population; it is not an assessment of the joint superpopulation--design efficiency bound.}
\end{table}

\FloatBarrier

\clearpage
\bibliographystyle{plainnat}
\bibliography{refs}
\end{document}